\newcommand{\p}{$(p-1)q\equiv \pm1$ $(mod\, p)$}
\newcommand{\np}{$(p-1)q\not\equiv \pm1$ $(mod\, p)$}
\newtheorem{Def}{Definition}[section]
\newtheorem{thm}{Theorem}[section]
\newtheorem{lem}{Lemma}[section]
\newtheorem{prop}{Proposition}[section]
\newtheorem{claim}{Claim}[section]
\newtheorem{Rem}{Remark}[section]
\begin{document}

\title[Decompositions of the 3-sphere and lens spaces with three handlebodies]{Decompositions of the 3-sphere and lens spaces\\ with three handlebodies}
\author{Yasuyoshi Ito and Masaki Ogawa}
\date{}
\keywords{handlebody decomposition, 3-manifold, Heegaard splitting, trisection of 3-manifolds}
\subjclass[2010]{Primary 57N10; Secondary 57N25}
\maketitle
\begin{abstract}
In this paper, we consider decompositions of 3-manifolds with three handlebodies. We classify such decompositions of the 3-sphere and lens spaces with small genera. These decompositions admit operations called stabilizations.
We also determine whether these decompositions are are obtained by stabilization from another decomposition.
\end{abstract}

\maketitle
\section{Introduction}
	In this paper, we consider decompositions of 3-manifolds with handlebodies. A Heegaard splitting is a decomposition of a 3-manifold with two handlebodies. Heegaard splittings, which have been studied for many years, play an important role in the study of 3-manifolds.  We consider decompositions of 3-manifolds with three handlebodies. Such a decomposition is more general than a Heegaard splitting. We classify such decompositions of the 3-sphere and lens spaces up to ambient isotopy in this paper.

	In dimension 4, Gay and Kirby introduced  a trisection of a 4-manifold, i.e., a decomposition of  a 4-manifold  into  three  four-dimensional handlebodies \cite{GK}. They showed that any closed, orientable, smooth 4-manifold admits a trisection.  In \cite{Koenig},  Koenig defined a trisection of  a 3-manifold. A trisection of a 3-manifold is a decomposition that uses three handlebodies and satisfies some properties. In this paper, we will consider a slightly generalized version, i.e, we do not assume that  the intersection of two handlebodies is connected. If a 3-manifold $M$ is decomposed into three handlebodies of genera $g_1$, $g_2$, and $g_3$, respectively, then we call this decomposition a type-$(g_1, g_2, g_3)$ decomposition. G\'omez-Larra\~naga et al. also studied such decompositions and characterized  the decompositions of 3-manifolds with three handlebodies of genera at most one \cite{G}. In \cite{G}, G\'omez-Larra\~naga showed that the 3-sphere has type-$(0, 0, 0)$, type-$(0, 0, 1)$, type-$(0, 1, 1)$, and type-$(1, 1, 1)$ decompositions  and lens spaces have type-$(0, 0, 1)$, type-$(0, 1, 1)$, and type-$(1, 1, 1)$ decompositions. 
	
	 Several results on the classification of Heegaard splittings of 3-manifolds are known. Waldhausen proved that the Heegaard splittings of the 3-sphere of the same genus are unique up to isotopy \cite{W}. Further, Bonahon and Otal proved that two Heegaard splittings of lens spaces of the same genus are unique up to isotopy \cite{BO}. 
	 We classify the such decompositions of the 3-sphere and lens spaces up to isotopy  if the genera of hadlebodies at most one in Theorem \ref{s3001}, \ref{lensclass011}, \ref{class111s3} and  \ref{class111}.
	 In the cases of handlebody decompositions, it was found to have multiple isotopy classes.
	 
	 It is known that a Heegaard splitting admits a stabilization operation. Reidemeister and Singer showed  that  two Heegaard splittings of the same 3-manifold will be isotopic after performing some stabilizations \cite{R, S}. Koenig defined the stabilizations of a trisection and showed a stably equivalent theorem \cite{Koenig}. In \cite{Poly}, it has been shown that the handlebody decompositions of the same 3-manifold are stably equivalent. We will characterize handlebody decompositions of the 3-sphere and lens spaces with genera at most one that can be obtained from others by stabilizations in Theorem \ref{stab_2}.
	 
	 In Section 2, we introduce a handlebody decomposition and list the results of this paper.  After that, we review some results on a handlebody decomposition in Section 3 and classify handlebody decompositions of the 3-sphere and lens spaces in Section 4. 
	
\section{Contents and Results}
	In this section, we introduce the notion of a handlebody decomposition of a 3-manifold.
	Koenig defined a trisection of a 3-manifold in \cite{Koenig}. The following definition of a handlebody decomposition is slightly more general than that of a trisection of a 3-manifold.
	The results on classifications are stated in Theorem \ref{s3001}, \ref{lensclass011}, \ref{class111s3} and  \ref{class111}.
\subsection{Definition of handlebody decomposition}
	\begin{Def}[A handlebody decomposition]
		Let $M$ be a closed, orientable 3-manifold and $H_i$ be a genus-$g_i$ handlebody  for $i=1, 2, 3$.  Suppose that M has a decomposition $M=H_1\cup H_2\cup H_3$. Then, we call this decomposition a handlebody decomposition if it satisfies the following conditions:
			\begin{enumerate}
			\renewcommand{\theenumi}{(\arabic{enumi})}
			\item[(1)] $H_i\cap H_j=\partial H_i\cap \partial H_j$ is a  (possibly disconnected) compact surface with a boundary denoted by $F_{ij}$; and
			\item[(2)] $H_1\cap H_2\cap H_3$ is a set of simple closed curves in M. We call a component of this set a branched locus.
		\end{enumerate}
		We refer to the union $F_{12}\cup F_{13}\cup F_{23}$ as a branched surface of the handlebody decomposition. We call this  decomposition  a type-$(g_1, g_2, g_3; b)$ decomposition, where $b$ is the number of branched loci. We sometimes omit $b$.
	\end{Def}
	
	\begin{Rem}
	\begin{enumerate}
	\renewcommand{\theenumi}{(\arabic{enumi})}
		\item[(1)] The above-mentioned definition of a handlebody decomposition corresponds to the one in \cite{Poly} such that its partition does not have a vertex and the number of handlebodies is three.
		\item[(2)] The difference between the definition of a trisection of 3-manifolds \cite{Koenig} and the above-mentioned definition is the connectedness of the intersection of two handlebodies. In our definition, we do not assume the connectedness. 
	\end{enumerate}
	\end{Rem}
	
	\subsection{Results on classifications}
	
	Our first result is the classification of the decompositions of the 3-sphere and lens spaces. In this paper, $D^2$, $D$, and $D_i$ denote a two-dimensional disk, $A$ and $A_i$ denote an annulus, $P$ denotes a thrice-punctured two-dimensional sphere, and $T^{\circ}$ denotes a once-punctured torus. Let $X$ and $Y$ be  topological spaces. The notation $X\cong Y$ implies that there exists an orientation-preserving or orientation-reversing homeomorphism between $X$ and $Y$. A  closed, orientable 3-manifold is called a lens space if it admits a genus-one Heegaard splitting and it is not homeomorphic to the 3-sphere or $S^2\times S^1$. Let $V_1$ and $V_2$ be solid tori and $M=V_1\cup_\varphi V_2$ be a lens space, where $\varphi$ is an orientation-reversing homeomorphism from $\partial V_2$ to $\partial V_1$.  We suppose that $m_2$ is a meridian of $\partial V_2$.
	Then, choosing a fixed longitude and meridian, $l_1$ and $m_1$, respectively, of $\partial V_1$, we write $\varphi_\ast(m_2)=pl_1+qm_2$, where $p$ and $q$ are coprime integers and $\varphi_\ast$ is an isomorphism between $H_1(\partial V_2)$ and $H_1(\partial V_1)$. 
	Then, we denote $M=L(p, q)$. It is well known that $L(p, q)\cong L(-p, q)\cong L(p, -q)\cong L(-p, -q)$. Furthermore, lens spaces $L(p, q)$ and $L(p', q')$ are homeomorphic if and only if $p=p'$ and either $q\equiv\pm q'$ or $qq'\equiv \pm 1$ $(mod\, p)$. This has been shown by Brody in \cite{B}. Hence, we assume that  $p>0, q>0$, and $q<p$ in this paper.

	\begin{thm}\label{s3001}
			A type-$(0,0,1)$ decomposition of the 3-sphere and a lens space $M$ has exactly two branched loci and it satisfies $F_{12} \cong D_{1} \cup D_{2}$ and $F_{23} \cong F_{31} \cong A$. 
			In fact, the 3-sphere and lens spaces have a type-$(0, 0, 1)$ decomposition that satisfies $F_{12} \cong D_{1} \cup D_{2}$ and $F_{23} \cong F_{31} \cong A$.
			
			Furthermore, there is exactly one embedding of the branched surface of the type-$(0, 0, 1)$ decomposition up to ambient isotopy if $M$ is the 3-sphere or a lens space with \p. Otherwise,  there are exactly two embedding of the branched surface of the type-$(0, 0, 1)$ decomposition up to ambient isotopy.
	\end{thm}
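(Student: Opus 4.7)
The plan is to prove three assertions: the topological types of $F_{12}, F_{13}, F_{23}$ (together with $b = 2$), existence, and the ambient-isotopy classification. For the structure, since $\partial H_i = F_{ij} \cup F_{ik}$ is a closed orientable surface of genus $g_i$ with the two pieces meeting along the $b$ branched loci (each an $S^1$, hence of Euler characteristic $0$), Euler-characteristic additivity yields
\begin{align*}
\chi(F_{12}) + \chi(F_{13}) &= 2,\\
\chi(F_{12}) + \chi(F_{23}) &= 2,\\
\chi(F_{13}) + \chi(F_{23}) &= 0,
\end{align*}
so $\chi(F_{12}) = 2$ and $\chi(F_{13}) = \chi(F_{23}) = 0$. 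Since $F_{12}$ is a compact orientable subsurface of $\partial H_1 \cong S^2$ it is planar, and $\chi = 2$ forces $F_{12} = D_1 \cup D_2$; in particular $b = 2$, and each of $F_{13}, F_{23}$ is the complement in a sphere of two disks, hence an annulus. Existence comes from an explicit construction: starting from a genus-one Heegaard splitting $M = V_1 \cup V_2$, I would set $H_3 := V_1$ and cut $V_2$ along two parallel meridian disks to obtain the two 3-balls $H_1, H_2$; the two meridian curves divide the Heegaard torus into the annuli $F_{13}, F_{23}$.

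For the isotopy classification, the key observation is that $H_1 \cup_{F_{12}} H_2$ (two 3-balls glued along two disks) is itself a solid torus, so any type-$(0,0,1)$ decomposition refines a genus-one Heegaard splitting $M = H_3 \cup (H_1 \cup H_2)$. By Waldhausen's theorem \cite{W} for $S^3$ and Bonahon--Otal's theorem \cite{BO} for lens spaces, this splitting is unique up to ambient isotopy, so I may assume it is the standard one $V_1 \cup V_2$; after this reduction the branched surface is determined up to isotopy by the choice of which $V_i$ plays the role of $H_3$, since the two parallel meridian disks in the other solid torus are themselves unique up to isotopy. Thus there are at most two isotopy classes, corresponding to $H_3 = V_1$ versus $H_3 = V_2$.

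The hard part is to decide when these two classes coincide. A short calculation shows that the condition $(p-1)q \equiv \pm 1 \pmod{p}$ simplifies to $q \equiv \pm 1 \pmod{p}$, i.e.\ to $M \cong L(p,1)$. For $M = S^3$ and $M = L(p, 1) = S^3/(\mathbb{Z}/p)$ I would construct the required ambient isotopy from a path in $U(2) \subset SO(4)$ connecting the identity to the coordinate swap $(z_1, z_2) \mapsto (z_2, z_1)$; such a path lies in the centralizer of the diagonal $\mathbb{Z}/p$-action, so it descends to $L(p, 1)$ and swaps the two standard Heegaard solid tori. For the remaining lens spaces I would distinguish the two embeddings by the unoriented free homotopy class of the core of $H_3$ in $\pi_1(M) = \mathbb{Z}/p$: a direct computation on the Heegaard torus shows this class is $\{\pm 1\}$ when $H_3 = V_1$ and $\{\pm q^{-1}\}$ when $H_3 = V_2$, which coincide precisely when $q \equiv \pm 1 \pmod{p}$. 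The main obstacle lies in this last step: one must verify carefully that the $U(2)$-path indeed provides an ambient isotopy of $L(p,1)$ swapping the two Heegaard solid tori, and that once the Heegaard splitting has been standardized no further moduli survive beyond the swap, so that the free-homotopy invariant suffices to complete the classification.
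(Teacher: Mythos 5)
Your Euler-characteristic computation, your existence construction, and your reduction of the classification to the uniqueness of genus-one Heegaard splittings all match the paper. However, the structural part has a genuine gap: the assertion that $\chi(F_{12})=2$ together with planarity ``forces $F_{12}=D_1\cup D_2$'' is false as a purely surface-theoretic statement. A compact planar surface of Euler characteristic $2$ need only have \emph{at least} two disk components; it could also be, say, $D_1\cup D_2\cup A$ or $D_1\cup D_2\cup D_3\cup P$, and nothing in your argument excludes these (so in particular $b=2$ is not yet established). Excluding the extra components is exactly where the $3$-manifold topology must enter, and it genuinely uses primeness of $M$: the paper first shows each disk boundary is essential in $\partial H_3$ (an inessential one would split off an $S^2\times S^1$ summand, Lemma \ref{lem2}), then compresses $H_3$ along one disk to produce a type-$(0,0,0;b-1)$ decomposition of a connected summand homeomorphic to $S^3$, and invokes Proposition \ref{s3000} to conclude $b-1=1$, hence $b=2$ and $F_{12}\cong D_1\cup D_2$, $F_{13}\cong F_{23}\cong A$. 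Some argument of this kind is unavoidable.

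The classification part is essentially correct and its skeleton (standardize the Heegaard splitting, observe that a pair of disjoint meridian disks is unique up to isotopy, so everything reduces to whether $V_1$ and $V_2$ can be swapped) is the same as the paper's; the difference is in how the swap question is settled. The paper proves Lemma \ref{core} (the cores of $V_1$ and $V_2$ are isotopic iff $(p-1)q\equiv\pm1 \pmod p$) by a homology computation with torus knots via Lemma \ref{longi}, whereas you distinguish the two configurations by the unoriented free homotopy classes $\{\pm1\}$ and $\{\pm q^{-1}\}$ of the cores in $\pi_1(M)\cong\mathbb{Z}/p$, and identify them for $L(p,1)$ by a path in $U(2)$ descending to the quotient. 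Both routes are valid, and your simplification $(p-1)q\equiv\pm1\pmod p\iff q\equiv\pm1\pmod p\iff M\cong L(p,1)$ is correct; your version is arguably cleaner for the ``distinct'' direction. The only point to make explicit is that an ambient isotopy carrying one branched surface to the other must carry $H_3$ (the unique complementary region that is a solid torus) to $H_3'$, hence induces an isotopy of their cores, so the free-homotopy invariant really does obstruct.
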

	
The following theorem is a characterization of type-$(0, 1, 1)$ decompositions of the 3-sphere and lens spaces. 	
\begin{thm}\label{s3011}
A type-$(0,1,1)$ decomposition of the 3-sphere and a lens space  $M$ satisfies one of the following: 
			\begin{enumerate}
			\renewcommand{\theenumi}{(\arabic{enumi})}
				\item[(1)] it has exactly one branched locus and satisfies $F_{12} \cong F_{31} \cong D$ and $F_{23} \cong T^{\circ}$. 
				\item[(2)] it has exactly three branched loci and satisfies $F_{12} \cong F_{31} \cong D \cup A $ and $F_{23} \cong P$. 
			\end{enumerate}
			In fact, the 3-sphere and lens spaces admit each type-$(0, 1, 1)$ decomposition.
\end{thm}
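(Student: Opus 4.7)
The plan is to pin down the topological types of the intersection surfaces using Euler characteristics. From $\partial H_1\cong S^2$, $\partial H_2\cong T^2$, $\partial H_3\cong T^2$ together with $\partial H_i=F_{ij}\cup F_{ik}$, solving the resulting three identities yields $\chi(F_{12})=\chi(F_{31})=1$ and $\chi(F_{23})=-1$. Since each branched locus is simultaneously a boundary circle of $F_{12}$, $F_{23}$, and $F_{31}$, letting $b$ denote the number of branched loci, every $F_{ij}$ has exactly $b$ boundary components.

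Next I enumerate combinatorial possibilities. Since $F_{12}$ and $F_{31}$ embed in $S^2$ they are planar, and writing the component count as $k$, the equation $\chi=2k-b=1$ forces $b=2k-1$ to be odd. For $b=1$ only $F_{12}\cong D$ is possible, and for $b=3$ only $F_{12}\cong D\cup A$ is possible. For $F_{23}\subset T^2$ with $b$ boundary circles and $\chi=-1$, using that the total genus is at most one, the possibilities are $F_{23}\cong T^{\circ}$ when $b=1$, and $F_{23}\cong P$, a once-punctured torus together with an annulus, or a twice-punctured torus together with a disk when $b=3$. For $b=3$ the latter two possibilities are excluded by a detailed analysis of how a genus-one piece of $F_{23}$ must sit in $T^2=\partial H_2$ relative to $F_{12}\cong D\cup A$, combined with the constraint that the same surface also realizes $F_{23}\subset\partial H_3$; this leaves only $F_{23}\cong P$. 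Together this gives (1) when $b=1$ and (2) when $b=3$.

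The main obstacle is excluding $b\ge 5$. I plan an innermost-disk reduction on $\partial H_1=S^2$: the dual graph of the $b$ branched loci on $S^2$ is a tree with at least two leaves, so some branched locus $\gamma$ bounds an innermost disk $D'$ which is a component of $F_{12}$ or of $F_{31}$. Then $\gamma$ is null-homotopic on the corresponding torus $\partial H_2$ or $\partial H_3$. Using irreducibility of $M$ (being $S^3$ or $L(p,q)$), an ambient isotopy of the decomposition can be arranged that cancels $D'$ against the adjacent region on $\partial H_1$ across $\gamma$, strictly reducing $b$. Iterating this, combined with the parity constraint that $b$ is odd, forces $b\in\{1,3\}$.

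For existence, I exhibit explicit constructions in $S^3$ and each $L(p,q)$. Starting from the standard genus-one Heegaard splitting $M=V_1\cup V_2$ with Heegaard torus $T$, take a ball $H_1\subset M$ whose intersection with $T$ is either a single disk (for (1)) or the disjoint union of a disk and an annulus (for (2)); setting $H_2=V_1\setminus\mathrm{int}(H_1)$ and $H_3=V_2\setminus\mathrm{int}(H_1)$ produces a type-$(0,1,1;1)$ or type-$(0,1,1;3)$ decomposition whose intersection surfaces match (1) or (2), respectively.
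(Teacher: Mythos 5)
Your Euler-characteristic setup, the observation that each $F_{ij}$ has exactly $b$ boundary circles, the parity constraint $b=2k-1$, and the existence constructions all match the paper's approach in spirit. However, there are two genuine gaps at exactly the points where the real work happens. First, your exclusion of $b\ge 5$ does not work as stated: an ambient isotopy of $M$ carries the triple intersection $H_1\cap H_2\cap H_3$ to itself homeomorphically, so it cannot ``strictly reduce $b$'' --- the number of branched loci is an isotopy invariant of the decomposition. Moreover, the innermost disk $D'\subset F_{12}$ on $\partial H_1\cong S^2$ need not have boundary that is inessential in the torus $\partial H_3$ (in the legitimate $b=3$ case it is in fact essential, which is why that case survives), and if your cancellation move were valid it would apply equally at $b=3$ and destroy conclusion~(2). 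The paper's actual mechanism is different in kind: a disk component of $F_{12}$ with boundary inessential in $\partial H_3$ forces an $S^2\times S^1$ connected summand (Lemma~\ref{lem2}), while one with essential boundary is compressed to split off a lens space summand and produce a type-$(0,0,1;b-1)$ decomposition of the remaining $S^3$ summand (Lemma~\ref{b-1}); primeness of $M$ together with the already-proved Theorem~\ref{s3001} (which pins $b=2$ for type-$(0,0,1)$) then bounds $b$. Some appeal to the prime decomposition and to the previously established type-$(0,0,1)$ classification seems unavoidable here, and your proposal contains neither.

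Second, for $b=3$ you dismiss $F_{23}\cong T^{\circ}\cup A$ and $F_{23}\cong(\text{twice-punctured torus})\cup D$ by ``a detailed analysis of how the surfaces sit in $T^2$.'' This cannot succeed as a purely surface-level argument: for example, the decomposition of $\partial H_2$ into a chain $D_{12}\,|\,A_{23}\,|\,A_{12}\,|\,T^{\circ}_{23}$ glued along three circles is perfectly realizable on a torus, and the corresponding configuration on $\partial H_3$ is realizable as well, so nothing combinatorial rules it out. The exclusion is again three-dimensional: in that configuration $\partial D_{12}$ turns out to be inessential in $\partial H_3$, so Lemma~\ref{lem2} forces an $S^2\times S^1$ summand, contradicting $M$ being $S^3$ or a lens space; and a disk component of $F_{23}$ is killed the same way since every curve in $\partial H_1\cong S^2$ is inessential. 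You need to import these connected-sum lemmas (or reprove them) for the argument to close.
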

The following theorem is a classification of a type-$(0,1,1)$ decomposition of the 3-sphere and a lens space up to ambient isotopy. 
\begin{thm}\label{lensclass011}
Let $M$ be the 3-sphere or a lens space.
If $M$ is the 3-sphere or a lens space $L(p, q)$ with $p=2$,  the type-$(0, 1, 1)$ decompositions of $M$ can be classified up to ambient isotopy as follows.
\begin{enumerate}
\renewcommand{\theenumi}{(\arabic{enumi})}
\item[(1)] there is exactly one ambient isotopy class of the branched surface of the type-$(0, 1, 1)$ decomposition of conclusion (1) of Theorem \ref{s3011}.
\item[(2)]  there is exactly one ambient isotopy class of the branched surface of the type-$(0, 1, 1)$ decomposition of conclusion (2) of Theorem \ref{s3011}.
\end{enumerate}
On the other hand, if $M$ is a lens space $L(p, q)$ with $p\neq 2$, the type-$(0, 1, 1)$ decompositions of $M$ can be classified up to ambient isotopy as follows.
\begin{enumerate}
\item[(3)]   there is exactly one  isotopy class of the branched surface of the type-$(0, 1, 1)$ decomposition of conclusion (1) of Theorem \ref{s3011}.
\item[(4)]  there are exactly two isotopy classes  of the branched surface of the type-$(0, 1, 1)$ decomposition of conclusion (2) of Theorem \ref{s3011} if $M$ is $L(p, q)$ with \p. Otherwise,  there are exactly  four isotopy classes of the branched surface of the type-$(0, 1, 1)$  decomposition of conclusion (2) of Theorem \ref{s3011}.
\end{enumerate}
\end{thm}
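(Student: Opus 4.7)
The plan is to fix a standard Heegaard torus $\Sigma\subset M$ using the uniqueness theorems of Waldhausen (for $S^3$) and Bonahon--Otal (for lens spaces), and then to classify the remaining data of the branched surface up to ambient isotopies preserving $\Sigma$.

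For parts (1) and (3), I would first observe that attaching the ball $H_1$ to the solid torus $H_2$ along the single disk $F_{12}$ preserves the solid-torus type, so $(H_1\cup H_2, H_3)$ is a genus-one Heegaard splitting of $M$ with Heegaard torus $\Sigma=\partial H_3$. The single branched locus $c$ bounds the disk $F_{31}\subset\Sigma$, so $c$ is trivial on $\Sigma$, and $F_{12}$ is a properly embedded disk in $H_1\cup H_2$ with $\partial F_{12}=c$. Since all trivial simple closed curves on $T^2$ are ambient isotopic, and since any two disks in a solid torus with the same trivial boundary are isotopic rel boundary (by irreducibility), this gives exactly one ambient isotopy class.

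For parts (2) and (4), the same argument yields a genus-one Heegaard splitting $(H_1\cup H_2, H_3)$ with $\Sigma=\partial H_3$. An Euler-characteristic count on $\partial H_1=S^2$ forces the three branched loci $c_1, c_2, c_3$ to sit as nested circles alternating disk and annulus components of $F_{12}\cup F_{31}$, whence $c_3$ is trivial on $\Sigma$ while $c_1\parallel c_2$ are essential. Since $c_1=\partial D_{12}$ where $D_{12}\subset F_{12}$ is a compressing disk of $H_1\cup H_2$, the curves $c_1$ and $c_2$ lie in the meridian class of $H_1\cup H_2$ on $\Sigma$. The branched surface is then described by two discrete parameters: (i) which of the two annular components of $\Sigma\setminus(c_1\cup c_2)$ plays the role of $A_{31}\subset F_{31}$, and (ii) the isotopy class of the annulus $A_{12}\subset H_1\cup H_2$ with $\partial A_{12}=c_2\cup c_3$, the latter being parameterized by a winding number around the core of $H_1\cup H_2$.

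The final step is to quotient this configuration data by the subgroup of $\mathrm{MCG}(M)$ preserving $\Sigma$. The hyperelliptic involution of the genus-one Heegaard splitting is always available and simultaneously swaps the two annular components of $\Sigma\setminus(c_1\cup c_2)$ and negates the winding of $A_{12}$. A further swap-of-solid-tori symmetry exists precisely when $L(p,q)$ admits a self-homeomorphism exchanging the two handlebodies of its genus-one Heegaard splitting; after absorbing the standard identification $L(p,q)\cong L(p,-q)$, this is equivalent to $q\equiv\pm 1\pmod p$, i.e.\ \p. For $S^3$ and $L(2,1)=\mathbb{R}P^3$, additional isometries of the ambient manifold (the extended symmetries of $S^3$ and those of $\mathbb{R}P^3$) provide yet more identifications that collapse the winding invariant entirely. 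Carrying out this orbit computation yields the stated counts: one class for $S^3$ or $L(2,1)$, two classes for $L(p,q)$ with $p\neq 2$ and \p, and four classes for $L(p,q)$ with $p\neq 2$ and \np.

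The main obstacle will be this last orbit computation, in particular verifying that the winding invariant of $A_{12}$ collapses exactly as claimed under the available symmetries, and that the listed classes are pairwise non-isotopic. The distinction between classes would most likely be detected by a signed intersection invariant between $A_{12}$ and a disjoint meridian disk of $H_3$, combined with the mod-$p$ reduction of winding induced by isotopies of $c_3$ inside its annular component of $\Sigma\setminus(c_1\cup c_2)$.
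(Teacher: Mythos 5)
Your overall strategy --- fix the genus-one Heegaard torus via Waldhausen and Bonahon--Otal, then classify the residual pieces of the branched surface inside the complementary solid torus modulo the available symmetries --- is the same as the paper's (Propositions \ref{011iso1}, \ref{011iso2}, \ref{011iso3}), and your treatment of conclusions (1) and (3) matches Proposition \ref{011iso1}. For conclusions (2) and (4), however, there are two genuine gaps. First, your parameter (ii), a ``winding number'' of the annulus $A_{12}$ around the core of $H_1\cup H_2$, does not exist. Since $A_{12}$ is disjoint from the meridian disk $D_{12}$, it lies in the $3$-ball obtained by cutting $H_1\cup H_2$ along $D_{12}$, where it cuts off a ball and a solid torus; such an annulus is the boundary of a regular neighborhood of a trivial $1$-string tangle and hence is unique up to ambient isotopy (Lemmas \ref{ann} and \ref{ann2}). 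The only discrete choices are your parameter (i) --- which annular region of $\Sigma\setminus(c_1\cup c_2)$ contains $c_3$ --- and which side of $\Sigma$ is $H_3$. If a $\mathbb{Z}$-valued winding invariant really existed your counts could not close up to finite numbers, and your proposed mechanism for collapsing it is not substantiated.

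Second, and more seriously, you quotient by the wrong group. Classification up to \emph{ambient isotopy} requires identifying configurations only under self-homeomorphisms isotopic to the identity, not under the full subgroup of $\mathrm{MCG}(M)$ preserving $\Sigma$, and not under isometries. Your stated criterion --- ``$L(p,q)$ admits a self-homeomorphism exchanging the two handlebodies'' --- is governed by $q^2\equiv\pm1\pmod p$, which is strictly weaker than $(p-1)q\equiv\pm1\pmod p$ (equivalently $q\equiv\pm1\pmod p$): for $L(5,2)$ the map $\sigma_-$ swaps the two solid tori, yet their cores are not isotopic, so your criterion would undercount the classes there. The correct inputs are Lemma \ref{core} (the cores of $V_1$ and $V_2$ are isotopic in $M$ if and only if $(p-1)q\equiv\pm1\pmod p$) and Lemma \ref{diff} (the hyperelliptic involution of $\Sigma$ is realized by an ambient isotopy preserving each solid torus if and only if $M$ is $S^3$ or $p=2$); together with the uniqueness above these collapse the $2\times2$ configurations to exactly $1$, $2$, or $4$ classes. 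Finally, for a lens space only one of $H_1\cup H_2$, $H_1\cup H_3$ is a solid torus (Lemma \ref{heegaard011lens}), so ``the same argument'' from the single-disk case does not show $(H_1\cup H_2,H_3)$ is a Heegaard splitting; this must be argued separately and handled via the $2\leftrightarrow3$ symmetry.
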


A type-$(1, 1, 1)$ decomposition of the 3-sphere and a lens space is characterized as follows.
	\begin{thm}\label{thm 3}
		A type-$(1,1,1)$ decomposition of the 3-sphere and a lens space $M$ satisfies one of the following, where $\{i, j, k\}=\{1, 2, 3\}$.
			\begin{enumerate}
			\renewcommand{\theenumi}{(\arabic{enumi})}
				\item[(1)] 
					it has exactly two branched loci and satisfies $F_{ij} \cong F_{jk} \cong F_{ki} \cong A$.
				\item[(2)]  it has exactly two branched loci and satisfies
					$F_{ij}\cong D \cup T^{\circ}$ and $F_{jk} \cong F_{ki} \cong A $.
				\item[(3)]  it has exactly four branched loci and satisfies
					$F_{ij} \cong D \cup P$ and $F_{jk} \cong F_{ki} \cong A_{1} \cup A_{2}$.
				\item[(4)]  it has exactly four branched loci and satisfies
					$F_{ij} \cong A_{1} \cup A_{2}$ and $F_{jk} \cong F_{ki} \cong D \cup P$.
				\item[(5)]  it has exactly four branched loci and satisfies
					$F_{ij} \cong F_{jk} \cong F_{ki} \cong D \cup P$.
				\end{enumerate}
			Furthermore, the following holds. 
				\begin{enumerate}
				 \item[(6)] $M \cong L(4,1)$. In this case,  $M$ also  has a decomposition that satisfies $F_{12}\cong F_{13}\cong F_{23} \cong A\cup A$ and has four branched loci.
		\end{enumerate}
		In fact, if $M$ is not $L(4, 1)$, $M$ admits type-$(1, 1, 1)$ decompositions that satisfy (1), (2), (3), (4), and (5) above. In addition, if  $M\cong L(4, 1)$, $M$ admits type-$(1, 1, 1)$ decompositions that satisfy  (1), (2), (3), (4), (5), and (6) above.
		
		\end{thm}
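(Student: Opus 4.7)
The plan is to proceed in three stages: first analyze each torus $\partial H_i$ individually, then impose the compatibility constraints that arise from the branched loci being shared across all three tori, and finally construct the resulting decompositions and identify the 3-manifolds that arise.

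For the local analysis, each $\partial H_i\cong T^2$ decomposes as $F_{ij}\cup F_{ik}$ glued along the $b$ branched loci, which form a disjoint family of simple closed curves on $T^2$. Since $\chi(T^2)=0$ we have $\chi(F_{ij})+\chi(F_{ik})=0$, and the pieces must be two-colored so that $F_{ij}$ and $F_{ik}$ alternate across each branched locus. A disjoint family of essential scc's on $T^2$ must be mutually parallel, and any additional inessential curves must lie in the complement, so the enumeration of allowable pairs $(F_{ij},F_{ik})$ on a single torus is short: for $b=2$ one gets either two parallel annuli or $\{D\cup T^{\circ}, A\}$; for $b=4$ one gets, among other options, the pairs involving $D\cup P$ and $A\cup A$ that appear in (3)--(6). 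Some a priori pairs (for example $D\sqcup D$ versus $T^{\circ\circ}$) are ruled out by the alternating condition; others must be ruled out later by incompressibility arguments inside the solid tori $H_i, H_j, H_k$, using the standard fact that a boundary curve of $F_{ij}$ that is inessential on $\partial H_i$ but essential on $\partial H_j$ is inconsistent with both $H_i$ and $H_j$ being solid tori.

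Each branched locus contributes one boundary circle to each of $F_{12}, F_{23}, F_{31}$, so matching the per-torus lists subject to $|\partial F_{12}|=|\partial F_{23}|=|\partial F_{31}|=b$ narrows the global configuration down to exactly the five types (1)--(5), together with the $b=4$ all-annuli configuration (6). For the realization part, each of (1)--(5) can be constructed in any target $M$ by starting from a genus-one Heegaard splitting of $M$ and either attaching a properly embedded disk and annulus system to one solid torus (producing (1) or (2)), or iteratively splitting one of the solid tori along a properly embedded disk to increase the number of branched loci (producing (3), (4), (5)). For (6), the combinatorial gluing data (three solid tori glued cyclically along four parallel annuli whose cores are essential curves of a common slope on each torus) determines $M$ up to orientation; a computation of $H_1$ from this presentation yields $\mathbb{Z}/4\mathbb{Z}$, so $M\cong L(4,1)$, and a converse construction inside $L(4,1)$ exhibits such a decomposition.

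The main obstacle is case (6): extracting $M\cong L(4,1)$ from the cyclic four-annulus gluing requires tracking the slopes of the branched loci on all three tori simultaneously, and one must also show that no other lens space admits this particular configuration. A secondary subtlety is the step-one exclusion of configurations outside the six listed, where the key tool is the interplay between the two-coloring condition and the compressibility/incompressibility of the boundary curves of the $F_{ij}$ in the adjacent solid tori.
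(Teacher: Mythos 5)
Your outline shares the paper's starting point---Lemma \ref{lem1} forces $\chi(F_{12})=\chi(F_{13})=\chi(F_{23})=0$, and one then enumerates the admissible configurations and realizes each one---but the heart of the argument is the \emph{exclusion} of every other configuration, and the tools you propose for that step do not work. The paper's exclusions are global, not local: a disk component of $F_{ij}$ whose boundary is inessential in $\partial H_k$ forces an $S^2\times S^1$ summand (Lemma \ref{lem2}), while a disk component whose boundary is essential in the solid torus $H_k$ lets one split off a lens-space summand and pass to a type-$(g_i,g_j,0;b-1)$ decomposition (Lemma \ref{b-1}); feeding this into the already-established classifications of type-$(0,0,1)$ and type-$(0,1,1)$ decompositions (Theorems \ref{s3001} and \ref{s3011}) is what bounds the number of disk components, the number of annulus components, and the number $b$ of branched loci. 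Your proposed substitute---a two-coloring of each torus plus the ``standard fact'' that a boundary curve of $F_{ij}$ inessential on $\partial H_i$ but essential on $\partial H_j$ is inconsistent with both handlebodies being solid tori---is not a fact. In the type-$(0,1,1)$ decomposition of Theorem \ref{s3011}(2), the boundary of the disk component of $F_{12}$ is a branched locus, hence also a boundary curve of $F_{23}$; it is inessential in $\partial H_2$ (it bounds that very disk) and meridional, hence essential, in $\partial H_3$, with $H_2$ and $H_3$ both solid tori. The same phenomenon occurs inside cases (3)--(5) of the present theorem, so your exclusion principle would eliminate configurations you are required to keep. Likewise, boundary-count matching alone does not rule out, say, $F_{12}\cong F_{13}\cong F_{23}\cong A\cup A\cup A$ with six branched loci; the paper kills this in Claim \ref{claim2.1} either by the meridian-disk splitting above or by observing that the decomposition would induce a Seifert fibration whose base is neither $S^2$ nor $\mathbb{R}P^2$, contradicting Proposition \ref{Seifert1}.

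The second genuine gap is case (6). Three solid tori glued cyclically along two essential annuli apiece do \emph{not} determine $M$ up to orientation: the cores of the annuli may have any non-meridional slope in each $H_i$, and the resulting manifolds are exactly the Seifert fibrations over $\mathbb{R}P^2$ with no exceptional fiber---an infinite family containing $\mathbb{R}P^3\#\mathbb{R}P^3$ and prism manifolds. For all of them $|H_1|=4$, but $H_1\cong\mathbb{Z}/2\oplus\mathbb{Z}/2$ for half of them, so your assertion that ``a computation of $H_1$ yields $\mathbb{Z}/4$'' is false as stated; what saves the conclusion is the standing hypothesis that $M$ is a lens space, either via $|H_1(M)|=4$ (so $M\cong L(4,\pm1)\cong L(4,1)$) or, as the paper does, by recognizing the fibration over $\mathbb{R}P^2$ and quoting Proposition \ref{Seifert2} (Geiges--Lange) that the only lens space fibering over $\mathbb{R}P^2$ is $L(4,1)$. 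You would also need to dispose separately of the sub-case in which the annulus cores are meridional in some $H_i$, where no Seifert structure exists and one must instead split along a meridian disk. Until the exclusion step is rebuilt on the connected-sum lemmas and the lower-type classifications (or an equivalent global argument), the proposal does not establish the theorem.
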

	\begin{Rem}
		A type-$(1, 1, 1)$ decomposition of conclusion (1) whose branched loci are not meridional in the boundary of each handlebody is  derived from a Seifert fibered structure of the 3-sphere or a lens space, i.e., each handlebody is a fibered torus. 
		Furthermore,  the type-$(1, 1, 1)$ decomposition of Theorem \ref{thm 3} (1) whose boundary has meridional branched loci  has at most one handlebody.  
	\end{Rem}
	
	We also classify the type-$(1, 1, 1)$ decompositions of the 3-sphere and lens spaces that satisfy cases (2), (3), (4), and (5) of Theorem \ref{thm 3}. Theorem \ref{class111s3} deals with the classification of type-$(1, 1, 1)$ decompositions of the 3-sphere and Theorem \ref{class111} deals with the classification of type-$(1, 1, 1)$ decompositions of  lens spaces. 
	
		\begin{thm}\label{class111s3}
			There is exactly one ambient isotopy class of embeddings of  the branched surface of  a type-$(1, 1, 1)$ decomposition of the 3-sphere that satisfies one of conclusions (2), (3), (4), or (5) of Theorem \ref{thm 3}.
		\end{thm}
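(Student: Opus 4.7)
The plan is to treat each of cases (2), (3), (4), (5) of Theorem \ref{thm 3} in turn, reducing each to Waldhausen's uniqueness theorem for Heegaard splittings of $S^{3}$ \cite{W} together with standard disk-uniqueness results in handlebodies. The overall strategy is uniform: use a disk component of some $F_{ij}$ to compress the complementary handlebody union, conclude that union is a solid torus, invoke Waldhausen to standardize the resulting genus-$1$ Heegaard splitting, and then standardize the remaining surfaces inside the two solid tori.

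In each of the four cases, at least one $F_{ij}$ contains a disk component $D$, whose boundary is a branched locus $\gamma \subset \partial H_{k}$ for the third index $k$. Since every other component of $F_{ik}$ and $F_{jk}$ is an annulus, a pair of pants, or a once-punctured torus, the curve $\gamma$ is essential on the torus $\partial H_{k}$. Thus $D$ is a compressing disk for $\partial H_{k}$ inside the irreducible 3-manifold $H_{i}\cup H_{j}$, which forces $H_{i}\cup H_{j}$ to be a solid torus. Hence $(H_{k}, H_{i}\cup H_{j})$ is a genus-$1$ Heegaard splitting of $S^{3}$, and by \cite{W} we may standardize $H_{k}$ up to ambient isotopy; the parallel branched loci on $\partial H_{k}$, bounding meridian disks in $H_{i}\cup H_{j}$, are longitudes of $H_{k}$ and can then be put into standard position.

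Inside the standardized solid torus $H_{i}\cup H_{j}$, each disk component in the $F$'s is a meridian disk and is unique up to isotopy given its boundary; cutting along the disks yields a 3-ball (or a disjoint union of 3-balls) in which the remaining components of the $F$'s must be placed with prescribed boundary on the resulting spheres. In case (2), the remaining component is a once-punctured torus $T^{\circ}$ separating the 3-ball into two solid tori; capping off to $S^{3}$ realizes $T^{\circ}$, together with a spherical disk, as an embedded torus bounding two solid tori, hence an unknotted torus, and a second application of \cite{W} gives uniqueness. In cases (3) and (4), the remaining pieces are pairs of pants, which are planar surfaces in 3-balls with boundary on spheres and are standard by elementary arguments.

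The main obstacle is case (5), where every $F_{ij}\cong D\cup P$ and there are four branched loci. Here each pair $H_{i}\cup H_{j}$ is a solid torus, so we have three compressing disks, one from each $F$; a priori there are many combinatorial possibilities for how the four branched loci lie on each $\partial H_{i}$ and how the three pair-of-pants components fit together. The strategy is to standardize one Heegaard splitting $(H_{k}, H_{i}\cup H_{j})$ first, and then to show that the two further compressing disks (from $F_{ik}$ and $F_{jk}$) can be simultaneously isotoped inside the now-standard solid torus $H_{i}\cup H_{j}$ to a standard disjoint system of meridian disks. Once all three disks are in standard position, the three pair-of-pants pieces are uniquely determined by their boundaries on the resulting 3-balls. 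The main technical subtlety is controlling the disjoint disk systems simultaneously with respect to the ambient isotopy, which is the reason case (5) requires the most delicate analysis.
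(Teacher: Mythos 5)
Your skeleton --- locate a disk component of some $F_{ij}$, compress to see that the union of the other two handlebodies is a solid torus, standardize the resulting genus-one Heegaard splitting by Waldhausen, then pin down the remaining surface pieces inside the two solid tori --- is essentially the paper's reduction (Lemma \ref{heegaard2} plus Propositions \ref{s3thecase2} and \ref{s3thecase3}). The gap is in the step you dismiss as ``standard by elementary arguments.'' The placement of the remaining annulus and pair-of-pants components inside the standardized solid torus is where the entire content of the theorem lives, and it is \emph{not} unique up to isotopy of that solid torus. Concretely, in case (4) the two annuli of $F_{ij}$ sit in the solid torus $V=H_j\cup H_k$ cut off by the disk of $F_{jk}$, with boundary pattern a disk, a pair of pants and two annuli on $\partial V$; there are exactly two ambient isotopy classes of such embeddings in $V$, interchanged by the hyperelliptic involution of $V$ and by no isotopy of $V$ (this is the paper's Lemma \ref{punc_1}, and the analogous Lemmas \ref{ann2}, \ref{annuiso1}, \ref{annuiso2} for the other boundary patterns). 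After standardizing the Heegaard splitting you are therefore left with two candidate configurations, and your argument as written either outputs the count ``two'' or silently conflates them.

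What makes the answer ``one'' for $S^3$ is the additional fact that the hyperelliptic involution of the unknotted solid torus extends to an ambient isotopy of $S^3$ preserving both sides of the genus-one splitting --- the paper's Lemma \ref{diff}, which rests on the unknot being isotopic to its reverse. This is exactly the point at which $S^3$ differs from a general lens space: for $L(p,q)$ with $p\neq 2$ the involution does not extend, the two classes survive, and one gets the larger counts of Theorem \ref{class111}. Your proof cannot be correct without invoking something equivalent, since as written it would ``prove'' uniqueness for $L(3,1)$ as well, where the statement is false. Two smaller omissions: when comparing two decompositions you must also rule out the ambiguity of which side of the Heegaard splitting carries the distinguished solid torus (for $S^3$ this is absorbed by the two cores being isotopic, Lemma \ref{core}); and your assessment of difficulty is inverted --- case (5) is actually the tame one, since its boundary pattern forces a unique embedding of $D\cup P$ in the solid torus (Lemma \ref{punc_3}), whereas the hyperelliptic subtlety bites in cases (3) and (4).
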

		
\begin{thm}\label{class111}
		Let $M$ be a lens space $L(p, q)$.
If $M$ satisfies $p=2$,  type-$(1, 1, 1)$ decompositions of $M$ can be classified as follows.
			\begin{enumerate}
			\renewcommand{\theenumi}{(\arabic{enumi})}
			\item[(1)] there are exactly two isotopy classes of  the branched surface of a handlebody decomposition of conclusion (2) of Theorem \ref{thm 3}.
			\item[(2)] there are exactly two isotopy classes of  the branched surface of a handlebody decomposition of conclusion (3) of Theorem \ref{thm 3}.
			\item[(3)] there are exactly two isotopy classes of  the branched surface of a handlebody decomposition of conclusion (4) of Theorem \ref{thm 3}.
			\item[(4)] there is exactly one isotopy class of  the branched surface of a handlebody decomposition of conclusion (5) of Theorem \ref{thm 3}.
			\end{enumerate}
		On the other hand, if $M$ satisfies $p\neq 2$, type-$(1, 1, 1)$ decompositions of $M$ can be classified as follows.		
			\begin{enumerate}
			\item[(5)] there are exactly two isotopy classes of the branched surface of a decomposition of conclusion (2) of Theorem \ref{thm 3}  if $M$ satisfies \p. Otherwise,  there are exactly  four isotopy classes of the branched surface of a decomposition of conclusion (2) of Theorem \ref{thm 3}.
			\item[(6)] there are exactly three isotopy classes of the branched surface of a decomposition of conclusion (3) of Theorem \ref{thm 3} if $M$ satisfies \p. Otherwise,  there are exactly six isotopy classes of the branched surface of a decomposition of conclusion (3) of Theorem \ref{thm 3}.
			\item[(7)] there are exactly four isotopy classes of the branched surface of a decomposition of conclusion (4) of Theorem \ref{thm 3} if $M$ satisfies \p.  Otherwise, there are exactly eight isotopy classes of the branched surface of a decomposition of conclusion (4) of Theorem \ref{thm 3}.
			\item[(8)] there is exactly one isotopy class of the branched surface of a decomposition of conclusion (5) of Theorem \ref{thm 3}  if $M$ satisfies  \p. Otherwise,  there are exactly two ambient isotopy classes of the branched surface of a decomposition of conclusion (5) of Theorem \ref{thm 3}.
			\end{enumerate}
		\end{thm}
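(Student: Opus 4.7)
The plan is to handle each of conclusions (2), (3), (4), (5) of Theorem \ref{thm 3} separately, and within each to split off the cases $p=2$ and $p\neq 2$. The backbone is the observation that since each $H_i$ is a solid torus and the branched loci are disjoint simple closed curves on each $\partial H_i$, the embedding of the branched surface is determined, up to ambient isotopy, by the slope data of these loci on the three boundary tori, together with the combinatorial gluing data recording which piece of which $F_{ij}$ lies on which side. The topological types of the $F_{ij}$ prescribed by Theorem \ref{thm 3}, together with the requirement that disk components of $F_{ij}$ actually bound meridian disks in $H_i$ and $H_j$, substantially rigidify the configurations.

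For each of the four cases I would first enumerate the allowed slope data. In the most symmetric case, conclusion (5), on each $\partial H_i$ there sit four parallel essential simple closed curves of a common slope; the meridian-disk requirement forces two of the three slopes to be meridional (one in each of the two handlebodies adjacent to the disk of the relevant $F_{ij}$), and propagating this around the three handlebodies together with $M\cong L(p,q)$ pins the remaining slope to a $(p,q)$-curve. Analogous but more elaborate bookkeeping handles (2), (3), (4), where the extra annular or pair-of-pants components introduce additional but constrained slope choices.

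Next, on the set of allowed slope data I would quotient by the self-homeomorphism group of $M$. The relevant symmetries are: an orientation-reversing involution of $L(p,q)$ (always present); the self-homeomorphism realizing $L(p,q)\cong L(p,q')$ with $qq'\equiv\pm 1\pmod p$, which produces a nontrivial extra identification of slopes exactly when \p; and, for the fully symmetric conclusion (5), a cyclic permutation of $H_{1},H_{2},H_{3}$. The factor-of-two collapse in parts (5)--(8) under \p{} comes from the inversion automorphism; the case $p=2$ is special because then $q=1$ automatically satisfies the inversion condition and several further identifications happen, shrinking the counts to $2,2,2,1$ as in parts (1)--(4).

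The main obstacle will be case (6), the only case with an odd count, which forces a genuine $\mathbb{Z}/3$ stabilizer of some configurations coming from the permutation of the three handlebodies: one must show that in exactly one orbit this stabilizer acts trivially (absorbing a factor of three), while in the remaining orbits it acts freely. A closely related subtlety appears in case (7), where the asymmetry $F_{ij}\not\cong F_{jk}$ singles out one handlebody and interacts with the inversion automorphism to produce the largest count. To verify that the candidate classes produced by the enumeration are genuinely distinct rather than only conjecturally so, I would compute, for each class, the homology classes of the branched loci in $H_{1}(L(p,q))\cong \mathbb{Z}/p$ and their mutual intersection numbers on each boundary torus, which together form a complete invariant modulo the action of the symmetry group described above.
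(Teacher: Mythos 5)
Your overall strategy conflates two different equivalence relations, and this breaks the counts. The theorem classifies branched surfaces up to \emph{ambient isotopy}, so the only identifications you may use are those realized by diffeomorphisms of $M$ isotopic to the identity; quotienting ``by the self-homeomorphism group of $M$'' gives the coarser classification of pairs up to homeomorphism. Concretely, the doublings in parts (5)--(8) for $p\neq 2$ come precisely from pairs of configurations that \emph{are} related by a self-diffeomorphism of $M$ (the hyperelliptic involution $\tau$ of the Heegaard torus, extended to $M$) but are \emph{not} ambient isotopic, because $\tau$ is nontrivial in the diffeotopy group of $L(p,q)$ when $p\neq 2$ (Theorem \ref{Diff}, Lemma \ref{diff}). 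Your proposed complete invariant --- homology classes of the branched loci in $H_1(L(p,q))$ and intersection numbers on the boundary tori --- is invariant under $\tau$ (which acts by $-I$ on the torus and by a sign on $\mathbb{Z}/p$, while unoriented curves only carry classes up to sign), so it cannot separate these pairs and you would report half the correct count. For the same reason the ``orientation-reversing involution (always present)'' is not a legitimate identification, and in fact does not exist unless $q^2\equiv -1 \pmod p$. Finally, the extra identification governed by \p{} is not produced by the homeomorphism realizing $qq'\equiv\pm 1$: it comes from whether the two solid tori of the genus-one Heegaard splitting can be exchanged by an \emph{isotopy}, i.e.\ whether their cores are isotopic knots in $M$ (Lemma \ref{core}), which is the condition $q\equiv\pm 1\pmod p$ and differs from $q^2\equiv\pm 1\pmod p$ (e.g.\ for $L(5,2)$).

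Two further gaps. First, the assertion that slope data plus combinatorial side data determines the embedding up to ambient isotopy is exactly the hard content of the proof and cannot be taken for granted; the paper devotes Lemmas \ref{ann}, \ref{ann2}, \ref{annuiso1}, \ref{annuiso2} and \ref{punc}--\ref{punc_3} to showing that each relevant surface embeds in a solid torus with the prescribed boundary pattern in one or exactly two ways, the two being exchanged by the hyperelliptic involution. Second, your explanation of the odd count in part (6) via a $\mathbb{Z}/3$ stabilizer is incorrect: conclusion (3) of Theorem \ref{thm 3} has no cyclic symmetry of the three handlebodies (one handlebody meets the other two in $A_1\cup A_2$ on both sides, while the other two meet each other in $D\cup P$), and the count $3=2+1$ arises because, by Lemma \ref{heegaard}, exactly one $\partial H_i$ is a genus-one Heegaard surface, and the two sub-cases --- the distinguished handlebody versus one of the other two --- contribute $2$ and $1$ classes respectively. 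This case division according to which $\partial H_i$ is a Heegaard surface, followed by an analysis of the complementary solid torus, is the organizing principle of the paper's proof (Propositions \ref{thecase2}--\ref{thecase5}) and is absent from your plan.
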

		
		\subsection{Results on stabilizations of a handlebody decomposition}
 The stabilization of a Heegaard splitting  is an operation that  increases the genera of handlebodies. Koenig defined  the stabilizations of a trisection of a 3-manifold \cite{Koenig}. Similarly, a handlebody decomposition admits the following stabilizations. Koenig showed that any two trisections of the same 3-manifold are isotopic to each other after some stabilizations.  We show that the decompositions of the 3-sphere with handlebodies of genera at most one are stabilized from a type-$(0, 0, 0)$ decomposition. In addition, we show that the handlebody decompositions of  lens spaces of  genera at most one are stabilized from a type-$(0, 0, 1)$ decomposition except Theorem \ref{thm 3} (6).  First, we define the stabilizations of  handlebody decompositions.

	\begin{Def}[stabilization]
Let $M = H_{i} \cup H_{j} \cup H_{k}$ be a type-$(g_{i}, g_{j}, g_{k})$ decomposition.

\begin{enumerate}
\renewcommand{\theenumi}{(\arabic{enumi})}
\item[(1)] The following operation is called a {\it type-0 stabilization} (Figure \ref{typeii}).
We take two points on the interior of $F_{ij}$ and connect them by a properly embedded boundary-parallel arc $\alpha$ in $H_{i}$. 
Let $N(\alpha)$ be the regular neighborhood of $\alpha$ in $H_i$. 
Define a new handlebody decomposition $M = H'_{i} \cup H'_{j} \cup H'_{k}$ by $H'_{i} = H_{i} \setminus int(N(\alpha))$, $H'_{j} = H_{j}\cup N(\alpha)$, and $H'_{k}=H_{k}$. 
Then, the triple $(g_{i}, g_{j}, g_{k})$ is changed into $(g_{i}+1, g_{j}+1 , g_{k})$ and the number of components of branched loci is not changed by this operation. 

\item[(2)] The following operation is called a {\it type-1 stabilization} (Figure \ref{typeia}). 
We take two points on the branched loci and connect them by an arc $\alpha$ on $F_{jk}$. Let $N(\alpha)$ be the regular neighborhood of $\alpha$ in $M$. 
Define a new handlebody decomposition $M = H'_{i} \cup H'_{j} \cup H'_{k}$, where $H'_{i} = H_{i} \cup N(\alpha)$, $H'_{j} = H_{j}\setminus int(N(\alpha))$ and $H'_{k} = H_{k}\setminus int(N(\alpha))$. 
Then, the triple $(g_{i}, g_{j}, g_{k})$ is changed into $(g_{i}+1, g_{j}, g_{k})$ and the number of components of branched loci is changed by $1$.
Conversely, if there exists a non-separating disk $D_{i} \subset H_{i}$ whose boundary intersects the set of branched loci at exactly two points transversely, then $D_{i}$ can be canceled by an inverse operation of a type-1 stabilization. 
We call this operation a {\it type-1 destabilization}.


\end{enumerate}
\begin{figure}[httb]
\centering
\includegraphics[scale=0.8]{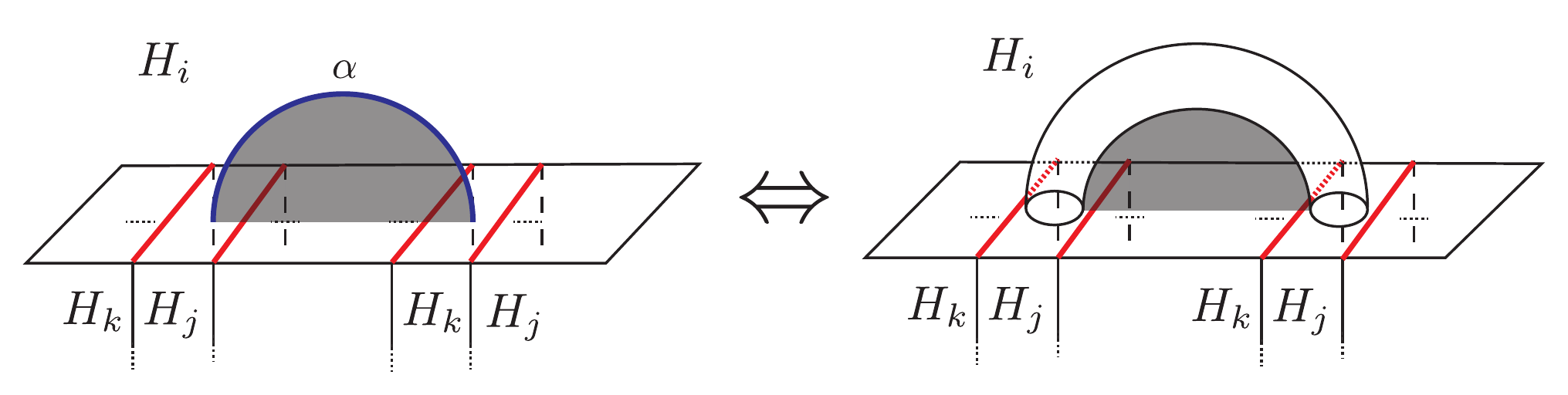}
\caption
{A type-0 stabilization along the arc $\alpha$. A triple $(g_{i}, g_{j}, g_{k})$ is changed into $(g_{i}+1, g_{j}+1 , g_{k})$.}
\label{typeii}
\end{figure}

\begin{figure}[httb]
\centering
\includegraphics[scale=0.8]{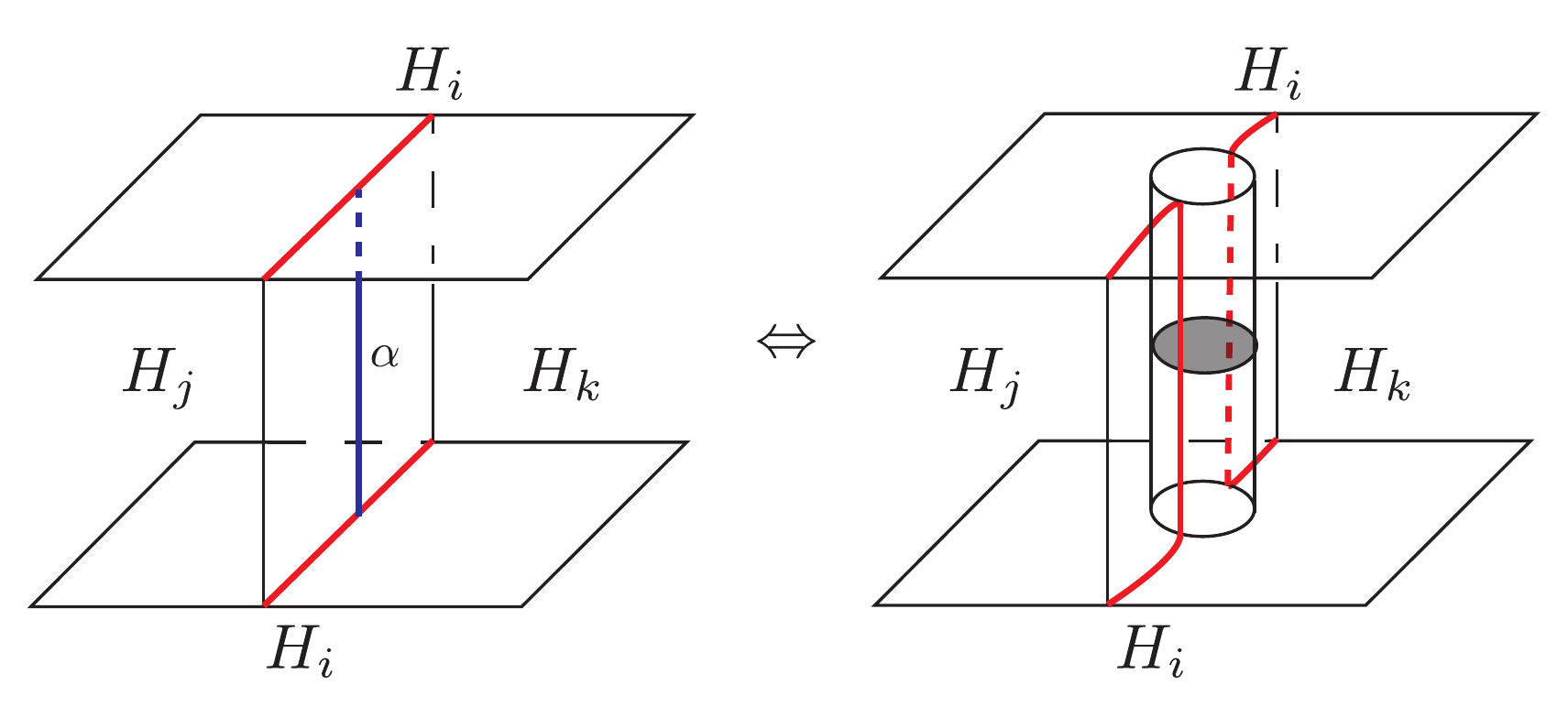}
\caption
{A type-1 stabilization along the arc $\alpha$. A triple $(g_{i}, g_{j}, g_{k})$ is changed into $(g_{i}+1, g_{j}, g_{k})$.}
\label{typeia}
\end{figure}


 If a handlebody decomposition can be obtained from another handlebody decomposition by a finite sequence of the stabilizations defined above, we say that it is stabilized.
\end{Def}

The result is the following theorem.
	\begin{thm}\label{stab_2}
	A type-$(0, 0, 1)$ decomposition of the 3-sphere and type-$(0, 1, 1)$ and  type-$(1, 1, 1)$ decompositions of the 3-sphere and lens spaces satisfy following.
	\begin{enumerate}
	\renewcommand{\theenumi}{(\arabic{enumi})}
			\item[(1)]  a type-$(0,0,1)$ decomposition of the 3-sphere is obtained from a type-$(0,0,0)$ decomposition by a type-1 stabilization.
			\item[(2)]  a type-$(0,1,1)$ decomposition of the 3-sphere and lens spaces is obtained from a type-$(0,0,1)$ decomposition by a type-1 stabilization.
			\item[(3)]  a type-$(1,1,1)$ decomposition of the 3-sphere and  lens spaces is obtained from a type-$(0,0,1)$ decomposition by a sequence of  type-1 stabilizations. On the other hand, the decomposition in Theorem \ref{thm 3} (6) is not stabilized. 
\end{enumerate}
	\end{thm}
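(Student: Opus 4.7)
The plan is to use the classification theorems (Theorems \ref{s3001}, \ref{s3011}, \ref{lensclass011}, \ref{thm 3}, \ref{class111s3}, \ref{class111}) as a bookkeeping device: for each positive assertion, I will exhibit explicit type-1 stabilizations whose output matches the face data of some target conclusion, and then argue that the choices of stabilization arcs, up to ambient isotopy, exhaust the counts of isotopy classes provided by those classifications. For the negative assertion in (3), I will rule out every possible destabilization of the exceptional decomposition.

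For (1), I start from the standard type-$(0,0,0)$ decomposition of $S^{3}$ (three balls sharing a common circle) and perform a type-1 stabilization along a short arc $\alpha$ on $F_{jk}$ with both endpoints on the branched circle. The resulting face types are those of Theorem \ref{s3001}, and the uniqueness statement in that theorem closes the case. For (2), I begin with a type-$(0,0,1)$ decomposition of $M$, so $F_{jk}$ is an annulus whose two boundary circles are the two branched loci. A type-1 stabilization along an arc $\alpha\subset F_{jk}$ that joins the two distinct boundary circles reduces the branched locus count to $1$ and produces the face types of Theorem \ref{s3011} (1); an arc with both endpoints on the same boundary circle raises the count to $3$ and produces Theorem \ref{s3011} (2). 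Matching Theorem \ref{lensclass011} then amounts to enumerating proper arcs on an annulus up to isotopy compatible with the symmetries of the underlying decomposition.

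The positive direction of (3) follows by applying two successive type-1 stabilizations to a type-$(0,0,1)$ decomposition. At each step the branched locus count changes by $\pm 1$, so the possible resulting counts are $0$, $2$, or $4$. A short case analysis over the choice of faces containing the two arcs, and whether each arc joins equal or distinct branched loci, exhibits explicit realizations of each of the five face patterns of Theorem \ref{thm 3} (1)--(5); once existence is in hand, the total counts of ambient isotopy classes are supplied by Theorems \ref{class111s3} and \ref{class111}.

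The hard part will be showing that the decomposition of Theorem \ref{thm 3} (6) is not stabilized. Assume for contradiction that it is obtained from some decomposition $\mathcal{D}'$ of $L(4,1)$ by a stabilization, and consider the inverse operation. A type-0 destabilization preserves the number of branched loci, so $\mathcal{D}'$ would be a type-$(0,0,1)$ decomposition with four branched loci, contradicting Theorem \ref{s3001}. A type-1 destabilization requires a non-separating properly embedded disk $D_{i}\subset H_{i}$ whose boundary meets the branched loci transversely in exactly two points. Since each $H_{i}$ is a solid torus, such a $D_{i}$ must be isotopic to a meridian disk, and $\partial D_{i}$ is a meridian curve on the torus $\partial H_{i}$. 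In case (6) every component of every $F_{ij}$ is an annulus, so each branched locus is an essential simple closed curve on $\partial H_{i}$ (none of them can bound a disk component of $F_{ij}$ or $F_{ik}$ since no such disk component exists). The four disjoint essential simple closed curves on the torus $\partial H_{i}$ are therefore mutually parallel; writing their common slope as $a_{i}\mu_{i}+b_{i}\lambda_{i}$ in meridian--longitude coordinates, the geometric intersection number of $\partial D_{i}$ with the four branched loci equals $4|b_{i}|$, which is never $2$. Hence no type-1 destabilization exists either, completing the proof.
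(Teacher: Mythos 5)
Your treatment of the exceptional case, Theorem \ref{thm 3} (6), is correct and is essentially the paper's argument made more precise: the paper's Claim 5.5 likewise rules out a type-1 destabilization because all four branched loci are essential (your $4|b_i|$ computation is a cleaner justification), and rules out a type-0 destabilization by the resulting face types, where your ``type-$(0,0,1;4)$ contradicts Theorem \ref{s3001}'' version is equally valid. Note that to conclude ``not stabilized'' you should say explicitly that the \emph{last} move of any stabilizing sequence would have to be a type-0 or a type-1 stabilization, and that being the output of a type-1 stabilization forces the existence of the cocore disk meeting the branched loci in two points; you implicitly do this, and it is fine.

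The positive assertions are where your proposal diverges from the paper and where there is a genuine gap. The paper proves (1)--(3) by \emph{destabilizing}: given an arbitrary decomposition of the stated type, it exhibits a meridian disk of some handlebody whose boundary meets the branched loci in exactly two points (via Claims 5.1--5.4, using that certain pairs of branched loci form a genus-one Heegaard diagram, or that one $H_i$ is a neighborhood of a regular fiber), and then performs the inverse of a type-1 stabilization. This handles every decomposition directly. Your route instead performs stabilizations on the small decompositions and then tries to argue that the outputs \emph{exhaust} all ambient isotopy classes listed in Theorems \ref{lensclass011}, \ref{class111s3}, and \ref{class111}. That exhaustion step is the entire content of the claim and you do not carry it out: matching \emph{face types} is not enough, since for example conclusion (4) of Theorem \ref{thm 3} has up to eight isotopy classes in a lens space with \np, so you would need to produce eight stabilization outcomes and prove them pairwise non-isotopic in $M$ (which requires exactly the hyperelliptic-involution and core-isotopy arguments of Lemmas \ref{diff} and \ref{core} all over again), before the count in Theorem \ref{class111} lets you conclude that every class is hit. ``Enumerating proper arcs on an annulus up to isotopy compatible with the symmetries of the underlying decomposition'' is a statement of the problem, not a solution to it; in particular an arc class on $F_{jk}$ is only determined up to isotopy in the \emph{complement} of the decomposition's symmetries, and distinct arcs can yield ambient-isotopic branched surfaces. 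Unless you either complete this enumeration-and-distinction argument or switch to the paper's destabilization strategy, parts (1)--(3) (positive direction) are not proved.
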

	
	 The remainder of the paper is organized as follows. In Section 3, we review some known results. In Section 4, we classify  handlebody decompositions of the 3-sphere and lens spaces.  In Section 5, we show Theorem \ref{stab_2}.

\section{Known results on  handlebody decompositions}
Gomez-Larra\~naga studied handlebody decompositions in \cite{G}. He characterized 3-manifolds that admit decompositions with handlebodies of genera at most one.  Let $\mathbb B$ be a connected sum of a finite number of  $S^2$-bundles over $S^1$'s,  let $\mathbb L$ and $\mathbb L_i$ be lens spaces, and let $\mathbb S(3)$ be a Seifert manifold  with at most three exceptional fibers. He showed the following theorem in \cite{G}.
\begin{prop}[\cite{G}]\label{GTh}Let $M$ be a closed 3-manifold.
\begin{enumerate}
\renewcommand{\theenumi}{(\arabic{enumi})}
\item[(1)] 
$M$ has a type-$(0,0,0)$ decomposition if and only if $M$ is homeomorphic to $\mathbb B$.
\item[(2)] 
$M$ has a type-$(0,0,1)$ decomposition if and only if $M$ is homeomorphic to $\mathbb B \# \mathbb L$.
\item[(3)] 
$M$ has a type-$(0,1,1)$ decomposition if and only if $M$ is homeomorphic to $\mathbb B \# \mathbb L$ or $\mathbb B \# \mathbb L_{1} \# \mathbb L_{2}$.
\item[(4)] 
$M$ has a type-$(1,1,1)$ decomposition if and only if $M$ is homeomorphic to $\mathbb B \# \mathbb L$ or $\mathbb B \# \mathbb L_{1} \# \mathbb L_{2}$ or $\mathbb B \# \mathbb L_{1} \# \mathbb L_{2} \# \mathbb L_{3} $ or $\mathbb B \# \mathbb S(3)$.
\end{enumerate}

\end{prop}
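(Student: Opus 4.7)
The plan is to prove each of the two implications in each of the four statements, treating the ``if'' direction constructively and the ``only if'' direction via reduction. Throughout I would freely use the fact that a genus-$0$ handlebody is a $3$-ball and a genus-$1$ handlebody is a solid torus.

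\textbf{Constructive direction (``if'').} First I would exhibit canonical decompositions of the basic pieces. For $S^2\times S^1$ a type-$(0,0,0)$ decomposition is built by slicing it along three meridional $2$-spheres; for a lens space $L(p,q)$ a type-$(0,0,1)$ decomposition comes from its genus-one Heegaard splitting after cutting one of the solid tori into two balls along a meridian disk; for $\mathbb{S}(3)$ a type-$(1,1,1)$ decomposition is obtained from the Seifert fibration by taking a fibered neighbourhood of each exceptional fiber. Once these building blocks are in hand, the remaining constructions are produced by the connect-sum amalgamation: if $N$ admits a type-$(g_1,g_2,g_3)$ decomposition and $N'$ admits a type-$(0,0,0)$ decomposition, then performing the connect sum inside the interior of a single handlebody $H_i$ of $N$ and a single ball of $N'$ yields a type-$(g_1,g_2,g_3)$ decomposition of $N\# N'$. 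Applying this repeatedly absorbs any $\mathbb{B}$-factor into one of the handlebodies, producing the four asserted families.

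\textbf{Classification direction (``only if'').} Fix a handlebody decomposition $M=H_1\cup H_2\cup H_3$ with $g_i\le 1$. Write $N_{ij}:=H_i\cup H_j$, which is a compact $3$-manifold with $\partial N_{ij}=F_{ik}\cup F_{jk}=\partial H_k$. The key observation is that $N_{ij}$ carries a Heegaard-like decomposition into the two handlebodies $H_i,H_j$ glued along the planar-or-once-punctured-torus surface $F_{ij}$. I would then argue that $N_{ij}$ is a punctured connect sum of copies of $S^2\times S^1$ and at most $g_i+g_j$ lens spaces, by compressing $F_{ij}$ within $H_i\cup H_j$ and invoking the classification of low-genus Heegaard splittings and compression bodies. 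Finally, filling the boundary $\partial H_k$ back in with $H_k$ either caps a sphere with a ball (contributing nothing), caps a sphere with a solid torus (contributing an $S^2\times S^1$ factor that can be absorbed into $\mathbb{B}$), or attaches a solid torus along a torus boundary (contributing at most one further lens-space or Seifert summand).

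\textbf{Main obstacle.} The essential difficulty is Case~(4): the appearance of a Seifert fibered summand $\mathbb{S}(3)$ that cannot be detected from any pairwise Heegaard substructure alone. When all three handlebodies are solid tori, the branched loci sit as simple closed curves on each of the three boundary tori $\partial H_i$, producing three slope triples $(\alpha_i,\beta_i)\in H_1(\partial H_i)$. The plan is to show that either one of these slopes is meridional (in which case that handlebody can be compressed and the decomposition reduces to one covered by Cases (1)--(3), yielding a lens-space connect-sum), or else all three slopes are essential, in which case the three solid tori play the role of fibered neighbourhoods of exceptional fibers and one can directly reconstruct a Seifert fibration of $M$ over $S^2$ with at most three exceptional fibers, placing $M$ in the family $\mathbb{B}\#\mathbb{S}(3)$. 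Verifying this dichotomy rigorously, and in particular ruling out intermediate possibilities, is where the main work of the argument lies.
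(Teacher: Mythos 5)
This proposition is not proved in the paper at all: it is quoted verbatim from G\'omez-Larra\~naga \cite{G}, and the authors only record the two tools he used, namely the Euler characteristic identity $\chi(F_{ij})=\tfrac12(\chi(\partial H_i)+\chi(\partial H_j)-\chi(\partial H_k))$ (Lemma \ref{lem1}) and the splitting lemma for disk components with inessential boundary (Lemma \ref{lem2}). So your attempt has to be judged on its own merits, and it contains several concrete errors. First, in the constructive direction, slicing $S^2\times S^1$ along three fibered $2$-spheres does not give a type-$(0,0,0)$ decomposition: the three pieces are copies of $S^2\times I$, not $3$-balls, and the pairwise intersections are closed surfaces, violating condition (1) of the definition (the correct model has each $F_{ij}\cong D\cup A$, forced by $\chi(F_{ij})=1$). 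Second, your amalgamation step is wrong as stated: removing a ball from the \emph{interior} of $H_i\subset N$ and from the interior of a ball of $N'$ and gluing along the resulting spheres produces a piece whose boundary is the disjoint union $\partial H_i\sqcup\partial B_1$, which is not a handlebody. The connected sum must be performed at a point of the branched locus, so that each new piece is a boundary connected sum $H_i\natural B_i'$ and remains a handlebody of the same genus.

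In the classification direction the central structural claim is also false in general: when $F_{ij}$ contains an annulus that is essential in both boundary tori, $H_i\cup_{F_{ij}}H_j$ is a Seifert fibered piece over a disk with up to two exceptional fibers, not a punctured connected sum of lens spaces and copies of $S^2\times S^1$; this is exactly the configuration responsible for the $\mathbb B\#\mathbb S(3)$ case, so the claim cannot be used as the engine of the proof. Moreover, your final dichotomy for case (4) (``each slope is meridional or essential'') omits the third possibility that a branched locus is \emph{inessential} in the boundary torus of some handlebody; this is precisely the situation handled by Lemma \ref{lem2}, which produces the $S^2$-bundle summands $\mathbb B$, and the related case of a disk component of $F_{ij}$ with boundary essential in $\partial H_k$, which produces lens space summands (cf.\ Lemma \ref{b-1}). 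A workable proof must begin from the Euler characteristic identity to enumerate the finitely many possible homeomorphism types of $F_{12}\cup F_{13}\cup F_{23}$, and then treat each branched locus according to this trichotomy; without that case analysis the argument does not close.
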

To show this proposition, Gomez-Larra\~naga applied the following lemmas. In this paper, we use these lemmas frequently.
\begin{lem}[\cite{G}, lemma 1]\label{lem1}
				Let $M=H_1\cup H_2\cup H_3$ be a handlebody decomposition of a 3-manifold $M$.
				For $\{i,j,k\}=\{1,2,3\}$, we have
				$\chi(F_{ij})=\frac{1}{2}(\chi (\partial H_i)+\chi(\partial H_j)-\chi(\partial H_k))$, where $\chi(X)$ denotes the Euler characteristic of $X$.
		\end{lem}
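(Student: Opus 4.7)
The plan is to set up three linear equations among the Euler characteristics of the pieces $F_{ij}$ by decomposing the boundary of each handlebody, and then invert the resulting $3\times 3$ system.

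First I would note that for each index $i\in\{1,2,3\}$, the closed surface $\partial H_i$ decomposes as $\partial H_i = F_{ij}\cup F_{ik}$ where $\{i,j,k\}=\{1,2,3\}$. By definition (1) of a handlebody decomposition, $F_{ij}$ and $F_{ik}$ are compact surfaces whose interiors are disjoint, and their intersection $F_{ij}\cap F_{ik}$ lies in the triple intersection $H_1\cap H_2\cap H_3$, which by (2) is a disjoint union of simple closed curves (the branched loci lying on $\partial H_i$). Hence $F_{ij}\cap F_{ik}$ is a disjoint union of circles and so has Euler characteristic zero.

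Applying inclusion–exclusion for Euler characteristics, which is valid here since $\partial H_i$ is obtained from $F_{ij}\sqcup F_{ik}$ by identifying circles along circles, I obtain
\begin{equation*}
\chi(\partial H_i)=\chi(F_{ij})+\chi(F_{ik})\qquad\text{for each }\{i,j,k\}=\{1,2,3\}.
\end{equation*}
Writing these three equations out, adding the ones for $\partial H_i$ and $\partial H_j$, and subtracting the one for $\partial H_k$, the terms $\chi(F_{ik})$ and $\chi(F_{jk})$ cancel and the term $\chi(F_{ij})$ appears twice, yielding
\begin{equation*}
\chi(\partial H_i)+\chi(\partial H_j)-\chi(\partial H_k)=2\chi(F_{ij}),
\end{equation*}
which rearranges to the claimed identity.

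There is no real obstacle in this argument; the only point requiring care is verifying that the overlap $F_{ij}\cap F_{ik}$ is indeed one-dimensional (a union of circles) rather than containing two-dimensional pieces, but this is exactly what conditions (1) and (2) of the definition guarantee, since any two-dimensional overlap would force $F_{ij}$ and $F_{ik}$ to share interior points and hence contradict the decomposition of $\partial H_i$ into the two surface pieces along their common boundary.
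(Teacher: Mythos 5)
Your proof is correct, and it is the standard argument for this fact (the paper itself quotes the lemma from G\'omez-Larra\~naga's paper without reproducing a proof): writing $\partial H_i=F_{ij}\cup F_{ik}$ with overlap a union of branched loci of Euler characteristic zero gives $\chi(\partial H_i)=\chi(F_{ij})+\chi(F_{ik})$, and solving the resulting linear system yields the identity. No gaps.
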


		\begin{lem}[\cite{G}, lemma 3]\label{lem2}
				Suppose that $F_{12}$ has at least two connected components and one of these is a disk $D$ such that $\partial D$ is inessential in $\partial H_{3}$. Then, $M = M' \# \mathbb{B}$, where $M' = H'_{1}\cup H'_{2}\cup H'_{3}$ with $H'_{i} \cong H_{i}$.
		\end{lem}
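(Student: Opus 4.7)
The plan is to form a 2-sphere in $M$ out of the disk $D$ and a disk in $\partial H_{3}$, and then to use this sphere to split off a connect summand. By the inessentiality hypothesis, there is a disk $D_{3}\subset\partial H_{3}$ with $\partial D_{3}=\partial D$. I form $S=D\cup D_{3}$, an embedded 2-sphere in $M$ with a seam along the branch locus $\partial D$. A local analysis near this seam, using the trivalent book structure in which the three pages $F_{12}, F_{13}, F_{23}$ meet along $\partial D$, shows that one side of $S$ in a small neighborhood of the seam lies in the wedge of a single handlebody --- say $H_{1}$ --- while the other side lies in the wedges $H_{2}\cup H_{3}$.

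Next I would perform sphere surgery on $M$ along $S$. If $S$ separates $M$, I obtain $M\cong M_{1}\# M_{2}$; if $S$ is non-separating, I obtain $M\cong M^{*}\#(S^{2}\times S^{1})$. In either case I cap the resulting 2-sphere boundaries with 3-balls and examine the induced decompositions. Because $D\subset F_{12}$ and $D_{3}\subset\partial H_{3}$, the three original handlebodies are cut by $S$ in a controlled way, and attaching the capping balls amounts to gluing 3-balls to the remaining pieces of each $H_{i}$ along disks in their boundaries. This operation preserves the handlebody type and the genus, so each component on either side of $S$ inherits a handlebody decomposition whose handlebodies have the same genera as the original $H_{i}$'s.

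The third step is to identify the two factors. On the "small" side (containing the wedge of $H_{1}$ near $\partial D$), the induced handlebody decomposition is degenerate enough that the resulting factor is either $S^{3}$ or a sum of $S^{2}\times S^{1}$'s, contributing to $\mathbb{B}$. If the interior of $D_{3}$ contains further branch loci, I would apply an innermost-disk argument inside $D_{3}$, replacing $\partial D$ with an innermost branch locus bounding an innermost subdisk of $D_{3}$, and iterate the splitting-off of $S^{2}\times S^{1}$ (or trivial $S^{3}$) summands. The hypothesis that $F_{12}$ has at least two components is used to guarantee that after the surgery the remaining handlebody decomposition $M' = H_{1}'\cup H_{2}'\cup H_{3}'$ is still genuine (i.e., no handlebody has been annihilated), yielding $M = M'\#\mathbb{B}$ with $H_{i}'\cong H_{i}$.

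The main obstacle will be confirming that the handlebody genera are unchanged by the sphere surgery. The underlying fact is that attaching a 3-ball to a handlebody along a disk in its boundary yields a homeomorphic handlebody --- a standard move --- but one must track carefully how $S$ intersects each $H_{i}$, which is where the innermost-disk reduction on $D_{3}$ is essential: it ensures that the pieces cut off are elementary enough (balls or $S^{2}\times S^{1}$ factors) that the remaining pieces retain their original handlebody structure.
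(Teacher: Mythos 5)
First, note that the paper does not actually prove this lemma; it is imported verbatim from G\'omez-Larra\~naga's paper \cite{G} as Lemma 3, so there is no in-paper argument to compare against. Judged on its own, your proposal starts correctly (forming $S=D\cup D_3$ and analyzing the trivalent seam along $\partial D$), but it has two genuine gaps. The first is that you never determine whether $S$ is essential, and this is the whole content of the lemma: the hypothesis that $F_{12}$ has a second component $G$ is exactly what forces $S$ to be \emph{non-separating}. Indeed, $S$ lies entirely inside the branched surface, and in $M\setminus S$ the interiors of $H_1$ and $H_2$ (which sit on opposite sides of $S$ along $\mathrm{int}(D)$) are joined through $\mathrm{int}(G)$, while $\mathrm{int}(H_3)$ is joined to them through $\partial H_3\setminus D_3\neq\emptyset$. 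Hence $M\setminus S$ is connected, $S$ is non-separating, and the $S^2$-bundle summand appears; without the second component this fails (e.g.\ the type-$(0,0,0)$ decomposition of $S^3$ has $F_{12}=D$ with $\partial D$ inessential in $\partial H_3$, yet $S^3$ has no such summand). You instead leave the separating case open and assign the two-component hypothesis a secondary role ("no handlebody is annihilated"), which misses the mechanism.

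The second gap is the assertion that capping the surgered sphere "amounts to gluing 3-balls to the remaining pieces of each $H_i$ along disks in their boundaries," so that handlebody types are preserved. This is false in the situation at hand. One can show that $\mathrm{int}(D_3)$ must contain branched loci (otherwise $D\cup D_3$ would be a closed subsurface of the connected surface $\partial H_1$ or $\partial H_2$, forcing $F_{12}=D$ and contradicting the hypothesis). Consequently, while the capping ball on the $H_2\cup H_3$ side meets $H_2$ along the disk $D$ and $H_3$ along the disk $D_3$ and can indeed be absorbed into $H_3$, the capping ball on the other side meets $H_1$ and $H_2$ along complementary planar subsurfaces of its boundary sphere that are in general disconnected and not disks (e.g.\ a ball attached along an annulus is a 2-handle attachment and does not return a homeomorphic handlebody). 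Handling this requires splitting off \emph{further} $S^2$-bundle summands, one for each level of nesting of the branched loci inside $D_3$ --- which is precisely why the conclusion allows $\mathbb{B}$ to be a connected sum of several $S^2$-bundles rather than a single $S^2\times S^1$. Your closing remark about an innermost-disk iteration on $D_3$ points in the right direction, but it is only announced, not carried out, and as written the central step of the proof does not go through.
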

Furthermore, 3-manifolds with the following handlebody decompositions are characterized.
\begin{prop}[\cite{G2}]
Let $M$ be a closed 3-manifold and $M_n$ be a closed 3-manifold that admits a genus-$n$ Heegaard splitting.
\begin{enumerate}
\renewcommand{\theenumi}{(\arabic{enumi})}
\item[(1)] 
$M$ has a type-$(0,0,n)$ decomposition if and only if $M$ is homeomorphic to $\mathbb B\# M_n$.
\item[(2)] 
$M$ has a type-$(0,1,n)$ decomposition if and only if $M$ is homeomorphic to $\mathbb B \# \mathbb L\# M_n$.
\end{enumerate}
\end{prop}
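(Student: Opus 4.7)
The plan is to generalize the proof of Proposition \ref{GTh}(2),(3) to arbitrary Heegaard genus $n$, treating each of (1) and (2) by a construction plus a reduction. For the ``if'' direction the construction uses Heegaard splittings together with local connect sums, while for the ``only if'' direction the connect-sum structure is extracted via Lemma \ref{lem1} and Lemma \ref{lem2}.

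For the construction (``if'' direction of (1)), start with the type-$(0,0,0)$ decomposition of $\mathbb B$ given by Proposition \ref{GTh}(1) and build a type-$(0,0,n)$ decomposition of $M_n$ from a genus-$n$ Heegaard splitting $M_n = V_1 \cup V_2$ as follows: choose a disjoint system of $n+1$ properly embedded disks in $V_1$ (one separating and $n$ non-separating meridional disks) that cuts $V_1$ into two $3$-balls $H_1, H_2$, and take $H_3 = V_2$. A local connect sum at a point of a branched locus of each piece matches the tripod structures and yields a type-$(0,0,n)$ decomposition of $\mathbb B \# M_n$. For (2) the same scheme applies after one further local connect sum with a type-$(0,1,1)$ decomposition of $\mathbb L$ from Theorem \ref{s3011}, producing the required triple $(0,1,n)$ by matching the ball factor of the $\mathbb L$-piece with a ball of the $(0,0,n)$-piece and absorbing the solid torus into $H_2$.

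For the reduction (``only if'' direction of (1)), Lemma \ref{lem1} gives $\chi(F_{12}) = n+1$ and $\chi(F_{13}) = \chi(F_{23}) = 1-n$; since $\partial H_1$ and $\partial H_2$ are $2$-spheres, every component of each $F_{ij}$ is planar. Iterate Lemma \ref{lem2}: each disk component of $F_{12}$ with boundary inessential in $\partial H_3$ splits off an $S^2$-bundle-over-$S^1$ summand. When no such disk remains, I claim the residual decomposition arises from a genus-$n$ Heegaard splitting: the subset $V := H_1 \cup H_2$, obtained by gluing two balls along the planar surface $F_{12}$, is a handlebody, and its boundary $F_{13} \cup F_{23}$ (glued along the branched loci) is a single closed surface of genus $n$, so that $M = V \cup H_3$ is the desired Heegaard splitting. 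Combined with the summands split off, $M \cong \mathbb B \# M_n$. The reduction for (2) is parallel: the solid torus $H_2$, after exhausting Lemma \ref{lem2}, attaches via an annulus whose framing records an extra lens space summand $\mathbb L$, and the remainder is a genus-$n$ Heegaard splitting.

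The main obstacle is the structural claim at the end of the reduction: once Lemma \ref{lem2} is exhausted, the union of two of the handlebodies should itself be a handlebody of the predicted genus with connected boundary. This requires showing that every remaining disk component of $F_{12}$ has essential boundary on $\partial H_3$ (so that no further reducing sphere escapes detection by Lemma \ref{lem2}) and then identifying $H_1 \cup H_2$ with a handlebody by a standard compression-body argument; the precise genus is then pinned down by a Mayer--Vietoris-style Euler characteristic count using Lemma \ref{lem1}. In case (2) an additional subtlety is the isolation of the lens space summand encoded by the framing of the annular components of $F_{12}$ and $F_{23}$ on $\partial H_2 = T^2$, which requires careful bookkeeping of the gluing data between $H_2$ and the other two handlebodies.
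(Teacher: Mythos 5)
The paper does not actually prove this proposition: it is quoted from \cite{G2}, where the classification of type-$(0,0,n)$ and type-$(0,1,n)$ decompositions is established, so there is no in-paper argument to compare yours against. Judged on its own terms, your ``if'' direction is essentially sound: a genus-$n$ handlebody is a regular neighborhood of a graph with two vertices and $n+1$ edges, and cutting along the $n+1$ disks dual to the edges yields two balls meeting exactly in those disks (your specific disk system --- ``one separating and $n$ non-separating meridional disks'' --- does not do this, since after cutting, a meridian disk would have $H_1$ on both of its sides, violating $H_1\cap H_2=\partial H_1\cap\partial H_2$; the dual-graph system fixes this). The connected sum with the type-$(0,0,0)$ decomposition of $\mathbb B$ at points of branched loci, and the Euler characteristic count $\chi(F_{12})=n+1$, $\chi(F_{13})=\chi(F_{23})=1-n$ via Lemma \ref{lem1}, are correct.

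The ``only if'' direction has a genuine gap exactly where you flag it, and that gap is the substance of the theorem rather than a deferrable verification. Lemma \ref{lem2} applies only while $F_{12}$ is disconnected and has a disk component whose boundary is inessential in $\partial H_3$, and it detects only $S^2$-bundle summands. Once it is exhausted, $F_{12}$ may still contain annuli, pairs of pants, or disks with boundary essential in the genus-$n$ surface $\partial H_3$; two balls glued along such a planar surface need not be a handlebody (two balls glued along a single annulus can already be $S^2\times I$), and essential disk components can contribute prime summands invisible to Lemma \ref{lem2}. Invoking ``a standard compression-body argument'' to identify $H_1\cup H_2$ with a handlebody presupposes irreducibility of that union, which is precisely what must be proved; the missing step is the accounting of all prime summands before one can apply the fact that a compact orientable irreducible $3$-manifold with free fundamental group and connected boundary is a handlebody. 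The problem is compounded in (2), where $F_{23}$ lies on the torus $\partial H_2$ and need not be planar, so the claim that the lens space summand is ``recorded by the framing of the annulus'' is an assertion, not an argument. As written, the proposal is a plausible outline of the strategy of \cite{G2}, not a proof.
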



\section{Classification of handlebody decompositions of the 3-sphere and lens spaces}

\subsection{Diffeotopy group of lens spaces}
In this section, we review the diffeotopy groups of lens spaces. The diffeotopy  group $\mathcal{D}(M)$ of a 3-manifold $M$ is the quotient of the diffeomorphism group Diff$(M)$ by its normal subgroup Diff$_0(M)$ of diffeomorphisms isotopic to the identity. 
The diffeotopy groups of lens spaces have been studied in \cite{H-R, B2}. 
To consider the embedding of branched surfaces of handlebody decompositions of the 3-sphere and lens spaces, we need the  diffeotopy  group of lens spaces.

Let $V_1\cup V_2$ be a genus-one Heegaard splitting of a lens space $L(p, q)$. 
We assume that $V_i=S^1\times D^2\subset \mathbb{C}\times \mathbb{C}$ for $i=1, 2$.
Then, we consider the following three diffeomorphisms preserving the Heegaard surface $\partial V_1=\partial V_2$ (see \cite{vs}).
\begin{enumerate}
\renewcommand{\theenumi}{(\arabic{enumi})}
\item[$\tau$]: an involution fixing each solid torus, defined as $\tau(u, v)=(\bar{u}, \bar{v})$ in each solid torus , where $\bar{z}$ denotes the complex conjugated of $z$;
\item[$\sigma_+$]: an involution exchanging $V_1$ and $V_2$, given by $\sigma_+:V_i\ni (u, v)\mapsto (u, v)\in V_j$ for $i\neq j$;
\item[$\sigma_-$]: a diffeomorphism exchanging $V_1$ and $V_2$, given by $\sigma_-: V_1\ni(u, v)\mapsto (\bar{u}, v)\in V_2$ and  $\sigma_-:  V_2\ni(u, v)\mapsto (u, \bar{v})\in V_1$.
\end{enumerate}
\begin{thm}[\cite{H-R, B2}]\label{Diff}
	The diffeotopy group of lens space $L(p, q)$ is isomorphic to the following:
	\begin{enumerate}
	\renewcommand{\theenumi}{(\arabic{enumi})}
	\item[(1)]  $\mathbb{Z}_2$, with generator $\sigma_-$,  if $p=2$;
	\item[(2)]  $\mathbb{Z}_2$, with generator $\tau$, if $q\equiv\pm 1$ mod $p$ and $p\neq 2$;
	\item[(3)]  $\mathbb{Z}_2\oplus \mathbb{Z}_2$, with generator $\tau$ and $\sigma_+$, if $q^2\equiv +1$ mod $p$ and $q\not\equiv \pm 1$ mod $p$;
	\item[(4)]  $\mathbb{Z}_4$, with generator $\sigma_-$, if $q^2\equiv -1$ mod $p$ and $p\neq 2$;
	\item[(5)]  $\mathbb{Z}_2$, with generator $\tau$, if $q^2\not\equiv \pm 1$ mod $p$.
	\end{enumerate}
\end{thm}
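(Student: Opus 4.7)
The plan is to compute $\mathcal{D}(L(p,q))$ by reducing the problem to an action on the Heegaard torus $T=\partial V_1=\partial V_2$. First I would invoke the Bonahon--Otal uniqueness of the genus-one Heegaard splitting of $L(p,q)$ to conclude that every self-diffeomorphism $f:L(p,q)\to L(p,q)$ is isotopic to one that preserves $T$ setwise. Such an $f$ either preserves each solid torus $V_i$ or exchanges $V_1$ and $V_2$; in either case it restricts on $T$ to a matrix $A\in GL(2,\mathbb{Z})$ in the basis $(l_1,m_1)$, and the assignment $[f]\mapsto A$ is well defined because any diffeomorphism of a solid torus that is the identity on the boundary is isotopic rel boundary to the identity.

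Next I would determine which matrices $A\in GL(2,\mathbb{Z})$ can actually appear. For an $f$ that preserves each $V_i$, the matrix $A$ must carry the meridian $m_1$ to $\pm m_1$, and the gluing condition $\varphi_\ast(m_2)=pl_1+qm_1$ imposes no further restriction once we work modulo isotopies extending across $V_2$. For an $f$ that swaps $V_1$ and $V_2$, $A$ must send $m_1$ to $\pm(pl_1+qm_1)$ and send $pl_1+qm_1$ to $\pm m_1$, and writing this out in coordinates yields the arithmetic congruence $q^2\equiv\pm 1\pmod p$ together with a sign that distinguishes the $+$ and $-$ cases.

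The cases in Theorem \ref{Diff} should then emerge as follows. The involution $\tau$ corresponds to $A=-I$ and always defines a nontrivial class, except when $p=2$, where $-I$ extends across $L(2,1)$ as an isotopy and is therefore trivial in $\mathcal{D}$; this accounts for the distinguished case (1). A swap realizing $\sigma_+$ exists precisely when $q^2\equiv +1\pmod p$, which splits into the subcase $q\equiv\pm 1\pmod p$ giving (2) and the subcase $q\not\equiv\pm 1\pmod p$ giving (3). A swap realizing $\sigma_-$ with $\sigma_-^{\,2}=\tau$ exists precisely when $q^2\equiv -1\pmod p$, producing the cyclic group of order $4$ in (4). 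When neither congruence holds we are left only with $\tau$, giving (5).

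The main obstacle will be the last step: verifying that the map $[f]\mapsto A$ is injective, so that the candidate generators $\tau$, $\sigma_+$, $\sigma_-$ are independent in $\mathcal{D}(L(p,q))$ and satisfy exactly the stated relations. For this I would use the fact that the mapping class group of a solid torus rel boundary is trivial: once an automorphism of $T$ extends over both sides, the extension is unique up to isotopy rel $T$, so distinct matrix classes yield distinct diffeotopy classes. Verifying that $\sigma_-^{\,2}$ is isotopic to $\tau$ (and not to the identity) in case (4), rather than merely conjugate to it, is the most delicate check, and would be done by computing the induced action on $T$ directly from the formulas for $\sigma_-$ given in the definitions preceding the theorem.
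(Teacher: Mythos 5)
This is a statement the paper imports verbatim from Hodgson--Rubinstein and Bonahon (\cite{H-R, B2}) and does not prove, so there is no in-paper argument to compare against; your proposal has to stand on its own as a proof of the cited classification. Its skeleton is the standard one and is correct as far as it goes: by uniqueness of the genus-one splitting every diffeomorphism can be isotoped to preserve the Heegaard torus $T$, the induced matrix on $H_1(T)$ must preserve $\{\pm m_1\}$ and $\{\pm \varphi_\ast(m_2)\}$ (possibly swapping them), and writing this out yields exactly the congruences $q\equiv\pm1$, $q^2\equiv\pm1 \pmod p$ that index the cases; the verification $\sigma_-^2=\tau$ from the explicit formulas is also fine.

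The genuine gap is the well-definedness and injectivity of $[f]\mapsto A$, which you defer to the end and then justify only by the triviality of the mapping class group of a solid torus rel boundary. That fact gives uniqueness of the \emph{extension} of a torus automorphism across each handlebody; it says nothing about isotopies of $M$ that move $T$, and it is precisely those isotopies that kill most matrix classes. Concretely, a Dehn twist of $T$ along $m_1$ extends over $V_1$ to a diffeomorphism isotopic to the identity of $M$, and likewise for $m_2$ over $V_2$, so infinitely many distinct matrices already represent the trivial diffeotopy class; worse, in case (2) the swap $\sigma_+$ induces a matrix carrying $m_1$ to $\pm(pl_1+qm_1)\neq\pm m_1$, yet the group is only $\mathbb{Z}_2$, so ``distinct matrix classes yield distinct diffeotopy classes'' is false as stated. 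What actually has to be proved is that after quotienting by the subgroup generated by these meridional twists the resulting map into $\mathcal{D}(L(p,q))$ is injective --- in particular that $\tau$ is \emph{not} isotopic to the identity when $p>2$ but \emph{is} when $p=2$. This is the real content of \cite{H-R, B2} and is established there by substantially different means (analysis of incompressible one-sided surfaces and of involutions), not by the formal argument you give. Without that input the proposal determines at most an upper bound on the list of candidate generators, not the isomorphism types claimed in (1)--(5).
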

\begin{Rem}
	The restriction $\tau |_{\partial V_1}$ is a hyperelliptic involution in the mapping class group of a torus $\partial V_1$.  
	We note that $\sigma_-$ is an orientation-reversing involution (see \cite{H-R, B2}).
\end{Rem}
The following lemma can easily be obtained from Theorem \ref{Diff}.
\begin{lem}\label{diff}
	Let $M$ be the 3-sphere or a lens space $L(p, q)$ with a genus-one Heegaard splitting $V_1\cup V_2$.
	$M$ is the 3-sphere or a lens space $L(p, q)$ with $p=2$ if and only if $M$ admits an ambient isotopy $F: M \times [0, 1]\to M$ that satisfies the following:
	\begin{enumerate}
	\renewcommand{\theenumi}{(\arabic{enumi})}
		\item[(1)]  $F(V_1, 1)=V_1$ and $F(V_2, 1)=V_2$.
		\item[(2)]  let $f_t(x)=F(x, t)$. Then, $f_1|_{\partial V_1}: \partial V_1\to \partial V_1$ is a hyperelliptic involution in a mapping class group of a torus $\partial V_1$.
	\end{enumerate}
\end{lem}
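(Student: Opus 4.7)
The plan is to apply Theorem \ref{Diff} together with the observation that the involution $\tau$ preserves the splitting $V_1\cup V_2$ and restricts on $\partial V_1$ to the hyperelliptic involution $(\theta,\phi)\mapsto(-\theta,-\phi)$. Hence the desired ambient isotopy $F$ exists precisely when $\tau$ is isotopic to the identity in $M$, i.e.\ when $[\tau]=0$ in $\mathcal{D}(M)$. Once this reformulation is in place, both directions follow from a case-by-case inspection of Theorem \ref{Diff}.

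For the \emph{if} direction, assume $M=S^3$ or $M=L(2,q)$. The mapping class group of $S^3$ is generated by an orientation-reversing reflection, while by Theorem \ref{Diff}(1) the diffeotopy group of $L(2,q)$ is $\mathbb{Z}_2$ generated by the orientation-reversing involution $\sigma_-$. Since $\tau$ is orientation-preserving, $[\tau]=0$ in either case, so there is an ambient isotopy from $\mathrm{id}_M$ to $\tau$; this isotopy satisfies (1) because $\tau(V_i)=V_i$ and (2) because $\tau|_{\partial V_1}$ is literally the hyperelliptic involution.

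For the converse, suppose such an $F$ exists and set $g:=f_1$. Then $g$ is isotopic to the identity in $M$, preserves the splitting, and $g|_{\partial V_1}$ lies in the same mapping class as $\tau|_{\partial V_1}$. Using an isotopy extension supported in a bicollar of $\partial V_1$ (which preserves both $V_1$ and $V_2$), I modify $g$ within its isotopy class so that $g|_{\partial V_1}=\tau|_{\partial V_1}$ on the nose. Then $h:=\tau^{-1}\circ g$ preserves the splitting and is the identity on $\partial V_1$. Since the mapping class group of a solid torus rel boundary is trivial, $h|_{V_i}$ is isotopic to $\mathrm{id}_{V_i}$ rel $\partial V_i$ for each $i$, and these isotopies glue to an ambient isotopy of $M$ between $h$ and $\mathrm{id}_M$. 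Thus $g=\tau\circ h$ is isotopic to $\tau$; combined with $g\simeq \mathrm{id}$ from the hypothesis, this gives $[\tau]=0$ in $\mathcal{D}(M)$.

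To finish I read off from Theorem \ref{Diff} the cases in which $[\tau]=0$. In cases (2), (3), and (5), $\tau$ is explicitly a generator of $\mathcal{D}(M)$, so $[\tau]\neq 0$. In case (4), a direct computation from the stated formulas gives $\sigma_-^2=\tau$, so $[\tau]$ is the unique order-two element of $\mathbb{Z}_4$, again nontrivial. Only case (1) and $M=S^3$ survive, giving the desired equivalence. The main obstacle I anticipate is the collar isotopy-extension argument together with the triviality of $\pi_0(\mathrm{Diff}(V_i,\partial V_i))$ that lets one replace $g$ by the standard model $\tau$ up to isotopy; once this replacement is justified, the remaining content of the lemma is a direct reading of Theorem \ref{Diff}.
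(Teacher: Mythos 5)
Your proposal is correct and follows essentially the same route as the paper: both reduce the existence of the ambient isotopy $F$ to the question of whether $\tau$ is trivial in $\mathcal{D}(M)$ and then read the answer off from Theorem \ref{Diff} (using that $\tau$ is orientation-preserving while $\sigma_-$ is orientation-reversing for $S^3$ and $p=2$). Your write-up is in fact more complete at the one step the paper merely asserts --- that $f_1$ is isotopic to $\tau$ --- via the collar argument and the connectedness of $\mathrm{Diff}(V_i\ \mathrm{rel}\ \partial V_i)$, and at case (4) of Theorem \ref{Diff}, where the identity $\sigma_-^2=\tau$ is needed to see that $[\tau]$ is nontrivial.
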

\begin{proof}
	Let $M$ be the 3-sphere.
	The core of $V_1$ is the unknot in the 3-sphere.
	Since a mirror image of the unknot in the 3-sphere is ambient isotopic to itself, there is an ambient isotopy $F:M\times [0, 1]\to M$ such that $F(M, 1): M\to M$ is $\tau$ defined above.
	Hence, $M$ admits an ambient isotopy $F: M \times [0, 1]\to M$ that satisfies the assumption.
	
	Let $M$ be a lens space $L(p, q)$ with $p=2$.
	Let $\tau$ be a self-diffeomorphism of $M$ that is defined above.
	By Theorem \ref{Diff}, a diffeotopy group of $M$ is generated by $\sigma_-$.
	Hence, $\tau$ is isotopic to $\sigma_-$ or the identity.
	Since $\tau$ is an orientation-preserving involution and $\sigma_-$ is an orientation-reversing involution, $\tau$ is isotopic to the identity.
	This implies that there is an ambient isotopy $F: M \times [0, 1]\to M$ that satisfies the assumption.

	Next, we suppose that $M$ admits an ambient isotopy $F$.
	We note that $f_1: M\to M$ is isotopic to $\tau$.
	$\tau$ is not the identity in $\mathcal{D}(L(p, q))$ if $p\neq 2$ by Theorem \ref{Diff}.
	Hence, $M$ is the 3-sphere or a lens space $L(p, q)$ with $p=2$.

\end{proof}

\subsection{Handlebody decompositions of the 3-sphere and lens spaces}

	In this section, we classify the  handlebody decompositions of the 3-sphere and lens spaces.  The following proposition  is shown in the proof of Proposition 1 in \cite{G}.

	\begin{prop} \label{s3000}
		A type-$(0, 0, 0)$ decomposition of the 3-sphere has  exactly one branched locus and satisfies $F_{12} \cong F_{23} \cong F_{31} \cong D$.
	\end{prop}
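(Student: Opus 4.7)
The plan is to show each $F_{ij}$ is a disk by combining the Euler-characteristic formula of Lemma~\ref{lem1} with Alexander's theorem in $S^3$ and a short Mayer--Vietoris computation, and then to read off the number of branched loci as the number of boundary components of any $F_{ij}$.

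Fix $\{i,j,k\}=\{1,2,3\}$. Since $H_k$ is a $3$-ball, $\partial H_k\cong S^2$ is an embedded $2$-sphere in $S^3$, so by Alexander's theorem the closed complement $S^3\setminus\mathrm{int}(H_k)=H_i\cup H_j$ is again a $3$-ball $B^3$. Plugging $\chi(\partial H_\ell)=2$ into Lemma~\ref{lem1} gives $\chi(F_{ij})=\tfrac12(2+2-2)=1$, and each component of $F_{ij}$ is an orientable compact subsurface of $\partial H_i\cong S^2$, hence planar with at least one boundary circle.

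Next I would apply the reduced Mayer--Vietoris sequence to $B^3=H_i\cup H_j$ with $H_i\cap H_j=F_{ij}$. Both $H_i,H_j$ and $B^3$ are contractible, so their reduced homology vanishes and the sequence forces $\tilde H_0(F_{ij})=0$. Hence $F_{ij}$ is connected, and a connected compact orientable planar surface with Euler characteristic $1$ is a disk, so $F_{ij}\cong D$ for every pair. The set of branched loci equals $\partial H_1\cap\partial H_2\cap\partial H_3=\partial F_{ij}$ for any pair, which is a single circle, so the decomposition has exactly one branched locus.

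The only step that is not immediate from definitions is the connectedness of $F_{ij}$: the Euler-characteristic constraint alone permits configurations such as $F_{ij}\cong D\cup A$, and ruling these out is exactly what the Mayer--Vietoris computation does, using in an essential way that each pairwise union $H_i\cup H_j$ is a ball (which in turn is Alexander's theorem). Everything else is a direct combination of Lemma~\ref{lem1} with the classification of compact planar surfaces by Euler characteristic and number of boundary components.
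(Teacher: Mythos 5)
Your argument is correct, but it reaches the conclusion by a different route than the paper, which gives no proof at all and simply cites the proof of Proposition~1 of~\cite{G}. The step that actually requires an idea is ruling out disconnected $F_{ij}$ (e.g.\ $D\cup A$ or $D\cup D\cup P$, all of which have $\chi=1$), and the mechanism used throughout this paper and in~\cite{G} for that step is Lemma~\ref{lem2}: if $F_{ij}$ were disconnected it would contain a disk component whose boundary is automatically inessential in the sphere $\partial H_k$, forcing an $S^2\times S^1$ summand and contradicting $M\cong S^3$. You instead observe via Alexander's theorem that $H_i\cup H_j$ is a ball and extract connectedness of $F_{ij}=H_i\cap H_j$ from the reduced Mayer--Vietoris sequence; combined with Lemma~\ref{lem1} and the classification of planar surfaces this is a complete and correct proof, and it is arguably more self-contained since it does not invoke the connected-sum lemma (in fact the Mayer--Vietoris step only needs $H_1(H_i\cup H_j)=0$ and connectedness, which already follow from the sphere $\partial H_k$ splitting $S^3$, so even Alexander's theorem is used only for convenience). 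The trade-off is that the Lemma~\ref{lem2} route is the one that generalizes to the $(0,0,1)$, $(0,1,1)$, and $(1,1,1)$ cases treated later, where the complementary regions are no longer balls and the homological shortcut is unavailable; your identification of the branched loci with $\partial F_{ij}$, a single circle once $F_{ij}\cong D$, matches the convention used implicitly throughout the paper.
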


Before proving Theorem \ref{s3001}, we prepare some lemmas. The following lemmas are applied not only to Theorem \ref{s3001} but also to Theorems \ref{lensclass011},  \ref{class111s3}, and \ref{class111}.
Lemma \ref{longi} can be obtained by observing the number of  intersections of a torus knot and the boundary of  a meridian disk of a solid torus. If the torus knot $T(p, q)$ satisfies $p=1$, there exists an  annulus embedded in a solid torus whose boundary is the core of a solid torus and $T(p, q)$.
\begin{lem}\label{longi}
Let $V$ be a solid torus and $C$ be a core of $V$. Then, the torus knot $T(p, q)$ in $\partial V$ is isotopic in $V$ to $C$ if and only if $p=\pm 1$
\end{lem}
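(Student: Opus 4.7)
The plan is to prove both directions by analyzing how $T(p,q)$ meets a meridian disk of $V$. Fix such a disk $D\subset V$, so that $\partial D$ is a meridian $m$ of $\partial V$, and let $l$ be a longitude. With the standard convention, $T(p,q)$ is homologous on $\partial V$ to $p\,l+q\,m$, so its algebraic intersection number with $\partial D=m$ on the torus $\partial V$ equals $\pm p$; after pushing $T(p,q)$ slightly into $V$, this coincides with the algebraic intersection of $T(p,q)$ with the oriented disk $D$. On the other hand, the core $C$ meets $D$ transversely in a single point, so its algebraic intersection with $D$ is $\pm 1$.

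For the necessity direction, suppose that $T(p,q)$ is isotopic in $V$ to $C$. Algebraic intersection with the oriented surface $D$ depends only on the homology class of the curve in $H_1(V)\cong\mathbb{Z}$, hence is an ambient-isotopy invariant. Comparing the two intersection numbers above forces $p=\pm 1$.

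For the sufficiency direction, I realize $V$ as $S^1\times D^2\subset\mathbb{C}\times\mathbb{C}$ with core $C=S^1\times\{0\}$ and $T(1,q)=\{(e^{i\theta},e^{iq\theta}):\theta\in\mathbb{R}\}\subset\partial V$, and define
$$A=\{(e^{i\theta},t\,e^{iq\theta}):\theta\in\mathbb{R},\ t\in[0,1]\}\subset V.$$
A direct check shows that $A$ is an embedded annulus with $\partial A=C\cup T(1,q)$, and that the family $c_t(\theta)=(e^{i\theta},t\,e^{iq\theta})$, $t\in[0,1]$, is an isotopy of embeddings from $C$ to $T(1,q)$; by the isotopy extension theorem, this upgrades to an ambient isotopy of $V$ carrying $C$ onto $T(1,q)$. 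The case $p=-1$ follows by reversing the orientation of $T(-1,q)$.

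The statement is standard enough that there is no real obstacle; the only point to watch is the convention that $p$ is the longitudinal coefficient of $T(p,q)$, equivalently the number of times $T(p,q)$ meets a meridian disk of $V$, so that the homological computation on $\partial V$ and the geometric count of $C\cap D$ can be compared.
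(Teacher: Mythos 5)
Your proof is correct and follows essentially the same route the paper indicates: the paper justifies the lemma by "observing the number of intersections of a torus knot and the boundary of a meridian disk" (your homological intersection-number argument for necessity) and by exhibiting an annulus in $V$ bounded by the core and $T(1,q)$ (your explicit annulus $A$ for sufficiency). You have simply filled in the details that the paper leaves as a two-sentence sketch.
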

		
\begin{figure}[httb]
\centering
\includegraphics[scale=0.8]{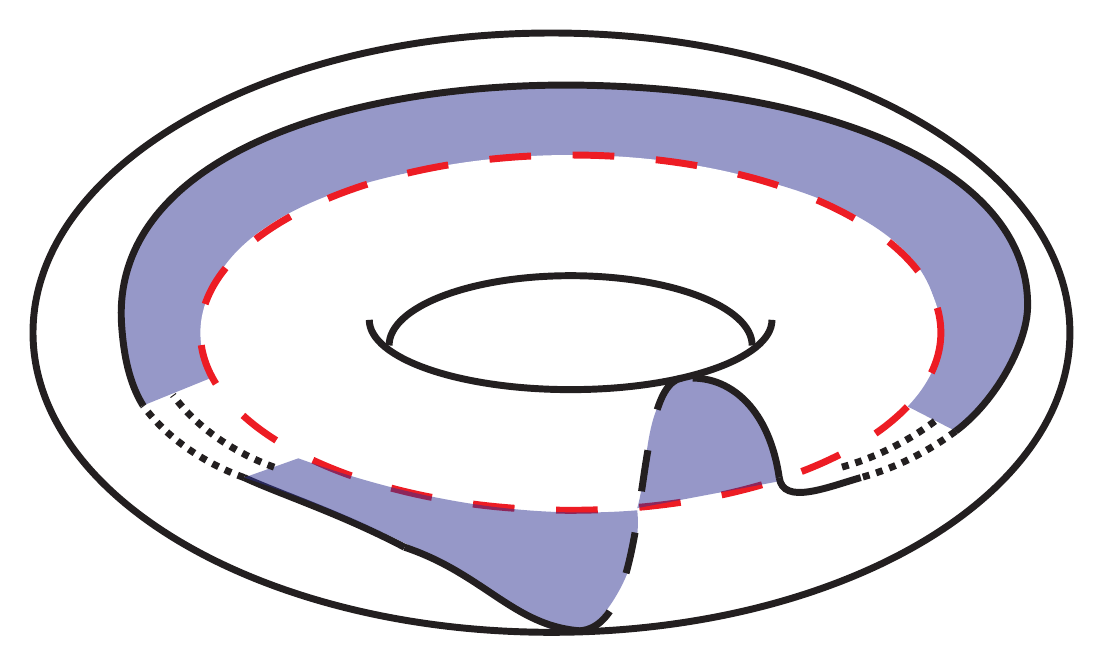}
\vspace{-3mm}
\caption{The annulus whose boundary is the core of a solid torus and a torus knot $T(1, q)$.}
\label{001decomposition}
\end{figure}

We use the following lemma to classify the handlebody decompositions of the 3-sphere and lens spaces. 
\begin{lem}\label{core}
	Let $M$ be the 3-sphere or a lens space $L(p, q)$ and $V_1\cup_\varphi V_2$ be a genus-one Heegaard splitting of $M$, where $\varphi: \partial V_2\to \partial V_1$ is an orientation-reversing homeomorphism. The core of $V_1$ is isotopic to the core of $V_2$ in $M$ if and only if $M$ is either the 3-sphere or $L(p, q)$ with \p.
\end{lem}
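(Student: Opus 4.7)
My plan is to prove the two implications separately. The easy direction is $M = S^{3}$: any two unknots in $S^{3}$ are ambient isotopic (for instance via the Hopf isotopy $(z_{1},z_{2})\mapsto (z_{1}\cos\theta + z_{2}\sin\theta,-z_{1}\sin\theta + z_{2}\cos\theta)$), so the cores are always isotopic there. The bulk of the work is for $M = L(p,q)$, and I will handle the two directions using a $\pi_{1}$-obstruction on the one hand and Theorem \ref{Diff} on the other.

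For the ``only if'' direction, I apply Van Kampen to $M = V_{1}\cup_{\varphi} V_{2}$. The gluing relation $\varphi_\ast(m_{2}) = pl_{1}+qm_{1}$ together with $m_{1} = 0$ in $\pi_{1}(V_{1})$ yields $\pi_{1}(M) = \mathbb{Z}/p\mathbb{Z}$ generated by $\alpha = [l_{1}]$, and the class of the core $c_{1}$ is $\alpha$. Writing $\varphi_\ast(l_{2}) = al_{1}+bm_{1}$ and using that $\varphi$ reverses orientation (so it acts as $-1$ on the intersection form of $\partial V_{1}\cong\partial V_{2}$) gives $aq-bp = -1$, hence $a \equiv -q^{-1}\pmod p$. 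Since $[c_{2}] = a\alpha$, if $c_{1}$ is ambient isotopic to $c_{2}$ in $M$ then $[c_{1}] = \pm [c_{2}]$ in $\pi_{1}(M)$, which forces $1 \equiv \pm q^{-1}\pmod p$, i.e.\ $q \equiv \pm 1 \pmod p$. Finally $(p-1)q \equiv -q \pmod p$, so this is exactly the condition \p.

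For the ``if'' direction, suppose \p, equivalently $q \equiv \pm 1 \pmod p$. Then $M$ falls in case (1) or (2) of Theorem \ref{Diff} and $\mathcal{D}(M)\cong \mathbb{Z}_{2}$. The involution $\sigma_{+}$ swaps $V_{1}$ and $V_{2}$ and hence carries $c_{1}$ to $c_{2}$, so it suffices to exhibit an ambient isotopy realizing $\sigma_{+}$ (or a small modification). As a class in $\mathcal{D}(M)$, $\sigma_{+}$ is either trivial or the unique nontrivial generator. In case (1), where $p = 2$, that generator is the orientation-reversing $\sigma_{-}$; since $\sigma_{+}$ is orientation-preserving it must be isotopic to the identity, and an ambient isotopy sends $c_{1}$ to $c_{2}$. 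In case (2) the generator is $\tau$, which preserves each $V_{i}$ setwise and fixes each core setwise (by the formula $\tau(u,v) = (\bar u,\bar v)$ on $V_{i}=S^{1}\times D^{2}$). Hence either $\sigma_{+}$ is isotopic to the identity, or $\sigma_{+}$ is isotopic to $\tau$ and $\sigma_{+}\circ \tau^{-1}$ is isotopic to the identity while still carrying $c_{1}$ to $c_{2}$. In either case an ambient isotopy of $M$ sends $c_{1}$ to $c_{2}$.

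The main subtlety I anticipate is pinning down orientation conventions in the $\pi_{1}$-computation, in particular the sign $aq - bp = -1$ coming from the orientation-reversal of $\varphi$ and its translation into the formula $[c_{2}] \equiv -q^{-1}\pmod p$, together with the verification that the congruence $q \equiv \pm 1 \pmod p$ is really equivalent to \p. A secondary, less serious check is that $\tau$ fixes each core setwise, which is immediate from the explicit formula for $\tau$ recalled just before Theorem \ref{Diff}.
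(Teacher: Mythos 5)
Your proof is correct, but it takes a genuinely different route from the paper's. The paper never looks at $\pi_1$: it isotopes the core of $V_2$ onto $\partial V_2$ as a curve $l_2+nm_2$, transports it by $\varphi_\ast$, and applies Lemma \ref{longi} to reduce everything to the Diophantine condition $|pn+r|=1$, which is then converted into the condition \p{} using $ps-qr=-1$ and Brody's classification; sufficiency comes from exhibiting the explicit solution $n=s-r$. Your free-homotopy obstruction $[c_1]=\pm[c_2]$ in $\pi_1(M)\cong\mathbb{Z}_p$ delivers necessity more directly, and in fact more robustly: the paper's passage from ``the cores are isotopic in $M$'' to $|pn+r|=1$ tacitly assumes the ambient isotopy can be funneled through the Heegaard torus, while your argument needs no such normalization. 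For sufficiency, your appeal to Theorem \ref{Diff} is sound, but one point deserves to be made explicit: the coordinate swap $\sigma_+$ descends to a self-diffeomorphism of $L(p,q)$ only when $q^2\equiv 1\pmod{p}$ (which is why it is listed as a generator only in case (3) of Theorem \ref{Diff}); here $q\equiv\pm1\pmod{p}$, so $q^2\equiv1$ and $\sigma_+$ does exist, and your corrections by $\tau$ (which preserves each core setwise) and by orientation (for $p=2$) then go through. You also implicitly replace the given splitting $V_1\cup_\varphi V_2$ by the standard model on which $\tau$ and $\sigma_\pm$ are defined; this is licensed by the Bonahon--Otal uniqueness of the genus-one splitting, which the paper uses elsewhere as well. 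In sum, the paper's method stays within the elementary toolkit of Lemma \ref{longi} and torus homology, while yours trades that for a cleaner necessity argument and a more conceptual use of the diffeotopy group.
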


\begin{proof}
	We consider the 3-sphere as  $L(1, 0)$.
	Since $\varphi$ is an orientation-reversing homeomorphism, $\varphi_\ast: H_1(\partial V_2)\to H_1(\partial V_1)$ is an isomorphism between the first homology group of a torus.
	Let $m_i$ and $l_i$ be a meridian and a longitude of $\partial V_i$, respectively.
	Then, the image of $m_2$ and $l_2$ under $\varphi_\ast$ can be written as follows:
			\[\varphi_\ast(m_2)=pl_1+qm_1,\]
			\[\varphi_\ast(l_2)=rl_1+sm_1,\]
	where $r$ and $s$ are integers. Since $\varphi$ is an orientation-reversing homeomorphism, $p$, $q$, $r$, and $s$ satisfy $ps-qr=-1$.
	By Lemma \ref{longi}, the core of $V_2$ is isotopic to a simple closed curve $c$ represented by $\left[c\right]=l_2+nm_2$, where $n$ is an integer.
	Then, the image of $c$ under $\varphi$ is a simple closed curve represented by $\varphi_\ast (\left[c\right])=\left[\varphi(c)\right]=(pn+r)l_1+(qn+s)m_1$.
	By Lemma \ref{longi}, $\varphi(c)$ is isotopic to the core of $V_1$ if and only if $|pn+r|=1$.
	
	Suppose that  the core of $V_1$ is isotopic to the core of $V_2$.
	Then, by Lemma \ref{longi}, we can obtain $|pn+r|=1$.
	From the equation $ps-qr=-1$, we obtain $r=(1+ps)/q$.
	Then, we obtain
	\[
		pn+\frac{1+ps}{q}=\pm 1.
	\]
	Hence,
	\[
		p(qn+s)=\pm q - 1.
	\]
	Then, we obtain
	\[
		p=\frac{-1\pm q}{qn+s},
	\]
	where $n$ and $s$ are integers. 
	This implies that $-1+ q$ or $-1-q$ must be a multiple of $p$.
	Since $0< q <p$ is satisfied, $-1+q$ is not a multiple of $p$.
	Hence, we can assume $-1-q$ that is a multiple of $p$.
	Since $0< q <p$ is satisfied, we obtain $-q\leq-1-q<0$.
	Since $-1-q$ is a multiple of $p$, $-1-q=-p$ is satisfied.
	Hence, we can obtain $q=p-1$.
	Since  lens spaces $L(p, q)$ and $L(p', q')$ are homeomorphic if and only if $p=p'$ and either $q\equiv\pm q'$ or $qq'\equiv \pm 1$ $(mod\, p)$, $L(p, p-1)$ is homeomorphic to $L(p, q)$ if and only if $(p-1)q\equiv \pm1$ $(mod\, p)$.
	Hence, if the core of $V_1$ is isotopic to the core of $V_2$, $L(p, q)$ satisfies $(p-1)q\equiv \pm1$ $(mod\, p)$. 
	
	Conversely, if $(p-1)q\equiv \pm1$ $(mod\, p)$, $L(p, q)\cong L(p, p-1)$.
	We can assume that $M=L(p, p-1)$.
	Suppose that $n=s-r$.
	Since $pn+r=p(s-r)+r=ps-(p-1)r=-1$, the core of $V_1$ is isotopic to the core of $V_2$ by Lemma \ref{longi}.
\end{proof}

Next, we show the following lemma.  We use this lemma to determine whether a 3-manifold with a given handlebody decomposition is a lens space.

\begin{lem}\label{b-1}
	Let $M=H_1\cup H_2\cup H_3$ be a type-$(g_1, g_2, 1; b)$ decomposition.
	If $F_{12}$ has a disk component $D$ such that  $\partial D$ is essential and not meridional in $\partial H_3$, then $M$ satisfies $M\cong M_1 \# M_2$, where $M_1$ has a type-$(g_1, g_2, 0; b-1)$ decomposition and $M_2$ is a lens space or $M$
	has a type-$(g_1, g_2, 0; b-1)$ decomposition.
\end{lem}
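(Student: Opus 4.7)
The plan is to use the disk $D$ to construct a 2-sphere $\Sigma\subset M$ whose two sides encode the desired decomposition. First I would choose a thin tubular neighborhood $N(D)\cong D\times[-1,1]$ of $D$ in $M$ so that $N(D)\cap H_3$ is only a collar of $\partial D$ inside $\partial H_3$, while $N(D)\cap H_i$ is a half-ball glued to $H_i$ along a disk in $\partial H_i$ for $i=1,2$. Set $B:=H_3\cup N(D)$. By construction $B$ is the solid torus $H_3$ with a 2-handle attached along the essential curve $\partial D$, and therefore $\Sigma:=\partial B$ is a 2-sphere in $M$.

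Next I would identify $B$ topologically. Writing the homology class $[\partial D]=p\ell+qm\in H_1(\partial H_3)$ with $\gcd(p,q)=1$, the hypothesis that $\partial D$ is essential and non-meridional forces $p\ne 0$. Capping $\Sigma$ with a 3-ball produces the closed 3-manifold $\widehat B$ obtained from $H_3$ by Dehn filling along $\partial D$, which in the paper's convention is the lens space $L(p,q)$. When $|p|=1$, Lemma \ref{longi} identifies $\partial D$ as isotopic in $H_3$ to the core, $L(p,q)\cong S^3$, and hence $B\cong B^3$. When $|p|\ge 2$, $L(p,q)$ is a genuine lens space and $B\cong L(p,q)\setminus\operatorname{int}B^3$.

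To produce the new decomposition I would set $H_1':=H_1\setminus\operatorname{int}N(D)$ and $H_2':=H_2\setminus\operatorname{int}N(D)$. Because $N(D)\cap H_i$ is a half-ball glued to $H_i$ along a disk in $\partial H_i$, each $H_i'$ is still a handlebody of genus $g_i$. In the case $B\cong B^3$, put $H_3':=B$; then $M=H_1'\cup H_2'\cup H_3'$ has type $(g_1,g_2,0)$, and the new interfaces are obtained from the old $F_{ij}$ by deleting $D$ from $F_{12}$ and adjoining one of the two disks of $\Sigma\setminus\partial H_3$ to each of $F_{13}$ and $F_{23}$, so the single branched locus $\partial D$ is erased and the type is $(g_1,g_2,0;b-1)$. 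In the case $B\cong L(p,q)\setminus\operatorname{int}B^3$, the sphere $\Sigma$ separates $M$ into $B$ and $M\setminus\operatorname{int}B$; capping the second side with a 3-ball $B_0$ gives a closed 3-manifold $M_1$ with $M\cong M_1\#L(p,q)$, and the same gluing argument shows that $M_1=H_1'\cup H_2'\cup B_0$ is a type-$(g_1,g_2,0;b-1)$ decomposition of $M_1$.

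The step I expect to be the main obstacle is the bookkeeping of interface surfaces and branched loci after the modification: one must check that the resulting $F_{ij}'$ are genuine compact surfaces with boundary, that they glue cleanly across the trace of $\partial N(D)$ on $\Sigma$, and that exactly the one branched component $\partial D$ is destroyed while the remaining $b-1$ branched components are left intact, so the count really does drop from $b$ to $b-1$ as claimed.
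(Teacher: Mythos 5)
Your proposal is correct and follows essentially the same route as the paper's proof: form the regular neighborhood $N(D)$ in $H_1\cup H_2$, view it as a 2-handle attached to the solid torus $H_3$ along the essential, non-meridional curve $\partial D$ so that $H_3\cup N(D)$ is a punctured lens space or a 3-ball, and then either split off the lens space summand or absorb the 3-ball, with the branched locus $\partial D$ being the one component lost. Your additional bookkeeping of the interface surfaces and the genus of $H_1'$, $H_2'$ only makes explicit what the paper leaves implicit.
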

\begin{proof}
	Let $N(D)$ be a regular neighborhood of $D$ in $H_1\cup H_2$. 
	$N(D)$ is a 2-handle attached to $H_3$ along $N(\partial D)$, where $N(\partial D)$ is a regular neighborhood of  $\partial D$ in $\partial H_3$.
	Since $H_3$ is a solid torus and $\partial D$ is essential in $\partial H_3$, $L=N(D)\cup H_3$ is a punctured lens space or a 3-ball.
	Then, $M\cong M_1\# M_2$, where $M_2$ is  a capping off of $L$.
	$M_1$ is obtained by capping off $(H_1-N(D))\cup (H_2- N(D))$.
	Let us define $H_1'=H_1-N(D)$ and $H_2'=H_2- N(D)$.
	Then, $H_1'\cup H_2'\cup H_3'$ is a type-$(g_1, g_2, 0)$ decomposition of $M'$, where $H_3$ is a 3-ball.
	The number of branched loci of the type-$(g_1, g_2, 0)$ decomposition of $M'$ above is equal to the number of components of $\partial (H_1'\cap H_2' )$.
	 Since $\partial (H_1'\cap H_2' )=\partial F_{12} - \partial D$, the number of branched loci of a type-$(g_1, g_2, 0)$ decomposition of $M'$ is $b-1$.
\end{proof}

\setcounter{section}{2}
\setcounter{thm}{0}
We restate Theorem \ref{s3001}. To show this theorem, we shall take a look at  the proof of Proposition 2 in \cite{G} in detail and apply the lemmas presented above.
	\begin{thm}\label{s3001}
			A type-$(0,0,1)$ decomposition of the 3-sphere and a lens space $M$ has exactly 2 branched loci and it satisfies $F_{12} \cong D_{1} \cup D_{2}$ and $F_{23} \cong F_{31} \cong A$. 
			In fact, the 3-sphere and lens spaces have a type-$(0, 0, 1)$ decomposition that satisfies $F_{12} \cong D_{1} \cup D_{2}$ and $F_{23} \cong F_{31} \cong A$.
			
			Furthermore, there is a unique ambient isotopy class of the branched surface of the decomposition if $M$ is the 3-sphere or a lens space with \p. Otherwise,  there are exactly two ambient isotopy classes of the branched surface of the decomposition.
	\end{thm}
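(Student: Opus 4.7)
The argument has three parts: determining the topological types of the pieces $F_{ij}$ by an Euler characteristic computation, constructing an explicit model to prove existence, and reducing the isotopy classification to the uniqueness of genus-one Heegaard splittings combined with Lemma \ref{core}.

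Applying Lemma \ref{lem1} with $(g_1,g_2,g_3)=(0,0,1)$ gives $\chi(F_{12})=2$ and $\chi(F_{23})=\chi(F_{31})=0$. Every branched locus is a boundary circle of each of $F_{12}$, $F_{23}$, $F_{31}$, and a closed component of some $F_{ij}$ would have to coincide with the whole connected surface $\partial H_i$, which is incompatible with the presence of any branched locus in the decomposition. Thus every component of every $F_{ij}$ has non-empty boundary and so Euler characteristic at most $1$, with equality only for disks; consequently $F_{12}$ must be a disjoint union of two disks, which forces $b=2$, and then $F_{23}$ and $F_{31}$ are each a single annulus. For existence, start with any genus-one Heegaard splitting $M=V_1\cup V_2$, cut $V_1$ by two parallel meridian disks $D_1,D_2$ into two 3-balls $H_1,H_2$, and set $H_3=V_2$; the resulting triple is manifestly a type-$(0,0,1)$ decomposition of the required form.

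For the isotopy classification, such a decomposition canonically determines a genus-one Heegaard splitting with $V_1=H_1\cup H_2$ (a solid torus, since two 3-balls glued along two disks form a genus-one handlebody) and $V_2=H_3$, and the branched loci are a pair of parallel meridians of $V_1$ on the Heegaard torus $T=\partial V_1$. By Waldhausen's theorem for $S^3$ and Bonahon--Otal's theorem for lens spaces, $T$ is unique up to ambient isotopy, and any two pairs of parallel meridians on a fixed solid torus are ambient isotopic; so the branched surface depends, up to ambient isotopy, only on which side of $T$ is labelled $V_1$. An ambient isotopy realising such a side-swap exists if and only if the core of $V_1$ is isotopic to the core of $V_2$ in $M$, since a core isotopy extends via the tubular neighbourhood of the core to an ambient isotopy taking $V_1$ onto $V_2$ and meridians of $V_1$ onto meridians of $V_2$. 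By Lemma \ref{core} this holds exactly when $M$ is the 3-sphere or $M$ is a lens space satisfying \p, yielding one ambient isotopy class in those cases and exactly two otherwise. The delicate point is this last step: one must justify, using the Waldhausen/Bonahon--Otal uniqueness, that any ambient isotopy between two branched surfaces can be arranged to preserve $T$ setwise, after which the side-swap analysis via Lemma \ref{core} completes the classification.
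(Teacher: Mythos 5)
Your existence construction and your classification argument follow essentially the same route as the paper's: reduce to a genus-one Heegaard splitting, invoke Waldhausen/Bonahon--Otal to fix the Heegaard torus, observe that the two disks are meridian disks of the solid torus $H_1\cup H_2$ and hence unique up to isotopy there, and reduce the remaining ambiguity (which side of the torus is $H_3$) to Lemma \ref{core}. However, the first part of your proof has a genuine gap. From $\chi(F_{12})=2$ and the fact that every component of $F_{12}$ has non-empty boundary, you may only conclude that $F_{12}$ contains \emph{at least} two disk components, not that it equals two disks. The Euler characteristic data from Lemma \ref{lem1} are equally consistent with, say, $F_{12}\cong D_1\cup D_2\cup A$, $F_{23}\cong F_{31}\cong A\cup A$ and $b=4$ (each piece then has four boundary circles and the correct Euler characteristic), so the step ``consequently $F_{12}$ must be a disjoint union of two disks, which forces $b=2$'' is a non sequitur. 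The paper closes this gap using the topology of $M$ rather than combinatorics: Lemma \ref{lem2} forces each disk boundary to be essential in $\partial H_3$ (else $M$ has an $S^2\times S^1$ summand), Lemma \ref{b-1} then compresses $H_3$ along one such disk to produce a type-$(0,0,0;b-1)$ decomposition of $S^3$, and Proposition \ref{s3000} (a type-$(0,0,0)$ decomposition of $S^3$ has exactly one branched locus) forces $b-1=1$, hence $b=2$, which only then rules out extra components. Some argument of this kind, exploiting that $M$ is the 3-sphere or a lens space, is indispensable before the component and branched-locus counts are justified.

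A secondary remark: in the case of two isotopy classes you assert, via Lemma \ref{core}, that the two embeddings are genuinely non-isotopic, and you correctly flag that one must first arrange any ambient isotopy of branched surfaces to preserve the Heegaard torus setwise. The paper's own proof is no more detailed on this point, so this is not a defect relative to the paper, but the Euler-characteristic gap above must be repaired.
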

\setcounter{section}{4}

		\begin{proof}
			Let $M$ be the 3-sphere or a lens space that has a type-$(0,0,1; b)$ decomposition $H_1\cup H_2\cup H_3$. By Lemma \ref{lem1}, $\chi(F_{12})=2$ and  $\chi(F_{13})=\chi(F_{23})=0$.
			  By the condition $\chi(F_{12})=2$, $F_{12}$ has  at least two disk components. Let  $D_1$  and  $D_2$ be the disk components of $F_{12}$. If $\partial D_1$ is inessential in $\partial H_3$,  $M\cong M' \# S^2\times S^1$ by Lemma \ref{lem2}. This contradicts the assumption that $M$ is the 3-sphere or  a lens space. Then, we can assume that $\partial D_1$ is essential in $\partial H_3$. Similarly, $\partial D_2$ is essential in $\partial H_3$. Let $N(D_1)$ be a regular neighborhood of $\partial D_1$ in $H_1\cup H_2$. After attaching $N(D_1)$ to $H_3$ as a 2-handle along $N(\partial D_1)$ in $\partial H_3$, $N(D_1)\cup H_3$ will be a punctured lens space $L$. Then, $M\cong M''\# Cap( L )$, where $M''$ has a type-$(0,0,0; b-1)$ decomposition by Lemma \ref{b-1} and $Cap(L)$ is a capping off of $L$. 
				
 			Since $M$ is the 3-sphere or a lens space, $M''\cong S^3$ and $Cap(L)\cong M$. Since $M''$ has a type-$(0,0,0; b-1)$ decomposition and $M''\cong S^3$, $b-1=1$; hence, $b=2$ by Proposition \ref{s3000}. If $F_{12}$ has another component, then the number of branched loci is at least 2. This contradicts Proposition \ref{s3000}. This discussion implies that  $F_{12}$ has exactly two disks as its components.  Then, $Cl(\partial H_1-(D_1\cup D_2))\cong A$ and $Cl(\partial H_2-(D_1\cup D_2))\cong A$ are satisfied. This implies that $F_{13}\cong F_{23}\cong A$.
			
			Next, we shall show that the 3-sphere and lens space $M$ has a type-$(0, 0, 1)$ decomposition that satisfies $F_{12} \cong D_{1} \cup D_{2}$ and $F_{23} \cong F_{31} \cong A$.
			$M$ has a genus-one Heegaard splitting $V_1\cup V_2$.
			Let $H_1$ and $H_2$ be 3-balls.
			Suppose that $H_1\cap H_2\cong D_1\cup D_2$.
			Then, $H_1\cup H_2$ is a solid torus.
			Hence, $(H_1\cup H_2) \cup H_3$ is a genus-one Heegaard splitting, where $H_3$ is a solid torus.
			Therefore, $M$ has a type-$(0, 0, 1)$ decomposition that satisfies $F_{12} \cong D_{1} \cup D_{2}$ and $F_{23} \cong F_{31} \cong A$.
			
			Next, we classify the type-$(0, 0, 1)$ decomposition of $M$ as stated above up to ambient isotopy. To classify the  type-$(0, 0, 1)$ decomposition of $M$ , we have to prove only the following claim.
			
			\begin{claim}\label{uni001}
				A type-$(0, 0, 1)$ decomposition of the 3-sphere or lens space is unique up to ambient isotopy if $M$ is  $L(p, q)$ with \p. Otherwise, the decomposition of $M$ has exactly two isotopy classes.
			\end{claim}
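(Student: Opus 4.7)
The plan is to reduce Claim \ref{uni001} to the known uniqueness of genus-one Heegaard splittings together with Lemma \ref{core}. The key observation is that for a type-$(0,0,1)$ decomposition of the form described in the theorem, the union $V:=H_1\cup H_2$ of the two 3-balls glued along the two disks of $F_{12}$ is itself a solid torus, so $M = V\cup H_3$ is a genus-one Heegaard splitting of $M$, and the branched surface $F_{12}\cup F_{23}\cup F_{31}$ is the Heegaard torus $\partial V=\partial H_3$ together with two parallel meridian disks of $V$ lying on one side of this torus.

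First I would take two such decompositions $M=H_1\cup H_2\cup H_3$ and $M=H_1'\cup H_2'\cup H_3'$ of the required form, and apply Waldhausen's theorem for the 3-sphere or the Bonahon--Otal theorem for lens spaces to isotope the Heegaard torus $\partial V'$ onto $\partial V$. After this isotopy there are only two possibilities: either the two splittings agree, so that $V=V'$ and $H_3=H_3'$, or they are swapped, so that $V=H_3'$ and $H_3=V'$. In the non-swapped case, $F_{12}$ and $F_{12}'$ are both pairs of parallel meridian disks inside the same solid torus $V$; since a meridian disk of a solid torus is unique up to isotopy, a further ambient isotopy supported near $V$ makes the two branched surfaces coincide. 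In the swapped case, the two decompositions are ambient isotopic if and only if there exists an ambient isotopy of $M$ carrying $V$ onto $H_3$, which is equivalent to the cores of $V$ and $H_3$ being ambient isotopic in $M$. By Lemma \ref{core}, this occurs exactly when $M$ is the 3-sphere or $L(p,q)$ with \p, so in that case all type-$(0,0,1)$ decompositions fall into a single isotopy class, while otherwise the swapped and non-swapped cases yield two distinct classes.

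The main obstacle I anticipate is showing that, when \np, the two candidate branched surfaces really are non-isotopic as embedded $2$-complexes in $M$, rather than merely as labelled decompositions. I would handle this by observing that the meridian disks in the branched surface distinguish the two sides of the Heegaard torus intrinsically: they intersect the core of the solid torus on one side algebraically once, and are disjoint from the core on the other side. Hence an ambient isotopy taking one branched surface to the other would induce an isotopy exchanging the two cores of the genus-one Heegaard splitting, contradicting Lemma \ref{core}, and thereby confirming that there are exactly two isotopy classes in the exceptional case.
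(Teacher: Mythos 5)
Your proposal is correct and follows essentially the same route as the paper: reduce to the uniqueness of genus-one Heegaard splittings, note that $F_{12}$ consists of meridian disks of the solid torus $H_1\cup H_2$ (unique up to isotopy), and invoke Lemma \ref{core} to decide whether the two sides of the Heegaard torus can be exchanged. Your closing remark about why the two classes are genuinely distinct when \np\ (the meridian disks intrinsically distinguish the two sides of the torus) is a small but welcome sharpening of a point the paper passes over quickly.
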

			
			\begin{proof}[Proof of Claim \ref{uni001}]
				Let $H_1\cup H_2\cup H_3$ and $H_1'\cup H_2'\cup H_3'$ be the type-$(0, 0, 1)$ decompositions of the 3-sphere or lens space.
				Furthermore, we denote $F_{ij}=H_i\cap  H_j$ and $F'_{ij}=H'_i\cap  H'_j$.
				Then, $(H_1\cup H_2)\cup H_3$ and $(H_1'\cup H_2')\cup H_3'$ are genus-one Heegaard splittings.
				Since the Heegaard splittings of the 3-sphere and lens spaces are unique up to isotopy, $\partial H_3$ is isotopic to $\partial H_3'$.
				Let $V_1\cup V_2$ be a genus-one Heegaard splitting of the 3-sphere or lens space such that $V_1=H_3$.
				Then, $F_{12}$ has two disks properly embedded in a solid torus $V_2$ whose boundaries are essential in $\partial V_2$.
				Hence, the components of $F_{12}$ are meridian disks of $V_2$.
				Therefore, the embedding of $F_{12}$ into $V_2$ is unique up to ambient isotopy in $V_2$ since a meridian disk of a solid torus is unique up to isotopy.
				If $V_1=H_3'$, we can ambient isotope $F_{12}$ to $F_{12}'$ by  keeping $\partial H_3'=H_3$.
				Hence, $F_{12}\cup F_{13}\cup F_{23}$ is ambient isotopic to $F_{12}'\cup F_{13}'\cup F_{23}'$.
				If $V_2=H_3'$ and $M$ is homeomorphic to the 3-sphere or a lens space  $L(p, q)$ with  \p, we can isotope $H'_3$ to $H_3$ by Lemma \ref{core}.
				Hence, if $M$ is $L(p, q)$ with \p, $F_{12}\cup F_{13}\cup F_{23}$ is ambient isotopic to $F_{12}'\cup F_{13}'\cup F_{23}'$.
				On the other hand, if $V_2=H_3'$ and $M$ is not homeomorphic to $L(p, q)$ with \np, we cannot isotope $H_3'$ to $H_3$ by Lemma \ref{core}.
				Then, we cannot ambient isotope $F_{12}\cup F_{13}\cup F_{23}$ to $F_{12}'\cup F_{13}'\cup F_{23}'$.
				Hence, if $M$ is $L(p, q)$ with \np, the type-$(0, 1, 1)$ decomposition of a lens space has exactly two ambient isotopy classes. 
			\end{proof}
			By Claim \ref{uni001}, this completes the proof of Theorem  \ref{s3001}. 
		\end{proof}

\begin{figure}[httb]
\centering
\includegraphics[scale=0.3]{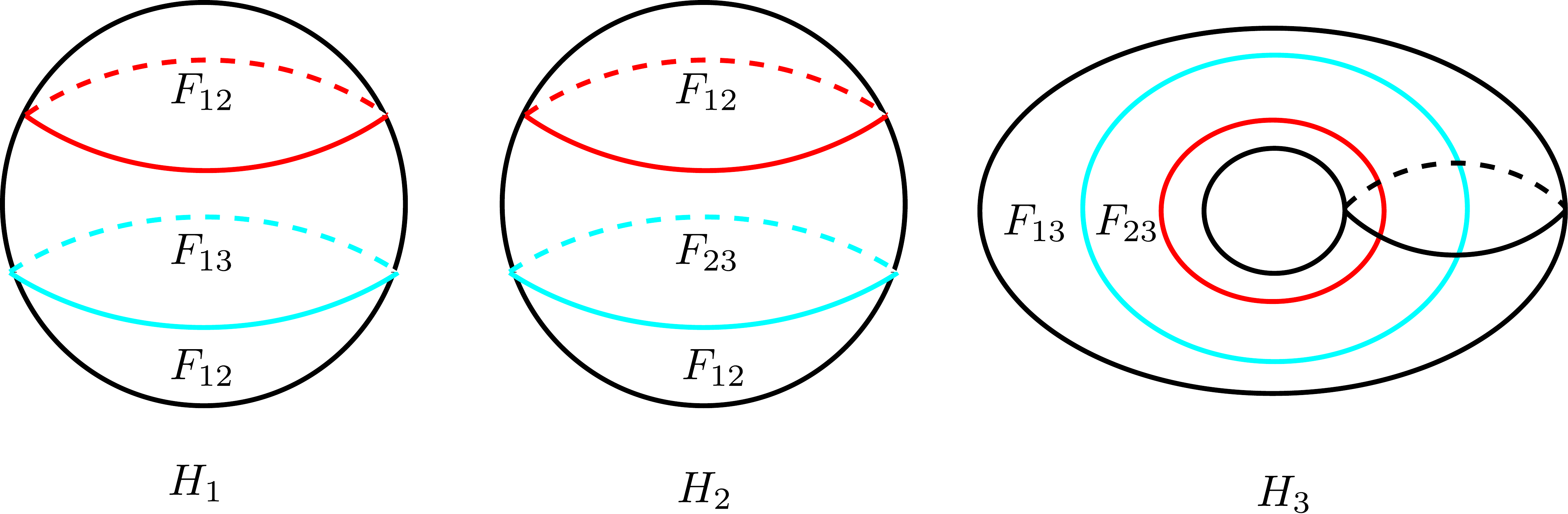}
\vspace{-3mm}
\caption{A type-$(0,0,1)$ decomposition of $S^{3}$ whose branched loci are two loops.}
\label{001decomposition}
\end{figure}


\setcounter{section}{2}

Before classifying the type-$(0, 1, 1)$ decompositions of the 3-sphere and lens spaces, we characterize them. We restate Theorem \ref{s3011}.
\begin{thm} \label{s3011}
A type-$(0,1,1)$ decomposition of the 3-sphere and a lens space  $M$ satisfies one of the following. 
			\begin{enumerate}
			\renewcommand{\theenumi}{(\arabic{enumi})}
				\item[(1)]  it has exactly one branched locus and satisfies $F_{12} \cong F_{31} \cong D$ and $F_{23} \cong T^{\circ}$ (Figure \ref{0111}).
				\item[(2)]  it has exactly three branched loci and satisfies $F_{12} \cong F_{31} \cong D \cup A $ and $F_{23} \cong P$ (Figure \ref{0112}).
			\end{enumerate}
			In fact, the 3-sphere and lens spaces admit each type-$(0, 1, 1)$ decomposition above.
\end{thm}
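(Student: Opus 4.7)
First I would compute Euler characteristics via Lemma~\ref{lem1}: $\chi(F_{12}) = \chi(F_{13}) = 1$ and $\chi(F_{23}) = -1$. Since $\partial H_1 = S^{2}$, the surfaces $F_{12}$ and $F_{13}$ are planar; writing each as a disjoint union of $k$ planar components with $b$ total boundary circles and total Euler characteristic $1$ gives $b = 2k - 1$, so $b$ is odd, and each of $F_{12}, F_{13}$ must contain at least one disk component. This settles the base case $b = 1$ at once: $F_{12} = F_{13} = D$, and $F_{23}$ is the unique subsurface of a torus with Euler characteristic $-1$ and a single boundary circle, namely $T^{\circ}$. Existence in this case is immediate by taking a genus-one Heegaard splitting $V_1 \cup V_2$ of $M$, setting $H_3 = V_1$, and splitting $V_2$ into a boundary-parallel $3$-ball $H_1$ with complementary solid torus $H_2$.

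For $b \geq 2$, the disk $D_0 \subset F_{12}$ sits in an $F_{12}$ with at least two components, so Lemma~\ref{lem2} forces $\partial D_0$ to be essential in $\partial H_3$ (otherwise an $S^{2} \times S^{1}$ summand would contradict $M$ being the $3$-sphere or a lens space). Applying Lemma~\ref{b-1} to cap $D_0$ off through $H_3$ then yields a type-$(0,1,0;\,b-1)$ decomposition either of $M$ itself or of a prime summand $M_1$; since the relevant summand remains the $3$-sphere or a lens space, Theorem~\ref{s3001} applied to the reduced decomposition forces $b - 1 = 2$, so $b = 3$. The Euler-characteristic and boundary-count constraints then force $F_{12} \cong F_{13} \cong D \cup A$.

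It remains to determine $F_{23}$ when $b = 3$. A priori $F_{23}$ is $P$, a disk together with a twice-punctured torus, or $A \cup T^{\circ}$. The disk-plus-twice-punctured-torus option is eliminated by applying Lemma~\ref{lem2} to $F_{23}$: the disk component's boundary is automatically inessential on $\partial H_1 = S^{2}$, producing a forbidden $S^{2} \times S^{1}$ summand. To rule out $A \cup T^{\circ}$, I would use the global observation that a once-punctured torus embedded in $T^{2}$ has planar complement of Euler characteristic $1$ with a single boundary circle, hence a disk. On each of $\partial H_2$ and $\partial H_3$ the boundary $\partial T^{\circ}$ is therefore inessential, and the other two branched loci lie inside this complementary disk as null-homotopic simple closed curves, so every branched locus is inessential in both $\partial H_2$ and $\partial H_3$. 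This contradicts Lemma~\ref{lem2}, which requires $\partial D_0$ to be essential in $\partial H_3$. Hence $F_{23} \cong P$, and existence of the case-$(2)$ decomposition follows by performing a type-$1$ stabilization on the type-$(0,0,1)$ decomposition of Theorem~\ref{s3001}, using a boundary-parallel arc on the annulus $F_{13}$ with both endpoints on a single branched locus. The main obstacle is this last step of ruling out $A \cup T^{\circ}$, since this configuration passes every naive Euler-characteristic and single-disk-only application of Lemma~\ref{lem2} and is defeated only by the complement-is-a-disk observation.
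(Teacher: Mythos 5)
Your argument is correct and follows essentially the same route as the paper's proof: Euler characteristics from Lemma \ref{lem1}, Lemma \ref{lem2} to force disk boundaries to be essential in the third handlebody (which is also exactly what kills the $A\cup T^{\circ}$ option for $F_{23}$, since the complement-is-a-disk observation makes every branched locus inessential in $\partial H_3$), and capping a disk component of $F_{12}$ through $H_3$ via Lemma \ref{b-1} so that Theorem \ref{s3001} pins down the number of branched loci. Your parity count $b=2k-1$ packages in one step what the paper proves as three separate claims (no disk in $F_{23}$, exactly one disk and at most one annulus in $F_{12}$), and your existence proof of case (2) by a type-1 stabilization of the type-$(0,0,1)$ decomposition replaces the paper's explicit disk-plus-annulus construction; both are sound variations, the only (shared with the paper) loose end being that one should also note that $\partial D_0$ cannot be meridional in $\partial H_3$, since that would again produce an $S^2\times S^1$ summand, before invoking Lemma \ref{b-1}.
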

\setcounter{section}{4}
\begin{proof}
Let $M$ be the 3-sphere or a lens space. Suppose that $M$ has a type-$(0, 1, 1; b)$ decomposition $H_{1} \cup H_{2} \cup H_{3}$, where $H_{1}$ is a 3-ball and $H_{2}$ and $H_3$ are  solid tori.
Then, $\chi(\partial H_{1})=2$ and $\chi(\partial H_{2})=\chi(\partial H_{3})=0$. 
By Lemma \ref{lem1}, $\chi(F_{12})=\chi(F_{31})=1$ and $\chi(F_{23})=-1$. 
Hence, each of $F_{12}$ and $F_{13}$ has at least one disk component.

\begin{claim}\label{011.a}
The surface $F_{23}$ has no disk component.
\end{claim}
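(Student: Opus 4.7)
The plan is to argue by contradiction. Suppose $F_{23}$ has a disk component $D$. The strategy is to turn this disk into an essential reducing 2-sphere in $M$ using Lemma \ref{lem2}, which contradicts the fact that $M$ is either the 3-sphere or a lens space (both of which are prime).

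Two observations set up the application of Lemma \ref{lem2}. First, since $\chi(F_{23}) = -1$ but a disk has Euler characteristic $1$, $D$ cannot be all of $F_{23}$: there must be at least one other component, so $F_{23}$ has at least two connected components. Second, $\partial D$ is a component of the branched loci, so $\partial D \subset \partial H_1$; since $H_1$ is a 3-ball, $\partial H_1 \cong S^2$ and every simple closed curve on a 2-sphere is inessential. In particular, $\partial D$ is inessential in $\partial H_1$.

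Now I would apply Lemma \ref{lem2} with the indices cyclically permuted, so that $F_{23}$ plays the role of $F_{12}$ and $H_1$ plays the role of $H_3$. The two hypotheses verified above are exactly what the lemma requires, and its conclusion yields $M \cong M' \# \mathbb{B}$ with $\mathbb{B}$ a nontrivial (nonempty) connected sum of $S^2$-bundles over $S^1$. But the 3-sphere and lens spaces admit no $S^2 \times S^1$ or twisted $S^2$-bundle summand, a contradiction.

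The main subtlety to verify is that Lemma \ref{lem2} is genuinely symmetric in the three handlebodies, so that cyclically permuting indices is legal — this is clear since the definition of a handlebody decomposition is itself symmetric in the three factors — and that the summand $\mathbb{B}$ it produces is truly nontrivial, rather than just $S^3$. The latter is the heart of Lemma \ref{lem2}: capping off $D$ with a spanning disk $E \subset \partial H_1$ for $\partial D$ produces an embedded 2-sphere in $M$, and the existence of the second component of $F_{23}$ is what prevents this sphere from bounding a ball on both sides, forcing the $S^2$-bundle summand.
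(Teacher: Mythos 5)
Your proof is correct and follows essentially the same route as the paper: the paper's own argument also deduces from $\chi(F_{23})=-1$ that $F_{23}$ has at least two components and then invokes Lemma \ref{lem2} (with the roles of the handlebodies permuted, the disk boundary being automatically inessential in the 2-sphere $\partial H_1$) to produce a forbidden $S^2$-bundle summand. Your write-up simply makes explicit the steps the paper leaves implicit.
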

\begin{proof}[Proof of Claim \ref{011.a}]
Suppose that $F_{23}$ has a disk component. Since  $\chi(F_{23})= -1$, $F_{23}$ has other components. Then, $M \ncong S^{3}$ and $M$ is not homeomorphic to a lens space by Lemma \ref{lem2}. This is a contradiction.
\end{proof}

\begin{claim}\label{011.b}
The number of disk components of  $F_{12}$ (resp.  $F_{13}$)  is one.
\end{claim}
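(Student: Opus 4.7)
My plan is to argue by contradiction, constructing an embedded 2-sphere $S \subset M$ whose complement in $M$ is connected, which contradicts the irreducibility of $M$ (since $M$ is $S^3$ or a lens space, every embedded 2-sphere in $M$ bounds a 3-ball and hence separates $M$). By the symmetry $H_2 \leftrightarrow H_3$ in the type-$(0,1,1)$ decomposition, it suffices to prove the claim for $F_{12}$.

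Suppose for contradiction that $F_{12}$ has at least two disk components $D_1$ and $D_2$. Since $F_{12}$ then has at least two connected components, Lemma \ref{lem2} is applicable: if $\partial D_k$ were inessential in the torus $\partial H_3$ for either $k \in \{1,2\}$, then $M$ would decompose as $M' \# \mathbb{B}$ with a nontrivial $\mathbb{B}$-summand, contradicting the primeness of $M$. Hence both $\partial D_1$ and $\partial D_2$ are essential simple closed curves in $\partial H_3 \cong T^2$, and being disjoint essential curves on a torus they are parallel and cobound an annulus $A \subset \partial H_3$. I then form the embedded 2-sphere $S := D_1 \cup A \cup D_2$: the three pieces meet only along the branched loci $\partial D_1$ and $\partial D_2$, with a $F_{12}$-sheet meeting a $\partial H_3$-sheet, so $S$ is a bona fide topologically embedded $S^2$ in $M$.

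The main step is to show $M \setminus S$ is connected. Since $D_1, D_2 \subset \partial H_1 \cap \partial H_2$ and $A \subset \partial H_3$, the sphere $S$ is disjoint from every $\operatorname{int}(H_i)$; in particular each $\operatorname{int}(H_i)$ lies in $M \setminus S$ and is connected. To bridge them, observe that any 2-dimensional interior point of $F_{ij} \setminus S$ joins $\operatorname{int}(H_i)$ and $\operatorname{int}(H_j)$ in $M \setminus S$. For the pair $(1,2)$: the 2-dimensional part of $S \cap F_{12}$ is $D_1 \cup D_2$ with Euler characteristic $2$, and by Lemma \ref{lem1} we have $\chi(F_{12}) = 1$, so $F_{12}$ has components beyond $D_1 \cup D_2$ and $F_{12} \setminus S$ has 2-dimensional interior. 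For a pair involving $H_3$: since $\partial H_3 = F_{13} \cup F_{23}$ is a torus while $A \subsetneq \partial H_3$ is a proper annular subsurface, one cannot have both $F_{13} \subset A$ and $F_{23} \subset A$, so at least one of $F_{13}$ or $F_{23}$ is not contained in $A$, and the corresponding $F_{k3} \setminus S$ has 2-dimensional interior, bridging $\operatorname{int}(H_k)$ with $\operatorname{int}(H_3)$.

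Combining these two bridges, all three $\operatorname{int}(H_i)$ lie in the same connected component of $M \setminus S$, so $M \setminus S$ is connected---contradicting the fact that $S$ separates $M$. The point requiring the most care is ensuring that $S$ is genuinely embedded at the branched loci, where three $F_{ij}$ sheets meet; this should follow from the standard local model of a branched locus as three half-planes meeting at the $z$-axis, with $D_k$ and $A$ belonging to two distinct half-planes and thus gluing along $\partial D_k$ into a topological 2-manifold.
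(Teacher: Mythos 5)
Your proof is correct, but it takes a genuinely different route from the paper's. The paper also begins by ruling out inessential disk boundaries via Lemma \ref{lem2}, but then it counts boundary circles (since $F_{12}\subset\partial H_1\cong S^2$ is planar, two disks together with a component of Euler characteristic at most $-1$ force at least five branched loci), compresses $H_3$ along one of the disks using Lemma \ref{b-1}, and derives a contradiction with Theorem \ref{s3001}, which says a type-$(0,0,1)$ decomposition of the 3-sphere or a lens space has exactly two branched loci. You instead assemble the two disks and an annulus in $\partial H_3$ cobounded by their (necessarily parallel, essential) boundaries into an embedded $2$-sphere $S$ and show that $M\setminus S$ is connected, contradicting the fact that every sphere in $S^3$ or a lens space bounds a ball and hence separates. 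Your argument is more elementary and self-contained --- it uses only Lemma \ref{lem1}, Lemma \ref{lem2} and irreducibility, rather than Lemma \ref{b-1} and the inductive appeal to the already-classified type-$(0,0,1)$ case --- and the delicate points are handled correctly: the interiors of $D_1$ and $D_2$ meet $\partial H_3$ only along the branched loci $\partial D_1\cup\partial D_2$, so $S$ is embedded, and the bridging of the three handlebody interiors through a spare component of $F_{12}$ (which exists since $\chi(F_{12})=1\neq 2$) and through the open annulus $\partial H_3\setminus A$ is valid. The trade-off is that the paper's route reuses machinery that is applied uniformly throughout the rest of the classification, whereas yours is a one-off geometric construction; both are acceptable proofs.
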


\begin{proof}[Proof of Claim \ref{011.b}]
We show that the  number of disk components of  $F_{12}$ is one.
Suppose that $F_{12}$ has at least two disk components $D_{1}$ and $D_{2}$ .
By the condition $\chi(F_{12})=1$, $F_{12}$ has a planar surface component whose Euler number is less than $-1$. 
Then, the number of branched loci $b$ is greater than $4$.
If $\partial D_{1}$ is inessential in $\partial H_{2}$, then $M \ncong S^{3}$ by Lemma \ref{lem2}. This is a contradiction.
Hence, we can assume that $\partial D_{1}$ is essential in $\partial H_{3}$. 
Let $N(D_{1})$ be a regular neighborhood of $D_1$ in $H_{1} \cup H_{2}$.
Then, $N(D_1)\cup H_3$ is a punctured lens space $L$.
Hence, $M\cong M'\#Cap(L)$, where $M'$ has a type-$(0, 0, 1; b-1)$ decomposition by Lemma \ref{b-1}.
Since $b$ is greater than $4$, $b-1$ is greater than $3$.
This contradicts the assumption that the 3-sphere and lens spaces do not have a type-$(0, 0, 1; b)$ decomposition such that $b>2$ by Theorem \ref{s3001}.
Hence, the number of disk components of $F_{12}$ is one.
Similarly, we can show that the  number of disk components of  $F_{13}$ is one.
\end{proof}
\begin{claim}\label{011.c}
	The number of non-disk components of $F_{12}$ is at most one. If $F_{12}$ has a non-disk component, it is an annulus.
 \end{claim}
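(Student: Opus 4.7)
The plan is to prove the claim in two stages: first, use Euler characteristic together with the planarity of subsurfaces of $S^{2}$ to conclude that every non-disk component must be an annulus; then bound the number of annulus components by one via two Mayer--Vietoris computations.

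For the first stage, since $F_{12} \subset \partial H_{1} \cong S^{2}$, each component of $F_{12}$ is a connected planar subsurface. No component can be closed: the only closed surface embedding in $S^{2}$ is $S^{2}$ itself, but this would force $F_{12} = \partial H_{1}$, contradicting $F_{12} \subset \partial H_{2} \cong T^{2}$ since $S^{2}$ does not embed in $T^{2}$. Hence every component has non-empty boundary and Euler characteristic at most $1$, with equality iff it is a disk. Lemma \ref{lem1} gives $\chi(F_{12}) = 1$, and Claim \ref{011.b} provides a unique disk component (contributing $\chi = 1$), so the remaining components have total Euler characteristic $0$; since each contributes at most $0$, each must contribute exactly $0$, forcing each to be an annulus.

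For the second stage, let $n$ denote the number of annulus components of $F_{12}$, so $F_{12}$ has $n+1$ components. Applying Mayer--Vietoris to $H_{1} \cup H_{2} = H_{1} \cup_{F_{12}} H_{2}$, the map $H_{0}(F_{12}) \to H_{0}(H_{1}) \oplus H_{0}(H_{2})$ sends every component of $F_{12}$ to $(1,-1)$ (because $H_{1}$ and $H_{2}$ are connected), so its kernel is free of rank $n$; by exactness this kernel equals the image of the connecting map from $H_{1}(H_{1} \cup H_{2})$, and therefore $\mathrm{rank}\, H_{1}(H_{1} \cup H_{2}) \geq n$. Now apply Mayer--Vietoris to $M = (H_{1} \cup H_{2}) \cup H_{3}$ with intersection $\partial H_{3} \cong T^{2}$: using $H_{2}(M) = 0$ and the injectivity of $H_{0}(T^{2}) \to H_{0}(H_{1} \cup H_{2}) \oplus H_{0}(H_{3})$, one extracts the short exact sequence
\[
0 \to H_{1}(T^{2}) \to H_{1}(H_{1} \cup H_{2}) \oplus H_{1}(H_{3}) \to H_{1}(M) \to 0.
\]
Since $H_{1}(M)$ is torsion (trivial for $S^{3}$, cyclic of order $p$ for $L(p,q)$), comparing ranks yields $\mathrm{rank}\, H_{1}(H_{1} \cup H_{2}) = 1$; combined with the previous bound, $n \leq 1$.

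The main obstacle I anticipate is obtaining the upper bound $\mathrm{rank}\, H_{1}(H_{1} \cup H_{2}) = 1$, which is exactly where the hypothesis that $M$ is $S^{3}$ or a lens space enters, via the torsionness of $H_{1}(M)$. A purely topological alternative would be to argue case-by-case that any additional annulus component forces some branched locus to be inessential in $\partial H_{k}$ for some $k$ and then invoke a disk-surgery argument in the spirit of Lemma \ref{lem2}; however, the homological approach above seems considerably shorter and avoids the combinatorial case analysis of the positions of branched loci on $\partial H_{2}$ and $\partial H_{3}$.
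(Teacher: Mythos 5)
Your proof is correct, and the two stages split exactly where the paper's proof does, but your second stage takes a genuinely different route. The first stage (every non-disk component is an annulus) is the same argument as the paper's: $\chi(F_{12})=1$ from Lemma \ref{lem1}, Claim \ref{011.b} supplies the unique disk component, and planarity of subsurfaces of $\partial H_1\cong S^2$ forces the remaining components to have $\chi=0$. For the bound on the number of annuli, the paper instead counts branched loci (two annuli plus a disk give $b\geq 5$), compresses along the disk component using Lemma \ref{b-1} to produce a type-$(0,0,1;b-1)$ decomposition with $b-1\geq 4$ of a summand that must again be $S^3$ or a lens space, and derives a contradiction with Theorem \ref{s3001}, which pins $b=2$ for such decompositions. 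Your Mayer--Vietoris computation --- $\mathrm{rank}\,H_1(H_1\cup H_2)\geq n$ from the splitting along $F_{12}$, and $\mathrm{rank}\,H_1(H_1\cup H_2)=1$ from the splitting of $M$ along $\partial H_3\cong T^2$ using $H_2(M)=0$ and the torsionness of $H_1(M)$ --- is self-contained: it does not lean on the already-proved classification of type-$(0,0,1)$ decompositions or on the cut-and-paste machinery of Lemma \ref{b-1}, and it isolates precisely where the hypothesis on $M$ enters. The paper's argument, by contrast, is of a piece with the inductive scheme used throughout the section (reduce a decomposition to a simpler one by surgering along an essential disk component), which is why the authors prefer it even though it is longer. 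Both arguments are sound; one small point worth noting in your write-up is that the possibility of a closed component is already excluded by the definition of a handlebody decomposition, which requires each $F_{ij}$ to be a compact surface with boundary, so your $S^2$-in-$T^2$ argument is a belt-and-suspenders step rather than a necessary one.
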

\begin{proof}[Proof of Claim \ref{011.c}]
	If $F_{12}$ has a planar surface component that is not an annulus, then $F_{12}$ has at least two disk components by $\chi(F_{12})=1$.
	This contradicts Claim \ref{011.b}.
	Then, if a component of $F_{12}$ is not a disk, it is an annulus.
	Suppose that  $F_{12}$ has at least two annuli components. 
	Then, the handlebody decomposition has at least 5 branched loci. 
	Let $D$ be a disk component of $F_{12}$ and $N(D)$ be a regular neighborhood of $D$ in $H_{1}\cup H_2$. 
	Then, $N(D)\cup H_3$ is a punctured lens space $L$.
	Hence, $M\cong M'\# Cap(L)$, where $M'$ has a type-$(0, 0, 1; b)$ decomposition with $b\geq4$ by Lemma \ref{b-1}.
	By Theorem \ref{s3001}, the 3-sphere and lens spaces do not admit a type-$(0, 0, 1; b)$ decomposition with $b\geq4$.
	Therefore, this contradicts the assumption.
\end{proof}
By Claims \ref{011.a}, \ref{011.b}, and \ref{011.c}, there are two cases $F_{12}\cong D^2$ and $F_{12}\cong D^2\cup A$.
If $F_{12}\cong D^2$, then $F_{13}\cong D^2$ by $ F_{13}\cong \partial H_1- F_{12}\cong D^2$.
Hence, $F_{23}\cong \partial H_2- F_{12}\cong T^\circ$.
Then, this decomposition corresponds to the decomposition of case (1).
Suppose that $F_{12}\cong D^2\cup A$. If  the core of an annulus component of $F_{12}$  is inessential in $\partial H_2$, then $F_{23}\cong A\cup T^\circ$ or $F_{23}\cong D \cup (T^\circ - D^2)$.
If $F_{23}\cong A\cup T^\circ$, $\partial D_{12}$ is inessential in $\partial H_3$, where $D_{12}$ is a disk component of $F_{12}$. 
Then, $M \ncong S^{3}$ and $M$ is not homeomorphic to a lens space by Lemma \ref{lem2}. This is a contradiction.
If $F_{23}\cong D \cup (T^\circ - D^2)$, this contradicts Claim \ref{011.a}.
Then, we can assume that the core of an annulus component of $F_{12}$ is essential in $\partial H_2$.
Hence, $F_{23}\cong \partial H_2- F_{12} \cong A- D^2\cong P$.
This decomposition corresponds to the decomposition of case (2).

			Next, we shall prove that $M$ admits a type-$(0, 1, 1)$ decomposition that satisfies cases (1) and (2).  
			 We can assume that $M$ has a genus-one Heegaard splitting $V_1\cup V_2$.
			 We take a point on a genus-one Heegaard surface. 
			 Let $N(p)$ be a regular neighborhood of $p$ in $M$.
			 Then, $N(p)\cup (V_1-N(p))\cup (V_2-N(p))$ is a type-$(0, 1, 1)$ decomposition that satisfies case (1). 
			 Hence, $M$ admits a type-$(0, 1, 1)$ decomposition that satisfies case (1). 
			 Let $D$ be a meridian disk of $V_1$.
			 Then, $V_1- N(D)$ is a 3-ball $B^3$, where $N(D)$ is a regular neighborhood of $D$ in $V_1$.
			 Let $A$ be an annulus in $B^3$ such that one of the components of $\partial A$ is parallel to $\partial D$ in $\partial V$.
			Then, $A$ cuts open $B^3$ into a 3-ball $H_1$ and a solid torus $H_2$ (see Figure \ref{ann011}).
			Hence, $D\cup A$ cuts open $V_1$ into $H_1$ and $H_2$. We denote $V_2=H_3$. Then, $H_1\cup H_2\cup H_3$ is a type-$(0, 1, 1)$ decomposition of $M$ that satisfies case (2).
\end{proof}

Before classifying the type-$(0, 1, 1)$ decompositions of the 3-sphere and lens spaces stated above, we present some lemmas. The following lemma shows that we can consider the type-$(0, 1, 1)$ decompositions of the 3-sphere and lens spaces as a genus-one Heegaard splitting. 
\begin{lem}\label{heegaard011lens}
	Let $H_1\cup H_2\cup H_3$ be a type-$(0, 1, 1)$ decomposition of the 3-sphere or a lens space  $M$  that satisfies either case (1) or case (2) of Theorem \ref{s3011}. Then, the following hold:
	\begin{enumerate}
	\renewcommand{\theenumi}{(\arabic{enumi})}
	 \item[(1)] if $H_1\cup H_2\cup H_3$ is a type-$(0, 1, 1)$ decomposition of $M$  that satisfies the conclusion of Theorem \ref{s3011} (1), both $\partial H_2$ and $\partial H_3$ are genus-one Heegaard surfaces of $M$.
	\item[(2)] if $M$ is a lens space and $H_1\cup H_2\cup H_3$ is a type-$(0, 1, 1)$ decomposition of  $M$  that satisfies the conclusion of Theorem \ref{s3011} (2), exactly either $\partial H_2$ or $\partial H_3$ is a genus-one Heegaard surface of $M$
	\item[(3)] if $M$ is the 3-sphere and $H_1\cup H_2\cup H_3$ is a type-$(0, 1, 1)$ decomposition of  $M$  that satisfies the conclusion of Theorem \ref{s3011} (2), both $\partial H_2$ and $\partial H_3$ are genus-one Heegaard surfaces of $M$.
	\end{enumerate}
\end{lem}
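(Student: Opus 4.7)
My plan for part (1) is straightforward: since $F_{12}\cong F_{13}\cong D$, each of $H_{1}\cup H_{2}$ and $H_{1}\cup H_{3}$ is a 3-ball attached to a solid torus along a single disk, hence itself a solid torus. Therefore $\partial H_{2}=\partial(H_{1}\cup H_{3})$ and $\partial H_{3}=\partial(H_{1}\cup H_{2})$ each split $M$ into two solid tori and are genus-one Heegaard surfaces of $M$.

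For parts (2) and (3) I work in case (2), where $F_{12}\cong F_{13}\cong D\cup A$ and $F_{23}\cong P$. The key reduction I would first establish is that, for $\{j,k\}=\{2,3\}$, the torus $\partial H_{j}$ is a genus-one Heegaard surface of $M$ if and only if the complementary submanifold $H_{1}\cup H_{k}$ is a solid torus. A Mayer--Vietoris computation for the decomposition $H_{1}\cup H_{k}=H_{1}\cup_{D\sqcup A}H_{k}$ yields
\[
H_{1}(H_{1}\cup H_{k})\;\cong\;\mathbb{Z}\oplus\mathbb{Z}/n_{k},
\]
where $n_{k}$ is the winding number in $H_{k}$ of the core of the annulus component of $F_{1k}$ (essential on $\partial H_{k}$ by the argument in the proof of Theorem \ref{s3011} (2)). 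Consequently $H_{1}\cup H_{k}$ is a solid torus precisely when $|n_{k}|=1$.

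For part (3), $M=S^{3}$, I would apply Mayer--Vietoris to $M=H_{j}\cup(H_{1}\cup H_{k})$ glued along $\partial H_{k}\cong T^{2}$. The vanishing of $H_{1}(S^{3})$ forces the map $H_{1}(T^{2})\cong\mathbb{Z}^{2}\to H_{1}(H_{j})\oplus H_{1}(H_{1}\cup H_{k})\cong\mathbb{Z}^{2}\oplus\mathbb{Z}/n_{k}$ to be surjective, and a minimal-generators count on the target requires $|n_{k}|=1$. This holds for both $k=2,3$, so both $\partial H_{2}$ and $\partial H_{3}$ are genus-one Heegaard surfaces.

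For part (2), $M=L(p,q)$ with $p\geq 2$, my plan is to argue as follows. The construction in the proof of Theorem \ref{s3011} (2) realizes a case-(2) decomposition by cutting one solid torus $V_{1}$ of a Heegaard splitting $V_{1}\cup V_{2}=M$ with a disk-plus-annulus, producing $H_{1}\cup H_{2}=V_{1}$ and $H_{3}=V_{2}$; in particular $\partial H_{3}$ is automatically a Heegaard surface. To show that $\partial H_{2}$ is not, I would trace the annulus $A$ used in the construction: its boundary component on $\partial V_{1}$ is parallel to the meridian $m_{1}$ of $V_{1}$, and inverting the Heegaard gluing relation $\varphi_{*}(m_{2})=pl_{1}+qm_{1}$ shows that $m_{1}$ has winding number $\pm p$ in $V_{2}=H_{3}$, so $|n_{3}|=p\geq 2$ and $H_{1}\cup H_{3}$ is not a solid torus. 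The main obstacle is verifying that every case-(2) decomposition of $L(p,q)$ arises (up to relabeling $H_{2}\leftrightarrow H_{3}$) from this construction, so that the slope computation applies universally.
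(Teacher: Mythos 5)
Part (1) of your plan is fine and is essentially the paper's argument. The difficulties are in parts (2) and (3), and they are genuine. First, your solid-torus criterion is not justified as stated: you deduce ``$H_{1}\cup H_{k}$ is a solid torus precisely when $|n_{k}|=1$'' from $H_{1}(H_{1}\cup H_{k})\cong\mathbb{Z}\oplus\mathbb{Z}/n_{k}$, but first homology $\mathbb{Z}$ does not detect a solid torus --- every knot exterior in $S^{3}$ has $H_{1}\cong\mathbb{Z}$. So in part (3), even after the Mayer--Vietoris count forces $|n_{k}|=1$, you have not shown that $H_{1}\cup H_{k}$ is a solid torus. This is repairable (a van Kampen computation gives $\pi_{1}(H_{1}\cup H_{k})\cong\mathbb{Z}/n_{k}\ast\mathbb{Z}$, and a knot exterior in $S^{3}$ with infinite cyclic fundamental group is an unknot exterior), but the repair uses $\pi_{1}$, not $H_{1}$, and you did not make it.

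The more serious gap is part (2). Your slope computation applies only to the model decomposition built in the existence half of Theorem \ref{s3011}, and you yourself flag that you cannot show every case-(2) decomposition of $L(p,q)$ arises from that model up to relabeling. That reduction is essentially the classification statement (Propositions \ref{011iso2} and \ref{011iso3}), which in the paper is proved \emph{using} this lemma, so assuming it would be circular. Your plan also never shows, for an arbitrary decomposition, that at least one and not both of $|n_{2}|,|n_{3}|$ equal $1$, which is exactly the ``exactly one'' content of conclusion (2). The paper avoids all of this by arguing directly on an arbitrary decomposition: take the disk component $D$ of $F_{12}$, note $\partial D$ is essential in $\partial H_{3}$ (otherwise Lemma \ref{lem2} produces an $S^{2}\times S^{1}$ summand), and attach $N(D)$ to $H_{3}$ as a $2$-handle. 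By primeness of $M$ the result is either a punctured $M$ --- in which case $\partial H_{3}$ is a Heegaard surface, the complementary piece is a ball, and $\partial H_{2}$ is seen not to be a Heegaard surface --- or a $3$-ball, in which case $\partial H_{3}$ is not a Heegaard surface and Theorem \ref{s3001}, applied to the resulting type-$(0,0,1)$ decomposition, shows $\partial H_{2}$ is one. You would need an argument of this kind (valid for every decomposition, not just the constructed one) to complete part (2).
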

\begin{proof}
	First, we shall prove (1).
	Suppose that $H_1\cup H_2\cup H_3$ satisfies the conclusion of Theorem \ref{s3011} (1).
	Then, both $(H_1\cup H_2)\cup H_3$ and $(H_1\cup H_3)\cup H_2$ are genus-one Heegaard splittings of $M$.
	Hence, both $\partial H_2$ and $\partial H_3$ are genus-one Heegaard surfaces.
	
	Next, we shall prove (2).
	Suppose that $M$ is a lens space and $H_1\cup H_2\cup H_3$ is a type-$(0, 1, 1)$ decomposition of  $M$  that satisfies the conclusion of Theorem \ref{s3011} (2).
	Let $D_1$ be a disk component of $F_{12}$.
	If $\partial D_1$ is inessential in $\partial H_3$, $M$ has $S^2\times S^1$ as a connected summand by Lemma \ref{lem2}.
	This contradicts the assumption that $M$ is a lens space.
	Then, we can assume that $\partial D_1$ is essential in $\partial H_3$.
	If $N(\partial D_1)\cup H_3$ is a punctured $M$, then $\partial H_3$ is a genus-one Heegaard surface and $(H_1-N(D_1))\cup (H_2-N(D_1))$ is a 3-ball.
	Hence, if $\partial H_3$ is a Heegaard surface, there is a disk in $H_1-N(D_1)$ that intersects a meridian disk of $H_2$ exactly once; hence, $\partial H_2$ is not a genus-one Heegaard surface.
	Suppose that $N(\partial D_1)\cup H_3$ is a 3-ball.
	Then, $\partial H_3$ is not a genus-one Heegaard surface of $M$ and $(H_1-N(D_1))\cup (H_3\cup N(D_1))\cup (H_2-N(D_1))$ is a type-$(0, 0, 1)$ decomposition of $M$.
	Now, $((H_1-N(D_1))\cup (H_3\cup N(D_1)))$ is a solid torus by Theorem \ref{s3001}.
 	Hence, $((H_1-N(D_1))\cup (H_3\cup N(D_1))) \cup (H_2-N(D_1))$ is a Heegaard splitting.
	Therefore, $\partial H_2$ is a genus-one Heegaard surface of $M$ since $\partial H_2$  is isotopic to $(H_2-N(D))$.
		
	Finally, we shall prove (3).
	 Suppose that $M$ is the 3-sphere and $H_1\cup H_2\cup H_3$ is a type-$(0, 1, 1)$ decomposition of  $M$  that satisfies the conclusion of Theorem \ref{s3011} (2).
	Let $D_2$ be a disk component of $F_{12}$.
	If $\partial D_2$ is inessential in $\partial H_3$, $M$ has a $S^2\times S^1$ as a connected summand by Lemma \ref{lem2}.
	This contradicts the assumption that $M$ is the 3-sphere.
	Then, we can assume that $\partial D_2$ is essential in $\partial H_3$.
	If $N(\partial D_2)\cup H_3$ is a punctured $M$, then $\partial H_3$ is a genus-one Heegaard surface and $(H_1-N(D_2))\cup (H_2-N(D_2))$ is a 3-ball.
	Then, $(H_1-N(D_2))\cup (H_3\cup N(D_2)) \cup (H_2-N(D_2))$ is a type-$(0, 0, 1; 2)$ decomposition of $M$.
	Now, $((H_1-N(D_2))\cup (H_3\cup N(D_2)))$ is a solid torus by Theorem \ref{s3001}.
 	Hence, $((H_1-N(D_2))\cup (H_3\cup N(D_2))) \cup (H_2-N(D_2))$ is a Heegaard splitting.
	Therefore, $\partial H_2$ is a genus-one Heegaard surface of $M$ since $\partial H_2$  is isotopic to $(H_2-N(D))$.
	This completes the proof.
\end{proof}

If $H_1\cup H_2\cup H_3$ is a type-$(0, 1, 1)$ decomposition of the 3-sphere or a lens space, $H_1\cup H_2$ or $H_1\cup H_3$ is a solid torus of a genus-one Heegaard splitting of the 3-sphere or lens space by Lemma \ref{heegaard011lens}.
Hence, we shall consider  the embedding of $F_{23}$ or $F_{13}$ into a solid torus $H_1\cup  H_2$ or $H_1\cup H_3$, respectively.

\begin{lem}\label{ann}
	A properly embedded annulus $A$ in a 3-ball $B_1$ that cuts open $B_1$ into a solid torus $V$ and a 3-ball $B_2$ is unique up to ambient isotopy.
\end{lem}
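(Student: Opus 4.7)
The plan is to reduce the question to the uniqueness of collar neighborhoods of a fixed annulus on $\partial B_1$. I would proceed in three stages: first standardizing $\partial A$ on $\partial B_1$, then recognizing the solid-torus side $V$ as a trivial bicollar of an annulus on $\partial B_1$, and finally invoking the uniqueness of bicollars.

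First, $\partial A$ consists of two disjoint simple closed curves on $\partial B_1 \cong S^2$. Any two such pairs of curves on $S^2$ are related by an ambient isotopy of $S^2$ (they have the same complementary pattern: two disks and an annulus), which extends to an ambient isotopy of $B_1$ by the isotopy extension theorem. Hence, after an initial isotopy, I may take $\partial A$ to be a fixed standard pair of parallel circles cutting $\partial B_1$ into an annulus $R$ and two disks $D_1, D_2$. Since $\partial V \setminus \operatorname{int}(A)$ is an annulus (the complement of $A$ on the torus $\partial V$), it must coincide with the annular component $R$, so $V \cap \partial B_1 = R$ and $B_2 \cap \partial B_1 = D_1 \cup D_2$.

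Next, I would prove that $V$ is a bicollar of $R$ in $B_1$, i.e., there is a homeomorphism of pairs $V \cong R \times [0,1]$ carrying $R$ to $R \times \{0\}$. Applying Van Kampen's theorem to $B_1 = V \cup_A B_2$, and using $\pi_1(B_2) = 1$, one has $\pi_1(B_1) = \pi_1(V)/N$, where $N$ is the normal closure of the image of $\pi_1(A) \to \pi_1(V) = \mathbb{Z}$. Writing the slope of the core of $A$ on $\partial V$ as $p\mu + q\lambda$ in meridian-longitude coordinates, the image in $\pi_1(V)$ is $q\lambda$. Since $B_1 \cong B^3$ is simply connected, this forces $q = \pm 1$, so the core of $A$ (and the parallel core of $R$) represents a generator of $\pi_1(V)$, i.e., is a longitude. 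After reparametrizing $V \cong S^1 \times D^2$ so that this core is the standard longitude and writing $D^2 \cong J \times [0,1]$ for an arc $J \subset \partial D^2$ containing the appropriate point, one obtains $V \cong (S^1 \times J) \times [0,1] = R \times [0,1]$ as pairs, with $R$ identified with $R \times \{0\}$.

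Finally, the classical uniqueness of bicollars of a compact codimension-zero submanifold of the boundary of a manifold provides an ambient isotopy of $B_1$, fixing $R$, that carries $V$ to any chosen standard bicollar $V_0$; this isotopy sends $A = \partial V \setminus \operatorname{int}(R)$ to the standard annulus $A_0 = \partial V_0 \setminus \operatorname{int}(R)$. Applying this argument to two arbitrary annuli with the stated property shows they are both ambient isotopic to $A_0$, and hence to each other. The main obstacle is the pair-level identification $V \cong R \times [0,1]$: a priori the annulus $A$ could sit at various slopes on $\partial V$, and it is precisely the condition that the other side $B_2$ be a $3$-ball which, via Van Kampen, forces the slope to be longitudinal and lets us recognize $V$ as a trivial collar.
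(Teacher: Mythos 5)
Your first two steps are fine: putting $\partial A$ into standard position on $\partial B_1\cong S^2$, identifying $V\cap\partial B_1$ with the annular region $R$ (forced, since otherwise $\partial V$ would be a sphere), and the Van Kampen computation showing that the core of $R$ is primitive in $\pi_1(V)$, hence a longitude, so that $(V,R)$ is \emph{abstractly} homeomorphic to $(R\times[0,1],\,R\times\{0\})$. The gap is the final step. The classical uniqueness theorem for collars (equivalently, regular neighborhoods) says that any two \emph{collars} of $R$ --- images of collar embeddings, i.e.\ genuine regular neighborhoods --- are ambient isotopic. It does not say that every codimension-zero submanifold $V$ with $V\cap\partial B_1=R$ that is merely abstractly pair-homeomorphic to $R\times[0,1]$ is ambient isotopic to such a collar; that implication is essentially the content of the lemma you are trying to prove. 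A priori $V$ could be a ``knotted'' product region (compare: a tubular neighborhood of a knotted circle is abstractly $S^1\times D^2$ but not ambient isotopic to an unknotted one), and the homeomorphism type of the pair $(V,R)$ cannot detect this. Ruling it out requires using the global hypotheses again --- for instance, that $B_2$ is a ball, or the observation that the core of $V$ is isotopic through $V$ to the core of $R\subset S^2$ and hence is an unknot, followed by control of the framing --- and no such argument is supplied. As written, the only use you make of the hypothesis on $B_2$ is $\pi_1(B_2)=1$ in the slope computation, which pins down the slope of $R$ on $\partial V$ but does not put $V$ into standard position.

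For comparison, the paper works on the other side of $A$: it regards $B_2$ as a regular neighborhood of a properly embedded arc $\alpha$ (the cocore of the $2$-handle), notes that the exterior of $\alpha$ being the solid torus $V$ means $(B_1,\alpha)$ is the trivial $1$-string tangle, and then invokes the uniqueness of the trivial tangle together with the uniqueness of regular neighborhoods of $\alpha$. There the object being standardized really is a regular neighborhood, so the uniqueness theorem legitimately applies, and the nontrivial input is the standard fact that a boundary-parallel arc in a ball is unique up to ambient isotopy. If you want to keep your setup, you must replace the appeal to ``uniqueness of bicollars'' by an actual proof that $V$ is ambient isotopic to a regular neighborhood of $R$ in $B_1$ --- the most economical route being to pass to the complementary arc exactly as the paper does.
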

\begin{proof}
	The 3-ball $B_2$  can be assumed as a 2-handle in a 3-ball $B_1$ attached to a solid torus $V$. 
	Let $\alpha$ be a core of a 2-handle $B_2$. 
	Then, the pair $(B_1, \alpha)$ is a 1-string free tangle.
	This is unique up to ambient isotopy in a 3-ball $B_1$. Then, its regular neighborhood is also unique up to ambient isotopy.
	Hence, an annulus $A$ is unique up to ambient isotopy.
\end{proof}
\begin{lem}\label{ann2}
	Let $V$ be a solid torus, $D$ be a meridian disk of $V$, and $A$ be an annulus properly embedded in $V$, which does not intersect $D$.
	Then, there are two embeddings of $D\cup A$ up to ambient isotopy if $D\cup A$ satisfies the following.
	\begin{enumerate}
	\renewcommand{\theenumi}{(\arabic{enumi})}
	\item[(1)] $D\cup A$ cuts open $V$ into a 3-ball $B$ and a solid torus $V'$ such that each of them does not have a self-intersection.
	\item[(2)] $\partial (D\cup A)$ cuts open $\partial V$ into an annulus $A'$, a disk $D'$, and a thrice-punctured sphere $P$.
	\end{enumerate}
	Furthermore, these ambient isotopy classes are taken to each other by hyperelliptic involution on $V$.
\end{lem}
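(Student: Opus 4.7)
The plan is to analyze the boundary $\partial(D\cup A) = \partial D\cup C_1\cup C_2$ on $\partial V = T^2$, to show that this boundary data uniquely determines the embedding of $D\cup A$ in $V$ up to ambient isotopy, and then to enumerate the allowable boundary configurations up to ambient isotopy of $V$.

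First I would analyze the three curves on $\partial V$. Three pairwise disjoint essential simple closed curves on $T^2$ are necessarily mutually parallel and cut $T^2$ into three annuli, so condition $(2)$ forces at least one of $\partial D, C_1, C_2$ to be inessential. Since $\partial D$ is a meridian, we may assume (relabeling if necessary) that $C_1$ is inessential, bounding a disk $\Delta_1\subset\partial V$. A short case check then shows $C_2$ must be essential and parallel to $\partial D$; otherwise the complementary surface does not contain a pair of pants as required. The curves $\partial D$ and $C_2$ separate $\partial V$ into two annuli $R_+, R_-$, and $C_1$ lies entirely in exactly one of them, producing two possible boundary configurations.

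Next I would prove that the interior of $D\cup A$ is rigid given this boundary data. The meridian disk $D$ is unique up to isotopy rel boundary by the classical uniqueness of meridian disks. For $A$, let $D'' = A\cup_{C_1}\Delta_1$ and push $\Delta_1$ slightly into $V$; this yields a properly embedded disk with meridional boundary, disjoint from $D$. In the ball $V\setminus N(D)$, a properly embedded disk with prescribed boundary is unique up to isotopy rel boundary, so $D''$, and therefore $A = D''\setminus\Delta_1$, is determined by the boundary data. Hence the embedding of $D\cup A$ in $V$ up to ambient isotopy is controlled entirely by the configuration of $\partial D\cup C_1\cup C_2$ on $T^2$.

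It remains to show that the two boundary cases are non-isotopic and are interchanged by the hyperelliptic involution. Identifying $V = S^1\times D^2\subset\mathbb{C}\times\mathbb{C}$, define $\tau(u,v)=(\bar u,\bar v)$; this is orientation-preserving on $V$, preserves $\partial D$ and $C_2$ setwise after a suitable choice of parametrization, and swaps $R_+$ with $R_-$, so it exchanges the two configurations. To see these configurations are not ambient isotopic, observe that any diffeomorphism of $V$ isotopic to the identity acts trivially on $H_1(V)\cong\mathbb{Z}$, while $\tau_\ast$ acts as $-1$; hence such a diffeomorphism preserves the longitude orientation and cannot swap $R_+$ and $R_-$ when the pair $\{\partial D,C_2\}$ is preserved. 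This yields exactly two ambient isotopy classes, related by $\tau$.

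The main technical point is the final step: one must rule out subtler ambient isotopies that could conceivably relate the two configurations. This reduces to the fact that the orientation-preserving mapping class group of the solid torus is $\mathbb{Z}/2$, generated by the hyperelliptic involution, which is easily verified via the induced action on $H_1(V)$.
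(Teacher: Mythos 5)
Your overall strategy coincides with the paper's: reduce to the configuration of $\partial D\cup C_1\cup C_2$ on $\partial V$, show the interior of $D\cup A$ is rigid once the boundary configuration is fixed, exhibit the two configurations as being exchanged by $\tau$, and rule out an ambient isotopy between them by the action on the boundary torus. Your boundary analysis and your non-isotopy argument via $H_1(V)$ are essentially what the paper does (the paper phrases the last step as: the terminal map of the isotopy restricts on $\partial V$ to the hyperelliptic involution, which is not isotopic to the identity; this is the same $-1$ on homology).

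The genuine gap is in the rigidity step. You cap $A$ off with the disk $\Delta_1\subset\partial V$ bounded by $C_1$, push in to get a properly embedded disk $D''$ with $\partial D''=C_2$, and conclude from uniqueness of such a disk rel boundary that ``$A=D''\setminus\Delta_1$ is determined.'' But an isotopy rel boundary carrying $D''_1$ to $D''_2$ need not carry the capping subdisk of $D''_1$ to that of $D''_2$, nor keep it boundary-parallel onto $\Delta_1$; so uniqueness of the meridian disk does not by itself give uniqueness of the annulus. You would still have to show the isotopy can be chosen compatibly with the decomposition $D''=A\cup(\text{cap})$, and that is exactly where the content of the lemma lies. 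The paper goes the other way: it cuts $V$ along $N(D)$ to get a $3$-ball in which $A$ is a properly embedded annulus splitting the ball into a ball and a solid torus, and then invokes Lemma \ref{ann} (such an annulus is the frontier of a regular neighborhood of a trivial arc, hence unique up to ambient isotopy). Replacing your capping step by this cut-along-$D$ argument, or adding a proof that the capping subdisk is determined up to isotopy within $D''$, would close the gap. Two smaller points: in the non-isotopy step you need (and do have, since $\partial D$ and $C_2$ lie on different components of $D\cup A$) that \emph{each} of $\partial D$ and $C_2$ is preserved setwise, not merely the pair $\{\partial D,C_2\}$ --- a homeomorphism interchanging the two meridians can swap $R_+$ and $R_-$ while acting by $+1$ on $H_1(V)$; and your closing claim that the orientation-preserving mapping class group of the solid torus is $\mathbb{Z}/2$ detected by $H_1$ is false (the twist along a meridian disk is nontrivial yet acts trivially on $H_1$), though your argument only needs that a diffeomorphism isotopic to the identity acts by $+1$ on $H_1(V)$, which is true.
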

\begin{proof}
		If each of the components of $\partial A$ is essential in $\partial V$,  it is meridional since $A$ does not intersect the meridian disk $D$.
		Suppose that each of the components of $\partial A$ is essential in $\partial V$.
		Then, $\partial (D\cup A)$ cuts open $\partial V$ into two annuli.
		This contradicts assumption (2).
		Suppose that each of the components of $\partial A$ is inessential.
		Then, $B$ intersects itself at $D$ since $\partial V'$ does not contain $D$.
		This contradicts assumption (1).
		Hence, we can assume that one of the components of $\partial A$ is inessential and the other is essential in $\partial V$.
		Since $D\cup A$ satisfies assumption (2), we can assume that the component of $\partial A$ that is inessential in $\partial V$ is $\partial D'$.
		Let $C$ be a component of $\partial A$ that is essential in $\partial V$.
		Then, $C$ is meridional in $\partial V$ since $A$ does not intersect $D$.
		Now, $C$ and $\partial D$ cut open $\partial V$ into two annuli $S_1$ and $S_2$.
		There are two cases: either  $D'\subset S_1$ or $D'\subset S_2$.
		
		Then, we shall show that a properly embedded annulus $A$ in $V$ that satisfies assumptions (1) and (2) as well as the assumption that $D'$ is in $S_1$ is unique up to isotopy.
		Since $D$ is unique up to ambient isotopy in $V$, we must consider the ambient isotopy class of $A$ in $V-N(D)$.
		Since $D$ is an intersection of $B$ and $V'$, $\partial D$ is inessential in $\partial V'$.
		Hence, $V'-N(D)$ is a solid torus and $B-N(D)$ is a 3-ball.
		Therefore, $A$ is a properly embedded annulus in a 3-ball $V-N(D)$ that cuts open $V-N(D)$ into a 3-ball $B-N(D)$ and a solid torus $V'-N(D)$.
		By Lemma \ref{ann}, $A$ is unique up to ambient isotopy in $V-N(D)$.
		Similarly, we can show that a properly embedded annulus  in $V$ that satisfies assumptions (1) and (2) as well as the assumption that $D'$ is in $S_2$ is unique up to isotopy.
		
		Figure \ref{ann011} (b) can be obtained by $\pi$-rotating Figure \ref{annuli_2} (a)  (see Figure \ref{ann011}).
		On the other hand, if they are ambient isotopic in $V$, there exists an ambient isotopy $F:V\times I\to V$ such that 
		\[
				F(S_1, 1)=S_2, F(S_2, 1)=S_1, F(\partial D, 1)=\partial D, F(C, 1)=C.
		\]
		We define  $f_t(x)=F(x, t)$. 
		Suppose that $f_1|_{\partial V}: \partial V\to \partial V$ is an identity in the mapping class group of a torus.
		Since $F(\partial D, 1)=\partial D$ and $F(C, 1)=C$,  an image of $S_1$ under $F(\partial V, 1)$ must be $S_1$.
		This contradicts the definition of $F$.
		Then, $f_1|_{\partial V}: \partial V\to \partial V$ is not an identity.
		Since the order of $f_1|_{\partial V}: \partial V\to \partial V$ in the mapping class group of a torus is 2, $f_1|_{\partial V}: \partial V\to \partial V$ is a hyperelliptic involution.
		Therefore, these are not isotopic in $V$ to each other since a hyperelliptic involution is not isotopic to the identity in the mapping class group of a torus.
		This completes the proof of the lemma (see Figure \ref{ann011}).
			\begin{figure}[httb]
			\centering
			\includegraphics[scale=0.8]{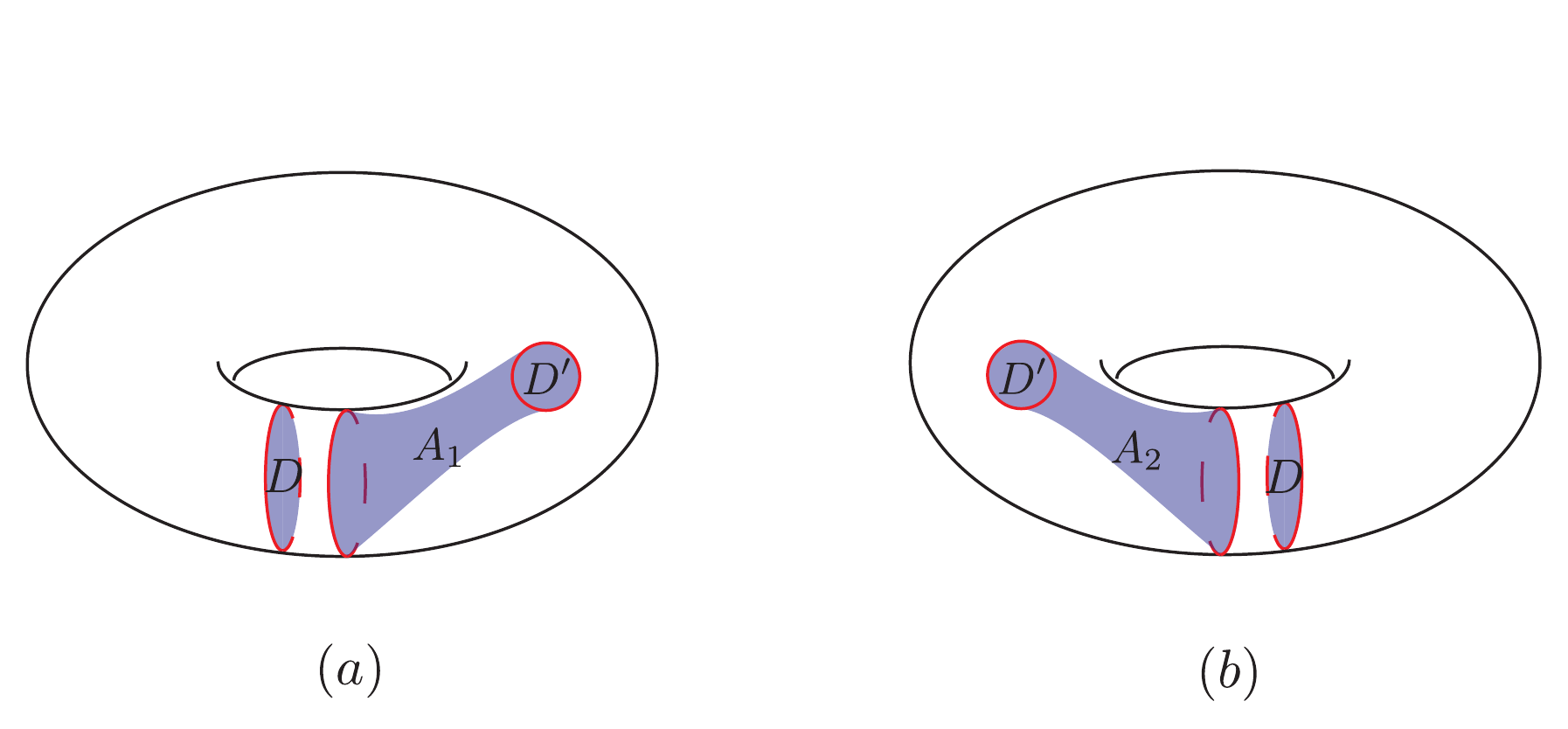}
			\vspace{-3mm}
			\caption{A properly embedded annulus and a disk in $V$ that satisfies the assumption in Lemma \ref{ann2}. (a) in a figure is taken to (b) by the hyperelliptic involution.}
			\label{ann011}
			\end{figure}
\end{proof}

Now, we shall start classifying the type-$(0, 1, 1)$ decompositions of the 3-sphere and lens spaces. We can obtain Theorem  \ref{lensclass011} from Propositions \ref{011iso1}, \ref{011iso2}, and \ref{011iso3}.
\begin{prop}\label{011iso1}
A type-$(0, 1, 1)$ decomposition of the 3-sphere or a lens space that satisfies case (1) is unique up to ambient isotopy.
\end{prop}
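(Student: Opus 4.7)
The plan is to identify each such decomposition with a small disk-pimple attached to a standard Heegaard torus, to verify uniqueness within a fixed choice of side, and then to use a second Heegaard torus hidden inside the branched surface to eliminate an apparent side-choice ambiguity.

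I will begin by invoking Lemma \ref{heegaard011lens}(1), which guarantees that $\partial H_2$ and $\partial H_3$ are genus-one Heegaard surfaces of $M$. Fixing a Heegaard splitting $M = V_1 \cup V_2$ with Heegaard torus $T = \partial V_1$, Waldhausen's theorem for $S^3$ and the Bonahon--Otal theorem for lens spaces furnish an ambient isotopy carrying $\partial H_2$ onto $T$. The disk $F_{12} \subset T$ is bounded by the branched locus $c$, which is inessential on $T$, and since inessential simple closed curves on a torus form a single ambient isotopy class, a further isotopy places $c$ in a standard position. Assuming first $H_2 = V_1$, so that $H_1 \subset V_2$ and $H_3 = V_2 \setminus H_1$, irreducibility of $V_2$ forces the sphere $F_{12} \cup F_{13}$ to bound a ball; hence $F_{13}$ is boundary-parallel in $V_2$ and unique up to isotopy, so within this side-choice the branched surface is determined up to ambient isotopy.

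The main obstacle is the complementary case $H_2 = V_2$ (equivalently, pimple on the $V_1$-side), because for $L(p,q)$ with $(p-1)q \not\equiv \pm 1 \pmod{p}$ Lemma \ref{core} forbids any ambient isotopy of $M$ exchanging $V_1$ and $V_2$. My key observation will be that the branched surface $\Sigma = F_{12} \cup F_{13} \cup F_{23}$ contains a second Heegaard torus $T^* := \partial H_3 = F_{13} \cup F_{23}$, so that one may equivalently write $\Sigma = T^* \cup F_{12}$. The two sides of $T^*$ are $H_3 = V_2 \setminus H_1$ and $H_1 \cup H_2 = H_1 \cup V_1$, isotopic as solid tori in $M$ to $V_2$ and $V_1$ respectively, and $F_{12}$ lies in the interior of the $V_1$-side of $T^*$, which is the opposite side from where the pimple sat relative to $T$. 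By Waldhausen/Bonahon--Otal there is an ambient isotopy $\phi_t$ of $M$ carrying $T^*$ to $T$ with matching sides, so $\phi_1(F_{12}) \subset V_1$; irreducibility of $V_1$ then makes $\phi_1(F_{12})$ boundary-parallel there, and $\phi_1(\Sigma) = T \cup \phi_1(F_{12})$ is a branched surface of precisely the form arising from $H_2 = V_2$. Combining this with the uniqueness established within a fixed side-choice completes the proof.
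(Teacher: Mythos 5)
Your proof is correct and follows essentially the same route as the paper: standardize one Heegaard torus contained in the branched surface, note that a properly embedded disk with inessential boundary in the complementary solid torus is boundary-parallel and hence unique, and resolve the side-choice ambiguity by re-reading the branched surface with respect to the second Heegaard torus it contains ($\partial H_3=F_{13}\cup F_{23}$ for you, $\partial H_2=F_{12}\cup F_{23}$ in the paper). The only blemish is a notational slip in the case $H_2=V_2$, where you write $H_3=V_2\setminus H_1$ and $H_1\cup H_2=H_1\cup V_1$ instead of $V_1\setminus H_1$ and $H_1\cup V_2$; the argument itself is unaffected.
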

\begin{proof}[Proof of Claim \ref{011iso1}]
	Let $H_1\cup H_2\cup H_3$ and $H_1'\cup H_2'\cup H_3'$ be type-$(0, 1, 1)$ decompositions of the 3-sphere or a lens space that satisfies case (1), where $H_1$ and $H_1'$ are homeomorphic to a 3-ball.
	Furthermore, we denote $F_{ij}=H_i\cap  H_j$ and $F'_{ij}=H'_i\cap  H'_j$.
	Then, $(H_1\cup H_2)\cup H_3$ and $(H_1'\cup H_2')\cup H_3'$ are genus-one Heegaard splittings.
	Since  genus-one Heegaard splittings of the 3-sphere and lens spaces are unique up to isotopy, $\partial H_3$ is isotopic to $\partial H_3'$.
	Let $V_1\cup V_2$ be a genus-one Heegaard splitting of the 3-sphere or a lens space such that $V_1=H_3$.
	Then, $F_{12}$ is a properly embedded disk  in $V_2$ such that its boundary is inessential in $\partial V_2$.
	If $V_1=H'_3$, $F'_{12}$ is a properly embedded disk in $V_2$ and $\partial F'_{12}$ is inessential in $\partial V_2$.
	Hence, $F_{12}$ is ambient isotopic to $F'_{12}$ in $V_2$.
	Therefore, $F_{12}\cup F_{13}\cup F_{23}$ is ambient  isotopic to $F'_{12}\cup F'_{13}\cup F'_{13}$.
	Next, we suppose that $V_2=H'_3$.
	Then, $F_{13}$ is isotopic to $F_{12}'$ in $V_1$.
	Since $F_{12}\cup F_{23}$ is also a Heegaard surface, $F_{12}\cup F_{23}$ is ambient isotopic to $\partial H_3'$. 
	Therefore, $F_{12}\cup F_{13}\cup F_{23}$ is ambient isotopic to $F'_{12}\cup F'_{13}\cup F'_{13}$.
\end{proof}
\begin{prop}\label{011iso2}
Let $M$ be the 3-sphere or a lens space with a genus-one Heegaard splitting of $V_1\cup V_2$.
If $M$ is the 3-sphere or a lens space $L(p, q)$ with $p=2$,  there is exactly one ambient isotopy class of the branched surface of the type-$(0, 1, 1)$ decomposition of conclusion (2) of Theorem \ref{s3011}.
 \end{prop}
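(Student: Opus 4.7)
The plan is to reduce the classification to the question of how the surface $F_{13}\cong D\cup A$ sits inside a fixed Heegaard solid torus, and then to collapse the two potential isotopy classes produced by Lemma \ref{ann2} using the diffeotopy-level symmetry provided by Lemma \ref{diff}. Concretely, I would begin with two type-$(0,1,1)$ decompositions $H_1\cup H_2\cup H_3$ and $H_1'\cup H_2'\cup H_3'$ of $M$ satisfying conclusion (2) of Theorem \ref{s3011}. Since conclusion (2) is symmetric in $H_2$ and $H_3$, Lemma \ref{heegaard011lens} (2)--(3) together with a relabeling allow me to assume that $\partial H_2$ and $\partial H_2'$ are both genus-one Heegaard surfaces of $M$. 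By the Waldhausen/Bonahon--Otal uniqueness of genus-one Heegaard splittings of the 3-sphere and lens spaces, I may further ambient isotope so that $H_2=H_2'$; hence $V := H_1\cup H_3 = H_1'\cup H_3'$ is a fixed Heegaard solid torus, and the comparison reduces to a comparison of the embeddings of $F_{13}$ and $F_{13}'$ in $V$.

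Inside $V$, the surface $F_{13}\cong D\cup A$ cuts $V$ into the 3-ball $H_1$ and the solid torus $H_3$, while the decomposition of $\partial V$ by the branched loci into $F_{12}\cong D\cup A$ and $F_{23}\cong P$ forces $\partial V\setminus \partial(D\cup A)$ to consist of a disk, an annulus, and a thrice-punctured sphere. Thus the hypotheses of Lemma \ref{ann2} are in place, and it yields at most two ambient isotopy classes for the configuration $F_{13}\subset V$, which are exchanged by the hyperelliptic involution $\tau$ of $\partial V$. The same discussion applies to $F_{13}'\subset V$, so at most two candidate classes remain for the full branched surface in $M$.

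Finally, I would invoke Lemma \ref{diff}: when $M$ is $S^3$ or a lens space $L(p,q)$ with $p=2$, there exists an ambient isotopy of $M$ preserving both solid tori of the Heegaard splitting whose time-$1$ map restricts on $\partial V$ to $\tau$. Applying this isotopy to one of the two candidate embeddings of $F_{13}$ identifies it with the other, so the two classes coincide in $M$. Combined with the existence statement already in Theorem \ref{s3011}, this yields exactly one ambient isotopy class of the branched surface, as required. I expect the main obstacle to be the bookkeeping at the start: verifying that the symmetry $H_2\leftrightarrow H_3$ together with Lemma \ref{heegaard011lens} really does allow both decompositions to be placed simultaneously against the same Heegaard surface, so that Lemma \ref{ann2} applies to compare them directly; once this setup is secured, Lemma \ref{diff} handles the remaining step mechanically.
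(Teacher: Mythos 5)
Your overall strategy is the same as the paper's: reduce to the embedding of the $D\cup A$ piece of the branched surface in the Heegaard solid torus complementary to the handlebody whose boundary is a Heegaard surface, apply Lemma \ref{ann2} to obtain at most two classes exchanged by the hyperelliptic involution, and then use Lemma \ref{diff} to merge them. (The paper works with $F_{12}\subset H_1\cup H_2$ after arranging that $\partial H_3$ is the Heegaard surface; your choice of $F_{13}\subset H_1\cup H_3$ with $\partial H_2$ the Heegaard surface is the same argument after relabeling, since $F_{12}\cong F_{13}$.)

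There is, however, one step where your justification is not sufficient as written: the claim that by the Waldhausen/Bonahon--Otal uniqueness of genus-one Heegaard splittings you may ambient isotope so that $H_2=H_2'$. Uniqueness of the genus-one splitting only lets you match the Heegaard \emph{surfaces} $\partial H_2$ and $\partial H_2'$; after doing so, $H_2'$ could still be the solid torus on the \emph{opposite} side of that surface from $H_2$. Whether the two sides can be interchanged by an ambient isotopy of $M$ is exactly the content of Lemma \ref{core}: this is possible if and only if $M$ is the 3-sphere or $L(p, q)$ with $(p-1)q\equiv\pm 1 \pmod p$, which does hold when $p=2$. This is not idle bookkeeping: it is precisely the point at which the count doubles for general lens spaces in Proposition \ref{011iso3} (four classes instead of two when the congruence fails), so an appeal to uniqueness of Heegaard splittings alone would ``prove'' too much. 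Your argument is repaired by inserting an application of Lemma \ref{core} at this step (and, more minor, by verifying as the paper does that the annulus component of $F_{13}$ has one meridional and one inessential boundary component so that Lemma \ref{ann2} genuinely applies); with those additions it coincides with the paper's proof.
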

 \begin{proof}
	Let $H_1\cup H_2\cup H_3$ and $H_1'\cup H_2'\cup H_3'$ be a type-$(0, 1, 1)$ decomposition of the 3-sphere or a lens space that satisfies case (2), where $H_1$ and $H_1'$ are homeomorphic to a 3-ball.
	Furthermore, we denote $F_{ij}=H_i\cap  H_j$ and $F'_{ij}=H'_i\cap  H'_j$.
	By Lemma \ref{heegaard011lens}, $\partial H_2$ or $\partial H_3$ is a Heegaard surface.
	Since $F_{12}\cong F_{13}$, we have to consider only the case that $\partial H_3$ is a Heegaard surface.
	Similarly, we suppose that $\partial H_3'$ is a Heegaard surface.
	Then, $\partial H_3$ and $\partial H_3'$ are genus-one Heegaard surfaces of $M$.
	Since Heegaard surfaces of the 3-sphere and lens space are unique up to isotopy, we can assume that $\partial H_3=\partial H_3'$.
	Suppose that $V_2=H_3$.
	Then, $F_{12}$ is properly embedded in $V_1$.
	$F_{12}$ satisfies $\partial V_1- \partial F_{12}\cong A\cup D \cup P$ and $F_{12}$ cuts open $V_1$ into a 3-ball $H_1$ and a solid torus $H_2$.
	If an annulus component $A_1$ of $F_{12}$ satisfies the condition that $\partial A_1$ is inessential in $\partial V$, a solid torus $H_2$ intersects $V_2=H_3$ at an annulus.
	This contradicts $H_2\cap H_3\cong P$.
	Hence, one of the components of $\partial A$ is meridional and the other is inessential in $\partial V_1$.
	Then, $F_{12}$ is the union of a disk and an annulus properly embedded in a solid torus $V_1$ that satisfies the assumption of Lemma \ref{ann2}.
	By Lemma \ref{diff}, $M$ admits an ambient isotopy $F: M\times [0, 1]\to M$ such that $F( V_1, 1)=V_1$ and $f_1|_{\partial V_1}:\partial V_1\to \partial V_1$ is a hyperelliptic involution in the mapping class group of a torus $\partial V_1$, where $f_1(x)=F(x, 1)$.
	Since $M$ admits an ambient isotopy $F: M\times [0, 1]\to M$ such that $F( V_1, 1)=V_1$ and $f_1|_{\partial V_1}:\partial V_1\to \partial V_1$ is a hyperelliptic involution in the mapping class group of a torus $\partial V_1$, the two ambient isotopy classes of the embedding $F_{12}$ in $V_1$ are ambient isotopic in $M$ by taking $F_{13}\cup F_{23}$ to itself.
	Hence, the embedding of $F_{12}\cup F_{13}\cup F_{23}$ is unique up to ambient isotopy in $M$.
	If $M$ is a lens space $L(p, q)$ with $p=2$, it satisfies \p.
	Hence, if $V_2=H'_3$, we can isotope $H'_3$ to $H_3$ by Lemma \ref{core}.
	Then,  $F_{12}\cup F_{13}\cup F_{23}$ is ambient isotopic to $F_{12}'\cup F_{13}'\cup F_{23}'$.
	This implies that the embedding of $F_{12}\cup F_{13}\cup F_{23}$ into $M$ is unique up to ambient isotopy.
\end{proof}

 \begin{prop}\label{011iso3}
 Let $M$ be  a lens space $L(p, q)$.
Suppose that $M$ satisfies $p\neq 2$. Then, there are exactly two isotopy classes  of the branched surface of the type-$(0, 1, 1)$ decomposition of conclusion (2) of Theorem \ref{s3011} if $M$ is homeomorphic to the 3-sphere or $L(p, q)$ with \p. Otherwise,   there are exactly  four isotopy classes of the branched surface of the type-$(0, 1, 1)$  decomposition of conclusion (2) of Theorem \ref{s3011}.
 \end{prop}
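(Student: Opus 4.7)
The plan is to follow the same template as Proposition \ref{011iso2} but keep track of the extra distinctions that survive when $p\neq 2$. Let $H_1\cup H_2\cup H_3$ and $H_1'\cup H_2'\cup H_3'$ be two type-$(0,1,1)$ decompositions of $M=L(p,q)$ of case (2), with $H_1$, $H_1'$ the 3-balls. By Lemma \ref{heegaard011lens}(2), exactly one of $\partial H_2,\partial H_3$ is a genus-one Heegaard surface, and similarly for the primed decomposition; using $F_{12}\cong F_{13}$ I may assume that the distinguished handlebody is $H_3$, resp.\ $H_3'$. Fix a genus-one Heegaard splitting $V_1\cup V_2$ of $M$; by Waldhausen--Bonahon--Otal there is an ambient isotopy of $M$ taking $\partial H_3$ onto $V_1\cap V_2$, so it only remains to count the isotopy classes of the embedded surface $F_{12}\cup F_{13}\cup F_{23}$ once $\partial H_3\in\{V_1\cap V_2\}$ is fixed, together with the two choices $H_3=V_1$ or $H_3=V_2$.

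First I handle the counting on the side $H_3=V_1$. As in the proof of Proposition \ref{011iso2}, $F_{12}$ is a properly embedded disk-plus-annulus in the solid torus $V_2$ satisfying the hypotheses of Lemma \ref{ann2}, so in $V_2$ there are exactly two ambient isotopy classes, and these two classes are interchanged by a hyperelliptic involution of $\partial V_2$. The key step is to argue that, under the assumption $p\neq 2$, these two classes remain distinct after extending to all of $M$. Suppose, for contradiction, that some ambient isotopy $F\colon M\times[0,1]\to M$ takes one class to the other and preserves $V_2$. Then $f_1=F(\cdot,1)$ restricted to $\partial V_2$ must interchange the two sub-annuli cut off by $\partial D\cup C$ in the notation of Lemma \ref{ann2}, hence is not isotopic to the identity on the torus, and is therefore forced to be isotopic to a hyperelliptic involution. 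By Lemma \ref{diff}, such an ambient isotopy of $M$ preserving each Heegaard solid torus and restricting to a hyperelliptic involution on the Heegaard surface exists only when $M$ is $S^3$ or $L(p,q)$ with $p=2$, contradicting $p\neq 2$. Therefore, on the side $H_3=V_1$, exactly two ambient isotopy classes of branched surface occur.

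The same argument applied on the side $H_3=V_2$ produces another two ambient isotopy classes in $M$. It remains to decide whether these are identified with the first two by an ambient isotopy of $M$ that swaps $V_1$ and $V_2$. Such an isotopy exists if and only if the cores of $V_1$ and $V_2$ are isotopic in $M$, which by Lemma \ref{core} happens exactly when $(p-1)q\equiv\pm1\pmod p$. In that case, any branched surface realized with $H_3=V_2$ can be carried onto one realized with $H_3=V_1$, so the two sides give the same pair of ambient isotopy classes and the total is $2$. Otherwise, no such isotopy of $M$ exists, so the two pairs are genuinely distinct and the total is $4$.

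The main obstacle is justifying carefully that the two classes produced by Lemma \ref{ann2} inside a fixed Heegaard solid torus stay distinct once we allow ambient isotopies of the whole $M$. The crux is the fact that any such merging isotopy induces on the Heegaard torus a nontrivial involution whose isotopy class is hyperelliptic; the passage from this observation to Lemma \ref{diff}, and hence to the dichotomy on $p$, is what forces precisely the answer $2$ or $4$ and matches the hypothesis on $(p-1)q\pmod p$. The remaining verifications (the reduction to $\partial H_3$ a Heegaard surface via Lemma \ref{heegaard011lens}, the application of Lemma \ref{ann2}, and the use of Lemma \ref{core} for the $V_1\leftrightarrow V_2$ swap) are all direct consequences of the lemmas already established.
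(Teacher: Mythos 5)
Your proposal is correct and follows essentially the same route as the paper: reduce via Lemma \ref{heegaard011lens} and uniqueness of genus-one Heegaard splittings to the embedding of $F_{12}$ in a Heegaard solid torus, apply Lemma \ref{ann2} to get two classes interchanged by the hyperelliptic involution, use Lemma \ref{diff} to show these stay distinct in $M$ when $p\neq 2$, and use Lemma \ref{core} to decide whether the two choices $H_3=V_1$ or $H_3=V_2$ are related by an ambient isotopy. The only cosmetic difference is that you organize the count as ``two classes on each side, then decide whether the sides merge,'' which is equivalent to the paper's bookkeeping.
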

\begin{proof}
	Let $H_1\cup H_2\cup H_3$ and $H_1'\cup H_2'\cup H_3'$ be the type-$(0, 1, 1)$ decomposition of the 3-sphere or a lens space that satisfies case (2), where $H_1$ and $H_1'$ are homeomorphic to a 3-ball.
	Furthermore, we denote $F_{ij}=H_i\cap  H_j$ and $F'_{ij}=H'_i\cap  H'_j$.
	By Lemma \ref{heegaard011lens}, $\partial H_2$ or $\partial H_3$ is a Heegaard surface.
	If $\partial H_2$ (resp. $\partial H_3$) is a Heegaard surface, we shall consider the embedding of $F_{13}$( resp. $F_{12}$) into a solid torus.
	Since $F_{12}\cong F_{13}$, we have to consider only the case that $\partial H_3$ is a Heegaard surface.
	Similarly, we suppose that $\partial H_3'$ is a Heegaard surface.
	Then, $\partial H_3$ and $\partial H_3'$ are genus-one Heegaard surfaces of $M$.
	Since Heegaard surfaces of the 3-sphere and lens space are unique up to isotopy, we can assume that $\partial H_3=\partial H_3'$.
	Let $V_1\cup V_2$ be a genus-one Heegaard splitting of $M$. 
	Then, we can suppose that $V_1=H_3$.
	Then, $F_{12}$ is embedded in $V_2$.
	$F_{12}$ satisfies $\partial V_2- \partial F_{12}\cong A\cup D \cup P$ and $F_{12}$ cuts open $V_2$ into a 3-ball $H_1$ and a solid torus $H_2$.
	If an annulus component $A_1$ of $F_{12}$ satisfies the condition that $\partial A_1$ is inessential in $\partial V$, a solid torus $H_2$ intersects $V_1=H_3$ at an annulus.
	This contradicts $H_2\cap H_3\cong P$.
	Hence, one of the components of $\partial A$ is meridional and the other is inessential in $\partial V_2$.
	Then, $F_{12}$ is the union of a disk and an annulus properly embedded in a solid torus $V_2$ that satisfies the assumption of Lemma \ref{ann2}.
	By Lemma \ref{diff}, $M$ does not admit an ambient isotopy $F: M\times [0, 1]\to M$ such that $F( V_2, 2)=V_2$ and $f_1|_{\partial V_2}:\partial V_2\to \partial V_2$ is a hyperelliptic involution in the mapping class group of a torus $\partial V_2$, where $f_1(x)=F(x, 1)$.
	Hence, the two ambient isotopy classes of the embedding $F_{12}$ in $V_2$ cannot be ambient isotopic in $M$ by taking  $F_{13}\cup F_{23}$ to itself.
	Then, if $F_{12}'$ is not isotopic to $F_{12}$ in $V_2$, they are not isotopic to each other in $M$ by taking $F_{13}\cup F_{23}$ (resp. $F_{13}'\cup F_{23}'$) to itself.
	If $V_2=H'_3$ and $M$ is $L(p, q)$ with \p, we can isotope $H'_3$ to $H_3$ by Lemma \ref{core}.
	Hence, if $M$ is  $L(p, q)$ with \p, a type-$(0, 1, 1)$ decomposition of the 3-sphere or a lens space that satisfies case (2) has two isotopy classes.
	If $V_2=H'_3$ and $M$ is $L(p, q)$ with \np, we cannot isotope $H'_3$ to $H_3$ by Lemma \ref{core}.
	Hence, if $M$ is $L(p, q)$ with \np, a type-$(0, 1, 1)$ decomposition of the 3-sphere or a lens space that satisfies case (2) has four isotopy classes.
\end{proof}

By Propositions \ref{011iso1}, \ref{011iso2}, and \ref{011iso3}, we can obtain Theorem  \ref{lensclass011}. We restate  Theorems  \ref{lensclass011}.
\setcounter{section}{2}
\begin{thm}\label{lensclass011}
Let $M$ be the 3-sphere or a lens space.
If $M$ is the 3-sphere or a lens space $L(p, q)$ with $p=2$,  the type-$(0, 1, 1)$ decompositions of $M$ can be classified up to ambient isotopy as follows.
\begin{enumerate}
\renewcommand{\theenumi}{(\arabic{enumi})}
\item[(1)]  there is exactly one ambient isotopy class of the branched surface of the type-$(0, 1, 1)$ decomposition of conclusion (1) of Theorem \ref{s3011}.
\item[(2)]  there is exactly one ambient isotopy class of the branched surface of the type-$(0, 1, 1)$ decomposition of conclusion (2) of Theorem \ref{s3011}.
\end{enumerate}
On the other hand, if $M$ is a lens space $L(p, q)$ with $p\neq 2$, the type-$(0, 1, 1)$ decompositions of $M$ can be classified up to ambient isotopy as follows.
\begin{enumerate}
\item[(3)]   there is exactly one  isotopy class of the branched surface of a type-$(0, 1, 1)$ decomposition of conclusion (1) of Theorem \ref{s3011}.
\item[(4)]  there are exactly two isotopy classes  of the branched surface of the type-$(0, 1, 1)$ decomposition of conclusion (2) of Theorem \ref{s3011} if $M$ is $L(p, q)$ with \p. Otherwise,  there are exactly  four isotopy classes of the branched surface of the type-$(0, 1, 1)$  decomposition of conclusion (2) of Theorem \ref{s3011}.
\end{enumerate}
\end{thm}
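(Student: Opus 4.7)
The plan is to observe that Theorem \ref{lensclass011} is essentially a packaging of the three intermediate Propositions \ref{011iso1}, \ref{011iso2}, and \ref{011iso3} that have already been stated and proved in the excerpt; the work consists in matching each of the four enumerated conclusions to the correct proposition and checking that the ``$p=2$'' split and the ``\p'' split are logically compatible.

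First I would address conclusions (1) and (3), which both concern the case (1) branched surface from Theorem \ref{s3011}. Proposition \ref{011iso1} asserts uniqueness up to ambient isotopy for that branched surface on the 3-sphere and on any lens space, with no hypothesis on $p$ or $q$. So conclusion (1) (for $M=S^3$ or $L(p,q)$ with $p=2$) and conclusion (3) (for $L(p,q)$ with $p\neq 2$) are immediate corollaries of that one proposition, and no further argument is needed.

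Next I would treat conclusions (2) and (4), which concern the case (2) branched surface from Theorem \ref{s3011}. For conclusion (2), where $M$ is either $S^3$ or $L(p,q)$ with $p=2$, I would apply Proposition \ref{011iso2} directly, which asserts uniqueness of the branched surface up to ambient isotopy exactly in these cases. For conclusion (4), where $p\neq 2$, I would invoke Proposition \ref{011iso3}, which gives two isotopy classes when \p\ holds and four otherwise. The only potential obstacle is to check that the case distinction is consistent at $p=2$: since we assume $0<q<p$, the only lens space with $p=2$ is $L(2,1)$, and $(p-1)q = 1 \equiv 1 \pmod{p}$, so the ``$p=2$'' branch lives inside the \p\ regime. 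This confirms that collapsing the count from four (or two) down to one in conclusion (2) is compatible with Proposition \ref{011iso3}: the extra identification in the $p=2$ case comes precisely from the additional self-diffeomorphism $\sigma_-$ of $L(2,q)$ provided by Theorem \ref{Diff} and Lemma \ref{diff}, which was the ingredient powering Proposition \ref{011iso2}.

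In summary, I expect the proof to be essentially one sentence per enumerated item: ``(1) and (3) follow from Proposition \ref{011iso1}; (2) follows from Proposition \ref{011iso2}; (4) follows from Proposition \ref{011iso3}.'' The main (mild) obstacle is simply to confirm that the $p=2$ case is correctly absorbed into the \p\ branch so that no lens space is classified twice with inconsistent counts, and this is a one-line congruence check.
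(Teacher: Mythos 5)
Your proposal matches the paper's own argument: the paper derives Theorem \ref{lensclass011} exactly by citing Propositions \ref{011iso1}, \ref{011iso2}, and \ref{011iso3}, with the same matching of enumerated items to propositions that you describe. Your extra check that $L(2,1)$ satisfies \p\ (so the $p=2$ branch is consistent with the general count) is a sensible sanity check but not part of the paper's stated deduction.
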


\setcounter{section}{4}
\begin{figure}[httb]
\centering
\includegraphics[scale=0.3]{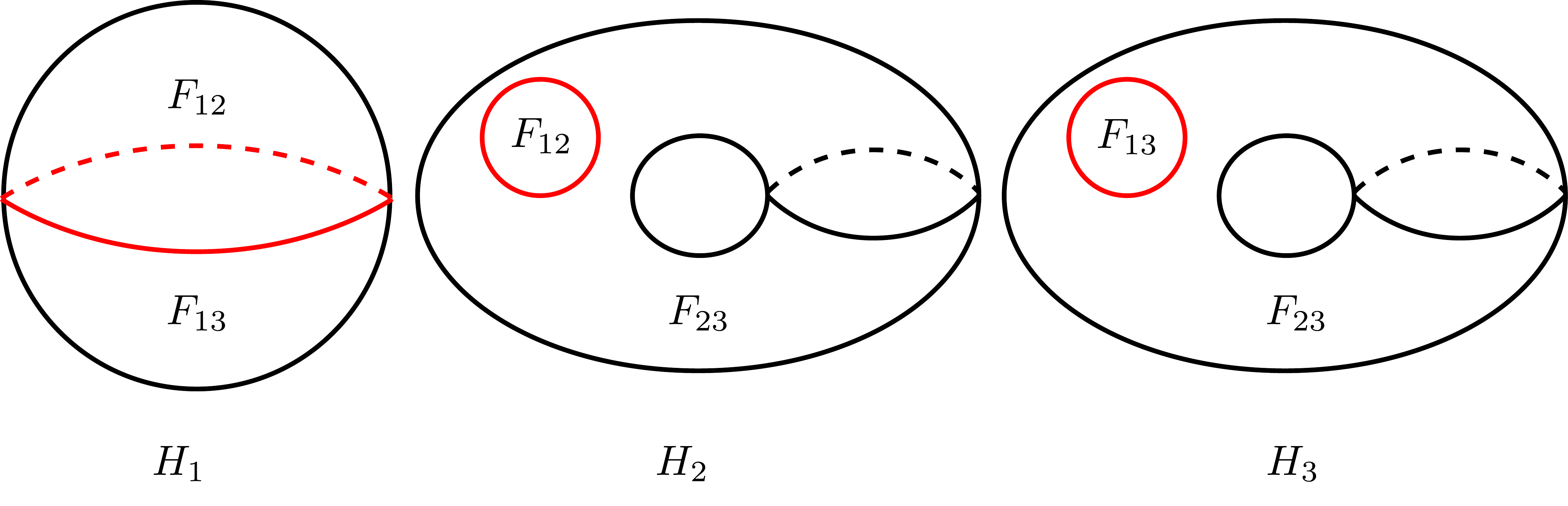}
\vspace{-3mm}
\caption{A type-$(0,1,1)$ decomposition whose branched locus is one loop.}
\label{0111}
\end{figure}
\begin{figure}[httb]
\centering
\includegraphics[scale=0.3]{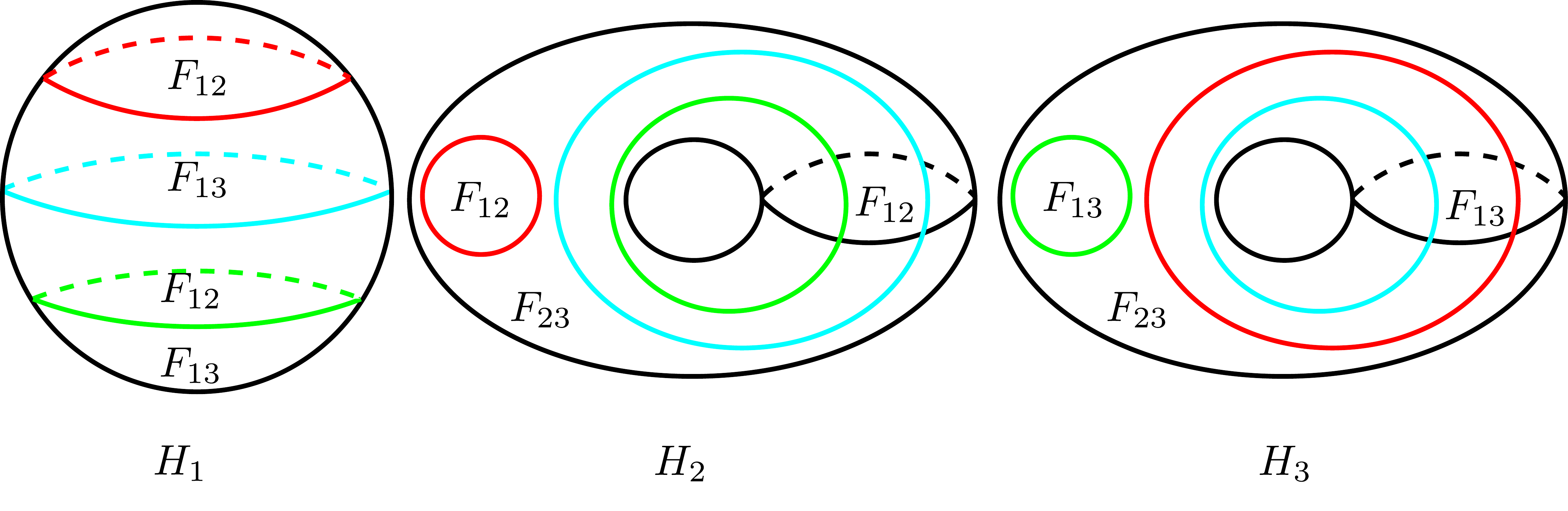}
\vspace{-3mm}
\caption{A type-$(0,1,1)$ decomposition whose branched loci are three loops.}
\label{0112}
\end{figure}

		Finally, we consider the type-$(1, 1, 1)$ decompositions of the 3-sphere and  lens spaces. To characterize a decomposition, we prepare two propositions about the Seifert fibered structure of lens spaces.
				\setcounter{section}{4}
		 \begin{prop}[Lemma 4.1 of \cite{S2}]\label{Seifert1}
       		The base space of a Seifert fibration of any lens space is $S^2$ or $\mathbb{R}P^2$. If the base space is $S^2$, there are at most two singular fibers; if the base space is $\mathbb{R}P^2$, there is no singular fiber.
		
       \end{prop}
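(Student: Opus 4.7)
The plan is to exploit the short exact sequence
\[
1 \longrightarrow \langle h \rangle \longrightarrow \pi_1(M) \longrightarrow \pi_1^{\mathrm{orb}}(B) \longrightarrow 1
\]
associated to a Seifert fibration of a closed orientable $3$-manifold $M$ over its base orbifold $B$ with exceptional fibers of orders $\alpha_1, \ldots, \alpha_n$, where $h$ is the class of a regular fiber. Since $\pi_1(L(p,q)) \cong \mathbb{Z}/p$ is cyclic, the quotient $\pi_1^{\mathrm{orb}}(B)$ must be cyclic as well.

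First, I would pin down the underlying surface $|B|$. Killing the cone-point generators in the standard orbifold presentation of $\pi_1^{\mathrm{orb}}(B)$ yields a surjection onto $\pi_1(|B|)$, which is therefore cyclic. This immediately excludes every closed surface other than $S^2$ and $\mathbb{R}P^2$, since all other closed surfaces have fundamental group containing either $\mathbb{Z}^2$ or a non-abelian free subgroup.

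For $|B| = S^2$, I would use the presentation
\[
\pi_1^{\mathrm{orb}}(S^2(\alpha_1,\ldots,\alpha_n)) = \langle x_1, \ldots, x_n \mid x_i^{\alpha_i} = 1,\ x_1 \cdots x_n = 1 \rangle
\]
and examine its abelianization, which is $\mathbb{Z}^n$ modulo the rows $\alpha_i e_i$ and $e_1 + \cdots + e_n$. A Smith normal form computation shows this abelianization is cyclic only when $n \le 2$ after discarding removable $\alpha_i = 1$ fibers, which proves the $S^2$ case.

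For $|B| = \mathbb{R}P^2$ I would pass to the orientation double cover $S^2 \to \mathbb{R}P^2$; the pulled-back Seifert fibration has base $S^2$ with $2n$ exceptional fibers and total space a double cover $\widetilde M \to M$. Because a finite cyclic group has at most one subgroup of index $2$ and such a subgroup is again cyclic, $\widetilde M$ is a lens space or $S^3$, and the $S^2$-case bound forces $2n \le 2$, so $n \le 1$. The main obstacle is eliminating $n = 1$: the lone exceptional fiber over $\mathbb{R}P^2$ would have its two preimages exchanged by the deck involution while preserving the orientation of $M$, and I would combine the Euler-number formula with the Seifert-invariant compatibility forced by the orientation-reversing loop in $\mathbb{R}P^2$ to derive a contradiction unless the invariant is trivial, i.e.\ unless $n=0$.
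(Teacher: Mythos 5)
The paper does not prove this proposition; it is quoted verbatim from Lemma~4.1 of \cite{S2}, so there is no internal proof to compare against and I am assessing your argument on its own. Your opening moves are the standard ones and are fine: $\pi_1^{\mathrm{orb}}(B)$ is a quotient of the cyclic group $\pi_1(L(p,q))$, hence cyclic, and killing the cone generators shows $\pi_1(|B|)$ is cyclic, so $|B|$ is $S^2$ or $\mathbb{R}P^2$. The first genuine gap is in your $S^2$ step: cyclicity of the abelianization does \emph{not} force $n\le 2$. For cone orders $(2,3,5)$ the abelianization $\mathbb{Z}^3/\langle 2e_1,\,3e_2,\,5e_3,\,e_1+e_2+e_3\rangle$ is trivial; for $(3,4,5)$ it is $\mathbb{Z}/12$; for $(2,2,2k+1)$ it is $\mathbb{Z}/2$ --- all cyclic with $n=3$. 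So the Smith normal form computation you invoke cannot give the stated conclusion. What you actually need is that the von Dyck group $\langle x_1,\dots,x_n\mid x_i^{\alpha_i},\ x_1\cdots x_n\rangle$ is itself non-cyclic whenever $n\ge 3$ and all $\alpha_i\ge 2$ (the spherical ones are dihedral, $A_4$, $S_4$, $A_5$; the Euclidean and hyperbolic ones are infinite and contain $\mathbb{Z}^2$ or a non-abelian free subgroup). Abelianization alone does not detect this.

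The second gap is the $\mathbb{R}P^2$ case with one cone point, which you correctly identify as the main obstacle but only sketch. Here the problem is not that your sketch is incomplete but that the sought contradiction does not exist: $\pi_1^{\mathrm{orb}}(\mathbb{R}P^2(\alpha))=\langle a,x\mid x^{\alpha},\,a^2x\rangle\cong\mathbb{Z}/2\alpha$ is cyclic, so there is no obstruction at the orbifold level, and one can realize such a fibration on an honest lens space. Concretely, let $N$ be the orientable $S^1$-bundle over the M\"obius band, with $\pi_1(N)=\langle a,h\mid aha^{-1}=h^{-1}\rangle$ and boundary torus carrying the section--fiber basis $x=a^2$, $y=h$; gluing in a solid torus along the primitive slope $2x+y$ produces a closed orientable Seifert fibered space over $\mathbb{R}P^2(2)$ whose exceptional fiber has multiplicity $2$ and whose fundamental group is $\langle a,h\mid aha^{-1}=h^{-1},\ a^4h\rangle=\langle a\mid a^8\rangle\cong\mathbb{Z}/8$, i.e.\ the lens space $L(8,3)$. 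So the assertion that a lens space fibering over $\mathbb{R}P^2$ has no singular fiber is not provable as stated; this is exactly the point addressed in the published erratum to \cite{S2}, where the possibility of one exceptional fiber over $\mathbb{R}P^2$ (realized by the lens spaces $L(4n,2n-1)$) is restored. Any argument you build on this proposition --- and any part of the surrounding paper that relies on it, such as the claim that only $L(4,1)$ fibers over $\mathbb{R}P^2$ --- should be checked against the corrected statement.
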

       In this paper, two  3-manifolds are homeomorphic if  there exists an orientation-preserving or orientation-reversing homeomorphism between them. Then, Proposition 4.2 of \cite{S2} can be written as follows. 
       \begin{prop}[Proposition 4.2 of \cite{S2}]\label{Seifert2}
       		A lens space that fibers over $\mathbb{R}P^2$ is homeomorphic to $L(4, 1)$. Each of these lens spaces  admits a unique Seifert fibered structure with base space $\mathbb{R}P^2$.
       \end{prop}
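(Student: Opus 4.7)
The plan is to identify which Seifert fibered 3-manifolds over $\mathbb{R}P^{2}$ can have cyclic fundamental group, since this is forced by the hypothesis that $M$ is a lens space.

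First, let $M$ be a lens space with a Seifert fibration over $\mathbb{R}P^{2}$. By Proposition \ref{Seifert1}, the fibration has no singular fibers, so $M$ is an orientable $S^{1}$-bundle over $\mathbb{R}P^{2}$. Decompose $\mathbb{R}P^{2}=D^{2}\cup_{S^{1}} B$, where $B$ is a M\"obius band. The preimage of $D^{2}$ is a solid torus $V$, and the preimage of $B$ is the unique orientable $S^{1}$-bundle $E$ over $B$ (the fiber must be reversed by the monodromy around the core of $B$ in order that the total space be orientable), with $\partial E$ a torus. Then $M$ is obtained by gluing $V$ to $E$ along this boundary torus.

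Next, I would set up a presentation of $\pi_{1}(M)$ via van Kampen. Let $f$ be a regular fiber and $a$ the loop in $E$ covering the core of $B$. The monodromy relation reads $afa^{-1}=f^{-1}$, and gluing the solid torus $V$ contributes one further relation of the form $a^{2}f^{b}=1$ for some integer $b$ determined by the bundle (essentially its Euler invariant). Hence
\[
\pi_{1}(M)\;=\;\bigl\langle a,\,f\;\big|\;afa^{-1}=f^{-1},\;a^{2}f^{b}=1\bigr\rangle .
\]
Abelianizing yields $H_{1}(M)\cong(\mathbb{Z}/2)^{2}$ when $b$ is even and $H_{1}(M)\cong\mathbb{Z}/4$ when $b$ is odd, so $b$ must be odd.

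The crucial step is ruling out odd $b$ with $|b|\ge 3$. Killing the central element $a^{2}=f^{-b}$ by imposing $a^{2}=1$ also forces $f^{b}=1$; the resulting quotient is the dihedral group of order $2|b|$, which is non-abelian for $|b|\ge 3$. Therefore $\pi_{1}(M)$ cannot be cyclic in that range, contradicting the lens space hypothesis. For $b=\pm 1$, substituting $f=a^{\mp 2}$ directly yields $\pi_{1}(M)\cong\mathbb{Z}/4$, so $M\cong L(4,1)$ by the classification of lens spaces. Uniqueness of the Seifert fibered structure follows because $b=+1$ and $b=-1$ produce the same fibration up to orientation reversal, and the Seifert invariant $b$ is the only parameter available.

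The main obstacle is the third step: the naive attempt to read off $\pi_{1}$ from $H_{1}$ fails, since $H_{1}\cong\mathbb{Z}/4$ already holds for \emph{every} odd $b$, yet $\pi_{1}$ is non-cyclic (and $M$ is not a lens space) as soon as $|b|\ge 3$. One has to extract a genuine dihedral quotient to eliminate these ``false'' lens space candidates and isolate $b=\pm1$.
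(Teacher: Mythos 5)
The paper offers no proof of this proposition at all---it is simply quoted from \cite{S2}---so your self-contained derivation is by necessity a different route, and as a deduction from the paper's Proposition \ref{Seifert1} it is essentially correct and is the standard argument. The decomposition $\mathbb{R}P^2=D^2\cup B$, the presentation $\langle a,f\mid afa^{-1}=f^{-1},\ a^2f^b=1\rangle$, the parity analysis of $H_1$, and especially the dihedral quotient obtained by imposing $a^2=1$ (which is what actually eliminates the odd $b$ with $|b|\ge 3$, where $H_1$ alone cannot) are all right. The one step that is asserted rather than proved is uniqueness: what your argument gives is that oriented circle bundles over $\mathbb{R}P^2$ are classified by the integer $b$, that only $b=\pm1$ yields a lens space, and that $b$ and $-b$ give fiber-preservingly homeomorphic total spaces---i.e.\ uniqueness of the \emph{fibered homeomorphism type}. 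The phrase ``unique Seifert fibered structure'' in \cite{S2} means uniqueness up to isotopy of the fibration on a fixed copy of $L(4,1)$, which does not follow from ``$b$ is the only parameter available'' and is in fact the substantive content of that paper. You should either prove the stronger form or state explicitly that you are proving the weaker one (which is all that is used here).

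A separate issue that your computation makes visible: Proposition \ref{Seifert1} as quoted in this paper (``no singular fiber over $\mathbb{R}P^2$'') is stronger than Lemma 4.1 of \cite{S2}, which permits one exceptional fiber over $\mathbb{R}P^2$; correspondingly, Proposition 4.2 of \cite{S2} concludes that the lens spaces fibering over $\mathbb{R}P^2$ are the $L(4n,2n-1)$ for all $n\ge 1$, not only $L(4,1)$. Indeed, if you rerun your van Kampen computation allowing one exceptional fiber of multiplicity $n$, the second relation becomes $a^{2n}f^{m}=1$, and for $m=\pm1$ elimination of $f$ gives $\pi_1\cong\mathbb{Z}/4n$, producing genuine lens-space examples beyond $L(4,1)$ (already for $n=2$). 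So your proof is valid \emph{conditionally} on Proposition \ref{Seifert1} as stated, but that input---and hence the statement you are proving---is a misquotation of the source, and the step ``no singular fibers, so $M$ is an honest $S^1$-bundle'' is exactly where the error enters.
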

	\setcounter{section}{2}
	\setcounter{thm}{3}
	We restate  Theorem \ref{thm 3}.
	\begin{thm}
		A type-$(1,1,1)$ decomposition of the 3-sphere and a lens space $M$ satisfies one of the following, where $\{i, j, k\}=\{1, 2, 3\}$.
			\begin{enumerate}
			\renewcommand{\theenumi}{(\arabic{enumi})}
				\item [(1)] 
					it has exactly two branched loci and satisfies $F_{ij} \cong F_{jk} \cong F_{ki} \cong A$.
				\item[(2)]  it has exactly two branched loci and satisfies
					$F_{ij}\cong D \cup T^{\circ}$ and $F_{jk} \cong F_{ki} \cong A $.
				\item[(3)]  it has exactly four branched loci and satisfies
					$F_{ij} \cong D \cup P$ and $F_{jk} \cong F_{ki} \cong A_{1} \cup A_{2}$.
				\item[(4)]  it has exactly four branched loci and satisfies
					$F_{ij} \cong A_{1} \cup A_{2}$ and $F_{jk} \cong F_{ki} \cong D \cup P$.
				\item[(5)]  it has exactly four branched loci and satisfies
					$F_{ij} \cong F_{jk} \cong F_{ki} \cong D \cup P$.
				\end{enumerate}
			Furthermore, the following holds. 
				\begin{enumerate}
				 \item[(6)] $M \cong L(4,1)$. In this case,  $M$ also  has a decomposition that satisfies $F_{12}\cong F_{13}\cong F_{23} \cong A\cup A$ and has four branched loci.
		\end{enumerate}
		In fact, if $M$ is not homeomorphic to $L(4, 1)$, $M$ admits type-$(1, 1, 1)$ decompositions that satisfy (1), (2), (3), (4), and (5) above. In addition, if  $M\cong L(4, 1)$, $M$ admits type-$(1, 1, 1)$ decompositions that satisfy  (1), (2), (3), (4), (5), and (6) above.
		\end{thm}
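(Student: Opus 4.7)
The plan is to combine the Euler characteristic machinery of Lemma \ref{lem1} with the no-$S^{2}\times S^{1}$-summand constraints of Lemma \ref{lem2} and Lemma \ref{b-1}, and then to invoke the Seifert fibered analysis of Propositions \ref{Seifert1} and \ref{Seifert2} to pin down the exceptional case (6).

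First, since each handlebody has torus boundary, $\chi(\partial H_{i})=0$ for $i=1,2,3$, so Lemma \ref{lem1} immediately yields $\chi(F_{ij})=0$ for every pair. Every component of $F_{ij}$ has non-empty boundary, and if $b$ denotes the number of branched loci then each $F_{ij}$ has exactly $b$ boundary circles. A routine enumeration of compact surfaces with boundary and $\chi=0$ shows that $b=1$ is impossible, that $b=2$ forces each $F_{ij}$ to be either $A$ or $D\cup T^{\circ}$, and that $b=4$ admits only $D\cup P$ or $A\cup A$ as each $F_{ij}$. Other values of $b$ and every remaining mixed pattern involve a disk component whose boundary is either inessential in the opposite torus (ruled out by Lemma \ref{lem2}, since $M$ would then have an $S^{2}\times S^{1}$ summand) or essential but non-meridional (handled by Lemma \ref{b-1}, which splits off a lens space while dropping the genus of a handlebody and so, after iteration, contradicts Theorems \ref{s3001} and \ref{s3011}). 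Matching the allowed surface types across the three pairs while respecting the gluing pattern at each branched locus yields exactly the combinations (1)--(5), together with the remaining all-annular $b=4$ configuration.

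For this all-annular $b=4$ configuration, each handlebody $H_{i}$ is cut by two essential annuli which are saturated by a natural product structure, and these annuli match along the branched loci to give $M$ a Seifert fibration in which every $H_{i}$ is a fibered solid torus. A combinatorial argument on how the three projections tile the base orbifold, combined with Proposition \ref{Seifert1}, forces the base to be non-orientable, and Proposition \ref{Seifert2} concludes $M\cong L(4,1)$, establishing (6). I expect this Seifert structure extraction, together with excluding the analogous all-annular patterns with larger $b$, to be the main obstacle of the proof. The existence assertions are then produced by explicit construction: for each of (1)--(5) I would start from a genus-one Heegaard splitting of $M$ and cut along an appropriate union of saturated annuli and meridian disks, and for (6) I would use the Seifert fibration of $L(4,1)$ over $\mathbb{R}P^{2}$ supplied by Proposition \ref{Seifert2}, partitioning the base into three disks to obtain the three fibered solid tori with the required branched-locus count.
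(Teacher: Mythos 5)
Your proposal follows essentially the same route as the paper's proof: Lemma \ref{lem1} forces $\chi(F_{ij})=0$ for all pairs, extra disk components and larger values of $b$ are excluded via Lemma \ref{lem2} and Lemma \ref{b-1} together with the already-established classifications of type-$(0,0,1)$ and type-$(0,1,1)$ decompositions, the all-annular configurations are resolved by the Seifert fibered analysis of Propositions \ref{Seifert1} and \ref{Seifert2} (base $S^2$ giving case (1), base $\mathbb{R}P^2$ giving $L(4,1)$ in case (6)), and existence is shown by explicit constructions starting from a genus-one Heegaard splitting. There are no substantive differences in strategy.
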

	
	\setcounter{section}{4}
			\begin{proof}
		Let $M$ be the 3-sphere or a lens space with a type-$(1, 1, 1;  b)$ decomposition $H_1 \cup H_2\cup H_3$. By Lemma \ref{lem1}, we shall obtain
 				\[
 					\chi(F_{12})=0\quad, \quad \chi(F_{13})=0\quad, \quad \chi(F_{23})=0.
 				\]
		\begin{claim}\label{disk one}
				The surface $F_{ij}$ has at most one disk component. 
		\end{claim}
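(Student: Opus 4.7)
The plan is to argue by contradiction. Without loss of generality (using the symmetry of the type-$(1,1,1)$ setup), I would suppose that $F_{12}$ contains two distinct disk components $D_{1}$ and $D_{2}$, and aim to derive a contradiction with the hypothesis that $M$ is the $3$-sphere or a lens space.

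First I would dispose of the inessential case. If the boundary $\partial D_{1}$ is inessential in $\partial H_{3}$, then Lemma \ref{lem2} produces a nontrivial $S^{2}\times S^{1}$ summand in $M$, contradicting the hypothesis that $M$ is the $3$-sphere or a lens space. The same argument rules out the case in which $\partial D_{2}$ is inessential, so both boundary curves may be assumed essential in the torus $\partial H_{3}$.

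Next I would apply Lemma \ref{b-1} to the disk $D_{1}$. Since $H_{3}$ is a solid torus and $\partial D_{1}$ is essential, the resulting manifold $N(D_{1})\cup H_{3}$ is either a $3$-ball or a punctured lens space, so $M \cong M'\# M''$, where $M''$ is a lens space (possibly the $3$-sphere) and $M'$ admits a type-$(1,1,0;b-1)$ decomposition. Since $M$ is prime, the Kneser--Milnor uniqueness theorem forces $M'$ itself to be the $3$-sphere or a lens space.

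The key observation is that $D_{2}$ is disjoint from a regular neighborhood of $D_{1}$, so $D_{2}$ survives as a disk component of the intersection between the two genus-$1$ handlebodies in the new type-$(1,1,0;b-1)$ decomposition of $M'$. Relabeling this as a type-$(0,1,1)$ decomposition and invoking Theorem \ref{s3011}, the intersection surface between the two genus-$1$ handlebodies must be homeomorphic to $T^{\circ}$ or $P$, neither of which has a disk component. This contradiction finishes the proof. The main technical point to verify is that $D_{2}$ persists through the reduction, which is immediate from the disjointness of $D_{1}$ and $D_{2}$ together with the fact that Lemma \ref{b-1} only alters a regular neighborhood of $D_{1}$.
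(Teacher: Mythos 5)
Your proof is correct and follows essentially the same route as the paper: rule out inessential disk boundaries with Lemma \ref{lem2}, compress along $D_{1}$ via Lemma \ref{b-1} to obtain a type-$(1,1,0;b-1)$ decomposition of a prime summand in which $D_{2}$ survives, and derive a contradiction. The only difference is the final step: where you invoke the classification of Theorem \ref{s3011}, the paper simply applies Lemma \ref{lem2} a second time, since $\partial D_{2}$ is automatically inessential in the $2$-sphere boundary of the new $3$-ball handlebody; both are valid, as Theorem \ref{s3011} is established before this claim.
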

			\begin{proof}[Proof of Claim 4.3]
				We show the case where $F_{ij}=F_{12}$.
				Suppose that $F_{12}$ has at least two disk components. 
				Since $\chi(F_{12})=0$,  $F_{12}$ has a component that is not a disk.
				Let $D_1$ and $D_2$ be the disk components of $F_{12}$ and $N(D_i)$ be a regular neighborhood of $D_i$ for $i=1, 2$.
				Since $H_3\cup N(D_1)$ is a punctured lens space $L$, $M\cong M'\# Cap(L)$, where $Cap(L)$ is a capping off of $L$.
				The connected summand $M'$ has a type-$(0, 1, 1; b-1)$ decomposition $H_1'\cup H_2'\cup H_3'$, where $H_1'=Cl( H_1-N(D_1))$, $H_2'=Cl(H_2-N(D_1))$ and $H_3'=B^3$ by Lemma \ref{b-1}.
				Since $F_{12}'=H_1'\cup H_2'$ is $F_{12}-D_1$, $F_{12}'$ has a disk component $D_2$; in addition, $F_{12}'$ has at least two components. 
				Since $H_3'$ is a 3-ball, $\partial D_2$ is inessential in $\partial H_3'$. By Lemma \ref{lem2}, $M'\cong S^2\times S^1\# M''$. 
				This contradicts the assumption that $M$ is the 3-sphere or a lens space.	Similarly, $F_{13}$ and $F_{23}$ have at most one disk component.		
			\end{proof}
			
			\begin{claim}\label{claim1.1}
 				If $F_{12}$ has a disk component, each of $F_{13}$ and $F_{23}$ is homeomorphic to   $A$ or $D^2\cup P$ or $A\cup A$.
			\end{claim}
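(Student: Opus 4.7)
The plan is to cap off the disk $D$ as a $2$-handle, reducing the decomposition to a type-$(1,1,0;b-1)$ decomposition that is already classified by Theorem \ref{s3011}.

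First I would show that the boundary $\partial D$ of the disk component $D\subset F_{12}$ is essential in the torus $\partial H_3$. Since $\chi(F_{12})=0$ but $\chi(D)=1$, the surface $F_{12}$ has at least two components, so if $\partial D$ were inessential in $\partial H_3$ then Lemma \ref{lem2} would yield $M\cong M''\#(S^2\times S^1)$, contradicting the hypothesis that $M$ is the $3$-sphere or a lens space. Next I would apply Lemma \ref{b-1}: letting $N(D)$ be a regular neighborhood of $D$ in $H_1\cup H_2$, the $2$-handle $N(D)$ is attached to $H_3$ along $N(\partial D)$, so $H_3\cup N(D)$ is a $3$-ball (if $\partial D$ is meridional in $\partial H_3$) or a punctured lens space (otherwise). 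Capping off yields a $3$-sphere or lens space summand of $M$, while setting $H_1'=H_1\setminus N(D)$, $H_2'=H_2\setminus N(D)$, and $H_3'$ to be the resulting $3$-ball produces a type-$(1,1,0;b-1)$ decomposition of a $3$-sphere or lens space $M'$ (either $M$ itself or its non-lens summand). Under this reduction, $F_{12}'=F_{12}\setminus D$, while $F_{13}'$ and $F_{23}'$ are obtained from $F_{13}$ and $F_{23}$ by capping off the boundary circle $\partial D\subset\partial H_3$ with the bottom disk of the $2$-handle $N(D)$.

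Theorem \ref{s3011}, applied after relabeling so that the genus-$0$ handlebody comes first, forces $b-1\in\{1,3\}$ and gives either $F_{13}'\cong F_{23}'\cong D$ with $F_{12}'\cong T^\circ$ (first case), or $F_{13}'\cong F_{23}'\cong D\cup A$ with $F_{12}'\cong P$ (second case). To recover $F_{13}$ and $F_{23}$, I would reverse the capping: since capping modifies a single component of $F_{13}$ (the one adjacent to $\partial D$ on the $F_{13}$-side of $\partial H_3$), uncapping removes an open disk from exactly one component of $F_{13}'$. In the first case, the only option is to uncap the single disk, yielding $F_{13}\cong A$, and similarly $F_{23}\cong A$. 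In the second case, the uncapping acts on the disk component (yielding $F_{13}\cong A\cup A$) or on the annulus component (yielding $F_{13}\cong D\cup P$), and likewise for $F_{23}$. Thus $F_{13}$ and $F_{23}$ each lie in $\{A,\, A\cup A,\, D\cup P\}$.

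The main obstacle I anticipate is justifying the cap/uncap correspondence between $F_{13}$ and $F_{13}'$, in particular the implicit claim that no component of $F_{13}$ more complicated than an annulus or a pair of pants can contain $\partial D$. This is handled by keeping careful track of the boundary of the $2$-handle $N(D)$, combined with the observation that $\partial D$ is essential in the torus $\partial H_3$: disjoint essential simple closed curves on a torus are all parallel, so every region into which the branched loci cut $\partial H_3$ is planar, and the allowed types of the component of $F_{13}$ carrying $\partial D$ are limited to precisely the ones produced by the uncapping procedure above.
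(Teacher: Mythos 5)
Your strategy is sound and close in spirit to the paper's: both arguments cap off the disk component of $F_{12}$ and lean on the type-$(0,1,1)$ classification. The difference is in how much of Theorem \ref{s3011} is used. The paper invokes the reduction only to bound the number of branched loci by $4$, then lists the possible homeomorphism types of $F_{12}$ from $\chi(F_{12})=0$ together with that bound (namely $D^2\cup T^\circ\cup A$, $D^2\cup P$, $D^2\cup T^\circ$), rules out the first by a second capping argument, and finally reads off $F_{13}$ and $F_{23}$ as the complementary subsurfaces of $F_{12}$ in the tori $\partial H_1$ and $\partial H_2$, with a case analysis of which complementary region contains the disk. You instead read off $F_{13}'$ and $F_{23}'$ completely from Theorem \ref{s3011} (each is $D$ or $D\cup A$) and recover $F_{13}$ and $F_{23}$ by removing the capping disk from one component; this determines everything at once and makes the exclusion of $F_{12}\cong D^2\cup T^\circ\cup A$ automatic, since $T^\circ\cup A$ is not among the allowed $F_{12}'$. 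The cap/uncap correspondence you flag as the main obstacle is in fact immediate: $F_{13}'$ is literally $F_{13}$ with the boundary circle $\partial D$ capped by a parallel copy of $D$ pushed into $H_1$, so $F_{13}$ is $F_{13}'$ minus an open disk in the interior of one component, and no further control on which component of $F_{13}$ carries $\partial D$ is needed.

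One slip to correct: if $\partial D$ is \emph{meridional} in $\partial H_3$, attaching the $2$-handle $N(D)$ to the solid torus $H_3$ yields a punctured $S^2\times S^1$, not a $3$-ball; it is the \emph{longitudinal} case, where $\partial D$ meets a meridian disk of $H_3$ once, that yields a $3$-ball. The meridional case must therefore be excluded outright, since it would give $M$ an $S^2\times S^1$ summand, contradicting that $M$ is $S^3$ or a lens space; this is exactly why the hypothesis of Lemma \ref{b-1} excludes meridional boundaries. The slip does not propagate, because every non-excluded case still produces the type-$(1,1,0;b-1)$ decomposition your argument needs.
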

				\begin{proof}[Proof of Claim \ref{claim1.1}]
					If  $b\geq 5$ and $F_{12}$ has a disk component, then $M\cong M' \# \mathbb{L}$, 
					where $M'$ has a type-$(0, 1, 1; 4)$ decomposition.
					Since the 3-sphere and lens spaces do not have a type-$(0, 1, 1; 4)$ decomposition, 
					this contradicts the assumption that  $M$ is the 3-sphere or a lens space.
					Suppose that $b\leq 4$ and $F_{12}$ has only one disk component.
					Since $\chi(F_{12})=0$, $F_{12}$ satisfies one of the following.
					\begin{enumerate}
					\renewcommand{\theenumi}{(\arabic{enumi})}
						\item[(1)]  $F_{12}\cong D^2 \cup T^{\circ} \cup A$.
						\item[(2)]  $F_{12}\cong D^2 \cup P$.
						\item[(3)]  $F_{12}\cong D^2 \cup T^{\circ}$.
					\end{enumerate}
					
					Suppose that $F_{12}\cong D^2 \cup T^{\circ} \cup A$.
					Let $D$ be a disk component of $F_{12}$ and $N(D)$ be a regular neighborhood of $D$.
					Then, $N(D)\cup H_3$ is a punctured lens space $L$ and $M\cong M'\# Cap(L)$.
					We define $H_1'$, $H_2'$, and $H_3'$ as $H_1'=Cl(H_1-N(D))$, $H_2'=Cl(H_2-N(D))$, and $H_3=B^3$.
					 Then, $H_1'\cup H_2'\cup H_3$ is a type-$(0, 1, 1; 3)$ decomposition of $M'$ by Lemma \ref{b-1}.
					 Further, $F_{13}'=H_1'\cap H_3' \cong D^2\cup A$ is satisfied. 
					 Let $D'$ be a disk component of $F_{13}'$.
					 Since the branch locus that is a boundary of a disk component in $\partial H_2$ is inessential in $\partial H_2$, $\partial D'$ is also inessential in $\partial H_2$.
					 By Lemma \ref{lem2}, $M'$ has  $S^2\times S^1$ as a connected summand. 
					 This contradicts the assumption that $M$ is the 3-sphere or a lens space.
					 Hence, $F_{12}$ is homeomorphic to  $D^2 \cup P$ or $D^2 \cup T^{\circ}$.
					 
					 If $F_{12}\cong T^\circ \cup D^2$, then $F_{13}=\partial H_1- (T^\circ \cup D^2)\cong A$ and $F_{23}=\partial H_2-(T^\circ \cup D^2)\cong A$.
					 Suppose that $F_{12}$ is homeomorphic to  $F_{12}\cong D^2 \cup P$.
					 If $\partial P$ is inessential in $\partial H_1$ or $\partial H_2$, there are two disk components in $F_{12}$, $F_{13}$, or $F_{23}$.
					 This contradicts Claim \ref{disk one}.
					Hence, one of the components of $\partial P$ is essential. 
					Therefore, two of the components $\partial P$ are essential in $\partial H_1$ and $\partial H_2$.
					Then, $Cl(\partial H_1 - P)$ is homeomorphic to $D^2\cup A$.
					Let $D_1$ be a disk component of $Cl(\partial H_1 - P)$ and $A_1$ be an annulus component of $Cl(\partial H_1 - P)$.
					If $D$ is included in $D_1$, $F_{13}\cong A\cup A$, where $D$ is a disk component of $F_{12}$. 
					On the other hand, if $D$ is included in $A_1$, $F_{13}\cong D^2 \cup P$, where $D$ is a disk component of $F_{12}$.
					Similarly, $F_{23}$ is homeomorphic to $A\cup A$ or $D^2\cup P$.
				\end{proof}
				\begin{claim}\label{claim2.1}
		 		Suppose that $F_{12}$ does not have a disk component.  Then, there are at most two annuli components in $F_{12}$.
		 \end{claim}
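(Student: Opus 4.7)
The plan is to argue by contradiction: suppose $F_{12}$ has no disk component but contains $n \ge 3$ annulus components $A_{1},\ldots,A_{n}$.

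First, I would analyze each $A_{i}$ as a sub-surface of the torus $\partial H_{1}$. A properly embedded annulus in a torus has its two boundary circles either both essential (hence mutually parallel in $\partial H_{1}$) or both inessential (cobounding nested disks, with the annulus lying between them). In the latter case, an innermost boundary circle of $A_{i}$ bounds a disk $D \subset \partial H_{1}$ disjoint from all other branched loci, and $D$ is a disk component of $F_{13}$. By Claim \ref{disk one}, $F_{13}$ has at most one disk component, so at most one $A_{i}$ has inessential boundary on $\partial H_{1}$; the symmetric argument applied to $\partial H_{2}$ and $F_{23}$ shows at most one $A_{i}$ has inessential boundary on $\partial H_{2}$. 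Since $n \ge 3$, there is an annulus $A \subset F_{12}$ whose boundary is essential on both $\partial H_{1}$ and $\partial H_{2}$.

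Next, I would use $A$ to produce a connect-sum reduction analogous to the role of the disk $D_{1}$ in Lemma \ref{b-1} and the proof of Claim \ref{claim1.1}. Let $N(A)$ be a regular neighborhood of $A$ in $H_{1}\cup H_{2}$, and set $H_{3}' := H_{3}\cup N(A)$. Because the two essential boundary circles of $A$ are parallel essential curves on the torus $\partial H_{3}$, attaching $N(A)$ amounts to attaching two $2$-handles to $H_{3}$ along these parallel curves, so $H_{3}'$ is either a solid torus, a punctured lens space, or a punctured $S^{2}\times S^{1}$. Capping off the resulting boundary spheres yields a connected sum decomposition $M \cong M' \# L$, where $L$ is a lens space, $S^{3}$, or $S^{2}\times S^{1}$, and $M'$ inherits a type-$(1,1,0;b-2)$ or type-$(0,1,1;b-2)$ handlebody decomposition with strictly fewer branched loci.

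Because $M$ is prime and not $S^{2}\times S^{1}$, the summand $L$ is forced to be $S^{3}$ or a lens space, and $M'$ is the $3$-sphere or a lens space. Iterating the reduction with the other essential-boundary annulus components eventually drives the branched-locus count below what is permitted by Proposition \ref{s3000} or Theorem \ref{s3001} for the terminal type-$(0,0,0)$ or type-$(0,0,1)$ summand, yielding the contradiction. The main obstacle is the rigorous execution of the reduction step: one must verify that $H_{3}\cup N(A)$ indeed lies in one of the three topological types listed above, rule out the $S^{2}\times S^{1}$ possibility using the assumption that $M$ is the $3$-sphere or a lens space, and carefully track how the component structures of $F_{12}'$, $F_{13}'$, and $F_{23}'$ evolve, in particular ensuring that no spurious disk component is introduced that would interfere with the inductive application of Claim \ref{disk one} and Lemma \ref{lem2}.
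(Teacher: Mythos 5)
Your opening step (using Claim \ref{disk one} to bound the number of annulus components of $F_{12}$ with inessential boundary on $\partial H_1$ or $\partial H_2$, and so extracting an annulus $A\subset F_{12}$ whose boundary is essential on both tori) is reasonable and close in spirit to the paper's tacit normalization. The proof breaks down at the reduction step. A regular neighborhood $N(A)\cong A\times[-1,1]$ of an annulus is attached to $H_3$ along $\partial A\times[-1,1]$, i.e.\ along two annuli in the torus $\partial H_3$; this is not the attachment of two $2$-handles, and when the attaching curves are essential in $\partial H_3$ the boundary of $H_3\cup N(A)$ is a union of tori, not spheres. No connected sum decomposition $M\cong M'\# L$ is produced, so the analogy with the disk $D_1$ in Lemma \ref{b-1} fails: that lemma genuinely needs a disk, whose neighborhood is a $2$-handle creating a sphere. (You also assert essentiality of $\partial A$ on $\partial H_3$, which your first step never established.) Worse, the configuration your iteration cannot reduce is precisely the one that actually occurs: all branched loci may be essential and non-meridional in every $\partial H_i$, in which case there is no compressing disk and no essential sphere available at all --- each $H_i$ is a fibered solid torus and $M$ is Seifert fibered. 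Conclusions (1) and (6) of Theorem \ref{thm 3} are exactly such decompositions with one and two annuli per $F_{ij}$, so any bound on the number of annuli must confront this case directly.

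The paper's proof does so via a dichotomy your proposal lacks. If no branched locus is meridional in any $\partial H_k$, then $M$ is a Seifert fibered space with at most three exceptional fibers; by Proposition \ref{Seifert1} the base is $S^2$ or $\mathbb{R}P^2$, and the intersection of two fibered solid tori in such a fibration consists of one or two annuli, contradicting the assumption of three or more components. If instead some branched locus is meridional in, say, $\partial H_1$, then a meridian disk $D$ of $H_1$ with $\partial D\subset \partial F_{12}$ gives an honest $2$-handle $N(D)$ attached to $H_2$ along an essential curve of the torus $\partial H_2$, producing a punctured lens space or $3$-ball, hence a sphere and a connected summand $M'$ carrying a type-$(0,0,1;b)$ decomposition with $b\geq 5$, contradicting Theorem \ref{s3001}. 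Your proposal contains neither the Seifert-fibration input nor the meridian-disk case split, and the iteration you describe cannot be started; the gap is essential rather than a matter of detail.
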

		 	\begin{proof}[Proof of Claim \ref{claim2.1}]
					We note that if $F_{12}$ does not have a disk component, each of the component $F_{12}$ is annulus since $\chi(F_{ij})=0$.
					We can assume that the core of $F_{ij}$ is essential in the boundary of each handlebodies and $F_{12}$ has at least three annulus components.
					Then, $F_{13}$  and $F_{23}$ consist of annuli. 
					We note that the number of components of $F_{ij}$ are equal. 
					
					If $\partial F_{ij}$ is not meridional in $\partial H_k$ for $k=1, 2, 3$, 
					then $M$ is a Seifert manifold with at most three singular fibers. 
					Since $M$ is the 3-sphere or a lens space, the base space is  $S^2$ or $\mathbb{R}P^2$ by Proposition \ref{Seifert1}. Then, the intersection of two fibered tori is exactly one annulus or exactly two annuli. 
					This contradicts the assumption.  
					
					Hence, we can assume that  $\partial F_{12}$ is meridional in $\partial H_1$. 
					Let $D$ be a meridian disk of $H_1$ such that  $\partial D \subset F_{12}$. 
					Let $N(D)$ be a regular neighborhood of $D$ in $H_1$. 
					The union $N(D)\cup H_2$ is a punctured lens space or a 3-ball $L$. 
					Then, $M\cong M' \# Cap(L)$, where $M'$ is a 3-manifold that has a type-$(0, 0, 1; b)$ decomposition with $b\geq 5$. 
					By  the condition $b\geq 5$ and  Proposition \ref{s3011}, $M'$ is not homeomorphic to the 3-sphere or a lens space.
					This contradicts the assumption that $M$ is the 3-sphere or a lens space.
					This completes the proof of Claim \ref{claim2.1}.
			\end{proof}
				By Claims \ref{claim1.1} and \ref{claim2.1}, we have the following cases after renaming the subscripts of the handlebodies.
		\begin{enumerate}
		\renewcommand{\theenumi}{(\arabic{enumi})}
				\item[(1)]  
					it has exactly two branched loci and satisfies $F_{ij} \cong F_{jk} \cong F_{ki} \cong A$.
				\item[(2)]  It has exactly two branched loci and satisfies
					$F_{ij}\cong D \cup T^{\circ}$ and $F_{jk} \cong F_{ki} \cong A $.
				\item[(3)]  it has exactly four branched loci and satisfies
					$F_{ij} \cong D \cup P$ and $F_{jk} \cong F_{ki} \cong A_{1} \cup A_{2}$.
				\item[(4)]  it has exactly four branched loci and satisfies
					$F_{ij} \cong A_{1} \cup A_{2}$ and $F_{jk} \cong F_{ki} \cong D \cup P$.
				\item[(5)]  it has exactly four branched loci and satisfies
					$F_{ij} \cong F_{jk} \cong F_{ki} \cong D \cup P$.
				\item[(6)]  $M \cong L(4,1)$. In this case,  $M$ also  has a decomposition that satisfies $F_{12}\cong F_{13}\cong F_{23} \cong A\cup A$ and it has four branched loci.
				\end{enumerate}
			We complete the proof of the first half of the statement.
			Hereafter, we shall show that $M$ has a type-$(1, 1, 1)$ decomposition that satisfies cases  (2), (3), (4), or  (5), and  if $M\cong L(4, 1)$, $M$ has another decomposition that satisfies case (6). 
			
			\begin{claim}\label{claim3.1}
			  the 3-sphere and any lens spaces have a type-$(1, 1, 1)$ decomposition that satisfies  cases  (2), (3), (4), and (5).
			  \end{claim}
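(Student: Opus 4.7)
The plan is to construct, for each of cases (2), (3), (4), (5), an explicit type-$(1,1,1)$ decomposition of the 3-sphere and of every lens space, following a uniform template. Since every such $M$ admits a genus-one Heegaard splitting $M = V_1 \cup_\varphi V_2$, I will build each decomposition so that its entire combinatorial and topological content lives inside the two solid tori $V_1, V_2$ and can be verified without reference to the gluing map $\varphi$. Consequently, the same model construction works simultaneously for $S^3$ and every lens space $L(p,q)$.

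Concretely, for each case I specify a branched-loci configuration on the torus $\partial V_1$ and a properly embedded surface $F_{12}$ (in $V_1$) of the required topological type, chosen so that the partition $\partial V_1 \setminus \text{(branched loci)} = F_{13} \cup F_{23}$ has the correct surface types. For case (2), the two branched loci are two parallel essential curves on $\partial V_1$, dividing $\partial V_1$ into the two annuli $F_{13}, F_{23}$; $F_{12} = D \cup T^\circ$ is realized as a meridian disk of $V_1$ together with a properly embedded once-punctured torus whose boundary is the parallel essential curve, arranged so that $V_1 \setminus (D \cup T^\circ)$ consists of two solid tori. For case (5), the four branched loci are two parallel essential curves together with two inessential curves (one in each of the complementary annuli), so that $F_{13}$ and $F_{23}$ each consist of a disk and a pair of pants; then $F_{12} = D \cup P$ is realized as a boundary-parallel disk together with a properly embedded pair of pants whose three boundary circles are the two essential curves and one of the inessential curves. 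Cases (3) and (4) follow the same template, altering the slopes and essentiality of the branched loci so that $F_{13}, F_{23}$ on $\partial V_1$ appear as $A_1 \cup A_2$ or as $D \cup P$, respectively, and then choosing $F_{12}$ to match.

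The main obstacle is verifying, in each case, that the proposed surface $F_{12}$ really does separate $V_1$ into two pieces each homeomorphic to a solid torus, rather than into some other pair of genus-one manifolds or into something more exotic. This is a cut-and-paste argument done via meridian disks of $V_1$: after cutting $V_1$ along a meridian disk, the surface $F_{12}$ reduces to a collection of planar surfaces and once-punctured surfaces inside a $3$-ball, for which the separating and handlebody properties can be read off directly by exhibiting explicit compressing disks of the resulting pieces. These verifications are most transparent when drawn inside a standardly embedded solid torus, in the spirit of the figures used earlier in the paper. Since the verification takes place entirely inside a solid torus and never invokes $\varphi$, the resulting decompositions exist uniformly for the $3$-sphere and every lens space.
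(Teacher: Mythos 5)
Your overall strategy is exactly the one the paper uses: fix a genus-one Heegaard splitting $V_1\cup V_2$, keep one solid torus as the third handlebody, and cut the other solid torus into two solid tori by a properly embedded surface ($D\cup T^{\circ}$ for case (2), $D\cup P$ or $A_1\cup A_2$ for cases (3)--(5)), the case being determined by the pattern of branched loci on the Heegaard torus; since the construction lives inside one solid torus it is independent of the gluing map and works uniformly for $S^3$ and all lens spaces. Cases (2), (3), (4) of your template are fine and match the paper's Figures of the embeddings (and its Lemmas on $D\cup P$ and $A_1\cup A_2$ in a solid torus).

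However, your explicit realization of case (5) is topologically impossible. You take the four branched loci to be two parallel meridians $C_1, C_2$ and two inessential curves $\partial D_1, \partial D_2$ (which is the correct boundary pattern), but you then realize $F_{12}=D\cup P$ with $D$ a \emph{boundary-parallel} disk (so $\partial D=\partial D_1$, say) and $P$ a pair of pants with $\partial P=C_1\cup C_2\cup\partial D_2$. The region $W$ of $V\setminus(D\cup P)$ adjacent to the disk region $D_1\subset\partial V$ then has $D\cup D_1$, a $2$-sphere, as a closed subsurface of $\partial W$; hence $D\cup D_1$ is an entire boundary component of $W$, so $W$ is a $3$-ball rather than a solid torus, and moreover $P$ does not meet $\partial W$, so $H_1\cap H_2$ would be $D$ alone rather than $D\cup P$. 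This is precisely what the paper's Lemma \ref{punc_3} rules out (``If $\partial D$ is inessential in $\partial V$, this contradicts assumption (1)''). The correct configuration, as in Figure \ref{punc4eps}, takes $\partial D$ to be one of the two \emph{essential} curves (so $D$ is a meridian disk) and $\partial P$ to be the other essential curve together with both inessential curves. With that correction your argument goes through and coincides with the paper's proof.
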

			  \begin{proof}[Proof of Claim \ref{claim3.1}]
				Let $M$ be the 3-sphere and a lens space.
				Then, $M$ has a genus-one Heegaard splitting $V_1\cup V_2$.
				Let $D$ be a meridian disk of $V_2$.
				Then, we can construct a  punctured torus $T^\circ$ properly embedded  in a 3-ball  $V_1-N(D)$, which cuts open $V_1-N(D)$ into two solid tori $H_1$ and $H_2$ (see Figure \ref{punctorus}).
				Then, $H_1\cup H_2\cup V_1$ is a type-$(1, 1, 1)$ decomposition that satisfies case (2).
				
				We can construct $D^2\cup P$ embedded in $V_2$ such that $D^2\cup P$ cuts open $V_2$ into two solid tori $H_1$ and $H_2$ and $\partial (D^2\cup P)$ cuts open $\partial V_2$ into four annuli (see Figure \ref{punc3eps}).
				Then, $H_1\cup H_2\cup V_1$ is a type-$(1, 1, 1)$ decomposition that satisfies case (3).
				
				In addition, we can construct $D^2\cup P$ embedded in $V_2$ such that $D^2\cup P$ cuts open $V_2$ into two solid tori $H_1$ and $H_2$ and $\partial (D^2\cup P)$ cuts open $\partial V_2$ into two annuli, a disk, and a thrice-punctured sphere (see Figure \ref{punc_1eps}).
				Then, $H_1\cup H_2\cup V_1$ is a type-$(1, 1, 1)$ decomposition that satisfies case (4).
				
				Furthermore, we can construct $D^2\cup P$ embedded in $V_2$ such that $D^2\cup P$ cuts open $V_2$ into two solid tori $H_1$ and $H_2$ and $\partial (D^2\cup P)$ cuts open $\partial V_2$ into two disks and two thrice-punctured spheres (see Figure \ref{punc4eps}).
				Then, $H_1\cup H_2\cup V_1$ is a type-$(1, 1, 1)$ decomposition that satisfies case (5).						
			\end{proof}
			\begin{claim}\label{claim4.1}
			 $M$ has a type-$(1, 1, 1)$ decomposition that satisfies case (1). A type-$(1, 1, 1)$ decomposition that satisfies case (1) has two cases: one is  the case where branched loci are meridional in the boundary of a handlebody and the other is the case where branched loci are not meridional in the boundary of a handlebody. 
			\end{claim}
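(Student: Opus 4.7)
The plan is to prove both halves of Claim \ref{claim4.1} — the existence of a type-$(1,1,1)$ case (1) decomposition for each $M$ in question and the dichotomy into meridional versus non-meridional sub-cases — by exhibiting two explicit constructions, one of each type, valid for every $M$ that is the 3-sphere or a lens space.

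For the meridional sub-case I would start with a genus-one Heegaard splitting $V_1\cup V_2=M$ and, inside the solid torus $V_2$, take a properly embedded $\partial$-parallel annulus $A$ whose two boundary components are parallel meridian circles of $\partial V_2$. Cutting $V_2$ along $A$ produces two pieces: a collar region $V_2^{\mathrm{thin}}\cong S^1\times D^2$ whose core is a meridian of $V_2$, and a complementary piece $V_2^{\mathrm{thick}}$ that is again a solid torus, homeomorphic to $V_2$. Setting $H_1=V_2^{\mathrm{thick}}$, $H_2=V_2^{\mathrm{thin}}$, and $H_3=V_1$ gives $F_{12}=A$, while $F_{13}$ and $F_{23}$ are the two annular strips of $\partial V_2$ cut off by $\partial A$; thus all three pairwise intersections are annuli and the two branched loci are the meridians comprising $\partial A$. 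Since $H_1\cong V_2$, these branched loci bound meridian disks in $H_1$ and hence are meridional in $\partial H_1$.

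For the non-meridional sub-case I would invoke the Seifert fibered structure of $M$ over $S^2$ with at most two exceptional fibers guaranteed by Proposition \ref{Seifert1}, and decompose the base $S^2$ into three closed disks $L_1,L_2,L_3$ meeting pairwise along three arcs $\alpha_1,\alpha_2,\alpha_3$ with a common pair of endpoints $N,S$, arranging the at-most-two exceptional fibers to lie over $N$ or $S$. Then each $H_i=\pi^{-1}(L_i)$ is a fibered solid torus, each $F_{ij}=\pi^{-1}(\alpha_k)$ (where $\alpha_k$ is the arc shared by $L_i$ and $L_j$) is an annulus, and the two branched loci are the fibers $\pi^{-1}(N),\pi^{-1}(S)$, each parallel to a regular fiber inside every $H_i$ and hence representing a longitude rather than a meridian of the corresponding solid torus. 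With both constructions exhibited, the dichotomy in the statement becomes essentially tautological, and the only substantive point to check will be that the $\partial$-parallel annulus in the first construction really separates $V_2$ into two solid tori — the thin side is visibly $S^1\times D^2$, while the thick side has torus boundary and infinite cyclic fundamental group generated by the core of $V_2$, and so is a solid torus as well.
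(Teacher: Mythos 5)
Your first (meridional) construction is correct and is in fact more explicit than what the paper records: the paper's own proof of this claim only produces the Seifert-fibered decomposition and treats the meridional/non-meridional alternative as a tautological case division, so exhibiting the boundary-parallel annulus with meridional boundary in $V_2$ is a genuine (and welcome) addition. The substantive point you flag at the end — that the ``thick'' side of the cut is a solid torus — is fine.

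The gap is in your second construction: you arrange the at-most-two exceptional fibers to lie over the corner points $N$ and $S$ of the three lunes. If an exceptional fiber of multiplicity $\alpha\ge 2$ sits over $N$, then $\pi^{-1}(L_i)$ is not a solid torus; it is not even a manifold along that fiber. In the model fibered solid torus $D^2\times[0,1]/\bigl((x,1)\sim(e^{2\pi i\beta/\alpha}x,0)\bigr)$, the preimage of a sector of the base orbifold with vertex at the cone point is the mapping torus of $\alpha$ planar sectors wedged at a single point under their cyclic permutation, and a punctured neighborhood of a point of the exceptional fiber in this set has $\alpha\ge 2$ components, so it is not locally Euclidean there. (Relatedly, an exceptional fiber is \emph{not} parallel to a regular fiber in its fibered neighborhood — by Lemma \ref{longi} this happens only when $\alpha=1$ — so the sentence justifying non-meridionality also fails for exceptional branched loci.) This is harmless for $S^3$ with the Hopf fibration and for the lens spaces that are circle bundles over $S^2$, but every $L(p,q)$ with $q\not\equiv\pm1 \pmod p$ forces at least one exceptional fiber in any Seifert fibration over $S^2$ (Proposition \ref{Seifert1}), so your construction breaks exactly where it is needed. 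The repair is the one the paper uses: place the (at most two) exceptional fibers in the \emph{interiors} of the disks $L_i$, so that each $H_i=\pi^{-1}(L_i)$ is an honest fibered solid torus whose core may be exceptional, the $F_{ij}=\pi^{-1}(\alpha_k)$ are vertical annuli, and the two branched loci $\pi^{-1}(N),\pi^{-1}(S)$ are \emph{regular} fibers; on $\partial H_i$ a regular fiber is an $(\alpha_i,\beta_i)$-curve with $\alpha_i\ge 1$, hence not a meridian.
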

			\begin{proof}
			$M$ has a Seifert fibered structure over $S^2$ at most two singular fiber by Proposition \ref{Seifert1}.
			 Let $H_i$ be a fibered solid torus  for $i=1, 2, 3$. 
			 Then, we can construct the Seifert fibered structure over $S^2$ by attaching $H_i$ to each other along the annuli. 
			 A component of the intersection of $H_i\cap H_j$ is an annulus since the base space is $S^2$.
			 Then, $H_1\cup H_2\cup H_3$ is a type-$(1, 1, 1)$ decomposition of $M$ that satisfies case (1).
			 Hence, $M$ has a type-$(1, 1, 1)$ decomposition that satisfies $F_{ij}\cong A$. 
			\end{proof}
						
			By Claims \ref{claim3.1} and \ref{claim4.1}, $M$ has a type-$(1, 1, 1)$ decomposition that satisfies cases (1), (2), (3), (4), and (5).
			
			 Finally, we show that $M$ has a type-$(1, 1, 1)$ decomposition that satisfies case (6) if $M$ is homeomorphic to $L(4, 1)$. 
			 Let $M$ be a lens space $L(4, 1)$.
			 Then, $M$ has a Seifert fibered structure over $\mathbb{R}P^2$ with no singular fiber by Proposition \ref{Seifert2}.
			 Let $H_i$ be a fibered solid torus; its core is a regular fiber for $i=1, 2, 3$. 
			 Then, we can construct the Seifert fibered structure over $\mathbb{R}P^2$ with no singular fiber.
			 The components of the intersection of $H_i\cap H_j$ are two annuli since the base space is $\mathbb{R}P^2$.
			 Then, $H_1\cup H_2\cup H_3$ is a type-$(1, 1, 1)$ decomposition of $M$ that satisfies case (6).
			 Hence, $M$ has a type-$(1, 1, 1)$ decomposition that satisfies $F_{ij}\cong A\cup A$.

		\end{proof} 
		
		\begin{Rem}
			 In this paper, we consider handlebody decompositions of the 3-sphere and lens spaces. A similar consequence is obtained when a 3-manifold is a Seifert fibered space that has at most three singular fibers. By looking at the details of the proof of Theorem 1 in \cite{G}, we have the following.  If the base space of Seifert fibered space $\mathbb{S}(3)$ is a genus-$g$ orientable closed surface, a type-$(1, 1, 1)$ decomposition of $\mathbb{S}(3)$ satisfies $F_{ij}\cong A_1\cup\cdots\cup A_{2g+1}$. If the base space of $\mathbb{S}(3)$ is a genus-$g$ non-orientable closed surface, a type-$(1, 1, 1)$ decomposition of $\mathbb{S}(3)$ satisfies $F_{ij}\cong A_1\cup\cdots \cup A_{g+1}$.
		
		\end{Rem}

		Before classifying a type-$(1, 1, 1)$ decomposition that satisfies conclusion (2), (3), (4), or (5) of Theorem \ref{thm 3}, we prepare some lemmas.
		By Lemmas  \ref{heegaard} and \ref{heegaard2} below, we can consider a type-$(1, 1, 1)$ decomposition that satisfies conclusion (2), (3), (4), or (5) of Theorem \ref{thm 3} as a genus-one Heegaard splitting.
			\begin{lem}\label{heegaard}
				Let $H_1\cup H_2\cup H_3$ be a type-$(1, 1, 1)$ decomposition of a lens space $M$ that satisfies conclusion (2), (3), (4), or (5) of Theorem \ref{thm 3}. Then, the following are satisfied.
				\begin{enumerate}
				\renewcommand{\theenumi}{(\arabic{enumi})}
				 \item[(1)]  if $H_1\cup H_2\cup H_3$ is a type-$(1, 1, 1)$ decomposition of $M$  that satisfies conclusion (3), (4), or (5) of Theorem \ref{thm 3}, exactly one of $\partial H_i$ is a genus-one Heegaard surface of $M$, where $i\in \{1, 2, 3\}$.
				 \item[(2)]  if $H_1\cup H_2\cup H_3$ is a type-$(1, 1, 1)$ decomposition of $M$  that satisfies conclusion (2) of Theorem \ref{thm 3}, at most two of $\partial H_i$ are genus-one Heegaard surfaces of $M$, where $i\in \{1, 2, 3\}$.
				 \end{enumerate}
			\end{lem}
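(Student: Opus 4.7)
The plan is to analyze, for each $i\in\{1,2,3\}$, when the torus $\partial H_i$ is a genus-one Heegaard surface of $M$. Since $H_i$ is itself a solid torus, this happens precisely when the complementary piece $\overline{M\setminus H_i}=H_j\cup H_k$, glued along $F_{jk}$, is also a solid torus. The central technical input is Lemma \ref{lem2}: if any disk component of an intersection surface has inessential boundary on the remaining handlebody, then $M$ acquires an $S^2\times S^1$ summand, contradicting the assumption that $M$ is a lens space. Hence every disk component of an intersection surface has essential boundary on the third torus.

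For part (1), I would treat cases (3), (4), and (5) of Theorem \ref{thm 3} in turn. Each of these cases contains at least one intersection of the form $D\cup P$. For the disk component $D$, its boundary on the third torus $\partial H_c$ is essential and therefore is either meridional or essential-non-meridional. In the non-meridional case, Lemma \ref{b-1} forces a non-trivial connected-sum decomposition of $M$, which together with the irreducibility of lens spaces constrains the configuration sharply and eventually pins down the Heegaard handlebody. In the meridional case, capping off along $D$ reduces to a decomposition of strictly simpler type already classified earlier in this section. Combined with the symmetry present in each of these cases, these reductions identify exactly one index $i$ for which $H_j\cup H_k$ is a solid torus, giving the unique Heegaard surface among $\partial H_1,\partial H_2,\partial H_3$.

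For part (2), in case (2) the intersection data $F_{ij}\cong D\cup T^\circ$ and $F_{jk}\cong F_{ki}\cong A$ distinguish $H_k$ from the pair $H_i,H_j$, while $H_i$ and $H_j$ themselves play symmetric roles. Hence $\partial H_i$ is a Heegaard surface if and only if $\partial H_j$ is, so the number of Heegaard surfaces among the three tori is $0$, $1$ (only $\partial H_k$), $2$ (both $\partial H_i$ and $\partial H_j$), or $3$. The claim then reduces to excluding the last scenario, and this is the main obstacle. I would argue by contradiction: assuming all three $\partial H_\ell$ are Heegaard surfaces, the uniqueness of genus-one Heegaard splittings of lens spaces lets me isotope all three to a single reference Heegaard torus $\partial V_1$, and positioning the three original handlebodies with respect to $V_1\cup V_2$ then forces a configuration of $H_1,H_2,H_3$ incompatible with the prescribed intersection $F_{ij}\cong D\cup T^\circ$.
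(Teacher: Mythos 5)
Your overall framing is right --- $\partial H_i$ is a Heegaard surface iff $H_j\cup H_k$ is a solid torus, and the analysis should run through the disk components of the $F_{ij}$ via Lemmas \ref{lem2} and \ref{b-1} --- but the proposal stops exactly where the proof has to do its work. For part (1) you never actually establish ``exactly one'': the phrases ``constrains the configuration sharply and eventually pins down the Heegaard handlebody'' and ``combined with the symmetry present in each of these cases'' are placeholders for the entire content of the statement. The paper's mechanism, which you are missing, is to cut along the disk component $D$ of (say) $F_{12}$ and observe that $N(D)\cup H_3$ is either a punctured copy of $M$ or a $3$-ball. In the first subcase $\partial H_3$ is a Heegaard surface and one checks directly that $\partial H_1,\partial H_2$ cannot be (their complements cannot be solid tori given that $N(D)\cup H_3$ is a punctured $M$); in the second subcase one obtains a type-$(0,1,1;3)$ decomposition of $M$, and Lemma \ref{heegaard011lens}(2) --- the type-$(0,1,1)$ analogue of this lemma, which your proposal never invokes --- says that \emph{exactly one} of the two remaining boundaries is a Heegaard surface. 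Without that reduction to the type-$(0,1,1)$ case there is no route to the count ``exactly one.''

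Two further points. First, your dichotomy ``meridional vs.\ essential-non-meridional'' for $\partial D$ on $\partial H_3$ is mis-assigned: if $\partial D$ were meridional (bounding a disk in $H_3$), then $D$ together with that meridian disk produces an $S^2\times S^1$ summand and the case is simply impossible, not the case that ``reduces to a simpler decomposition''; the operative dichotomy is, within the non-meridional case, whether $N(D)\cup H_3$ is a punctured $M$ or a $3$-ball (equivalently whether $\partial D$ is longitudinal in $\partial H_3$), and these two branches yield the two different Heegaard handlebodies. Second, in part (2) the claim that $\partial H_i$ is a Heegaard surface iff $\partial H_j$ is does not follow from the intersection pattern being ``symmetric'' --- there is no homeomorphism of the decomposition exchanging $H_i$ and $H_j$ a priori --- and your proposed contradiction for the three-Heegaard-surface scenario (isotoping all three tori to a reference splitting) is only a plan; note that the three tori intersect one another, so they cannot be simultaneously isotoped to a single Heegaard torus. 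Here too the same $N(D)\cup H_3$ dichotomy settles the matter: the punctured-$M$ branch gives exactly one Heegaard surface, and the $3$-ball branch gives a type-$(0,1,1;1)$ decomposition for which Lemma \ref{heegaard011lens}(1) yields exactly the two surfaces $\partial H_1,\partial H_2$.
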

			
			\begin{proof}
				Suppose that  $H_1\cup H_2\cup H_3$ satisfies conclusion (2) of  Theorem \ref{thm 3}.
				We can assume that $F_{12}\cong D\cup T^\circ$ without loss of  generality.
				$N(D)\cup H_3$ is a punctured $M$ or a 3-ball, where $D$ is a disk component of $F_{12}$ and $N(D)$ is a regular neighborhood of $D$ in $H_1\cup H_2$.
				Suppose that  $N(D)\cup H_3$ is a punctured $M$.
				Then, $\partial H_3$ is a Heegaard surface and $(H_1-N(D))\cup (H_2-N(D))$ is a 3-ball.
				If $\partial (H_1-N(D))$ is a genus-one Heegaard surface of $M$, the exterior of $ (H_1-N(D)$ may be a solid torus.
				This contradicts the assumption that $N(D)\cup H_3$ is a punctured $M$.
				Hence, $\partial (H_1-N(D))$ is not a genus-one Heegaard surface of $M$. 
				Similarly, $\partial (H_2-N(D))$ is not a genus-one Heegaard surface of $M$.
				This implies that if $\partial H_3$ is a genus-one Heegaard surface of $M$, both $\partial H_1$ and $\partial H_2$ are not genus-one Heegaard surfaces of $M$.
				If $N(D)\cup H_3$ is a 3-ball, $(H_1-N(D))\cup (H_2-N(D))\cup (N(D)\cup H_3)$ is a type-$(0, 1, 1; 1)$ decomposition of $M$.
				Then, if $N(D)\cup H_3$ is a 3-ball, both $\partial H_2$ and $\partial H_1$ are Heegaard surfaces by  Lemma \ref{heegaard011lens}.
								
				Next, we suppose that  $H_1\cup H_2\cup H_3$ satisfies conclusion (3) of  Theorem \ref{thm 3}.
				We can assume that $F_{12}\cong D\cup P$ without loss of  generality.
				$N(D)\cup H_3$ is a punctured $M$ or a 3-ball, where $D$ is a disk component of $F_{12}$ and $N(D)$ is a regular neighborhood of $D$ in $H_1\cup H_2$.
				Suppose that  $N(D)\cup H_3$ is a punctured $M$.
				Then, $\partial H_3$ is a Heegaard surface and $(H_1-N(D))\cup (H_2-N(D))$ is a 3-ball.
				If $\partial (H_1-N(D))$ is a genus-one Heegaard surface of $M$, the exterior of $ (H_1-N(D)$ may be a solid torus.
				This contradicts the assumption that $N(D)\cup H_3$ is a punctured $M$.
				Hence, $\partial (H_1-N(D))$ is not a genus-one Heegaard surface of $M$. 
				Similarly, $\partial (H_2-N(D))$ is not a genus-one Heegaard surface of $M$.
				This implies that if $\partial H_3$ is a genus-one Heegaard surface of $M$, both $\partial H_1$ and $\partial H_2$ are not genus-one Heegaard surfaces of $M$.
				Suppose that $N(D)\cup H_3$ is a 3-ball and $(H_1-N(D))\cup (H_2-N(D))\cup (N(D)\cup H_3)$ is a type-$(0, 1, 1; 3)$ decomposition of $M$.
				By Lemma \ref{heegaard011lens}, either $\partial (H_1-N(D))$ or $\partial (H_2-N(D))$ is a genus-one Heegaard surface of $M$.
				This implies that either $\partial H_1$ or $\partial H_2$ is a genus-one Heegaard surface of $M$.
				
				Next, we suppose that  $H_1\cup H_2\cup H_3$ satisfies conclusion (4) of  Theorem \ref{thm 3}.
				We can assume that $F_{12}\cong F_{23}\cong D\cup P$ without loss of generality.
				$N(D)\cup H_3$ is a punctured $M$ or a 3-ball, where $D$ is a disk component of $F_{12}$ and $N(D)$ is a regular neighborhood of $D$ in $H_1\cup H_2$.
				Suppose that  $N(D)\cup H_3$ is a punctured $M$.
				Then, $\partial H_3$ is a Heegaard surface and $(H_1-N(D))\cup (H_2-N(D))$ is a 3-ball.
				If $\partial (H_1-N(D))$ is a genus-one Heegaard surface of $M$, the exterior of $ (H_1-N(D)$ may be a solid torus.
				This contradicts the assumption that $N(D)\cup H_3$ is a punctured $M$.
				Hence, $\partial (H_1-N(D))$ is not a genus-one Heegaard surface of $M$. 
				Similarly, $\partial (H_2-N(D))$ is not a genus-one Heegaard surface of $M$.
				This implies that if $\partial H_3$ is a genus-one Heegaard surface of $M$, both $\partial H_1$ and $\partial H_2$ are not genus-one Heegaard surfaces of $M$.
				Suppose that $N(D)\cup H_3$ is a 3-ball and $(H_1-N(D))\cup (H_2-N(D))\cup (N(D)\cup H_3)$ is a type-$(0, 1, 1; 3)$ decomposition of $M$.
				By Lemma \ref{heegaard011lens}, either $\partial (H_1-N(D))$ or $\partial (H_2-N(D))$ is a genus-one Heegaard surface of $M$.
				This implies that either $\partial H_1$ or $\partial H_2$ is a genus-one Heegaard surface of $M$.
				
				Finally, we suppose that  $H_1\cup H_2\cup H_3$ satisfies conclusion (5) of  Theorem \ref{thm 3}.
				We can assume that $F_{12}\cong D\cup P$ without loss of generality.
				$N(D)\cup H_3$ is a punctured $M$ or a 3-ball, where $D$ is a disk component of $F_{12}$ and $N(D)$ is a regular neighborhood of $D$ in $H_1\cup H_2$.
				Suppose that  $N(D)\cup H_3$ is a punctured $M$.
				Then, $\partial H_3$ is a Heegaard surface and $(H_1-N(D))\cup (H_2-N(D))$ is a 3-ball.
				If $\partial (H_1-N(D))$ is a genus-one Heegaard surface of $M$, the exterior of $ (H_1-N(D)$ may be a solid torus.
				This contradicts the assumption that $N(D)\cup H_3$ is a punctured $M$.
				Hence, $\partial (H_1-N(D))$ is not a genus-one Heegaard surface of $M$. 
				Similarly, $\partial (H_2-N(D))$ is not a genus-one Heegaard surface of $M$.
				This implies that if $\partial H_3$ is a genus-one Heegaard surface of $M$, both $\partial H_1$ and $\partial H_2$ are not genus-one Heegaard surfaces of $M$.
				Suppose that $N(D)\cup H_3$ is a 3-ball and $(H_1-N(D))\cup (H_2-N(D))\cup (N(D)\cup H_3)$ is a type-$(0, 1, 1; 3)$ decomposition of $M$.
				By Lemma \ref{heegaard011lens}, either $\partial (H_1-N(D))$ or $\partial (H_2-N(D))$ is a genus-one Heegaard surface of $M$.
				This implies that either $\partial H_1$ or $\partial H_2$ is a genus-one Heegaard surface of $M$.
			\end{proof}

			\begin{lem}\label{heegaard2}
				Let $H_1\cup H_2\cup H_3$ be a type-$(1, 1, 1)$ decomposition of the 3-sphere $M$ that satisfies one of conclusion (2), (3), (4), or (5) of Theorem \ref{thm 3}.
				Then, each of $\partial H_i$ is a genus-one Heegaard surface for $i=1, 2, 3$.
			\end{lem}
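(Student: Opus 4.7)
The strategy is to mimic the case analysis of Lemma \ref{heegaard} while exploiting the fact that $S^3$ has no nontrivial connected summands. In Lemma \ref{heegaard} each case branches according as $N(D) \cup H_k$ is a punctured $M$ or a $3$-ball; in $S^3$ only the $3$-ball alternative survives, and this will force \emph{all three} torus boundaries to be Heegaard surfaces rather than just one.

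In each of cases (2)--(5) of Theorem \ref{thm 3}, I first choose a disk component $D$ of some $F_{ij}$ (for case (4) take $D \subset F_{13}$ or $F_{23}$; otherwise pick any). Let $k$ be the remaining index, so $\partial D \subset \partial H_k$. Lemma \ref{lem2} rules out $\partial D$ being inessential on $\partial H_k$, and Lemma \ref{b-1} applied to $M = S^3$ rules out $\partial D$ being essential but non-meridional, since either option produces a nontrivial $\mathbb{L}$ or $S^2 \times S^1$ summand. Hence $\partial D$ is meridional in the solid torus $H_k$, and $H_k' := H_k \cup N(D)$ is a $3$-ball.

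Next, set $H_i' := H_i \setminus \mathrm{int}\,\nu(D)$ and $H_j' := H_j \setminus \mathrm{int}\,\nu(D)$. Then $S^3 = H_i' \cup H_j' \cup H_k'$ is a type-$(0,1,1)$ decomposition, and removing $D$ caps off the loop $\partial D$ on both of $F_{ik}, F_{jk}$; an inspection of each case shows the result realizes conclusion (1) of Theorem \ref{s3011} in case (2) and conclusion (2) of Theorem \ref{s3011} in cases (3), (4), (5). Applying Lemma \ref{heegaard011lens}(1) or (3) in the $3$-sphere setting, both $\partial H_i'$ and $\partial H_j'$ are genus-one Heegaard surfaces of $S^3$; since the surgery only slides $D$ across a collar, $\partial H_i$ and $\partial H_j$ are ambient isotopic in $S^3$ to $\partial H_i'$ and $\partial H_j'$ respectively. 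Thus two of the three boundaries are already Heegaard.

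For the remaining boundary $\partial H_k$, observe that $H_i' \cup H_j' = S^3 \setminus \mathrm{int}\,H_k'$ is a $3$-ball (complement of a ball in $S^3$), to which $N(D)$ is attached along the two disjoint disks $D \times \{0\}$ and $D \times \{1\}$ on its bounding sphere, i.e., as an orientable $1$-handle. Hence $H_i \cup H_j = (H_i' \cup H_j') \cup N(D)$ is a solid torus, so $\partial H_k$ bounds a solid torus on each side in $S^3$ and is therefore a genus-one Heegaard surface. The main point requiring care is the bookkeeping in the middle step, verifying precisely which conclusion of Theorem \ref{s3011} the reorganized decomposition realizes in each case, especially for case (4) where the chosen $D$ sits in $F_{ik}$ rather than $F_{ij}$ so that $F_{ij}' = F_{ij}$ while $F_{ik}'$ and $F_{jk}'$ are the ones restructured.
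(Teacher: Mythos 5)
Your proposal is correct and follows essentially the same route as the paper: cap off a disk component of the branched surface (whose boundary is forced to be meridional because $S^3$ admits no $S^2\times S^1$ or lens-space summand), observe that this turns the decomposition into a type-$(0,1,1)$ decomposition of $S^3$, and invoke Lemma \ref{heegaard011lens} to recover the Heegaard property for the remaining two boundaries. The paper derives the Heegaard property of $\partial H_k$ directly from $N(D)\cup H_k$ being a $3$-ball and then handles the other two, whereas you reverse the order and spell out the $1$-handle argument for $\partial H_k$; this is only a difference of presentation.
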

			\begin{proof}
				Suppose that  $H_1\cup H_2\cup H_3$ satisfies conclusion (2) of  Theorem \ref{thm 3}.
				We can assume that $F_{12}\cong D\cup T^\circ$ without loss of generality.
				Then, $N(D)\cup H_3$ is a 3-ball since $M$ is the 3-sphere,  where $D$ is a disk component of $F_{12}$ and $N(D)$ is a regular neighborhood of $D$ in $H_1\cup H_2$.
				This implies that $\partial H_3$ is a genus-one Heegaard surface of $M$.
				Then, $(H_1-N(D))\cup (H_2-N(D))\cup (H_3\cup N(D))$ is a type-$(0, 1, 1; 1)$ decomposition of $M$.
				Hence, both $\partial H_1$ and $\partial H_2$ are also genus-one Heegaard surfaces by Lemma \ref{heegaard011lens}.
				
				Next, we suppose that  $H_1\cup H_2\cup H_3$ satisfies conclusion (3), (4), or (5) of  Theorem \ref{thm 3}.
				We can assume that $F_{12}\cong D\cup P$ without loss of generality.
				Then, $N(D)\cup H_3$ is a 3-ball since $M$ is the 3-sphere,  where $D$ is a disk component of $F_{12}$ and $N(D)$ is a regular neighborhood of $D$ in $H_1\cup H_2$.
				This implies that $\partial H_3$ is a genus-one Heegaard surface of $M$.
				Then, $(H_1-N(D))\cup (H_2-N(D))\cup (H_3\cup N(D))$ is a type-$(0, 1, 1; 3)$ decomposition of $M$.
				Hence, both $\partial H_1$ and $\partial H_2$ are also genus-one Heegaard surfaces by Lemma \ref{heegaard011lens}.
			\end{proof}
			
Next, we consider the surfaces properly embedded in a solid torus or a 3-ball. 
If $H_1\cup H_2\cup H_3$ is a type-$(1, 1, 1)$ decomposition of the 3-sphere or a lens space, $H_i\cup H_j$ is a solid torus of a genus-one Heegaard surface of the 3-sphere or a lens space for some $\{i, j\}\subset\{1, 2, 3\}$ by Lemmas \ref{heegaard} and \ref{heegaard2}.
The following lemmas correspond to the embedding of $F_{ij}$ into a solid torus $H_i\cup H_j$ for some $\{i, j\}\subset\{1, 2, 3\}$.

		\begin{lem}\label{annuiso2}
			Let $A_1\cup A_2$ be the union of two disjoint annuli properly embedded in a solid torus $V$. Then, there are exactly two ambient isotopy classes of the embedding $A_1\cup A_2$ into $V$ if $A_1\cup A_2$ satisfies the following.
			\begin{enumerate}
			\renewcommand{\theenumi}{(\arabic{enumi})}
			\item[(1)]  $A_1\cup A_2$ cuts open $V$ into two solid tori $V_1$ and $V_2$.
			\item[(2)]  $\partial (A_1\cup A_2)$ cuts open $\partial V$  into two disks $D_1$, $D_2$ and two thrice-punctured spheres $P_1$, $P_2$ with $\partial D_i\subset \partial P_i$.
			\end{enumerate}
			Furthermore, these ambient isotopy classes are taken to each other by the hyperelliptic involution on $V$.
		\end{lem}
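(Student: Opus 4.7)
The plan is to adapt the proof of Lemma \ref{ann2}. Label the four boundary components of $A_1\cup A_2$ on $\partial V$ as $c_1=\partial D_1$, $c_2=\partial D_2$, and $c_3,c_4$ (the two circles shared between $\partial P_1$ and $\partial P_2$). By condition (2), $c_1$ and $c_2$ are inessential on the torus $\partial V$, and a simple Euler characteristic and counting argument forces $c_3,c_4$ to be parallel essential curves cutting $\partial V$ into two annular regions, each containing one of the $D_i$ in its interior (so that $P_i$ is that annulus minus the disk $D_i$). Since each annulus $A_k$ yields the relation $[c]=[c']$ in $H_1(V)\cong \mathbb{Z}$ between its two boundary circles, any such circle paired with an inessential one is null-homologous in $V$; being primitive in $H_1(\partial V)$, it must then be a meridian of $V$.

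Next I enumerate the ways the four circles can pair into $\partial A_1$ and $\partial A_2$. Up to swapping $A_1\leftrightarrow A_2$, the possibilities for $\partial A_1$ are (i) $c_1\cup c_2$, (ii) $c_1\cup c_3$, or (iii) $c_1\cup c_4$. I rule out (i) using condition (1): $c_1$ and $c_2$ lie in distinct components of $\partial V\setminus(c_3\cup c_4)$, so $A_1$ would have to cross the separating piece associated to $A_2$ (whose boundary is $c_3\cup c_4$), contradicting disjointness of $A_1$ and $A_2$, or else the resulting decomposition of $V$ would contain a non-solid-torus piece. Thus the pairing is (ii) or (iii), and these two are interchanged by swapping $c_3\leftrightarrow c_4$.

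For uniqueness of the embedding within a chosen pairing, take a spanning meridian disk of $V_1$ or $V_2$, which is a meridian disk $E$ of $V$ disjoint from $A_1\cup A_2$. Cutting $V$ along $E$ produces a 3-ball in which each $A_k$ becomes a properly embedded annulus cutting off a solid torus from a 3-ball; Lemma \ref{ann} then gives uniqueness of each annulus up to ambient isotopy in the 3-ball, and reassembling along $E$ yields uniqueness of the embedding of $A_1\cup A_2$ in $V$ within the chosen pairing.

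Finally, to see that (ii) and (iii) are distinct in $V$ but related by a hyperelliptic involution: the involution $\tau$ of $V$ that restricts on $\partial V$ to the diffeomorphism exchanging the two annular regions of $\partial V\setminus(c_3\cup c_4)$ swaps $c_3$ with $c_4$ and $c_1$ with $c_2$, hence carries configuration (ii) to (iii). As in the end of the proof of Lemma \ref{ann2}, any ambient isotopy of $V$ realizing such an exchange must restrict on $\partial V$ to a nontrivial mapping class preserving $\{c_1,c_2\}$ and $\{c_3,c_4\}$ setwise; this mapping class has order two on the torus and must equal the hyperelliptic involution, so (ii) and (iii) are not isotopic as embeddings. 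The main obstacle is the boundary pairing analysis, in particular ruling out case (i) rigorously from the solid-torus hypothesis; once that combinatorial step is complete, the remaining uniqueness and nonisotopy arguments transfer from Lemma \ref{ann2} with only notational modifications.
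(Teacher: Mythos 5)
Your proposal is correct and follows essentially the same route as the paper's proof: identify the essential boundary circles as meridians, reduce to two possible pairings of the four boundary circles, prove uniqueness within each pairing by producing a meridian disk of $V$ disjoint from $A_1\cup A_2$ (the paper obtains it concretely by pushing the capped-off disk $A_1\cup D_1$ into $V_1$, rather than just asserting that a meridian disk of $V_1$ works) and applying Lemma \ref{ann} in the cut-open 3-ball, and then show the two pairings are swapped by the hyperelliptic involution but not ambient isotopic. The only real divergence is how the degenerate pairing $\partial A_1=c_1\cup c_2$ is excluded: the paper caps the other annulus with the two disks to obtain a 2-sphere bounding a ball by irreducibility of $V$, whereas your separation argument also works provided you justify that the annulus with boundary $c_3\cup c_4$ separates $V$ with $c_1$ and $c_2$ on opposite sides (for instance because its boundary is null-homologous mod $2$ in $\partial V$).
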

		
		\begin{proof}
			We denote $S_i=D_i\cup P_i$ for $i=1, 2$. 
			By assumption (2), $S_i$ is an annulus.
			Since $\partial V = S_1\cup S_2$, $\partial S_1=\partial S_2$ is essential in $\partial V$.
			Suppose that $\partial A_1=\partial S_1$.
			Then, $\partial A_2=\partial D_1\cup \partial D_2$.
			Hence, a 2-sphere $A_2\cup D_1\cup D_2$ in $V$ bounds a 3-ball since $V$ is irreducible.
			This contradicts the assumption that $A_1\cup A_2$ cuts open $V$ into two solid tori.
			Hence, we can assume that one of the components of $\partial A_1$ is essential in $\partial V$ and the other is inessential in $\partial V$.
			Let $C$ be a component of $\partial A_1$ such that it is essential in $\partial V$.
			Then, $C$ bounds a disk in $V$ since one of the components of $\partial A_1$ bounds a disk  in $\partial V$.
			Hence, $C$ is meridional.
			
			We can suppose that $\partial D_i$ is a component of $\partial A_i$ without loss of generality.
			Let  $C_1$ and $C_2$ be components of $\partial S_1=\partial S_2$.
			Then, there are two cases.
			One is the case where $\partial A_1=C_1\cup \partial D_1$, $\partial A_2=C_2\cup\partial D_2$ for $i\neq j$ and
			the other is the case where $\partial A_1=C_2\cup \partial D_1$, $\partial A_2=C_1\cup\partial D_2$ for $i\neq j$.
			
			Next, we shall show that the union of two annuli $A_1\cup A_2$ embedded in $V$, satisfying assumptions (1) and (2), $A_1=C_1\cup \partial D_1$, and $A_2=C_2\cup\partial D_2$ for $i\neq j$, is unique up to ambient isotopy in $V$.
			We also denote $A_1'\cup A_2'$ as the union of  two disjoint annuli properly embedded in a solid torus $V$, satisfying assumption (2), $A_1'\cup A_2'$ cuts open $V$ into two solid tori $V_1'$ and $V_2'$, $A_1=C_1\cup \partial D_1$, and $A_2=C_2\cup\partial D_2$ for $i\neq j$.
			We note that $A_1\cup D_1$ is a disk whose boundary is $C_1$.
			Hence, we can obtain a meridian disk $D$ in $V$ by slightly isotoping $A_1\cup D_1$ into $V_1$ or $V_2$.
			We can assume $D$ in $V_1$ without loss of generality.
			Since $A_1$ and $A_2$ are disjoint, $D\cap(A_1\cup A_2)=\emptyset$.
			Similarly, we can obtain a meridian disk $D'$ such that $D'\cap(A_1'\cup A_2')=\emptyset$
			Since the meridian disk of a solid torus is unique up to ambient isotopy, we can obtain a meridian disk $D'$ that satisfies $D''\cap(A_1\cup A_2)=\emptyset$ and $D''\cap(A_1'\cup A_2')=\emptyset$.
			A disk $D''$ is isotopic to $D_1$ since $D_1\cup D''\cup A_1$ is a 2-sphere in $V_1$ and $V$ is irreducible.
			Let $B$ be a 3-ball in $V$ such that $\partial B=D_1\cup D''\cup A_1$.
			We note that $V_1-B$ is a solid torus and $B$ is a 2-handle attached to $V_2$.
			If the core of an attaching region of $B$ in $\partial V_2$ is inessential, $V_1-B$ has two boundary components.
			This contradicts the assumption that $V_1-B$ is a solid torus.
			Hence, the core of an attaching region of $B$ is essential in $\partial V_2$.
			We note that the core of an attaching region of $B$ is a longitude in $\partial V_2$ since $V$ is a solid torus.
			Since $D''$ is a meridian disk of $V$, $V-D''$ is a 3-ball.
			 $A_2$ is embedded in a 3-ball $V-D''$ and it cuts open $V-D''$ into a 3-ball $V_2\cup B$ and a solid torus $V_1- B$.
			 Similarly, $A_2'$ is embedded in a 3-ball $V-D''$ and it cuts open $V-D''$ into a 3-ball and a solid torus.
			 Hence, $A_2$ is ambient isotopic to $A_2'$ by Lemma \ref{ann}.
			 Similarly, we can show that $A_1$ is ambient isotopic to $A_1'$.
			 Hence, $A_1\cup A_2$ is unique up to ambient isotopy in $V$.
			 Similarly, we can show that $A_1\cup A_2$ embedded in $V$, satisfying assumptions (1) and (2), $A_1=C_1\cup \partial D_2$, and $A_2=C_2\cup\partial D_1$ for $i\neq j$, is unique up to ambient isotopy in $V$.
			
			Then, $A_1\cup A_2$ satisfying assumptions (1) and (2) is as shown in Figure \ref{annuli_2}.
			Figure \ref{annuli_2} (b) can be obtained by $\pi$-rotating Figure \ref{annuli_2} (a)  (see Figure \ref{annuli_2}).
			Conversely, if these are isotopic each other, there exists an ambient isotopy $F: V \times I\to V$ such that 
			\[
					F(C_1, 1)=C_2, F(C_2, 1)=C_1, F(S_1, 1)=S_1, F(S_2, 1)=S_2.
			\]
			We define $f_t(x)=F(x, t)$.
			Suppose that $f_1|_{\partial V}:\partial V\to \partial V$ is an identity in the mapping class group of a torus.
			Since $F(S_1, 1)=S_1$ and $F(S_2, 1)=S_2$, $F(C_1, 1)=C_1$ and  $F(C_2, 1)=C_2$.
			This contradicts the definition of $F$.
			Hence, $f_1|_{\partial V}:\partial V\to \partial V$ is not an identity in the mapping class group of a torus.
			It is clear that the order of $f_1|_{\partial V}:\partial V\to \partial V$ is 2 in the mapping class group of a torus.
			Hence, $f_1|_{\partial V}:\partial V\to \partial V$ is the hyperelliptic involution. 
			This contradicts the assumption that $F: V \times I\to V$ is an ambient isotopy.
			 \end{proof}
				\begin{figure}[httb]
				\centering
				\includegraphics[scale=0.8]{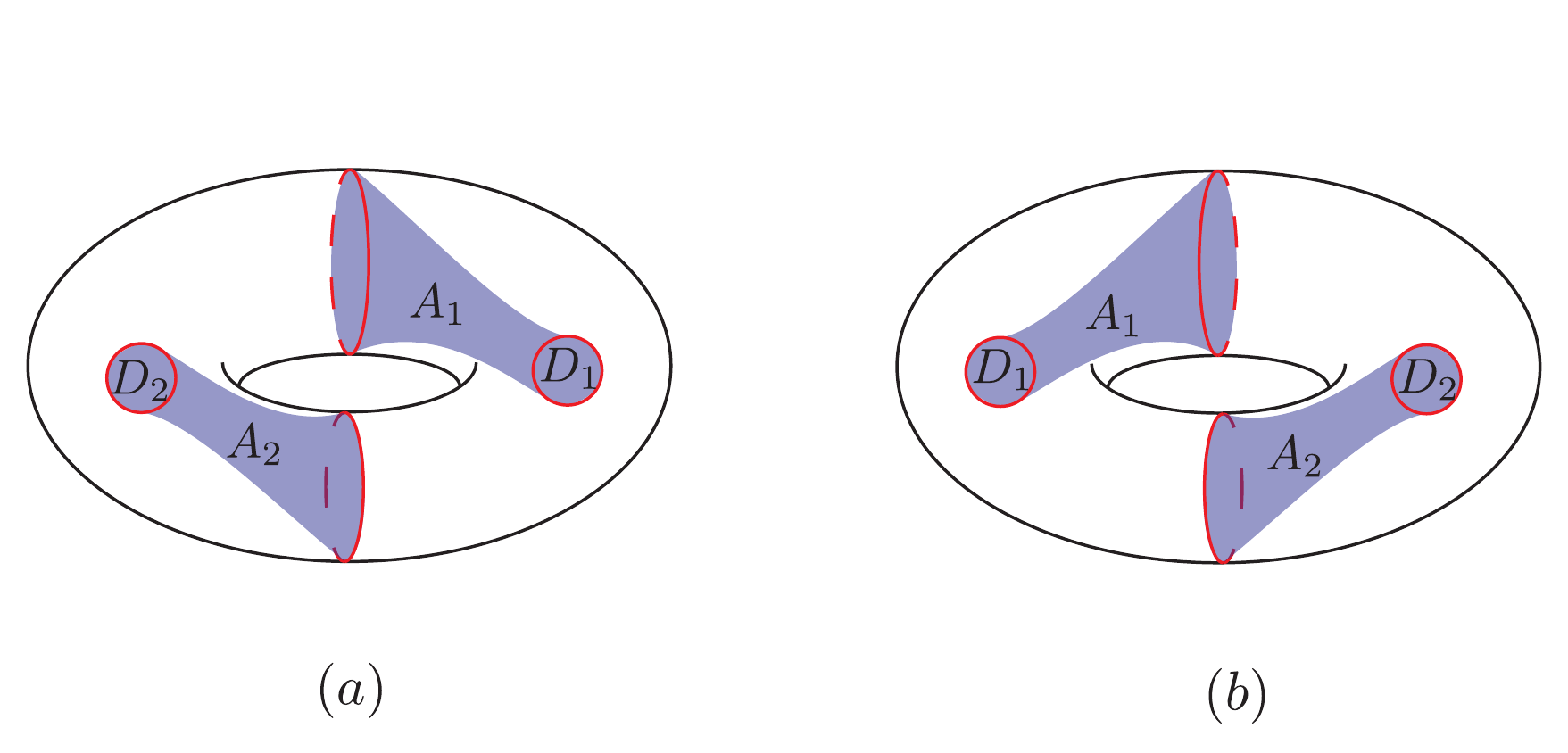}
				\vspace{-3mm}
				\caption{The two embedded annuli in a solid torus satisfying the assumption of Lemma \ref{annuiso2}. (a) in a figure is taken to (b) by the hyperelliptic involution.}
				\label{annuli_2}
				\end{figure}

		\begin{lem}\label{annuiso1}
		Let $A_1\cup A_2$ be the union of  two disjoint annuli properly embedded in a solid torus $V$. Then, there are exactly two ambient isotopy classes of the embedding $A_1\cup A_2$ into $V$ if $A_1\cup A_2$ satisfies the following.
		\begin{enumerate}
		\renewcommand{\theenumi}{(\arabic{enumi})}
			\item[(1)]  $A_1\cup A_2$ cuts open $V$ into two solid tori $V_1$ and $V_2$.
			\item[(2)]  $\partial (A_1\cup A_2)$ cuts open $\partial V$ into a disk $D$, a thrice-punctured sphere $P$, and two annuli $S_1$, $S_2$ such that $\partial S_1$ is inessential in $\partial V$ and $\partial S_2$ is essential in $\partial V$.
		\end{enumerate}
		Furthermore, these ambient isotopy classes are taken to each other by hyperelliptic involution on $V$.
		\end{lem}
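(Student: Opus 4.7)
The proof will adapt the strategy of Lemmas \ref{ann2} and \ref{annuiso2}. First, I analyze the boundary pattern of $A_1 \cup A_2$ on $\partial V$. Since the two components of $\partial S_1$ are inessential on the torus $\partial V$ and cobound the annulus $S_1$, they are nested, with one bounding a disk containing the other. Writing $c_3 = \partial D$ for the inner inessential curve and $c_4$ for the outer one, $\partial V$ decomposes cyclically as $D$, $S_1$, $P$, $S_2$, where $\partial S_2 = c_1 \cup c_2$ are the two parallel essential curves.

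Next, I enumerate pairings of the four circles into $\partial A_1 \cup \partial A_2$ and rule out those inconsistent with the two-solid-torus hypothesis. An annulus having both inessential boundary components $c_3 \cup c_4$ is, by irreducibility of $V$, $\partial$-parallel to $S_1$; the complementary solid torus then severely constrains the position of the second annulus. An annulus having one essential and one inessential boundary component forces its essential boundary to be meridional in $\partial V$, by the same argument as in Lemma \ref{annuiso2}: capping with the disk on $\partial V$ bounded by the inessential component yields a disk in $V$ bounded by the essential one. Combining these observations with the requirement that $V \setminus (A_1 \cup A_2)$ have exactly two components, both solid tori, the case analysis collapses the pairings of the four circles to exactly two equivalence classes up to relabeling of $A_1, A_2$.

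For each equivalence class of pairing, I show the embedding is unique up to ambient isotopy in $V$. Capping each inessential boundary component of $A_1 \cup A_2$ by the disk in $\partial V$ it bounds, and pushing slightly into the interior, produces a properly embedded disk in $V$ disjoint from $A_1 \cup A_2$ whose boundary is meridional by the previous step. Uniqueness of the meridian disk in a solid torus (cf.\ Lemma \ref{ann2}) lets me fix a meridian disk $D'$ disjoint from $A_1 \cup A_2$. Cutting $V$ along $D'$ yields a 3-ball $B$ in which $A_1 \cup A_2$ has a prescribed boundary pattern on $\partial B$, and Lemma \ref{ann} gives uniqueness of the annuli within $B$, hence within $V$.

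Finally, following the end of the proof of Lemma \ref{annuiso2}, I show the two classes are exchanged by the hyperelliptic involution $\iota$ of $V$ but are not isotopic within $V$. The restriction of $\iota$ to $\partial V$ swaps $c_1 \leftrightarrow c_2$ while preserving each of the regions $D, S_1, P, S_2$ setwise, and hence exchanges the two pairings. Conversely, any ambient isotopy of $V$ taking one embedding to the other restricts to a diffeomorphism of $\partial V$ that preserves the region decomposition but swaps $c_1, c_2$; such a map has order two and is not the identity, so it must represent the hyperelliptic involution in the mapping class group of $\partial V$, which is not isotopic to the identity. The main obstacle will be the case analysis of Step 2, where all pairings must be carefully examined and reduced to exactly two using the constraints of assumptions (1), (2) and the irreducibility of $V$.
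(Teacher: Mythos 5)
Your proposal follows essentially the same route as the paper's proof: rule out the pairing in which one annulus carries both inessential boundary circles, deduce that each annulus has one meridional and one inessential boundary component, reduce to two pairings, prove uniqueness in each by capping off to a meridian disk and cutting to a 3-ball where Lemma \ref{ann} applies, and finally distinguish the two classes via the hyperelliptic involution acting on $\partial V$. The argument is correct in outline and matches the paper's proof step for step.
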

		\begin{proof}
				If $\partial A_1=\partial S_1$, $A_1\cup A_2$ cuts open $V$ into three solid tori.
				This contradicts assumption (2).
				Hence, we can assume that one of the components of $\partial A_i$ is essential in $\partial V$ and the other is inessential in $\partial V$ for $i=1, 2$.
				Therefore, the component of $\partial A_i$ that is essential in $\partial V$ is meridional since it bounds a disk $A_i\cup D_i$, where $D_i$ is a disk bounded by the component of $\partial A_i$ that is inessential in $\partial V$.
				We can suppose that $\partial D$ is a component of $\partial A_1$ without loss of generality.
				We denote $\partial S_2=C_1\cup C_2$.
				Then, if $\partial A_1=\partial D\cup C_1$, $\partial A_2=(\partial S_1- \partial D) \cup C_2$.
				On the other hand, if  $\partial A_1=\partial D\cup C_2$, $\partial A_2=(\partial S_1- \partial D) \cup C_1$.
				
				Next, we shall show that the union of two annuli $A_1\cup A_2$ embedded in $V$, satisfying assumptions (1) and (2), $\partial A_1=\partial D\cup C_1$, and $\partial A_2=(\partial S_1- \partial D) \cup C_2$,  is unique up to ambient isotopy in $V$.
				We also denote $A_1'\cup A_2'$ as the union of  two disjoint annuli properly embedded in a solid torus $V$, satisfying assumption (2), $A_1'\cup A_2'$ cuts open $V$ into two solid tori $V_1'$ and $V_2'$, $\partial A_1'=\partial D\cup C_1$, and $\partial A_2'=(\partial S_1- \partial D) \cup C_2$.
				Since $A_1\cup D$ is a disk embedded in $V$ that does not intersect $A_2$, $C_1$ bounds a meridian disk $D'$ of $V$ that does not intersect $A_2$.
				Similarly, we can obtain a meridian disk $D''$ that does not intersect $A_2'$.
				Since a meridian disk of a solid torus is unique up to ambient isotopy, we can isotope $D''\cup A_1\cup A_2$ by taking $D''$ to $D'$.
				Then, $A_1'\cap D'=\emptyset$ and $A_2'\cap D'=\emptyset$.
				We note that $A_1\cup D\cup D'$ is a 2-sphere in $V$.
				Hence, $A_1\cup D\cup D'$ bounds a 3-ball $B$ in $V$ since $V$ is irreducible.
				We can assume that $B$ is in $V_1$ without loss of generality.
				Since $A_1\cup D$ is a disk in  $\partial V_1$, $D'$ is boundary-parallel in $V_1$.
				Hence, $V_1-B$ is a solid torus.
				We can consider $B$ as a 2-handle attached to $V_2$ since $V_2\cap B$ is an annulus $A_1$.
				Hence, $B\cup V_2$ is a 3-ball since $V$ is a solid torus.
				Then, $A_2$ is a properly embedded annulus in a 3-ball  $V - N(D')$ that cuts open $V - N(D')$ into a 3-ball $V_2\cup (B-N(D'))$ and a solid torus $V_1-(B-N(D'))$. 
				Similarly, $A_2'$ is a properly embedded annulus in a 3-ball  $V - N(D')$ that cuts open $V - N(D')$ into a 3-ball and a solid torus.
				Hence, by Lemma \ref{ann}, $A_2$ is  ambient isotopic to $A_2'$.
				We can suppose that $A_2=A_2'$.
				We can assume that $A_2$ cuts open $V-N(D')$ into a 3-ball $B'$ and a solid torus.
				Then, $A_1$ is an annulus properly embedded in a 3-ball $B'$ that cuts open a 3-ball $B$ and a solid torus $V_2$.
				Similarly, $A_1$ is also an annulus properly embedded in a 3-ball $B'$ that cuts open a 3-ball and a solid torus.
				Hence, $A_1$ is ambient isotopic to $A_1'$ in $V$ by Lemma \ref{ann}. 
				
				Similarly, we can show that the union of two annuli $A_1\cup A_2$ embedded in $V$, satisfying assumptions (1) and (2), $\partial A_1=\partial D\cup C_2$, and $\partial A_2=(\partial S_1- \partial D) \cup C_1$, is unique up to ambient isotopy in $V$.				
				
				Then, $A_1\cup A_2$, satisfying assumptions (1) and (2), is as shown in Figure \ref{annuli_1}.
				Figure \ref{annuli_1} (b) can be obtained by $\pi$-rotating Figure \ref{annuli_1} (a)  (see Figure \ref{annuli_1}).
				Conversely, if these are ambient isotopic to each other in $V$, there exists an ambient isotopy $F: V \times I\to V$ such that 
					\[
							F(C_1, 1)=C_2, F(C_2, 1)=C_1, F(S_1, 1)=S_1, F(S_2, 1)=S_2.
					\]
				We define $f_t(x)=F(x, t)$.
				Suppose that $f_1|_{\partial V}:\partial V\to \partial V$ is an identity in the mapping class group of a torus.
				Since $F(S_1, 1)=S_1$ and $F(S_2, 1)=S_2$, $F(C_1, 1)=C_1$ and  $F(C_2, 1)=C_2$.
				This contradicts the definition of $F$.
				Hence, $f_1|_{\partial V}:\partial V\to \partial V$ is not an identity in the mapping class group of a torus.
				It is clear that the order of $f_1|_{\partial V}:\partial V\to \partial V$ is 2 in the mapping class group of a torus.
				Hence, $f_1|_{\partial V}:\partial V\to \partial V$ is a hyperelliptic involution. 
				This contradicts the assumption that $F: V \times I\to V$ is an ambient isotopy.
				This completes the proof of the lemma.
				\end{proof}
				\begin{figure}[httb]
				\centering
				\includegraphics[scale=0.8]{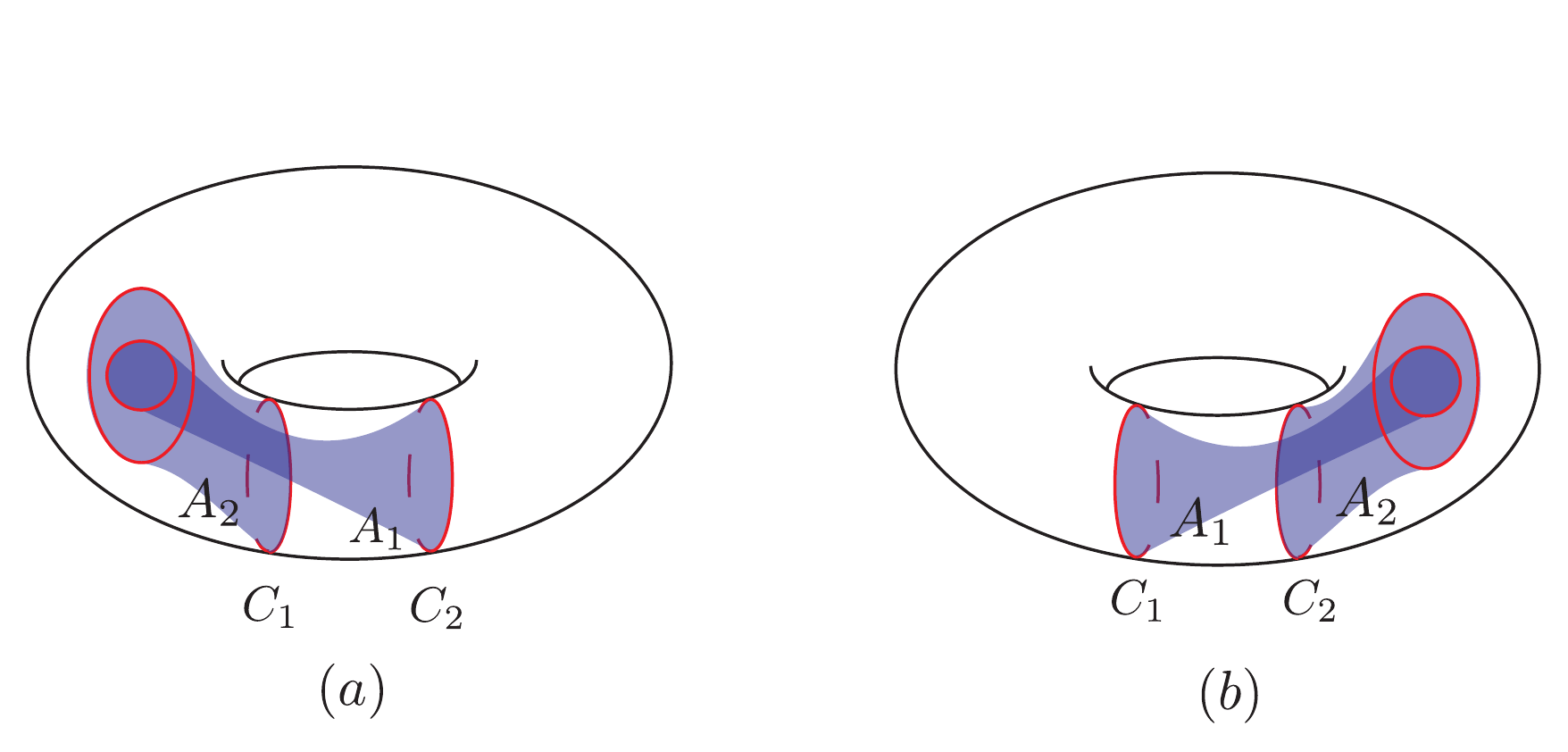}
				\vspace{-3mm}
				\caption{The embedding of two annuli into a solid torus satisfying the assumption of Lemma \ref{annuiso1}. (a) in a figure is taken to (b) by the hyperelliptic involution.}
				\label{annuli_1}
				\end{figure}

			\begin{lem}\label{punc}
			 	 A thrice-punctured sphere $P$ embedded in a 3-ball $B$ is unique up to ambient isotopy  if $P$ cuts open $B$ into two solid tori and $\partial P$ cuts open $\partial B$ into two annuli and two disks.
			\end{lem}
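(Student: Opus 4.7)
The plan is to reduce the uniqueness of $P$ in $B$ to the uniqueness of a properly embedded annulus in a 3-ball, established in Lemma \ref{ann}, by capping off one boundary component of $P$ with a disk on $\partial B$.

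First I would normalize $\partial P \subset \partial B$. Three disjoint simple closed curves on $\partial B \cong S^2$ whose complement is two disks and two annuli must form a nested configuration, and after relabelling may be assumed to have $c_3$ as the middle circle with $\partial D_i = c_i$ ($i = 1, 2$) and annuli $A_1, A_2$ between consecutive circles. Any two such configurations on $S^2$ are related by a self-homeomorphism of $\partial B$ isotopic to the identity, which extends to an ambient isotopy of $B$, so I may fix $\partial P$ in this standard position.

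Next I would cap off the boundary circle $c_2$. Push the interior of the disk $D_2 \subset \partial B$ slightly into the solid torus $V_2$ while fixing $\partial D_2 = c_2$; the result is a disk $D^*$ with $D^* \cap \partial B = c_2$ and $\mathrm{int}(D^*) \subset \mathrm{int}(V_2)$. The union $P \cup D^*$ is a topological annulus glued along $c_2$, and smoothing the crease by a small isotopy that pushes the joined surface off $\partial B$ near $c_2$ into the $V_2$-side yields a properly embedded annulus $A' \subset B$ with $\partial A' = c_1 \cup c_3$. Analyzing a normal disk to $c_2$, I would verify that the smoothing produces two complementary regions of $A'$ in $B$: one region is the union of $V_1$ with the thin 3-ball slab between $D_2$ and $D^*$, whose boundary consists of $A'$ glued to the disk $D_1$ along $c_1$ and to the disk $A_2 \cup D_2$ along $c_3$, i.e., a 2-sphere, so that this region is a 3-ball by irreducibility of $B$; the other region is $V_2$ minus the slab, a solid torus, with boundary $A' \cup A_1$, a torus. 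Hence $A'$ satisfies the hypotheses of Lemma \ref{ann}.

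By Lemma \ref{ann}, the annulus $A'$ is unique up to ambient isotopy in $B$. Reversing the capping operation, I would recover $P$ from $A'$ by removing a small open disk from $\mathrm{int}(A')$ in the slab region and ambient isotoping the new boundary circle, which lies in the 3-ball complementary region, onto the fixed curve $c_2 \subset \partial B$; such an isotopy exists and is unique up to further isotopy because any two unknots in a 3-ball are ambient isotopic, and the isotopy carries the pair-of-pants along. Since both $A'$ and the position of $c_2 \subset \partial B$ are canonical after the initial normalization, $P$ is determined uniquely up to ambient isotopy in $B$. The main obstacle I foresee is justifying that the smoothing merges the correct regions, so that the $3$-ball side of $A'$ contains $V_1$ together with the slab rather than $V_1$ together with $V_2 \setminus \mathrm{slab}$; the latter choice would produce a genus-$2$ handlebody, incompatible with the Euler characteristic constraint $\chi(B \setminus A') = 1$, and ruling it out requires the careful local analysis in the normal disk to $c_2$ sketched above.
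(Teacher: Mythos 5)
The forward half of your argument --- normalizing $\partial P$, capping $c_2$ with a push-in of $D_2$, checking that the resulting annulus $A'$ splits $B$ into a ball and a solid torus, and invoking Lemma \ref{ann} --- is sound and is close in spirit to the paper's proof, which also reduces everything to Lemma \ref{ann}. The paper organizes the reduction differently: it splits $P$ along a curve $C$ parallel to $\partial D_1$ into an annulus $A$ and a pair of pants $P'$, caps $C$ with a boundary-parallel disk $D$, and applies Lemma \ref{ann} twice, once to the annulus $P'\cup D$ in $B$ and once to the annulus $A$ inside the complementary ball $H_2\cup B'$. The advantage of the nested two-step version is that each piece of $P$ is pinned down inside a region whose gluing data is already controlled, so no ``uncapping'' is ever needed.

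That uncapping is exactly where your proposal has a genuine gap. Knowing that $A'$ is unique up to ambient isotopy does not yet give uniqueness of $P$: you must show that the additional data needed to recover $P$ from $A'$ --- the sub-disk of $A'$ to be removed and the tube along which its boundary is dragged back to $c_2$ --- is itself canonical. The sub-disk is fine (any two disks in the interior of an annulus are isotopic within it), but the drag is governed by a properly embedded \emph{arc} in the ball component of $B\setminus A'$ running from that sub-disk to $D_2$, not by an unknotted \emph{circle}, so the fact you cite (``any two unknots in a 3-ball are ambient isotopic'') is not the relevant one. Such an arc can be knotted, in which case the complementary piece $W_1\setminus(\mbox{tube})$ is not a solid torus; you must first argue that the hypothesis that $B\setminus P$ consists of two solid tori forces the arc to be a trivial (boundary-parallel) 1-string tangle, and then invoke uniqueness of the trivial tangle rel its endpoint disks --- realized by an ambient isotopy of $B$ that preserves $A'$ setwise and fixes $\partial B$ --- to match the two slabs. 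This is precisely the content of Lemma \ref{ann} (whose proof runs through free 1-string tangles), so the gap is fillable, but as written the step ``uniqueness of $A'$ implies uniqueness of $P$'' is not justified.
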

			\begin{proof}
			We assume that  $\partial B - \partial P=A_1\cup A_2\cup D_1\cup D_2$, where $A_i$ is an annulus and $D_i$ a disk for $i=1, 2$. 
			Furthermore, we can assume that $B-P=H_1\cup H_2$, where $H_i$ is a solid torus for $i=1, 2$.
			We take a simple closed curve $C$ in $P$, which is parallel to $\partial D_1$.
			Then, $C$ bounds a disk $D$ in $H_1$ or $H_2$.
			We can assume that $D$ is in $H_1$ without loss of generality.
			Since $D$ is parallel to $\partial D_1$, $D$ is a boundary-parallel disk in $H_1$.
			Then, $C$ cuts open $P$ into $A$ and $P'$, where $A$ is an annulus and $P'$ is a thrice-punctured sphere.
			Hence, $S=D\cup A\cup D_1$ is a 2-sphere embedded in $B$. 
			Then, $S$ bounds a 3-ball $B'$ that can be regarded as a 2-handle  attached to $H_2$.
			If the core of an attaching region of $B'$ is inessential in $\partial H_2$, $H_2\cup B'\cup H_1$ has two boundary components.
			This contradicts the assumption that $H_2\cup B'\cup H_1$ is a 3-ball.
			We can assume that the core of an attaching region of $B'$ is essential in $\partial H_2$
			Hence, $H_2\cup B'$ is a 3-ball since $B$ is a 3-ball.
			We note that $H_1- B'$ is a solid torus since $D$ is a boundary-parallel disk in $H_1$.
			Therefore, $P'\cup D$ is an annulus properly embedded in $B^3$ that cuts open $B^3$ into a solid torus $H_1- B'$ and a 3-ball $H_2\cup B'$.
			Then, the embedding of  $P'$ into $B^3$ is unique up to ambient isotopy by Lemma \ref{ann}.
			Furthermore, $A$ is an annulus properly embedded in a 3-ball  $H_2\cup B'$ that cuts open $H_2\cup B'$ into a solid torus $H_2$ and a 3-ball $B'$.
			Then, the embedding of $A$ into $H_2\cup B'$ is unique up to isotopy  by Lemma \ref{ann}.
			Therefore, the embedding of $P$ into $B^3$ is unique up to isotopy (see Figure \ref{2-sphe}). 
			\end{proof}
				\begin{figure}[httb]
				\centering
				\includegraphics[scale=0.8]{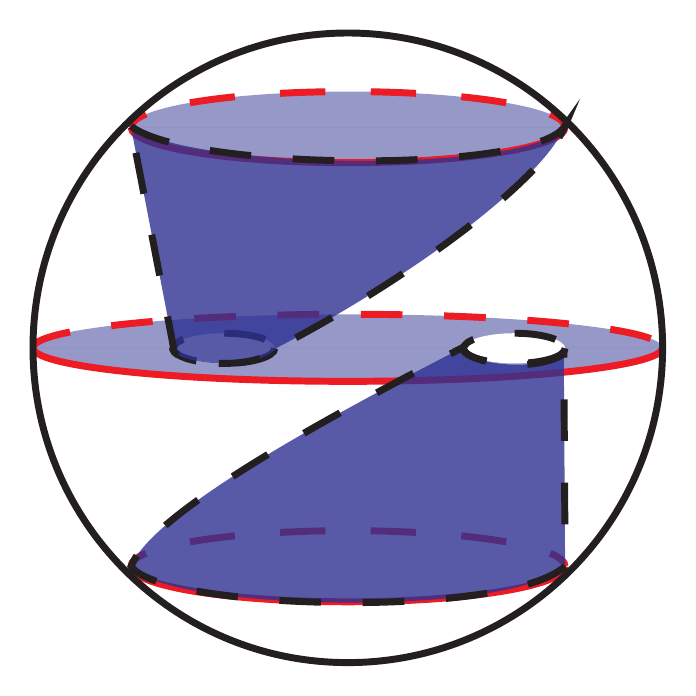}
				\vspace{-3mm}
				\caption{The embedding of a thrice-punctured 2-sphere into a 3-ball}
				\label{2-sphe}
				\end{figure}
				
			\begin{lem}\label{punc1}
				The proper embedding of the punctured torus $T^\circ$ into a 3-ball  $B$ such that $T^\circ$ cuts open $B$ into two solid tori  is unique up to isotopy.
			\end{lem}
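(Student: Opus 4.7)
The plan is to compress $T^\circ$ along a meridian disk of $H_1$, reducing the problem to two separate uniqueness facts: that a properly embedded disk in $B$ with prescribed boundary is unique up to ambient isotopy, and that a trivial 1-string tangle in a 3-ball is unique up to ambient isotopy rel boundary. This parallels the strategy of Lemma \ref{punc}, which reduced a thrice-punctured sphere to an annulus via an auxiliary disk and Lemma \ref{ann}.

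First I would produce a meridian disk $D$ of $H_1$ with $\partial D \subset T^\circ$. Since $\partial H_1 = T^\circ \cup D'$ with $D' \subset \partial B$ a disk bounded by $\partial T^\circ$, any meridian curve of $\partial H_1$ can be pushed off the contractible region $D'$ entirely into $T^\circ$; moreover, as a meridian is non-separating in $\partial H_1$, its image $\partial D$ is essential and non-separating on $T^\circ$. Let $N(D)$ be a regular neighborhood of $D$ in $H_1$, with $\partial N(D) = D^+ \cup D^- \cup A_D$ where $A_D$ is an annular neighborhood of $\partial D$ on $T^\circ$. The surgered surface $T^* = (T^\circ \setminus A_D) \cup D^+ \cup D^-$ is a properly embedded disk in $B$ (Euler characteristic equal to $1$ and one boundary component), separating $B$ into a 3-ball $B_1^* = H_1 \setminus N(D)$ and a region $B_2^* = H_2 \cup N(D)$; the first is a 3-ball because $D$ is a meridian of the solid torus $H_1$, and then $B_2^*$ is forced to be a 3-ball too since $B_1^* \cup B_2^* = B$.

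For two embeddings $T^\circ_1, T^\circ_2$ satisfying the hypothesis, carrying out this compression produces disks $T^*_1, T^*_2$ in $B$. Any two simple closed curves on $\partial B = S^2$ are ambient isotopic by Schoenflies, and by irreducibility of $B$ any two properly embedded disks with the same boundary cobound a 3-ball and hence are ambient isotopic; so after a global ambient isotopy of $B$ I may assume $T^*_1 = T^*_2 =: T^*$. The remaining data for each embedding is the tube $N(D_i)$, equivalently its core arc $\alpha_i$, a properly embedded arc in one of the two side 3-balls of $T^*$ with endpoints on $T^*$. Each $\alpha_i$ is a trivial 1-string tangle because its complement in the ambient side 3-ball is the solid torus $H_2^{(i)}$, and trivial 1-string tangles in a 3-ball are unique up to ambient isotopy rel boundary. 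Hence if $\alpha_1$ and $\alpha_2$ lie on the same side of $T^*$, they can be isotoped to coincide, completing the argument in that case.

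The remaining, and I expect main, obstacle is that $\alpha_1$ and $\alpha_2$ might initially lie on opposite sides of $T^*$. I would resolve this by applying a $\pi$-rotation of $B$ about an axis contained in $T^*$: this is an orientation-preserving homeomorphism isotopic to the identity (via the path of partial rotations) that exchanges the two sides of $T^*$ setwise. After applying it to one of the configurations, both arcs lie on the same side of $T^*$, and the previous paragraph concludes the proof.
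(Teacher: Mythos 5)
Your argument is correct, but it is organized quite differently from the paper's. The paper's proof is a three-line ``dual disks'' argument: it asserts the existence of meridian disks $D_1\subset H_1$ and $D_2\subset H_2$ whose boundaries meet in a single point (using that $B$ is a $3$-ball), concludes that $H_1$ is an unknotted $1$-handle, and stops there; the existence of such a dual pair is itself a nontrivial Waldhausen-type fact that the paper does not justify. You instead compress $T^\circ$ along a single meridian disk of $H_1$, reduce to the uniqueness of a properly embedded disk in $B$ together with a trivial $1$-string tangle (trivial because its exterior is the solid torus $H_2$), and then reassemble by tubing. This is exactly the template the paper uses for Lemma \ref{ann} and Lemma \ref{punc}, so your route is arguably more self-contained relative to the paper's own toolkit, at the cost of being longer. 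Two points deserve the care you (mostly) give them: first, the reconstruction step implicitly uses that the pair $(T^*,\alpha)$ determines $T^\circ$ up to isotopy, which holds because the regular neighborhood $N(\alpha)$ meeting $T^*$ in two disks is well defined up to isotopy, and because the ambient isotopy matching $\alpha_1$ to $\alpha_2$ must be arranged to preserve $T^*$ setwise (move the endpoints inside the disk $T^*$ first, then use triviality of the tangle); second, the ``opposite sides'' issue is genuine, since the labelling of the two solid tori is not part of the data, and your $\pi$-rotation about a diameter lying in the standardly positioned $T^*$ is a legitimate ambient isotopy of $B$ whose time-one map preserves $T^*$ and exchanges its sides. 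With those details in place the proof is complete.
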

			\begin{proof}
				There are meridian disks $D_i$ in $H_i$ for $i=1, 2$ whose boundaries intersect each other exactly once since $B$ is a 3-ball.
				Then, $H_1$ is a 1-handle in $B$ and it is unknotted since there are meridian disks $D_i$ in $H_i$ whose boundaries intersect exactly once.
				Hence, the proper embedding of a punctured torus $T^\circ$ into a 3-ball  $B$ that cuts open $B$ into two solid tori  is  unique up to ambient isotopy.
			\end{proof}
							\begin{figure}[httb]
				\centering
				\includegraphics[scale=0.8]{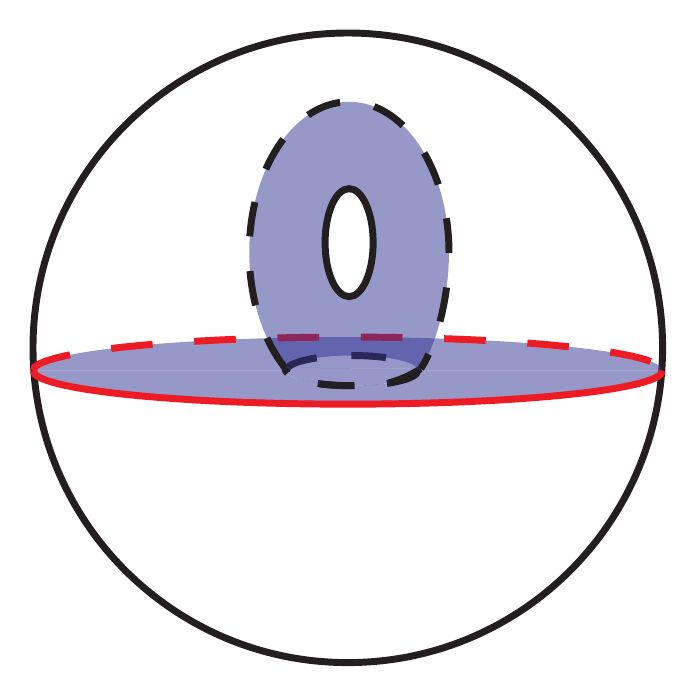}
				\vspace{-3mm}
				\caption{The embedding of a punctured torus into a 3-ball}
				\label{punctorus}
				\end{figure}
			\begin{lem}\label{punc_1}
 				Let $V$ be a solid torus and $D\cup P$ be the union of a disk and a thrice-punctured sphere properly embedded in $V$. If $D\cup P$ satisfies the following, there are exactly two embeddings of $D\cup P$ up to ambient isotopy.
				 \begin{enumerate}
				 \renewcommand{\theenumi}{(\arabic{enumi})}
  					   \item[(1)]  $D\cup P$ cuts open $V$ into two solid tori $V_1$ and $V_2$.
  					   \item[(2)]  $\partial (D\cup P)$ cuts open $\partial V$ into $D\cup P\cup A_1\cup A_2$, which satisfies the condition that $\partial A_1$ is inessential and $\partial A_2$ is essential in $\partial V$. 
				 \end{enumerate}
				 Furthermore, these ambient isotopy classes are taken to each other by the hyperelliptic involution on $V$.
			\end{lem}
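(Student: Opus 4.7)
The plan is to follow the template of Lemmas \ref{ann2}, \ref{annuiso2}, and \ref{annuiso1}: first constrain the combinatorics of $\partial(D\cup P)$ on the torus $\partial V$, then reduce the embedding question to a classification of annuli via boundary-parallelism of $D$ and Lemma \ref{ann}, and finally separate the resulting isotopy classes using a mapping class argument with the hyperelliptic involution.

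I would begin by analyzing the arrangement of the four boundary circles of $D\cup P$ on $\partial V$. Hypothesis (2) together with an Euler-characteristic count forces the two inessential circles to be nested, with $D$ (the region on $\partial V$) the innermost disk and $A_1$ the annular collar between them; the two essential circles cobound $A_2$ and, together with the outer inessential circle, bound the thrice-punctured sphere region $P$. A priori $\partial D$ could be any of the four circles, but all but one possibility can be ruled out: if $\partial D$ were essential then $D$ would be a meridian disk, and cutting $V$ along $D\cup P$ could not produce two solid tori with the prescribed boundary partition; and if $\partial D$ were the outer inessential circle then $D$ would cobound a ball with the disk $D\cup A_1$ in $\partial V$, forcing the connected surface $P$ to have all its boundary circles on one side of $D$, which contradicts that $\partial P$ contains circles on both sides. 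Hence $\partial D$ must be the innermost inessential circle, and $\partial P$ consists of the remaining three circles, namely the two essentials $\partial A_2$ and the outer inessential circle.

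The main reduction is to cut $V$ along the boundary-parallel disk $D$, obtaining a ball $B_0$ bounded by $D$ and the innermost disk in $\partial V$, and a complementary solid torus $W$ containing $P$. Inside $\partial W$, the outer inessential boundary circle of $P$ now bounds a disk $\Delta = A_1 \cup D$, and boundary compression of $P$ across $\Delta$ replaces $P$ by a properly embedded annulus $\alpha \subset W$ with $\partial \alpha = \partial A_2$. The condition that $D\cup P$ cuts $V$ into two solid tori translates into the condition that $\alpha$ cuts $W$ into two solid tori. After removing a meridian disk of $W$ disjoint from $\alpha$ (reducing $W$ to a 3-ball in which $\alpha$ is a properly embedded annulus separating a solid torus from a ball), Lemma \ref{ann} gives uniqueness of $\alpha$ up to ambient isotopy once the pairing of $\partial \alpha$ with the two essential circles is chosen. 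There are precisely two such pairings, hence at most two isotopy classes of $D\cup P$. The hyperelliptic involution of $V$ (a $\pi$-rotation) preserves the partition $D'\cup A_1 \cup P'\cup A_2$ of $\partial V$ but exchanges the two essential circles, exhibiting the two classes as genuinely distinct: as in Lemma \ref{annuiso1}, any ambient isotopy interchanging the classes would restrict at time $1$ to a self-homeomorphism of $\partial V$ with the same effect, forcing it to be a hyperelliptic involution and hence non-trivial in the mapping class group of the torus, a contradiction.

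The main obstacle will be the verification that boundary compression across $\Delta$ sets up a well-defined bijection between isotopy classes of the pair $(W, P)$ and isotopy classes of the pair $(W, \alpha)$ with the specified boundary behaviour. The subtlety is that $\Delta$ is not an interior compression disk in the usual sense: part of $\Delta$ lies on $\partial W$, and part lies on the new copy of $D$ created by the cut. One must check that isotopies of $\alpha$ in $W$ lift back to isotopies of $P$ in $W$ and, conversely, that attaching a boundary-parallel collar to $\alpha$ along $\Delta$ recovers a thrice-punctured sphere in a well-defined isotopy class, so that no isotopy class is introduced or lost during the compression step.
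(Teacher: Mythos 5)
Your identification of $\partial D$ is wrong, and the whole reduction is built on it. Label the four circles of $\partial(D\cup P)$ as $e_1$ (the one bounding the disk region of $\partial V$), $e_2$ (the other inessential circle), and $C_1,C_2$ (the essential circles bounding $A_2$). You claim $\partial D=e_1$, so that $D$ is boundary-parallel. But then $D$ together with the disk region of $\partial V$ bounds a $3$-ball $B_0$ in $V$ whose interior is disjoint from $P$ (as you yourself arrange when you place $P$ in the complementary solid torus $W$), so $B_0$ is the closure of one of the two complementary components of $D\cup P$ --- and a $3$-ball is not a solid torus, contradicting hypothesis (1). Your dismissal of the case ``$\partial D$ essential'' is an unsupported assertion and is in fact false: the correct configuration is exactly that $D$ is a meridian disk with $\partial D\in\{C_1,C_2\}$ and $\partial P$ consisting of $e_1$, $e_2$ and the remaining essential circle. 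One checks this is consistent with (1) and (2): cutting $V$ along the meridian disk $D$ yields a $3$-ball whose boundary sphere is decomposed by $\partial P$ into two disks and two annuli, which is precisely the setting of Lemma \ref{punc}, and regluing the two copies of $D$ recovers two solid tori. The paper's proof runs this way: $\partial D$ must be essential (the case $\partial D=e_1$ fails by the ball argument above, and $\partial D=e_2$ fails because the connected surface $P$ would have boundary circles on both sides of the sphere $D$ caps off), leaving the two cases $\partial D=C_1$ and $\partial D=C_2$; each is realized by a unique embedding up to ambient isotopy by Lemma \ref{punc}, and the two are interchanged by the hyperelliptic involution but by no ambient isotopy of $V$.

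Because of this, your main construction --- cutting along a boundary-parallel $D$ and boundary-compressing $P$ to an annulus $\alpha\subset W$, then invoking Lemma \ref{ann} --- does not apply: in the correct configuration there is no boundary-parallel disk to cut along, and $P$ is handled directly by Lemma \ref{punc} after cutting along the meridian disk, so the bijectivity issue for the compression step that you flag as the main obstacle never arises. Your final mapping-class-group argument (an ambient isotopy exchanging the two classes would restrict to a swap of $C_1$ and $C_2$ preserving the two complementary annuli of $\partial V$, hence to a hyperelliptic involution, which is nontrivial in the mapping class group of the torus) coincides with the paper's and is sound, but it needs the correct case division $\partial D=C_1$ versus $\partial D=C_2$ to have the right two classes to distinguish.
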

				\begin{proof}
  					  Let $S_1=A_1\cup D\cup P$ and $S_2=A_2$.
  					  Then, $S_1\cup S_2=\partial V$.
    					  Let $\partial S_1=C_1\cup C_2$.
  					  If $\partial D$ is inessential in $\partial V$, this contradicts assumption (1).
					  Hence, $\partial D$ is essential in $\partial V$.
					  This implies that $D$ is a meridian disk of $V$.
  					  Therefore, we have  the following two cases by assumption (2).
  					  \begin{enumerate}
					  \renewcommand{\theenumi}{(\arabic{enumi})}
      							  \item[(1)]  $\partial D= C_1$.
       							 \item[(2)]  $\partial D= C_2$.
  					  \end{enumerate}
					  By Lemma \ref{punc}, the embedding of $D\cup P$, which satisfies cases (1) and (2), is unique up to ambient isotopy.
					  We suppose that each of the cases are ambient isotopic.
					  Then, the ambient isotopy $F$ satisfies the following.
 					   \[
  						      F(C_1, 1)=C_2, F(C_2, 1)=C_1, F(S_1, 1)=S_1, F(S_2, 1)=S_2.
  					  \]
					  We define $f_t(x)=F(x, t)$.
					  Suppose that $f_1|_{\partial V}:\partial V\to \partial V$ is an identity in the mapping class group of a torus.
					  Since $F(S_1, 1)=S_1$ and $F(S_2, 1)=S_2$, $F(C_1, 1)=C_1$ and  $F(C_2, 1)=C_2$.
					  This contradicts the definition of $F$.
					  Hence, $f_1|_{\partial V}:\partial V\to \partial V$ is not an identity in the mapping class group of a torus.
					  It is clear that the order of $f_1|_{\partial V}:\partial V\to \partial V$ is 2 in the mapping class group of a torus.
  					  This implies that $f_1|_{\partial V}:\partial V\to \partial V$ is a hyperelliptic involution in the mapping class group of a torus.
					  This contradicts the assumption that $F$ is an ambient isotopy.
					  Hence, each of the cases is not ambient isotopic in $V$ to each other.
				\end{proof}	
						\begin{figure}[httb]
							\centering
							\includegraphics[scale=0.8]{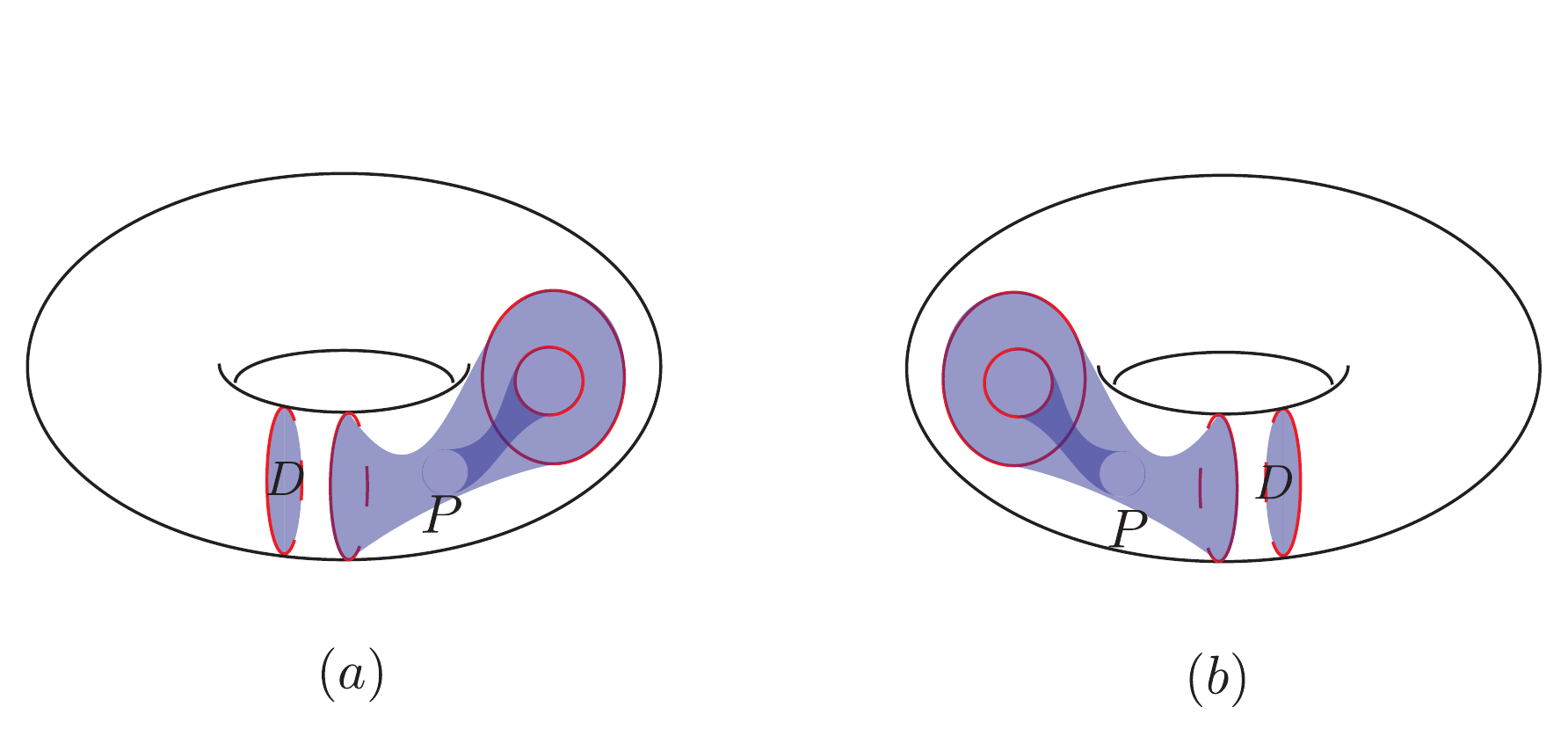}
							\vspace{-3mm}
							\caption{The embedding of a thrice-punctured 2-sphere and a disk  into a solid torus, satisfying the assumption of Lemma \ref{punc_1}. (a) in a figure is taken to (b) by the hyperelliptic involution.}
							\label{punc_1eps}
						\end{figure}
				\begin{lem}\label{punc_2}
					Let $V$ be a solid torus and $D\cup P$ be the union of a disk and a thrice-punctured sphere properly embedded in $V$. If $D\cup P$ satisfies the following, there is exactly one embedding of $D\cup P$ up to ambient isotopy.
				 	\begin{enumerate}
					\renewcommand{\theenumi}{(\arabic{enumi})}
  					   \item[(1)]  $D\cup P$ cuts open $V$ into two solid tori $V_1$ and $V_2$.
  					   \item[(2)]  $\partial (D\cup P)$ cuts open $\partial V$ into $A_1\cup A_2\cup A_3\cup A_4$. 
					 \end{enumerate}
				\end{lem}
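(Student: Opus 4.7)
My plan is to reduce to Lemma~\ref{punc} in the same way that the proof of Lemma~\ref{punc_1} does, and then to observe that the extra symmetry of the present configuration eliminates the two-class phenomenon seen there.

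First I would verify that every component of $\partial(D \cup P)$ is essential in $\partial V$. If some component were inessential then one of the four complementary regions in $\partial V$ would fail to be an annulus, contradicting hypothesis (2). Since $\partial D$ bounds the disk $D \subset V$ and is essential on the torus $\partial V$, it must be a meridian, so $D$ is a meridian disk of $V$. The three circles of $\partial P$ are then disjoint essential curves on $\partial V$ parallel to $\partial D$, so all four boundary circles are mutually parallel meridians.

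Next, since the meridian disk of a solid torus is unique up to ambient isotopy, I may cut $V$ along $D$. This yields a 3-ball $B = V \setminus N(D)$, and $P$ becomes a properly embedded thrice-punctured sphere in $B$. Tracking the decomposition of $\partial B \cong S^{2}$, the three circles of $\partial P$ decompose it into two annuli (two of the original $A_i$'s) and two disks (each formed by amalgamating one of the remaining $A_i$'s with a copy of the cut disk $D$). Hypothesis (1) that $D \cup P$ cuts $V$ into two solid tori translates immediately to $P$ cutting $B$ into two solid tori. Therefore Lemma~\ref{punc} applies and gives uniqueness of the embedding of $P$ in $B$ up to ambient isotopy.

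Finally, I would show that the choice of which of the four parallel meridians plays the role of $\partial D$ does not affect the ambient isotopy class in $V$. This is where Lemma~\ref{punc_2} differs from Lemmas~\ref{ann2}, \ref{annuiso2}, \ref{annuiso1}, and \ref{punc_1}: in each of those the complementary regions in $\partial V$ include at least one piece (a disk, or an annulus with inessential boundary) that distinguishes two candidates for $\partial D$, and the two are interchanged only by the hyperelliptic involution, which is not an ambient isotopy of $V$. Here all four complementary regions are annuli bounded by parallel meridians, and the rotation of $V \cong S^{1} \times D^{2}$ given by $r_\theta(z,w) = (e^{i\theta} z, w)$ is isotopic to the identity and cyclically permutes any four parallel meridional circles. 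Combined with the uniqueness of $P$ in $B$, this shows that any two embeddings of $D\cup P$ satisfying (1) and (2) are ambient isotopic in $V$.

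The main obstacle is this last step: making precise that the cyclic symmetry on $\partial V$ is realized by an honest ambient isotopy of $V$ rather than merely a self-diffeomorphism, so that the two apparent isotopy classes (distinguished by which circle is $\partial D$) actually coincide. The explicit rotation $r_\theta$ resolves this and simultaneously explains why the conclusion of Lemma~\ref{punc_2} is uniqueness rather than the two-class statement of Lemma~\ref{punc_1}.
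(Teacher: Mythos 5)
Your proposal is correct and follows essentially the same route as the paper: show that all four boundary circles are parallel meridians, match the meridian disks, and reduce to Lemma \ref{punc} in the ball obtained by cutting along $D$. Your closing rotation argument is a harmless elaboration of the step the paper states more tersely, namely that since every component of $\partial P$ and $\partial P'$ is meridional they can be matched by an isotopy once $D'$ has been taken to $D$.
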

					\begin{proof}
						From assumption (2),  $\partial D$ is essential.
						Hence, $D$ is a meridian disk of $V$.
						Then, each component of $\partial P$ is meridional in $\partial V$.
						Let $D'\cup P'$ be the union of a properly embedded disk and a thrice-punctured sphere satisfying assumption (2) and cutting open $V$ into two solid tori.
						We can isotope $D'\cup P'$ by taking $D'$ to $D$ since they are meridian disks of $V$.
						$\partial P$ and $\partial P'$ are isotopic since each component of $\partial P$ and $\partial P'$ is meridional.
						By Lemma \ref{punc}, $P$ is ambient isotopic to $P'$  in $V-D$.
						Hence, $D'\cup P'$ is ambient isotopic to $D'\cup P'$.
						This completes the proof (see figure \ref{punc3eps}).
					\end{proof}
					\begin{figure}[httb]
							\centering
							\includegraphics[scale=0.8]{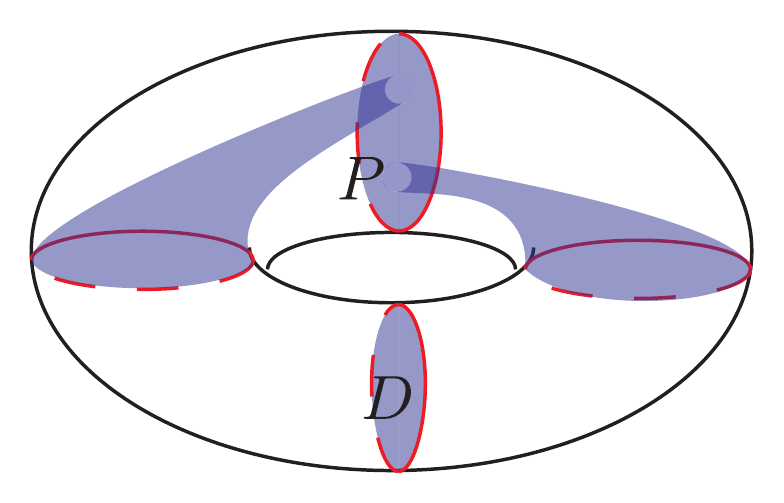}
							\vspace{-3mm}
							\caption{The embedding of a thrice-punctured 2-sphere and a disk  into a solid torus satisfying the assumption of Lemma \ref{punc_2}}
							\label{punc3eps}
							\end{figure}
					
				\begin{lem}\label{punc_3}
					Let $V$ be a solid torus and $D\cup P$ be the union of a disk and a thrice-punctured sphere properly embedded in $V$. If $D\cup P$ satisfies the following, there is exactly one embedding of $D\cup P$ up to ambient isotopy.
				 	\begin{enumerate}
					\renewcommand{\theenumi}{(\arabic{enumi})}
  					   \item[(1)]  $D\cup P$ cuts open $V$ into two solid tori $V_1$ and $V_2$.
  					   \item[(2)]  $\partial (D\cup P)$ cuts open $\partial V$ into $D_1\cup P_1\cup D_2\cup P_2$ such that $\partial D_1\subset \partial P_2$, $\partial D_2\subset \partial P_1$. 
					 \end{enumerate}
				\end{lem}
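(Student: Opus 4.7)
The plan is to first analyze the structure of $\partial(D\cup P)$ on $\partial V$, then show that $\partial D$ must be meridional so that $D$ can be standardized as the unique meridian disk of $V$, and finally reduce to the uniqueness statement in Lemma \ref{punc} applied to $P$ inside $V\setminus N(D)\cong B^{3}$.

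First I would observe that, by assumption (2), the curves $\partial D_1$ and $\partial D_2$ are inessential in $\partial V$, since they bound the disks $D_1,D_2\subset\partial V$. The remaining two boundary components of $D\cup P$, call them $C_1$ and $C_2$, are precisely the boundary components of $P_1$ (resp.\ $P_2$) that are not equal to $\partial D_2$ (resp.\ $\partial D_1$); thus $C_1\cup C_2$ is the shared boundary between $P_1$ and $P_2$, and $C_1, C_2$ are parallel essential curves on $\partial V$ that together cut $\partial V$ into two annular regions, one containing $P_1\cup D_2$ and the other containing $P_2\cup D_1$.

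Second, I would show that $\partial D$ is essential in $\partial V$. If $\partial D$ were inessential, then by the irreducibility of $V$ the disk $D$ would be boundary-parallel, cobounding a $3$-ball $B$ with a disk on $\partial V$. Since $P$ is disjoint from $D$, $P$ would lie entirely on one side of $D$, and cutting $V$ by $D\cup P$ would produce at least three components ($B$ together with the pieces coming from cutting $V\setminus B$ by $P$), contradicting condition (1). Hence $\partial D$ is essential, so $\partial D\in\{C_1,C_2\}$; as $\partial D$ bounds the disk $D\subset V$, it is meridional, and so both $C_1$ and $C_2$ are meridional on $\partial V$. Without loss of generality $\partial D=C_1$, and $\partial P=\partial D_1\cup\partial D_2\cup C_2$.

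Third, the meridian disk of the solid torus $V$ is unique up to ambient isotopy, so I would isotope $D$ to a fixed meridian disk. Let $B:=V\setminus N(D)$, a $3$-ball whose boundary is the annulus $A':=\partial V\setminus N(\partial D)$ capped off by two parallel disk copies of $D$. Since $C_2$ is a meridional curve on $\partial V$ disjoint from $C_1=\partial D$, it is a core circle of $A'$, and hence separates the sphere $\partial B$ into two disks each containing one cap. The curves $\partial D_1$ and $\partial D_2$ are inessential circles on $A'$ lying on opposite sides of $C_2$ (this is where the incidence data $\partial D_1\subset\partial P_2$ and $\partial D_2\subset\partial P_1$ combined with the splitting of $\partial V$ by $C_1\cup C_2$ is used), bounding the disks $D_1,D_2$ in the respective hemispheres. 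Consequently $\partial P$ cuts $\partial B$ into two disks ($D_1,D_2$) and two annuli. Cutting $V$ by $D\cup P$ gives two solid tori by assumption (1), so cutting $B$ by $P$ likewise gives two solid tori, and Lemma \ref{punc} applies to yield that the embedding of $P$ in $B$ is unique up to ambient isotopy. Combined with the uniqueness of the meridian disk $D$, this gives uniqueness of the embedding of $D\cup P$ in $V$.

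I expect the main obstacle to be the essentiality of $\partial D$ in the second step, since this is what rules out degenerate boundary-parallel configurations and allows the reduction to the $3$-ball $B$. A secondary technical point is verifying that $\partial D_1$ and $\partial D_2$ genuinely lie on opposite sides of $C_2$ on $\partial B$, which follows by tracking the incidence data on $\partial V$; once these observations are in place, the argument reduces cleanly to the uniqueness of the meridian disk together with Lemma \ref{punc}, with no additional involution-based case analysis.
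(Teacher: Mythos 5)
Your proposal is correct and follows essentially the same route as the paper's proof: show $\partial D$ is essential (hence $D$ is the unique meridian disk), standardize $D$, and then apply Lemma \ref{punc} to $P$ in the $3$-ball $V\setminus N(D)$. You supply somewhat more detail than the paper does (in particular on why an inessential $\partial D$ contradicts assumption (1) and on the boundary pattern on $\partial B$), but the argument is the same.
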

					\begin{proof}
						If $\partial D$ is inessential in $\partial V$, this contradicts assumption (1).
						Then, we can suppose that $\partial D$ is essential in $\partial V$.
						Hence, $D$ is a meridian disk of $V$.
						Then, exactly one component of $\partial P$ is meridional in $\partial V$ and the other components are inessential in $\partial V$.
						Let $D'\cup P'$ be the union of a properly embedded disk and a thrice-punctured sphere satisfying assumption (2) and cutting open $V$ into two solid tori.
						We can isotope $D'\cup P'$ by taking $D'$ to $D$ since they are meridian disks of $V$.
						Then, we can suppose that $\partial P=\partial P'$.
						By Lemma \ref{punc}, $P$ is ambient isotopic to $P'$  in $V-D$.
						Hence, $D'\cup P'$ is ambient isotopic to $D'\cup P'$.
						This completes the proof (see figure \ref{punc4eps}).
					\end{proof}

							\begin{figure}[httb]
							\centering
							\includegraphics[scale=0.8]{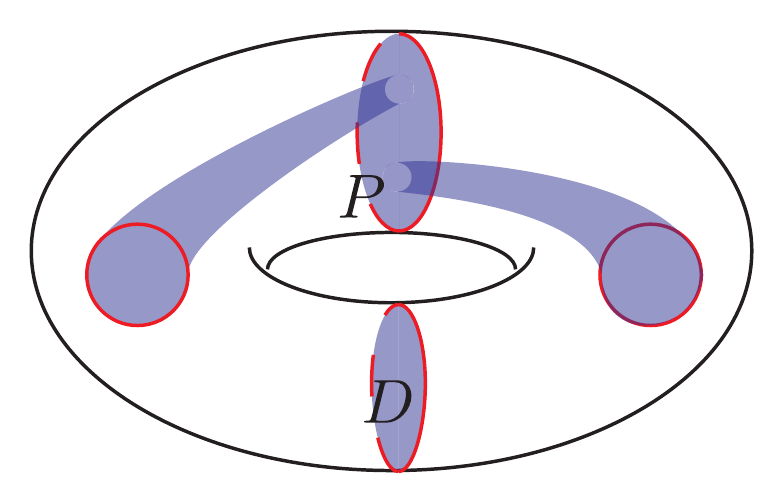}
							\vspace{-3mm}
							\caption{The embedding of a thrice-punctured 2-sphere and a disk  into a solid torus satisfying the assumption of Lemma \ref{punc_3}}
							\label{punc4eps}
							\end{figure}
Then, we shall start classifying the type-$(1, 1, 1)$ decompositions of the 3-sphere. 
			\begin{prop}\label{s3thecase2}
				Let $H_1\cup H_2\cup H_3$ be the type-$(1, 1, 1)$ decomposition of the 3-sphere $M$ that satisfies conclusion (2) of Theorem \ref{thm 3}.
				Then, the embedding of the branched surface $F_{12}\cup F_{13}\cup F_{23}$ into $M$ is unique up to ambient isotopy .
			\end{prop}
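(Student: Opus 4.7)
The plan is to reduce uniqueness to the uniqueness of a meridian disk of a solid torus together with Lemma~\ref{punc1} (uniqueness of the proper embedding of $T^\circ$ in a $3$-ball). Let $H_1\cup H_2\cup H_3$ and $H_1'\cup H_2'\cup H_3'$ be two type-$(1,1,1)$ decompositions of $S^3$ satisfying conclusion (2) of Theorem~\ref{thm 3}; after relabeling the handlebodies we may assume in both that $F_{12}\cong D\cup T^\circ$ and $F_{13}\cong F_{23}\cong A$. The first step is to straighten out the Heegaard structure: by Lemma~\ref{heegaard2}, each of $\partial H_3$ and $\partial H_3'$ is a genus-one Heegaard surface of $S^3$, so by Waldhausen's theorem they are ambient isotopic, and since the cores of the two solid tori of a genus-one Heegaard splitting of $S^3$ are isotopic (Lemma~\ref{core} with $p=1$), we may further arrange $H_3=H_3'$. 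Write $V_2:=H_1\cup H_2=\overline{S^3\setminus H_3}$.

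The second step isotopes the disk component into standard position. Let $D$ be the disk component of $F_{12}$. From the proof of Lemma~\ref{heegaard2}, $N(D)\cup H_3$ is a $3$-ball; this forces $\partial D$ to be a longitude of $H_3$ on $\partial H_3$, equivalently a meridian of $V_2$. Hence $D$ is a meridian disk of $V_2$, and the same is true of the disk component $D'$ of $F_{12}'$. Because a meridian disk of a solid torus is unique up to ambient isotopy, and the resulting isotopy of $V_2$ extends across $H_3$ to an ambient isotopy of $S^3$ (its boundary trace preserves the meridian class of $H_3$), we may further arrange $D=D'$.

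The third step handles the once-punctured torus. The components $T^\circ$ and ${T^\circ}'$ of $F_{12}$ and $F_{12}'$ both lie in the $3$-ball $V_2\setminus N(D)$ and each cuts it into two solid tori, so by Lemma~\ref{punc1} there is an ambient isotopy of this $3$-ball carrying $T^\circ$ onto ${T^\circ}'$. Extending this isotopy across the complementary $3$-ball $N(D)\cup H_3$ produces an ambient isotopy of $S^3$ taking $F_{12}$ to $F_{12}'$. Since $F_{13}$ and $F_{23}$ are exactly the two annular pieces into which the branched loci $\partial F_{12}\cap \partial H_3$ cut $\partial H_3$, the full branched surfaces then coincide.

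The hard part will be Step~2: verifying that $\partial D$ is genuinely forced to be a meridian of $V_2$ (ruling out, in particular, that it could be a non-meridional essential torus curve or an inessential curve), and then checking that the ambient isotopies produced in the three steps can be assembled into a single global ambient isotopy of $S^3$ rather than being only defined on $V_2$ or on $V_2\setminus N(D)$.
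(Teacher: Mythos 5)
Your proposal is correct in outline but takes a genuinely different route from the paper. The paper also begins with Lemma \ref{heegaard2}, but then normalizes $H_1$ (one of the two handlebodies whose intersection is $D\cup T^{\circ}$) to be a Heegaard solid torus $V_1$, and observes that the remaining piece of the branched surface, the annulus $F_{23}$, sits in the complementary solid torus $H_2\cup H_3$ with both boundary circles inessential there and cuts it into two solid tori; such an annulus is boundary-parallel, hence unique up to isotopy, and the case where the roles of the two Heegaard solid tori are exchanged is absorbed by symmetry. You instead normalize $H_3$ and analyze $F_{12}=D\cup T^{\circ}$ inside $H_1\cup H_2$ via uniqueness of the meridian disk together with Lemma \ref{punc1}; this parallels what the paper does for conclusions (3)--(5) in Proposition \ref{s3thecase3}. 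Both routes work; the paper's choice makes the residual surface an annulus, which is pinned down immediately, whereas yours makes it a once-punctured torus and costs an extra lemma.

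The one point you must shore up is the one you flag at the end, and it is slightly more delicate than merely ``assembling'' the isotopies. Lemma \ref{punc1} produces an ambient isotopy of the $3$-ball $B=V_2\setminus N(D)$ with no control over its trace on $\partial B$; after applying it, the sets $\partial H_3$ and $D$, which you arranged in Steps 1--2 to coincide with $\partial H_3'$ and $D'$, may have moved, and then the image of $F_{13}\cup F_{23}=\partial H_3$ need no longer equal $\partial H_3'$. To repair this, first isotope $\partial T^{\circ}$ to $\partial T^{\circ\prime}$ within the annulus $\partial V_2\setminus N(\partial D)$ (both are core-parallel there, since both branched loci are meridians of $V_2$), and then invoke a rel-boundary strengthening of Lemma \ref{punc1}: two properly embedded once-punctured tori in a $3$-ball with the same boundary circle, each splitting the ball into two solid tori, are isotopic by an isotopy fixing the boundary sphere; this follows from uniqueness of the unknotted proper arc in a $3$-ball rel endpoints, but it is not what Lemma \ref{punc1} states. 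Step 2 itself is fine and can even be shortened: $\partial D$ is essential in $\partial H_3$ by Lemma \ref{lem2} (otherwise $S^3$ would acquire an $S^2\times S^1$ summand), and an essential curve on the boundary of the solid torus $H_1\cup H_2$ that bounds a properly embedded disk inside it is by definition a meridian.
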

			\begin{proof}
			Let $H_1\cup H_2\cup H_3$ and $H_1'\cup H_2'\cup H_3'$ be the type-$(1, 1, 1)$ decompositions of the 3-sphere or a lens space that satisfies conclusion (2) of Theorem \ref{thm 3}.
			Furthermore, we denote $F_{ij}=H_i\cap  H_j$ and $F'_{ij}=H'_i\cap  H'_j$.
			We can assume that $F_{12} \cong F_{12}'\cong D^2\cup T^\circ$ without loss of generality.
			By Lemma \ref{heegaard2}, each of $\partial H_i$ is a genus-one Heegaard surface for $i=1, 2, 3$.
			Then, we have to consider only the cases where each of $\partial H_1$ and $\partial H_1'$ is a Heegaard surface of $M$.
			Since a Heegaard surface of $M$ is unique up to isotopy, we can assume that $\partial H_1=\partial H_1'$.
			By Lemma \ref{heegaard2}, $\partial H_2$ is also a Heegaard surface.
			
			We suppose that $\partial H_1$ is a Heegaard surface of $M$. 
			Let $V_1\cup V_2$ be a genus-one Heegaard splitting of $M$.
			Then, we can assume that $H_1=V_1$.
			Now, $F_{23}$ is an annulus embedded in $V_2$ that satisfies the condition that $\partial F_{23}$ is inessential in $\partial V_2$.
			$F_{23}$ is boundary-parallel in $V_2$ since $F_{23}$ cuts open $V_2$ into two solid tori and $\partial F_{23}$ is inessential in $\partial V_2$.
			Suppose that  $H_1'=V_1$.
			Then, $\partial F_{23}$ and $\partial F_{23}'$ are inessential in $V_2$ and these are boundary-parallel in $V_2$.
			Hence, $F_{23}$ is isotopic to $F_{23}'$.
			Therefore, if   $H_1'=V_1$, $F_{12}\cup F_{13}\cup F_{23}$ is isotopic to $F_{12}'\cup F_{13}'\cup F_{23}'$.
			If $H_1'=V_2$, $F_{23}$ is isotopic to $F_{13}'$ since $F_{23}$ is boundary-parallel and  $\partial F_{23}=\partial F_{13}'$. 
			Furthermore, $F_{13}\cup F_{12}$ is isotopic to  $F_{12}'\cup F_{23}'$ since $\partial H_2'$ is also a Heegaard surface.
			Then, the embedding of $F_{12}\cup F_{13}\cup F_{23}$ into $M$  such that $\partial H_1$ is a Heegaard surface is unique up to isotopy.
			Therefore, the type-$(1, 1, 1)$ decomposition of the 3-sphere or a lens space $M$ that satisfies conclusion (2) of Theorem \ref{thm 3} is unique up to ambient isotopy.
			\end{proof}
			
			\begin{prop}\label{s3thecase3}
				Let $H_1\cup H_2\cup H_3$ be the type-$(1, 1, 1)$ decomposition of the 3-sphere  $M$ that satisfies conclusion (3), (4), or (5) of Theorem \ref{thm 3}.
				Then, the embedding of $F_{12}\cup F_{13}\cup F_{23}$ into $M$ is unique up to isotopy.
			\end{prop}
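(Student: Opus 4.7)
The strategy closely parallels Proposition~\ref{s3thecase2}; the main novelty is that the ``inner'' surface living inside the big solid torus is now $D\cup P$ or $A_{1}\cup A_{2}$ rather than a boundary-parallel annulus, so the classification Lemmas~\ref{annuiso2}, \ref{punc_2}, and \ref{punc_3} replace the elementary boundary-parallel argument used there.

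Given two decompositions $H_{1}\cup H_{2}\cup H_{3}$ and $H_{1}'\cup H_{2}'\cup H_{3}'$ of $S^{3}$ realizing the same conclusion, I would first relabel so that $F_{13}\cong F_{23}$ and $F_{13}'\cong F_{23}'$; in cases (3) and (4) this selects the distinguished handlebody, while in case (5) any choice works. By Lemma~\ref{heegaard2}, both $\partial H_{3}$ and $\partial H_{3}'$ are genus-one Heegaard surfaces of $S^{3}$, so Waldhausen's theorem lets me ambient-isotope so that $\partial H_{3}=\partial H_{3}'$. Writing $V_{1}\cup V_{2}$ for this common splitting, $H_{3}$ and $H_{3}'$ each equal $V_{1}$ or $V_{2}$; by Lemma~\ref{core} (the $S^{3}$ case), the cores of $V_{1}$ and $V_{2}$ are isotopic, yielding an ambient isotopy of $S^{3}$ that interchanges them, so I may assume $H_{3}=H_{3}'=V_{1}$.

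After this reduction, $F_{12}$ and $F_{12}'$ become properly embedded surfaces in $V_{2}$, each separating it into two solid tori, and their boundary curves cut $\partial V_{2}$ into $F_{13}\cup F_{23}$ (resp.\ $F_{13}'\cup F_{23}'$). In case (3) the cut is $4$ annuli and Lemma~\ref{punc_2} applies directly. In case (5) the cut is $D_{13}\cup P_{13}\cup D_{23}\cup P_{23}$; a sphere-in-torus observation---namely, $\partial D_{13}=\partial D_{23}$ would produce a $2$-sphere embedded in the torus $\partial V_{2}$---forces $\partial D_{13}\subset\partial P_{23}$ and $\partial D_{23}\subset\partial P_{13}$, matching the hypothesis of Lemma~\ref{punc_3}. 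Both lemmas give a unique ambient-isotopy class of $F_{12}$ in $V_{2}$, and extending an isotopy of $V_{2}$ taking $F_{12}'$ onto $F_{12}$ via a collar of $\partial V_{2}$ into $V_{1}$ produces an ambient isotopy of $S^{3}$ carrying the full branched surface of the second decomposition to that of the first.

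The main difficulty is case (4): here $F_{12}\cong A_{1}\cup A_{2}$ and the cut of $\partial V_{2}$ is $D_{13}\cup P_{13}\cup D_{23}\cup P_{23}$; the same sphere-in-torus observation, after relabeling, verifies the $\partial D_{i}\subset\partial P_{i}$ condition of Lemma~\ref{annuiso2}, which then yields \emph{two} ambient-isotopy classes in $V_{2}$, interchanged by the hyperelliptic involution on $V_{2}$. To collapse these in $S^{3}$, I would invoke Lemma~\ref{diff} applied to $M=S^{3}$: it supplies an ambient isotopy of $S^{3}$ whose time-one map preserves each $V_{i}$ setwise and restricts on $\partial V_{1}$ to the hyperelliptic involution, and hence (up to isotopy in $V_{2}$) realizes the hyperelliptic involution on the solid torus $V_{2}$. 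Applying this isotopy to the second decomposition whenever $F_{12}'$ sits in the Lemma~\ref{annuiso2}-class opposite to that of $F_{12}$ identifies the two classes and yields the desired uniqueness.
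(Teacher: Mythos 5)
Your proposal is correct and follows essentially the same route as the paper: reduce via Lemma \ref{heegaard2}, Waldhausen's theorem, and Lemma \ref{core} to a fixed genus-one Heegaard splitting with $H_3=H_3'=V_1$, classify the remaining surface properly embedded in the solid torus $V_2$ by the embedding lemmas, and use Lemma \ref{diff} to identify the two hyperelliptic-related classes in $S^3$. The only cosmetic difference is in case (4), where you label so that the inner surface is $A_1\cup A_2$ and apply Lemma \ref{annuiso2}, while the paper takes $F_{12}\cong D\cup P$ and uses Lemma \ref{punc_1}; both give exactly two classes interchanged by the hyperelliptic involution, which the ambient isotopy from Lemma \ref{diff} then merges.
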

			\begin{proof}
				Let $H_1\cup H_2\cup H_3$ and $H_1'\cup H_2'\cup H_3'$ be the type-$(1, 1, 1)$ decompositions of the 3-sphere or a lens space that satisfies conclusion (2) of Theorem \ref{thm 3}.
				Furthermore, we denote $F_{ij}=H_i\cap  H_j$ and $F'_{ij}=H'_i\cap  H'_j$.
				We can assume that $F_{12} \cong F_{12}'\cong D^2\cup P$  without loss of generality.
				By Lemma \ref{heegaard2}, each of $\partial H_i$ is a genus-one Heegaard surface for $i=1, 2, 3$
				Then, we have to consider only the case where each of $\partial H_3$ and $\partial H_3'$ is a Heegaard surface of $M$ .
				
				Let $V_1\cup V_2$ be a genus-one Heegaard splitting of $M$.
				Then, we can suppose that $H_3=V_1$.
				The embedding of $F_{12}$ into $V_2$ is unique up to ambient isotopy if $H_1\cup H_2\cup H_3$ is a decomposition of Theorem \ref{thm 3} (3), (4), or (5) by Lemmas \ref{punc_1}, \ref{punc_2}, and \ref{punc_3}.
				If $H_1\cup H_2\cup H_3$ is a decomposition of Theorem \ref{thm 3} (3), (4), or (5), two isotopy classes of the embedding of $F_{12}$ into $V_2$ are ambient isotopic to each other in $M$ by Lemma \ref{diff}.
				If $V_2=H'_3$, then $H'_3$ is isotopic to $H_3$ if and only if $M$ is homeomorphic to the 3-sphere or a lens space $L(p, q)$ with \p by Lemma \ref{core}.
				Hence, we can assume that $H_3=V_1$.
				Since $H_3=V_1$, $F_{12}\cup F_{13}\cup F_{23}$ is isotopic to $F_{12}'\cup F_{13}'\cup F_{23}'$.
				Hence,  the embedding of $F_{12}\cup F_{13}\cup F_{23}$ into $M$  is unique up to ambient isotopy.
			\end{proof}
			
			Next, we shall start classifying the type-$(1, 1, 1)$ decompositions of lens spaces. 
		\begin{prop}\label{thecase2}
			Let $H_1\cup H_2\cup H_3$ be a type-$(1, 1, 1)$ decomposition of a lens space $M$ that satisfies conclusion (2) of Theorem \ref{thm 3}.
			Then, there are two embeddings of $F_{12}\cup F_{13}\cup F_{23}$ into $M$  up to ambient isotopy  if $M$ is $L(p, q)$ with \p.
			Otherwise, there are three embeddings of $F_{12}\cup F_{13}\cup F_{23}$ into $M$ up to ambient isotopy.
		\end{prop}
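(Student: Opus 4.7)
The plan is to follow the template of Propositions~\ref{011iso3} and~\ref{s3thecase2}: let $H_1\cup H_2\cup H_3$ and $H_1'\cup H_2'\cup H_3'$ be two type-$(1,1,1)$ decompositions of a lens space $M$ satisfying conclusion~(2) of Theorem~\ref{thm 3}, normalized so that $F_{12}\cong F_{12}'\cong D\cup T^\circ$; fix a genus-one Heegaard splitting $V_1\cup V_2$ of $M$ and classify the decompositions by analyzing how pieces of the branched surface sit inside the Heegaard solid tori. First, I would apply Lemma~\ref{heegaard}(2) together with Lemma~\ref{heegaard011lens}(2) to split every such decomposition into one of two intrinsically distinct cases: Case~A, in which $N(D)\cup H_3$ is a punctured~$M$ and $\partial H_3$ is the unique Heegaard surface among the $\partial H_i$; and Case~B, in which $N(D)\cup H_3$ is a $3$-ball and, after possibly relabeling, $\partial H_1$ is the unique Heegaard surface among the $\partial H_i$. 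These cases are preserved by ambient isotopy because they are equivalent to whether $\partial D$ is essential in $\partial H_3$.

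In Case~A, set $V_1=H_3$. Then $D$ is a meridian disk of $V_2=H_1\cup H_2$ (its boundary is essential in $\partial V_2=\partial H_3$), and cutting $V_2$ along $D$ yields a $3$-ball in which the once-punctured torus $T^\circ$ is properly embedded and separates the ball into two solid tori. By Lemma~\ref{punc1}, such a $T^\circ$ is unique up to isotopy, so $F_{12}$, and hence the whole branched surface, is determined up to isotopy fixing $\partial V_1$. In Case~B, set $V_1=H_1$. Then both branch loci are inessential and nested on $\partial V_2$: $\partial D$ bounds $D\subset\partial V_2$, while $\partial T^\circ$ bounds the disk $D\cup A\subset\partial V_2$. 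Capping $F_{23}$ with these disks produces a $2$-sphere in the irreducible $V_2$, which forces $F_{23}$ to be boundary-parallel to the annular region $A=F_{13}\subset\partial V_2$. Hence $F_{23}$, and thereby the whole branched surface, is uniquely determined up to isotopy fixing $\partial V_1$.

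Having pinned down the configuration inside one Heegaard solid torus, in each case there remains the choice between $V_1=H_3$ and $V_2=H_3$ in Case~A, or between $V_1=H_1$ and $V_2=H_1$ in Case~B. By Lemma~\ref{core}, these alternatives are related by an ambient isotopy of $M$ precisely when $(p-1)q\equiv\pm 1\pmod{p}$. Combining with the case distinction, each case contributes a single isotopy class when the congruence holds and two otherwise, producing the stated count. The main obstacle will be verifying that Cases~A and~B are genuinely inequivalent under ambient isotopy (which follows from the invariance of the essentiality of $\partial D$ in $\partial H_3$) and that the uniqueness obtained for $F_{12}$ or $F_{23}$ inside $V_2$ extends from isotopy fixing $\partial V_2$ to a genuine ambient isotopy of the branched surface in $M$; this will be handled by combining Lemma~\ref{core} with the diffeotopy information in Theorem~\ref{Diff} and its consequence Lemma~\ref{diff}.
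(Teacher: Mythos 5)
Your overall strategy---splitting into the case where $\partial H_3$ is a Heegaard surface and the case where it is not, identifying the relevant piece of the branched surface inside a Heegaard solid torus, proving its uniqueness there, and then accounting for the two ways the complementary solid torus can sit in $M$ via Lemma~\ref{core}---is the same as the paper's, and your Case~A is handled correctly (in fact your reduction of $F_{12}\cong D\cup T^\circ$ to Lemma~\ref{punc1} after cutting along the meridian disk $D$ is cleaner than the paper's citation at that point). Case~A contributes $1$ class if \p{} and $2$ otherwise, as in the paper.

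The gap is in Case~B, and it is fatal to the count. First, your description of Case~B is inaccurate: by Lemma~\ref{heegaard}(2) (via Lemma~\ref{heegaard011lens}), when $N(D)\cup H_3$ is a $3$-ball \emph{both} $\partial H_1$ and $\partial H_2$ are genus-one Heegaard surfaces, not a unique one after relabeling. This symmetry is exactly what you then lose: you apply Lemma~\ref{core} to the choice $H_1=V_1$ versus $H_1=V_2$ and conclude that Case~B also contributes $1$ or $2$ classes according to the congruence, giving totals $1+1=2$ and $2+2=4$; but the proposition asserts $2$ and $3$, so your argument does not "produce the stated count" in the non-congruent case. The paper's point is that in Case~B the branched surface is a Heegaard torus $F_{12}\cup F_{13}=\partial H_1$ together with the boundary-parallel annulus $F_{23}$, and if $H_1'=V_2$ one can match $F_{23}$ with $F_{13}'$ (both are isotopic to the same annulus on the Heegaard torus, since $F_{23}$ is boundary-parallel with $\partial F_{23}=\partial F_{13}'$) and match $F_{12}\cup F_{13}=\partial H_1$ with $F_{12}'\cup F_{23}'=\partial H_2'$, which is again a Heegaard torus. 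Hence the two choices yield ambient isotopic branched surfaces \emph{unconditionally}, and Case~B always contributes exactly one class; Lemma~\ref{core} is not the right tool there because the question is whether the branched surfaces (not the solid tori $H_1$ and $H_1'$) are isotopic, and the $H_1\leftrightarrow H_2$ symmetry provides the isotopy even when the cores of $V_1$ and $V_2$ are not interchangeable. You would need to add this symmetry argument to repair the proof.
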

		\begin{proof}
			Let $H_1\cup H_2\cup H_3$ and $H_1'\cup H_2'\cup H_3'$ be the type-$(1, 1, 1)$ decompositions of a lens space that satisfies conclusion (2) of Theorem \ref{thm 3}.
			Furthermore, we denote $F_{ij}=H_i\cap  H_j$ and $F'_{ij}=H'_i\cap  H'_j$.
			We can assume that  $F_{12} \cong F_{12}'\cong D^2\cup T^\circ$ without loss of generality.
			By \ref{heegaard}, if $\partial H_1$ (resp. $\partial H_1'$) is a Heegaard surface of $M$, $\partial H_2$ (resp. $\partial H_2'$) is also a Heegaard surface.
			Hence, there are two cases where each of $\partial H_1$ (resp. $\partial H_1'$) and $\partial H_2$ (resp. $\partial H_2'$) is a Heegaard surface of $M$ or $\partial H_3$ (resp. $\partial H_3'$) is a Heegaard surface of $M$.
			Then, we must consider the cases where $\partial H_1$ is a Heegaard surface of $M$ and $\partial H_3$ is a Heegaard surface of $M$.
			Let $V_1\cup V_2$ be a genus-one Heegaard splitting of $M$.
			If $\partial H_i$ is a Heegaard surface of $M$, then $\partial H_i$ is isotopic to $\partial V_1$ since a Heegaard splitting of $M$ is unique up to isotopy for $i=1,  3$.
			
			First, we suppose that each of $\partial H_1$ and $\partial H_1'$ is a Heegaard surface of $M$. 
			Then, we can assume that $H_1=V_1$.
			Now, $F_{23}$ is an annulus embedded in $V_2$ that satisfies the condition that $\partial F_{23}$ is inessential in $\partial V_2$.
			$F_{23}$ is boundary-parallel in $V_2$ since $F_{23}$ cuts open $V_2$ into two solid tori and $\partial F_{23}$ is inessential in $\partial V_2$.
			Suppose that  $H_1'=V_1$.
			Then, $\partial F_{23}$ and $\partial F_{23}'$ are inessential in $V_2$ and they are boundary-parallel in $V_2$.
			Hence, $F_{23}$ is isotopic to $F_{23}'$ by keeping $\partial H_1=\partial H_1'$.
			Therefore, if $H_1'=V_1$, $F_{12}\cup F_{13}\cup F_{23}$ is isotopic to $F_{12}\cup F_{13}\cup F_{23}$.
			If $H_1'=V_2$, $F_{23}$ is isotopic to $F_{13}'$ since $F_{23}$ is boundary-parallel and  $\partial F_{23}=\partial F_{13}'$. 
			Furthermore, $F_{13}\cup F_{12}$ is isotopic to  $F_{12}'\cup F_{23}'$ since $\partial H_2'$ is also a Heegaard surface.
			Then, the embedding of $F_{12}\cup F_{13}\cup F_{23}$ into $M$ is such that $\partial H_1$ is unique up to ambient isotopy.
			Therefore, the type-$(1, 1, 1)$ decomposition of a lens space $M$ that satisfies conclusion (2) of Theorem \ref{thm 3} that satisfies the condition that $\partial H_1$ is a Heegaard surface of $M$ has one isotopy class.
			
			Next, we suppose that $\partial H_3$ and $\partial H_3'$ are Heegaard surfaces of $M$. Then, we can assume that $H_3=V_1$.
			$F_{12}$ is the union of a disk and a thrice-punctured sphere properly embedded in $V_2$ that satisfies the assumption of Lemma \ref{punc_2}.
			Hence, if $H_3'=V_1$,  $F_{12}\cup F_{13}\cup F_{23}$ is isotopic to $F_{12}'\cup F_{13}'\cup F_{23}'$ by Lemma \ref{punc_2}.
			If $V_2=H'_3$, then $H'_3$ is isotopic to $H_3$ if and only if $M$ is $L(p, q)$ with \p by Lemma \ref{core}.
			Then, if $M$ is $L(p, q)$ with \p, $F_{12}\cup F_{13}\cup F_{23}$ is isotopic to $F'_{12}\cup F'_{13}\cup F'_{23}$.
			On the other hand, if $M$ is $L(p, q)$ with \np, $F_{12}\cup F_{13}\cup F_{23}$ cannot be isotopic to $F'_{12}\cup F'_{13}\cup F'_{23}$.
			Hence,  the embedding of $F_{12}\cup F_{13}\cup F_{23}$ into $M$  that satisfies the condition that  $\partial H_3$ is a Heegaard surface is unique up to ambient isotopy if $M$ is $L(p, q)$ with \p.
			On the other hand,  the embedding of $F_{12}\cup F_{13}\cup F_{23}$ into $M$   that satisfies the condition that $\partial H_3$ is a Heegaard surface has two isotopy classes if $M$ is  $L(p, q)$ with \np.
			
			
			We shall complete the proof by summing the number of isotopy classes of the case where $\partial H_1$ and $\partial H_3$ are Heegaard surfaces.
		\end{proof}
			
		\begin{prop}\label{thecase3}
			Let $H_1\cup H_2\cup H_3$ be a type-$(1, 1, 1)$ decomposition of  a lens space $M$ that satisfies conclusion (3) of Theorem \ref{thm 3}.
			If $M$ is  a lens space $L(p, q)$ with $p=2$,  there are two embeddings of $F_{12}\cup F_{13}\cup F_{23}$ into $M$ up to ambient isotopy.
			On the other hand, if $M$ is a lens space $L(p, q)$ with $p\neq 2$,  the type-$(1, 1, 1)$ decomposition $H_1\cup H_2\cup H_3$ of $M$ can be classified as follows.
			\begin{enumerate}
			\renewcommand{\theenumi}{(\arabic{enumi})}
				\item[(1)]  there are three embeddings of $F_{12}\cup F_{13}\cup F_{23}$ into $M$ up to ambient isotopy if $M$ is  $L(p, q)$ with \p.
				\item[(2)]  there are six embeddings of $F_{12}\cup F_{13}\cup F_{23}$ into $M$ up to ambient isotopy if $M$ is $L(p, q)$ with \np.
			\end{enumerate}
		\end{prop}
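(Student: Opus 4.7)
The plan is to mimic the argument of Proposition \ref{thecase2} and Proposition \ref{s3thecase3}: I take two decompositions $H_1\cup H_2\cup H_3$ and $H'_1\cup H'_2\cup H'_3$ satisfying conclusion (3) of Theorem \ref{thm 3} and, after renaming indices, assume $F_{12}\cong F'_{12}\cong D\cup P$. By Lemma \ref{heegaard}(1), exactly one of $\partial H_1,\partial H_2,\partial H_3$ is a genus-one Heegaard surface of $M$. Because $F_{13}\cong F_{23}$, the indices $1$ and $2$ are interchangeable at the level of the underlying branched surface, so there are two essentially different cases to treat: Case (A), where $\partial H_3$ is a Heegaard surface, and Case (B), where $\partial H_1$ (equivalently $\partial H_2$) is a Heegaard surface. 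Throughout I fix a genus-one Heegaard splitting $V_1\cup V_2$ of $M$, which is unique up to isotopy.

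In Case (A), $F_{12}=D\cup P$ is properly embedded in the solid torus $H_1\cup H_2=V_{3-i}$ (with $H_3=V_i$), cutting it into the solid tori $H_1$ and $H_2$, and the complement of $\partial F_{12}$ on the Heegaard torus is $F_{13}\cup F_{23}$, that is, four annuli. This is precisely the hypothesis of Lemma \ref{punc_2}, which gives a single isotopy class of such an embedding in the solid torus. By Lemma \ref{core}, the two possible choices $H_3=V_1$ or $H_3=V_2$ produce ambient-isotopic branched surfaces in $M$ exactly when \p. Hence Case (A) contributes one class if \p\ holds and two classes otherwise.

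In Case (B), with $H_1=V_i$ a Heegaard solid torus, the surface $F_{23}=A_1\cup A_2$ is properly embedded in the solid torus $V=H_2\cup H_3=V_{3-i}$, cutting it into two solid tori, and the complement of $\partial F_{23}$ on $\partial V$ is $F_{12}\cup F_{13}=(D\cup P)\cup (A'_1\cup A'_2)$. I would verify, using that the only disk region on $\partial V$ is $D$ together with the requirement that $F_{23}$ cut $V$ into two solid tori (rather than three pieces), that exactly one of the two annular components of $F_{13}$ has essential boundary on $\partial V$ and the other inessential; this places the configuration in the setup of Lemma \ref{annuiso1}, which produces exactly two isotopy classes of $F_{23}$ in $V$, interchanged by the hyperelliptic involution on $V$. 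By Lemma \ref{diff} these two classes are ambient isotopic in $M$ exactly when $p=2$, and by Lemma \ref{core} the two choices $H_1=V_1$ versus $H_1=V_2$ yield isotopic branched surfaces in $M$ precisely when \p. Hence Case (B) contributes: one class when $p=2$; two classes when $p\neq 2$ and \p; and $2\times 2=4$ classes when \np.

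Summing the two cases then yields the totals $1+1=2$ for $p=2$ (noting that \p\ holds automatically), $1+2=3$ for $p\neq 2$ with \p, and $2+4=6$ for \np, as claimed. The step I expect to be the main obstacle is the topological verification in Case (B) that the complement has exactly one essential-boundary and one inessential-boundary annulus, so that Lemma \ref{annuiso1} (rather than Lemma \ref{annuiso2}) is the relevant tool; a secondary but important technical point is confirming that the involution interchanging the two classes of Lemma \ref{annuiso1} really coincides with the hyperelliptic involution on $V$, so that Lemma \ref{diff} legitimately collapses them precisely when $p=2$.
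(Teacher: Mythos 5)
Your proposal follows essentially the same route as the paper's proof: split into the case where $\partial H_3$ is a Heegaard surface (handled by Lemma \ref{punc_2}, one class in the solid torus) and the case where $\partial H_1$ is a Heegaard surface (handled by Lemma \ref{annuiso1}, two classes exchanged by the hyperelliptic involution), then use Lemma \ref{diff} and Lemma \ref{core} to count ambient isotopy classes in $M$ and sum the two cases. The counts $2$, $1+2=3$, and $2+4=6$ agree with the paper's argument.
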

		\begin{proof}
			Let $H_1\cup H_2\cup H_3$ and $H_1'\cup H_2'\cup H_3'$ be the type-$(1, 1, 1)$ decompositions of a lens space that satisfies conclusion (3) of Theorem \ref{thm 3}.
			Furthermore, we denote $F_{ij}=H_i\cap  H_j$ and $F'_{ij}=H'_i\cap  H'_j$.
			We can assume that $F_{12} \cong F_{12}'\cong D^2\cup P$ without loss of generality.
			By Lemma \ref{heegaard}, there are three cases where $\partial H_i$ (resp. $\partial H_i'$) is a Heegaard surface of $M$ for $i=1, 2, 3$.
			If $\partial H_i$ is a Heegaard surface of $M$, then $\partial H_i$ is isotopic to $\partial V_1$ since the Heegaard splitting of $M$ is unique up to isotopy for $i=1, 2, 3$.
			
			If $\partial H_i$ is a Heegaard surface of $M$ and $H_i=V_1$, then the isotopy classes of the embedding $F_{12}\cup F_{13}\cup F_{23}$ into $M$ depend on only the embedding $F_{i3}\cong A\cup A$ into $V_2$ for $i=1, 2$. 
			Therefore, we can assume that the cases where $\partial H_1$ is a Heegaard surface and $\partial H_2$ is a Heegaard surface are the same. 
			Hence, we must consider the case where $\partial H_1$ is a Heegaard surface of $M$ and the case where  $\partial H_3$ is a Heegaard surface of $M$.
			Let $V_1\cup V_2$ be a genus-one Heegaard splitting of $M$.
			
			First, we consider the case where $M$  is a lens space $L(p, q)$ with $p=2$.
			We suppose that each of $\partial H_1$ and $\partial H_1'$ is a Heegaard surface.
			Then, we can assume that $H_1=V_1$ since a Heegaard surface of $M$ is unique up to isotopy.
			$F_{23}\cong A \cup A$ is properly embedded in $V_2$. $F_{23}$ cuts open $V_2$ into two solid tori $H_2$ and $H_3$ and $\partial V_2 - \partial F_{23}=F_{12}\cup F_{13}\cong P\cup D^2\cup A\cup A$.
			By Lemma \ref{annuiso1}, the embedding of $F_{23}$ into $V_2$ has two isotopy classes.
			By Lemma \ref{diff}, $M$ admits an ambient isotopy $F: M\times [0, 1]\to M$ such that $F( V_2, 1)=V_2$ and $f_1|_{\partial V_2}:\partial V_2\to \partial V_2$ is a hyperelliptic involution in the mapping class group of a torus $\partial V_2$, where $f_1(x)=F(x, 1)$.
			Since  $M$ admits an ambient isotopy $F: M\times [0, 1]\to M$ such that $F( V_2, 1)=V_2$ and $f_1|_{\partial V_2}:\partial V_2\to \partial V_2$ is a hyperelliptic involution in a mapping class group of a torus $\partial V_2$,  two of the isotopy classes of $F_{23}$ are ambient isotopic to each other by taking $F_{12}$ (resp. $F_{13}$) to $F_{12}$ (resp. $F_{13}$), where $f_1(x)=F(x, 1)$.
			If $H_1'=V_1$, $F_{23}'$  is properly embedded in $V_2$.
			$F_{23}'$ cuts open $V_2$ into two solid tori $H_2'$ and $H_3'$ and $\partial V_2 - \partial F_{23}'=F_{12}'\cup F_{13}'\cong P\cup D^2\cup A\cup A$.
			From the above discussion, $F_{12}\cup F_{13}\cup F_{23}$ is ambient isotopic to  $F_{12}'\cup F_{13}'\cup F_{23}'$.
			Suppose that $H_1'=V_2$.
			By Lemma \ref{core}, $V_1$ is isotopic to $V_2$ if and only if $M$ is $L(p, q)$ with \p.
			Since $M$  is a lens space $L(p, q)$ with $p=2$, $M$ satisfies \p.
			Then, we can isotope $H_1'$ to $H_1$.
			Hence, if each of $\partial H_1$ and $\partial H_1'$ is a Heegaard surface of $M$,  $F_{12}\cup F_{13}\cup F_{23}$ is ambient isotopic to  $F_{12}'\cup F_{13}'\cup F_{23}'$ in $M$.
			
			We suppose that $\partial H_3$ is a Heegaard surface of $M$.
			Then, we can assume that $H_3=V_1$ since a Heegaard surface of $M$ is unique up to isotopy.
			$F_{12}\cong D\cup P$ is properly embedded in $V_2$.
			By Lemma \ref{punc_2}, there is exactly one embedding of $F_{12}\cong D\cup P$ into $V_2$ up to ambient isotopy.
			If  $H_3'=V_2$, $H_3'$ is isotopic to $V_1$ if and only if $M$ is homeomorphic to $L(p, q)$ with \p by Lemma \ref{core}.
			Since $M$  is a lens space $L(p, q)$ with $p=2$, $M$ satisfies  \p.
			Then, we can isotope $H_3'$ to $V_1$.
			Hence, we can isotope $F_{12}\cup F_{13}\cup F_{23}$ to $F'_{12}\cup F'_{13}\cup F'_{23}$.
			Therefore,  if $\partial H_3$ is a Heegaard surface of $M$ and $M$ satisfies $p=2$, the type-$(1, 1, 1)$ decomposition of the 3-sphere or a lens space $M$ that satisfies conclusion (3) of Theorem \ref{thm 3} has one isotopy class. 
			
			Next, we consider the case where $M$ is a lens space $L(p, q)$ with $p\neq 2$.
			We suppose that $\partial H_1$ is a Heegaard surface.
			Then, we can assume that $H_1=V_1$ since a Heegaard surface of $M$ is unique up to isotopy.
			$F_{23}\cong A \cup A$ is properly embedded in $V_2$. $F_{23}$ cuts open $V_2$ into two solid tori $H_2$ and $H_3$ and $\partial V_2 - \partial F_{23}=F_{12}\cup F_{13}\cong P\cup D^2\cup A\cup A$.
			By Lemma \ref{annuiso1}, the embedding of $F_{23}$ into $V_2$ has two isotopy classes .
			By Lemma \ref{diff}, $M$ does not admit an ambient isotopy $F: M\times [0, 1]\to M$ such that $F( V_2, 1)=V_2$ and $f_1|_{\partial V_2}:\partial V_2\to \partial V_2$ is a hyperelliptic involution in the mapping class group of a torus $\partial V_2$, where $f_1(x)=F(x, 1)$.
			Since  $M$ does not admit an ambient isotopy $F: M\times [0, 1]\to M$ such that $F( V_2, 1)=V_2$ and $f_1|_{\partial V_2}:\partial V_2\to \partial V_2$ is a hyperelliptic involution in a mapping class group of a torus $\partial V_2$,  two of the isotopy classes of $F_{23}$ cannot be ambient isotopic to each other by sending $F_{12}$ (resp. $F_{13}$) to $F_{12}$ (resp. $F_{13}$), where $f_1(x)=F(x, 1)$. 
			Suppose that $H_1'=V_2$.
			By Lemma \ref{core}, $H_1'$ is isotopic to $H_1(=V_1)$ if and only if $M$ is homeomorphic to $L(p, q)$ with \p.
			Then, if $\partial H_1$ is a Heegaard surface of $M$ and $M$ is $L(p, q)$ with \p, the embedding $F_{12}\cup F_{13}\cup F_{23}$ into $M$ has two ambient isotopy classes.
			One the other hand, if $\partial H_1$ is a Heegaard surface of $M$ and $M$ is  $L(p, q)$ with \np, the embedding $F_{12}\cup F_{13}\cup F_{23}$ into $M$ has four ambient isotopy classes.
			
			We suppose that $\partial H_3$ is a Heegaard surface of $M$.
			Then, we can assume that $H_3=V_1$.
			$F_{12}\cong D\cup P$ is properly embedded in $V_2$.
			By Lemma \ref{punc_2}, there is exactly one embedding of $F_{12}\cong D\cup P$ into $V_2$ up to ambient isotopy.
			If  $H_3'=V_2$, $H_3'$ is isotopic to $H_3(=V_1)$ if and only if $M$ is homeomorphic to $L(p, q)$ with \p by Lemma \ref{core}.
			Hence, if  $H_3'=V_2$ and $M$ is $L(p, q)$ with \p, $F_{12}\cup F_{13}\cup F_{23}$ has exactly one isotopy class.
			On the other hand, if  $H_3'=V_2$ and $M$ is $L(p, q)$ with \np, $F_{12}\cup F_{13}\cup F_{23}$ cannot be isotopic to $F'_{12}\cup F'_{13}\cup F'_{23}$.
			Therefore,  if $\partial H_3$ is a Heegaard surface of $M$ and $M$ is $L(p, q)$ with \p, the type-$(1, 1, 1)$ decomposition of the 3-sphere or a lens space $M$ that satisfies conclusion (3) of Theorem \ref{thm 3} has one isotopy class. 
			One the other hand,  if $\partial H_3$ is a Heegaard surface of $M$ and $M$ is $L(p, q)$ with \np, the type-$(1, 1, 1)$ decomposition of the 3-sphere or a lens space $M$ that satisfies conclusion (3) of Theorem \ref{thm 3} has two isotopy classes. 

		\end{proof}
		
		\begin{prop}\label{thecase4}
			Let $H_1\cup H_2\cup H_3$ be a type-$(1, 1, 1)$ decomposition of a lens space $M$ that satisfies conclusion (4) of Theorem \ref{thm 3} and $V_1\cup V_2$ be a genus-one Heegaard splitting of $M$.
			If $M$ is the 3-sphere or a lens space $L(p, q)$ with $p=2$, there are two embeddings of $F_{12}\cup F_{13}\cup F_{23}$ into $M$ up to ambient isotopy.
			On the other hand, if $M$ is a lens space $L(p, q)$ with $p\neq 2$,  the type-$(1, 1, 1)$ decomposition $H_1\cup H_2\cup H_3$ of $M$ can be classified as follows.
			\begin{enumerate}
			\renewcommand{\theenumi}{(\arabic{enumi})}
				\item[(1)]  there are four embeddings of $F_{12}\cup F_{13}\cup F_{23}$ into $M$ up to ambient isotopy if $M$ is homeomorphic to $L(p, q)$ with \p.
				\item[(2)]  there are eight embeddings of $F_{12}\cup F_{13}\cup F_{23}$ into $M$ up to ambient isotopy if $M$ is not homeomorphic to $L(p, q)$ with \np.
			\end{enumerate}
		\end{prop}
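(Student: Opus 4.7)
The plan is to follow the blueprint of Propositions \ref{thecase2} and \ref{thecase3}. Label the two decompositions so that $F_{12}\cong F_{12}'\cong A_{1}\cup A_{2}$ and $F_{13}\cong F_{23}\cong F_{13}'\cong F_{23}'\cong D\cup P$. Since $F_{12}$ is topologically distinct from $F_{13}$ and $F_{23}$, the handlebody $H_{3}$ is distinguished and $H_{1},H_{2}$ are interchangeable. By Lemma \ref{heegaard}(1), exactly one of $\partial H_{1},\partial H_{2},\partial H_{3}$ is a Heegaard surface, so after using the $H_{1}\leftrightarrow H_{2}$ symmetry we face two sub-cases: either $\partial H_{3}$ is a Heegaard surface, or $\partial H_{1}$ is.

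Fix a genus-one Heegaard splitting $V_{1}\cup V_{2}$ of $M$. In the first sub-case, setting $H_{3}=V_{1}$ makes $F_{12}\cong A_{1}\cup A_{2}$ a properly embedded surface in $V_{2}$ that cuts $V_{2}$ into the two solid tori $H_{1},H_{2}$; the complement of $\partial F_{12}$ in $\partial V_{2}$ is $F_{13}\cup F_{23}\cong (D\cup P)\cup(D\cup P)$, i.e., two disks and two thrice-punctured spheres. A short connectivity check (the dual adjacency graph of the four regions must be connected because $\partial V_{2}$ is a single torus) shows that each disk must be adjacent to a thrice-punctured sphere, so the configuration satisfies the hypothesis of Lemma \ref{annuiso2}, which yields two isotopy classes related by the hyperelliptic involution of $\partial V_{2}$. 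In the second sub-case, setting $H_{1}=V_{1}$ makes $F_{23}\cong D\cup P$ properly embedded in $V_{2}$ with complement $F_{12}\cup F_{13}\cong (A_{1}\cup A_{2})\cup(D\cup P)$; after verifying that one of the two annuli in $F_{12}$ meets $\partial V_{2}$ in an essential circle and the other in an inessential one, the hypothesis of Lemma \ref{punc_1} is satisfied and we again obtain two hyperelliptic-related isotopy classes.

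To assemble the global count I will apply Lemmas \ref{diff} and \ref{core} exactly as in Propositions \ref{thecase2} and \ref{thecase3}. Lemma \ref{diff} guarantees that the hyperelliptic involution on $\partial V_{2}$ extends to an ambient isotopy of $M$ precisely when $p=2$, collapsing each pair of hyperelliptic-related classes to one. Lemma \ref{core} guarantees that the interchange $V_{1}\leftrightarrow V_{2}$ is realised by an ambient isotopy of $M$ precisely when $(p-1)q\equiv\pm1\pmod p$, so otherwise the choices $H_{i}=V_{1}$ and $H_{i}=V_{2}$ in each sub-case contribute independent isotopy classes. Summing over the two sub-cases then yields $1+1=2$ when $p=2$; $2+2=4$ when $p\neq 2$ and $(p-1)q\equiv\pm1$; and $(2\cdot 2)+(2\cdot 2)=8$ when $p\neq 2$ and $(p-1)q\not\equiv\pm1$, matching the stated counts.

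The main obstacle I anticipate is the verification of the essentiality hypothesis of Lemma \ref{punc_1} in the second sub-case: one must rule out the a priori possibility that both annulus components of $F_{12}$ have essential, or both inessential, boundary on $\partial V_{2}$, which would put us outside the scope of Lemma \ref{punc_1}. The complementary check that the two sub-cases cannot be fused by an ambient isotopy of $M$ is routine: any ambient isotopy preserving the branched surface setwise must preserve $H_{3}$ (since $F_{12}$ alone among the pairwise intersections is a pair of annuli), and hence preserves the property that $\partial H_{3}$ is or is not a Heegaard surface, so the sub-case counts are to be added rather than identified.
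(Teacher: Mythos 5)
Your proposal is correct and follows essentially the same route as the paper: the same reduction via Lemma \ref{heegaard} to the two sub-cases ($\partial H_3$ versus $\partial H_1$ being the Heegaard surface), the same appeals to Lemmas \ref{annuiso2} and \ref{punc_1} for the two local classifications, and the same use of Lemmas \ref{diff} and \ref{core} to assemble the counts $2$, $4$, and $8$. The essentiality check for Lemma \ref{punc_1} that you flag as the main obstacle is indeed the point at which the argument needs care, and the paper passes over it at the same level of detail as you do.
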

		\begin{proof}
			Let $H_1\cup H_2\cup H_3$ and $H_1'\cup H_2'\cup H_3'$ be the type-$(1, 1, 1)$ decompositions of a lens space which satisfies the conclusion (4) of Theorem \ref{thm 3}.
			Furthermore, we denote $F_{ij}=H_i\cap  H_j$ and $F'_{ij}=H'_i\cap  H'_j$.
			We can assume that $F_{12} \cong F_{12}'\cong A\cup A$ without loss of generality.
			By Lemma \ref{heegaard}, there are three cases where $\partial H_i$ (resp. $\partial H_i'$) is a Heegaard surface of $M$ for $i=1, 2, 3$.
			Let $V_1\cup V_2$ be a genus-one Heegaard splitting of $M$.
			If $\partial H_i$ is a Heegaard surface of $M$, then $\partial H_i$ is isotopic to $\partial V_1$ since a Heegaard splitting of $M$ is unique up to isotopy for $i=1, 2, 3$.
			
			If $\partial H_i$ is a Heegaard surface of $M$ and $H_i=V_1$, then the isotopy classes of the embedding $F_{12}\cup F_{13}\cup F_{23}$ into $M$ depend on only the embedding $F_{i3}\cong D\cup P$ into $V_2$ for $i=1, 2$. 
			Therefore, we can assume that the cases where $\partial H_1$ is a Heegaard surface and $\partial H_2$ is a Heegaard surface are the same. 
			Hence, we must consider the cases where $\partial H_1$ is a Heegaard surface of $M$ and the case where  $\partial H_3$ is a Heegaard surface of $M$.
			
			First, we consider the case where $M$ is  a lens space $L(p, q)$ with $p=2$.
			We suppose that each of $\partial H_1$ and $\partial H_1'$ is a Heegaard surface.
			Then, we can assume that $H_1=V_1$ since a Heegaard surface of $M$ is unique up to isotopy.
			$F_{23}\cong D \cup P$ is properly embedded in $V_2$. 
			By Lemma \ref{punc_1}, there are two embeddings of $F_{12}$ into $V_2$ up to ambient isotopy.
			By Lemma \ref{diff}, the isotopy classes of $F_{12}\cup F_{13}\cup F_{23}$ are isotopic to each other in $M$.
			If  $H_1'=V_2$, $H_1'$ is isotopic to $H_1(=V_1)$ if and only if $M$ is homeomorphic to $L(p, q)$ with \p by Lemma \ref{core}.
			Since $M$  is a lens space $L(p, q)$ with $p=2$, $L(p, q)$ satisfies \p.
			Hence, if  $H_1'=V_2$, the embedding of $F_{12}\cup F_{13}\cup F_{23}$ is ambient isotopic to the embedding of $F'_{12}\cup F'_{13}\cup F'_{23}$.
			Therefore,  if $\partial H_1$ is a Heegaard surface of $M$, the type-$(1, 1, 1)$ decomposition of the 3-sphere or a lens space $M$ that satisfies conclusion (4) of Theorem \ref{thm 3} has one isotopy class. 
			
			We suppose that $\partial H_3$ is a Heegaard surface of $M$.
			$F_{12}\cong A\cup A$ is properly embedded in $V_2$. 
			$F_{12}$ cuts open $V_2$ into two solid tori $H_1$ and $H_2$ and $\partial V_2 - \partial F_{12}\cong P\cup D^2\cup P\cup D^2$ since $\partial V_2=\partial H_3=F_{13}\cup F_{23}$.
			By Lemma \ref{annuiso2}, there are exactly two such embeddings of $F_{12}$ into $V_2$ up to ambient isotopy.
			By Lemma \ref{diff}, $M$ admits an ambient isotopy $F: M\times [0, 1]\to M$ such that $F( V_2, 1)=V_2$ and $f_1|_{\partial V_2}:\partial V_2\to \partial V_2$ is a hyperelliptic involution in the mapping class group of a torus $\partial V_2$, where $f_1(x)=F(x, 1)$.
			Since  $M$ admits an ambient isotopy $F: M\times [0, 1]\to M$ such that $F( V_2, 1)=V_2$ and $f_1|_{\partial V_2}:\partial V_2\to \partial V_2$ is a hyperelliptic involution in a mapping class group of a torus $\partial V_2$,  two of the isotopy classes of $F_{12}$ are ambient isotopic to each other by sending $F_{23}$ (resp. $F_{13}$) to $F_{23}$ (resp. $F_{13}$). 
			By Lemma \ref{core}, $H_1$ is isotopic to $V_2$ if and only if $M$ is homeomorphic to $L(p, q)$ with \p.
			Since $M$  is a lens space $L(p, q)$ with $p=2$, $M$ satisfies \p.
			Then,  the embedding $F_{12}\cup F_{13}\cup F_{23}$  into $M$ has one isotopy class if $\partial H_1$ is a Heegaard surface of $M$.
			
			Next, we consider the case where $M$ is a lens space $L(p, q)$ with $p\neq2$.
			We suppose that $\partial H_1$ is a Heegaard surface.
			Then, we can assume that $H_1=V_1$.
			$F_{23}\cong D \cup P$ is properly embedded in $V_2$. 
			By Lemma \ref{punc_1}, the embedding of $F_{23}$ into $V_2$ has two ambient isotopy classes.
			Furthermore, each ambient isotopy class of  the embedding of $F_{23}$ into $V_2$ is not isotopic to each other by taking $\partial H_1$ to $\partial H_1$ if $p\neq 2$ by Lemma \ref{diff}.
			If $H_1'=V_2$, $H_1'$ is isotopic to $H_1(=V_1)$ if and only if $M$ is homeomorphic to $L(p, q)$ with \p by Lemma \ref{core}.
			Hence, if $M$ is  $L(p, q)$ with \p, there are exactly two embedding of $F_{12}\cup F_{13}\cup F_{23}$ up to ambient isotopy.
			On the other hand, if  $H_1'=V_2$ and $M$ is $L(p, q)$ with \np, $F_{12}\cup F_{13}\cup F_{23}$ cannot be isotopic to $F'_{12}\cup F'_{13}\cup F'_{23}$.
			Hence, if $M$ is $L(p, q)$ with \np, there are four embeddings of $F_{12}\cup F_{13}\cup F_{23}$ into $M$ up to ambient isotopy.
			
			We suppose that $\partial H_3$ is a Heegaard surface of $M$.
			Then, we can assume that $H_3=V_1$.
			$F_{12}\cong A\cup A$ is properly embedded in $V_2$. 
			$F_{12}$ cuts open $V_2$ into two solid tori $H_1$ and $H_2$ and $\partial V_2 - \partial F_{12}\cong P\cup D^2\cup P\cup D^2$ since $\partial V_2=\partial H_3=F_{13}\cup F_{23}$.
			By Lemma \ref{annuiso2}, there are exactly two such embeddings of $F_{12}$ into $V_2$ up to ambient isotopy.
			Furthermore, the ambient isotopy classes of  the embedding of $F_{12}$ into $V_2$ are not isotopic to each other by taking $\partial H_3$ to $\partial H_3$ if $p\neq 2$ by Lemma \ref{diff}.
			If $H_3'=V_2$, $H_3'$ is isotopic to $H_3(=V_1)$ if and only if $M$ is homeomorphic to $L(p, q)$ with \p by Lemma \ref{core}.
			This implies that if $M$ is $L(p, q)$ with \np, $F_{12}\cup F_{13}\cup F_{23}$ cannot be isotopic to $F_{12}'\cup F_{13}'\cup F_{23}'$.
			To summarize the above discussion,  there are exactly two embedding of  $F_{12}\cup F_{13}\cup F_{23}$  into $M$ up to ambient isotopy if $\partial H_3$ is a Heegaard surface of $M$ and $M$ is $L(p, q)$ with \p.
			One the other hand,  there are exactly four embeddings of $F_{12}\cup F_{13}\cup F_{23}$ into $M$ up to ambient isotopy if $\partial H_3$ is a Heegaard surface of $M$ and $M$ is $L(p, q)$ with \np.
		\end{proof}
		
		\begin{prop}\label{thecase5}
			Let $H_1\cup H_2\cup H_3$ be the type-$(1, 1, 1)$ decomposition of a lens space $M$ that satisfies conclusion (5) of Theorem \ref{thm 3}.
			The embedding of $F_{12}\cup F_{13}\cup F_{23}$ into $M$ is unique up to isotopy  if $M$ is $L(p, q)$ with \p.
			On the other hand, there are exactly two embeddings of $F_{12}\cup F_{13}\cup F_{23}$ into $M$ up to ambient isotopy if $M$ is $L(p, q)$ with \np.
		\end{prop}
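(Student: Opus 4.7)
The plan is to follow the template established by Propositions \ref{s3thecase3}, \ref{thecase3}, and \ref{thecase4}, but using the symmetry $F_{12}\cong F_{13}\cong F_{23}\cong D\cup P$ to simplify the case analysis substantially, and finishing with Lemma \ref{punc_3} in place of Lemmas \ref{punc_1} or \ref{punc_2}. Concretely, let $H_1\cup H_2\cup H_3$ and $H_1'\cup H_2'\cup H_3'$ be two type-$(1,1,1)$ decompositions of $M$ satisfying conclusion (5) of Theorem \ref{thm 3}, and fix a genus-one Heegaard splitting $V_1\cup V_2$ of $M$.

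First I would apply Lemma \ref{heegaard} (1) to conclude that exactly one of $\partial H_1,\partial H_2,\partial H_3$ is a genus-one Heegaard surface of $M$, and similarly for the primed decomposition. Because every $F_{ij}$ is homeomorphic to $D\cup P$, the branched surface is symmetric under relabeling of the indices $\{1,2,3\}$, so after renumbering I may assume that in both decompositions it is $\partial H_3$ (respectively $\partial H_3'$) that is the Heegaard surface; this renumbering does not affect the isotopy class of the branched surface as a subset of $M$. Since Heegaard surfaces of $M$ are unique up to isotopy, I may further isotope so that $\partial H_3=\partial V_1=\partial V_2$, and then there are exactly two sub-cases: $H_3=V_1$ or $H_3=V_2$.

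Next I would fix $H_3=V_1$ and consider the properly embedded surface $F_{12}\subset V_2$. Under conclusion (5), the arcs $\partial F_{12}$ cut $\partial V_2=F_{13}\cup F_{23}$ into two disks and two thrice-punctured spheres in the pattern required by Lemma \ref{punc_3}, and $F_{12}$ separates $V_2$ into the two solid tori $H_1,H_2$. Hence by Lemma \ref{punc_3} the embedding of $F_{12}$ into $V_2$ is unique up to ambient isotopy of $V_2$, and this isotopy automatically carries $F_{13}\cup F_{23}=\partial V_2\setminus\mathrm{int}(F_{12})$ along with it. Therefore, whenever $H_3=H_3'=V_1$, the branched surfaces are ambient isotopic in $M$.

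Finally, I would treat the swap $H_3=V_1$ versus $H_3'=V_2$ using Lemma \ref{core}: the two solid tori of a genus-one Heegaard splitting of $L(p,q)$ are ambient isotopic in $M$ if and only if $(p-1)q\equiv\pm 1\pmod p$. If this congruence holds, the swap produces an ambient isotopy sending $H_3'$ onto $H_3$, so the $V_1$- and $V_2$-subcases collapse to a single ambient isotopy class, giving exactly one class in total. If the congruence fails, no ambient isotopy of $M$ interchanges $V_1$ and $V_2$, so the two subcases are genuinely distinct, giving exactly two classes. The main obstacle I anticipate is verifying that no subtle extra isotopy is introduced by the mapping class group: I expect to invoke Theorem \ref{Diff} and Lemma \ref{diff} only implicitly through Lemma \ref{core}, since in conclusion (5) the embedding of $F_{12}$ in $V_2$ is already unique (not two-valued), so the hyperelliptic-involution trick used in Propositions \ref{thecase3} and \ref{thecase4} is not needed here.
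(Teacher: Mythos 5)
Your proposal is correct and follows essentially the same route as the paper's proof: both reduce, via Lemma \ref{heegaard} and the full symmetry of conclusion (5), to the case where one fixed handlebody is a solid torus of the genus-one Heegaard splitting, then apply Lemma \ref{punc_3} for uniqueness of the $D\cup P$ embedding in the complementary solid torus, and finally use Lemma \ref{core} to decide whether the $V_1$/$V_2$ swap yields one or two classes. The only differences are cosmetic (you single out $\partial H_3$ where the paper uses $\partial H_1$, and $\partial F_{12}$ consists of circles, not arcs).
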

		\begin{proof}
			Let $H_1\cup H_2\cup H_3$ and $H_1'\cup H_2'\cup H_3'$ be the type-$(1, 1, 1)$ decompositions of a lens space $M$ that satisfies conclusion (5) of Theorem \ref{thm 3}.
			Furthermore, we denote $F_{ij}=H_i\cap  H_j$ and $F'_{ij}=H'_i\cap  H'_j$.
			We can assume that $F_{12} \cong F_{12}'\cong D\cup P$ without loss of generality.
			By Lemma \ref{heegaard}, there are three cases where $\partial H_i$ (resp. $\partial H_i'$) is a Heegaard surface of $M$ for $i=1, 2, 3$.
			Let $V_1\cup V_2$ be a genus-one Heegaard splitting of $M$.
			If $\partial H_i$ is a Heegaard surface of $M$, then $\partial H_i$ is isotopic to $\partial V_1$ since a Heegaard splitting of $M$ is unique up to isotopy for $i=1, 2, 3$.
			
			If $\partial H_i$ is a Heegaard surface of $M$ and $H_i=V_1$, then the isotopy classes of the embedding $F_{12}\cup F_{13}\cup F_{23}$ into $M$ depend on only the embedding $F_{jk}\cong D\cup P$ into $V_2$ for $\{i,  j, k\}=\{1, 2, 3\}$. 
			Therefore, we can assume that the cases where $\partial H_i$ is a Heegaard surface are the same for $i=1, 2, 3$. 
			Hence, we have to consider only the cases where $\partial H_1$ is a Heegaard surface of $M$.
			
			We suppose that $\partial H_1$ and $\partial H_1'$ are Heegaard surface. 
			Then, we can suppose that $H_1=V_1$.
			$F_{23}\cong D \cup P$ is properly embedded in $V_2$, which satisfies the assumption of Lemma \ref{punc_3}. 
			Hence, if $H'_3=V_1$, $F_{23}$ is isotopic to $F_{23}'$ by taking $\partial H_1$ to $\partial H_1$ by Lemma \ref{punc_3}. 
			Then, $F_{12}\cup F_{13}\cup F_{23}$ is isotopic to $F'_{12}\cup F'_{13}\cup F'_{23}$.
			If  $H_1'=V_2$, $H_1'$ is isotopic to $V_1$ if and only if $M$ is  $L(p, q)$ with \p  by Lemma \ref{core}.
			Hence, if  $H_1'=V_2$ and $M$ is $L(p, q)$ with \p, $F_{12}\cup F_{13}\cup F_{23}$ is isotopic to $F'_{12}\cup F'_{13}\cup F'_{23}$.
			On the other hand, if  $H_1'=V_2$ and $M$ is $L(p, q)$ with \np, $F_{12}\cup F_{13}\cup F_{23}$ cannot be isotopic to $F'_{12}\cup F'_{13}\cup F'_{23}$.
			Therefore,  if $\partial H_1$ is a Heegaard surface of $M$ and $M$ is $L(p, q)$ with \p, the type-$(1, 1, 1)$ decomposition of the 3-sphere or a lens space $M$ that satisfies conclusion (3) of Theorem \ref{thm 3} has one isotopy class. 
			One the other hand,  if $\partial H_1$ is a Heegaard surface of $M$ and $M$ is $L(p, q)$ with \np, the type-$(1, 1, 1)$ decomposition of the 3-sphere or a lens space $M$ that satisfies conclusion (3) of Theorem \ref{thm 3} has two isotopy classes. 
		\end{proof}

		\begin{proof}[Proof of Theorem \ref{class111s3}]
		By Propositions \ref{s3thecase2} and \ref{s3thecase3}, we obtain Theorem \ref{class111s3}. 
		\end{proof}
		
		\begin{proof}[Proof of Theorem \ref{class111}]
		By Propositions \ref{thecase2}, \ref{thecase3}, \ref{thecase4}, and \ref{thecase5}, we obtain Theorem \ref{class111}.
		\end{proof}

		\setcounter{section}{4}

\setcounter{section}{4}
\section{Stabilization of handlebody decomposition}
The stabilization of a Heegaard splitting is defined as a connected sum with a genus-one Heegaard splitting of the 3-sphere. Koenig defined the stabilization operation of handlebody decompositions. Reidemeister and  Singer showed  a stably equivalent theorem for a Heegaard splitting and its stabilization. 

We consider the stabilization of handlebody decompositions.  We focus on the handlebody decomposition of the 3-sphere and lens spaces.
We restate  Theorem \ref{stab_2}.
	\setcounter{section}{2}
	\setcounter{thm}{6}
	\begin{thm}\label{stab_2}
	A type-$(0, 0, 1)$ decomposition of the 3-sphere and type-$(0, 1, 1)$ and  type-$(1, 1, 1)$ decompositions of the 3-sphere and lens spaces satisfy the following.
	\begin{enumerate}
	\renewcommand{\theenumi}{(\arabic{enumi})}
			\item[(1)]  a type-$(0,0,1)$ decomposition of the 3-sphere is obtained from a type-$(0,0,0)$ decomposition by a type-1 stabilization.
			\item[(2)]  a type-$(0,1,1)$ decomposition of the 3-sphere and lens spaces is obtained from a type-$(0,0,1)$ decomposition by a type-1 stabilization.
			\item[(3)]  a type-$(1,1,1)$ decomposition of the 3-sphere and  lens spaces is obtained from a type-$(0,0,1)$ decomposition by a sequence of  type-1 stabilizations and the case of Theorem \ref{thm 3} (6) is not stabilized. 
\end{enumerate}
	\end{thm}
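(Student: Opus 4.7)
The plan is to realize each required stabilization explicitly, using the combinatorial forms supplied by Theorems \ref{s3001}, \ref{s3011}, and \ref{thm 3} together with the uniqueness statements in Theorems \ref{lensclass011}, \ref{class111s3}, and \ref{class111}. The main obstacle will be the negative claim in part (3): ruling out every destabilization of the $L(4,1)$-decomposition of Theorem \ref{thm 3} (6).

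For part (1) I would begin from the unique type-$(0,0,0)$ decomposition of $S^3$ (Proposition \ref{s3000}), pick two points on its single branched circle, and connect them by an arc $\alpha\subset F_{12}$. The type-$1$ stabilization with $i=3$ changes $H_3$ from a $3$-ball into a solid torus, splits $F_{12}$ into two disks, and replaces $F_{23},F_{31}$ by annuli, while the branched-loci count becomes $2$; by Theorem \ref{s3001} the result is forced to be the unique type-$(0,0,1)$ decomposition of $S^3$. For part (2) I would start from a type-$(0,0,1)$ decomposition in standard form ($F_{12}\cong D\cup D$, $F_{23}\cong F_{31}\cong A$) and perform a type-$1$ stabilization with $i=2$ along an arc $\alpha\subset F_{13}$. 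If the endpoints of $\alpha$ lie on different branched circles, the count drops to $1$ and the output matches Theorem \ref{s3011} (1); if they lie on the same circle and $\alpha$ is essential in the annulus $F_{13}$, the count rises to $3$ and the output matches Theorem \ref{s3011} (2). The additional isotopy ambiguities for $L(p,q)$ with $p\neq 2$ recorded in Theorem \ref{lensclass011} are absorbed by the freedom to choose which branched circle an endpoint of $\alpha$ lies on, together with the ambient isotopies from Lemmas \ref{heegaard011lens} and \ref{core}.

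The positive half of part (3) I would handle by concatenation: take a type-$(0,1,1)$ decomposition produced in part (2) and apply a second type-$1$ stabilization, now with $i$ equal to the index of the $3$-ball handlebody and with the new arc lying on the punctured torus $T^\circ$ or the thrice-punctured sphere $P$ appearing as $F_{23}$. A short catalogue of arc choices, distinguished by which branched loci the endpoints meet and which isotopy class the arc represents on the relevant surface, realizes in turn each of conclusions (1)--(5) of Theorem \ref{thm 3}. By Theorems \ref{class111s3} and \ref{class111}, the ambient isotopy class of the output is determined by these combinatorial data, and the remaining binary or fourfold ambiguities are absorbed by the isotopies already invoked in part (2).

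The hard part is verifying that the decomposition in Theorem \ref{thm 3} (6) admits no destabilization. For a type-$1$ destabilization we would need a non-separating disk $D_i\subset H_i$ whose boundary meets the four branched loci transversely in exactly two points. In the Seifert fibered description of $L(4,1)$ over $\mathbb{RP}^2$ supplied by Proposition \ref{Seifert2}, each $H_i$ is a regularly fibered solid torus and the four branched loci on $\partial H_i$ are parallel regular fibers, hence parallel longitudes. Any non-separating disk in a solid torus is isotopic to a meridian disk, whose boundary meets each longitudinal locus in exactly one transverse point, so the geometric intersection is $4\neq 2$. A type-$0$ destabilization is excluded by an analogous argument: the boundary-parallel arc realizing the inverse of a type-$0$ stabilization would have to cobound a disk with a subarc of $\partial H_i$ across one of the annular components of $F_{ij}$, contradicting the fact that both boundary circles of each annulus in $F_{ij}\cong A\cup A$ are essential curves on $\partial H_i$. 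Combining these two exclusions shows the decomposition in Theorem \ref{thm 3} (6) is not stabilized, completing the proof.
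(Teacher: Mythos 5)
Your overall strategy runs in the opposite direction from the paper's: you build each decomposition \emph{up} from a simpler one by explicit stabilizations and then invoke the classification theorems to argue that every isotopy class is hit, whereas the paper argues \emph{down}, exhibiting inside an arbitrary given decomposition a non-separating disk meeting the branched loci in exactly two points and performing a type-1 destabilization (Claims 5.1--5.4 in the paper's proof). Your treatment of part (1) and your negative argument for Theorem \ref{thm 3} (6) are essentially sound and close to the paper's (the four branched loci are longitudinal in each solid torus, so any meridian disk meets them in at least four points; and a type-0 stabilization of a type-$(0,0,1)$ decomposition cannot produce four branched loci). But the forward direction creates a genuine gap in part (3).

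The concrete problem is conclusion (1) of Theorem \ref{thm 3}. Your argument that ``the ambient isotopy class of the output is determined by these combinatorial data'' appeals to Theorems \ref{class111s3} and \ref{class111}, but those theorems classify only the decompositions satisfying conclusions (2)--(5); the all-annuli case (1) is explicitly excluded from the classification, and it contains infinitely many ambient isotopy classes (one for each Seifert fibration of $S^3$ or $L(p,q)$ over $S^2$, i.e.\ for each choice of fiber slopes on the three fibered tori). A finite catalogue of arcs on $T^{\circ}$ or $P$ followed by a uniqueness appeal therefore cannot show that \emph{every} case-(1) decomposition is stabilized; you need the paper's direct observation that when one $H_i$ is a neighborhood of a regular fiber, its meridian disk meets the two branched loci exactly twice, giving a type-1 destabilization. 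A second, smaller gap of the same nature occurs in part (2) and in the classified cases of part (3): for $L(p,q)$ with $(p-1)q\not\equiv\pm1 \pmod p$ there are up to four (resp.\ eight) isotopy classes to account for, and you assert rather than verify that your arc choices, modulo the isotopies of Lemma \ref{core}, realize all of them and not merely some. This bookkeeping is plausibly completable, but as written it is a claim, not a proof; the destabilization argument avoids it entirely because it starts from the decomposition whose stabilized status is in question.
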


\setcounter{section}{5}

	\begin{proof}
	First, we consider a type-$(0, 0, 1)$ decomposition. 
Let $H_{1} \cup H_{2} \cup H_{3}$ be a type-$(0,0,1)$ decomposition of the 3-sphere.
There is a meridian disk of a solid torus $H_3$ that intersects the branched loci twice. 
Hence, we can destabilize the decomposition by a type-1 destabilization. 

		Next, we consider a type-$(0, 1, 1)$ decomposition. 
		the 3-sphere and lens spaces have two type-$(0, 1, 1)$ decompositions . 
		If the decomposition satisfies case (1) of the statements of Theorem \ref{s3011}, there is a meridian disk of $H_2$ that intersects the branched loci exactly twice. 
		Then, we can perform a type-1 destabilization. Subsequently, we can obtain a type-$(0, 0, 1)$ decomposition of a lens space.
		Hence this case is stabilized from a type-$(0, 0, 1)$ decomposition.
		
		If a handlebody decomposition satisfies conclusion (2) of Theorem \ref{s3011},
		exactly two branched loci in $\partial H_2$ or $\partial H_3$ form a Heegaard diagram of the 3-sphere.
		We can take a meridian disk of $H_2$ or $H_3$ whose boundary intersects them exactly once.
		This implies that there is a meridian disk of $H_2$ or $H_3$ that intersects the branched loci exactly twice.  
		Therefore, we can perform a type-1 destabilization.  
		Hence, we can obtain a type-$(0, 0, 1)$ decomposition of a lens space from a type-$(0, 1, 1)$ decomposition of case (2) of Theorem \ref{s3011}.

		Finally, we consider a type-$(1, 1, 1)$ decomposition.
		First, we consider the case where $F_{12}$ has a disk component. This case corresponds to conclusions (1), (2), (3), and (4) of Theorem \ref{thm 3}.
\begin{claim}\label{cl}
	If a handlebody decomposition satisfies conclusions (3), (4), and (5) of Theorem \ref{thm 3}, then there exist two handlebodies $H_i$ and $H_j$ whose boundaries satisfy the condition that exactly two branched loci are essential in both $\partial H_i$ and $\partial H_j$.
\end{claim}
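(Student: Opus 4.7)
The plan is, for each of conclusions (3), (4), and (5) of Theorem \ref{thm 3}, to analyse each torus $\partial H_l = F_{lj}\cup F_{lk}$ as a surface cut along the branched loci into pieces whose homeomorphism types are prescribed by the conclusion, and to determine purely combinatorially how many branched loci are essential in each such torus. The pair $(H_i,H_j)$ demanded by the claim can then be read off.

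The main tool is the dual graph $G_l$ associated with the decomposition of $\partial H_l$: the vertices of $G_l$ are the complementary pieces, and the edges are the branched loci on $\partial H_l$. Because $\partial H_l$ is connected and its Euler characteristic matches the sum of the Euler characteristics of its pieces, $G_l$ is connected and its first Betti number equals $1$. Moreover, a branched locus $c\subset \partial H_l$ is non-separating in $\partial H_l$, equivalently essential since $\partial H_l$ is a torus, if and only if the corresponding edge of $G_l$ lies on a cycle. The first step is therefore to set up this essential-versus-edge-on-cycle correspondence, alongside the two standard observations that a disk component forces its boundary to be inessential in $\partial H_l$ and that an annulus component forces its two boundary circles to be isotopic in $\partial H_l$.

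The second step is to enumerate the combinatorial types of the cut torus that actually arise in the three conclusions. Only three patterns occur, which I will call Type I (torus cut into $D\cup P$ on one side and $A\cup A$ on the other), Type II (torus cut into $D\cup P$ on both sides), and Type III (torus cut into $A\cup A$ on both sides). Reading off the $F_{ij}$'s prescribed by Theorem \ref{thm 3}, in conclusion (3) the boundaries $\partial H_1,\partial H_2,\partial H_3$ are of types $(\textrm{I},\textrm{I},\textrm{III})$ after reindexing; in conclusion (4) they are $(\textrm{I},\textrm{I},\textrm{II})$; and in conclusion (5) they are all of Type II. A short case analysis, whose only nontrivial point is ruling out the pattern $\partial D=\partial D'$ in Type II (which would force $D\cup D'$ to be a $2$-sphere inside the torus $\partial H_l$), pins down the dual graph in each type: in Types I and II it is a tree with a single doubled edge, so its unique cycle has length $2$ and exactly $2$ branched loci are essential, while in Type III it is itself a $4$-cycle and all $4$ branched loci are essential.

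The third step is to read off the desired pair. In conclusion (3) one must choose the two Type I boundaries, thereby obtaining two tori each carrying exactly two essential branched loci; in conclusions (4) and (5) any pair among the three boundaries works, since each of them is of Type I or Type II. The main obstacle I expect is the elementary but finicky case analysis determining which gluings of disks, annuli, and thrice-punctured spheres actually embed in a torus; once the impossibility of $\partial D=\partial D'$ is observed, the circuit-rank count finishes the argument with no further calculation.
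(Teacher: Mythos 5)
Your proof is correct, and it reaches the same count (exactly two essential branched loci on at least two of the three boundary tori) as the paper, but by a different organization of the argument. The paper's proof of Claim \ref{cl} is a hands-on case analysis on each torus $\partial H_l$: it locates the piece of $F_{lk}$ that shares a boundary circle with a disk component of $F_{lj}$, concludes that circle (and hence the whole boundary of the adjacent annulus, in cases (3) and (4)) is inessential, and rules out further inessential loci because they would create extra disk components; in case (5) it exhibits $\partial H_l$ explicitly as two annuli $D_{lj}\cup P_{lk}$ and $D_{lk}\cup P_{lj}$ glued along two essential curves. Your dual-graph formulation packages exactly these observations into one uniform lemma -- a branched locus is essential in the torus $\partial H_l$ iff it is non-separating iff its edge is non-bridge, and since $V=E=4$ the connected graph $G_l$ has circuit rank one -- after which the three patterns I, II, III are settled by degree and bipartiteness constraints (degree-one disk vertices cannot lie on the cycle; the excluded configuration $\partial D=\partial D'$ is the same two-sphere obstruction the paper uses implicitly). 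The gain of your route is uniformity: one lemma disposes of all three conclusions, and the Type III computation simultaneously explains why $\partial H_3$ in conclusion (3) fails the condition, which the paper does not need to address but which clarifies why only two of the three handlebodies can be chosen there. One small caution: your justification that $b_1(G_l)=1$ "because the Euler characteristics match" is not the right reason -- it follows simply from $E-V+1=4-4+1$ together with connectedness of the torus -- but the conclusion is correct and the rest of the argument is sound.
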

\begin{proof}[Proof of Claim \ref{cl}]
	We consider conclusion (3) of Theorem \ref{thm 3}.  
	Conclusion (1) of  Theorem \ref{thm 3} satisfies $\partial H_1=D_{12}\cup P_{12}\cup A^1_{13}\cup A^2_{13}$, where $D_{12}$ and  $P_{12}$ are components of $F_{12}$ and $A^i_{13}$ is a component of $F_{13}$ for $i=1, 2$.
	One of the components of $F_{13}$ has a boundary component corresponding to $\partial D_{12}$. 
	We assume that $A^1_{13}$ is such a component. Then,  $\partial A^1_{13}$ is inessential in $\partial H_1$. 
	If $\partial A^2_{13}$ is also inessential in $\partial H_1$, $F_{12}$ or $F_{13}$ has a disk component that is not $D_{12}$. 
	This contradicts the assumption. Hence, $\partial A^2_{13}$ is essential in $\partial H_1$. Similarly, $\partial H_2$ satisfies the claim condition. 
	
	Next, we consider conclusion (4) of Theorem \ref{thm 3}. Conclusion (2) of Theorem \ref{thm 3} satisfies $\partial H_2=D_{12}\cup P_{12}\cup A^1_{23}\cup A^2_{23}$, where $D_{12}$ and  $P_{12}$ are components of $F_{12}$ and $A^i_{23}$ is a component of $F_{23}$ for $i=1, 2$. Hence, $\partial H_2$ satisfies the claim condition from the above discussion. Furthermore, $\partial H_3$ satisfies the claim condition.
	
	Finally, we consider conclusion (5) of Theorem \ref{thm 3}. Conclusion (3) of Theorem \ref{thm 3}  satisfies $\partial H_1=D_{12}\cup P_{12}\cup D_{13}\cup P_{13}$, where $D_{12}$ and  $P_{12}$ are components of $F_{12}$ and $D_{13}$ and $P_{13}$ are  components of $F_{23}$. One of the components of $\partial P_{23}$ corresponds to $\partial D_{12}$. Then, $P_{23}\cup D_{12}$ is an annulus in $\partial H_1$. Similarly, $P_{12}\cup D_{13}$ is also an annulus in $\partial H_1$. Hence, $\partial H_1$ is obtained by identifying the boundary components of two annuli. Then, $\partial H_1$ satisfies  the condition of the claim. Similarly, $\partial H_2$ satisfies  the condition of the claim.
\end{proof} 
\begin{claim}\label{cl2}
	One of the boundaries of a handlebody in the statement of Claim \ref{cl} contains exactly two branched loci that intersect the boundary of a meridian disk of a handlebody exactly once.
\end{claim}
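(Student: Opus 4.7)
The goal is to exhibit, on one of the two solid tori $H_{i}$ or $H_{j}$ identified by Claim \ref{cl}, a meridian disk whose boundary meets the union of branched loci transversely in exactly two points, one on each of two distinct branched loci. Such a disk is precisely what the type-1 destabilization used in Theorem \ref{stab_2} requires. My plan is to work with the essential annulus $A$ produced in the proof of Claim \ref{cl}: its boundary $c \cup c'$ consists of the two branched loci that are essential on both $\partial H_{i}$ and $\partial H_{j}$, while the remaining branched loci on each of these tori are inessential and bound disks there. Consequently, any meridian disk of $H_{i}$ or $H_{j}$ may be isotoped to be disjoint from the inessential branched loci, and the problem reduces to showing that on at least one of $\partial H_{i}$, $\partial H_{j}$ the two parallel essential curves $c$, $c'$ are longitudinal, i.e., meet the meridian curve exactly once.

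The second step is to pin down the slope of $c$. Lemma \ref{heegaard} (or Lemma \ref{heegaard2} in the $S^{3}$ case) singles out a handlebody $H_{k}$ whose boundary is a genus-one Heegaard torus of $M$, and Lemma \ref{lem2} applied to the disk component of $F_{ij}$ forces its boundary to be essential on $\partial H_{k}$. Since the branched loci on $\partial H_{k}$ decompose it into parallel annuli, they all share a common slope there, namely the meridional slope of the complementary solid torus $H_{i} \cup H_{j}$. Combining this with the explicit models for $F_{ij}$ inside the complementary solid torus provided by Lemmas \ref{punc_1}, \ref{punc_2}, and \ref{punc_3} for cases (3), (4), and (5) of Theorem \ref{thm 3}, respectively, determines how the annulus $A$ is glued into each of $H_{i}$ and $H_{j}$.

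The main obstacle is the final slope translation. Reading these explicit models, one verifies that the annulus $A$ is parallel into exactly one of the two solid tori $H_{i}$, $H_{j}$; on the corresponding boundary torus its boundary curves are therefore isotopic to the core, and meet the meridian exactly once. The alternative --- that $c$ fails to be longitudinal on both sides --- would force $c$ to be either null-homotopic on both sides, giving via $A$ together with two compressing disks a reducing $2$-sphere that contradicts the irreducibility of $M$, or to have intersection number $\geq 2$ with the meridian on both sides, a possibility ruled out by the gluing data just described. In either case, the desired meridian disk exists on at least one of $H_{i}$, $H_{j}$, completing the claim.
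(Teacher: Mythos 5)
Your overall strategy---reduce to the standard models of the faces inside the complementary solid torus of the Heegaard handlebody and read off the slopes of the essential branched loci---could in principle be made to work, since the uniqueness lemmas (Lemmas \ref{punc_1}--\ref{punc_3}, \ref{annuiso1}, \ref{annuiso2}) do pin down the embeddings. But as written the argument has two genuine gaps.

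First, the combinatorial setup is incorrect. In Claim \ref{cl} the two branched loci that are essential in $\partial H_i$ and the two that are essential in $\partial H_j$ are in general \emph{different} pairs. For example, in case (3) of Theorem \ref{thm 3}, write the four loci $\ell_1,\dots,\ell_4$ in their cyclic order on the third boundary torus, with $\partial D_{12}=\ell_1$; then the essential pair on $\partial H_1$ is $\{\ell_2,\ell_3\}$ (the boundary of the $F_{13}$-annulus not adjacent to $D_{12}$), while the essential pair on $\partial H_2$ is $\{\ell_3,\ell_4\}$ (the boundary of the corresponding $F_{23}$-annulus). Only one locus is essential on both tori, so there is no single annulus $A$ with $\partial A=c\cup c'$ consisting of loci essential on both sides, and the two sides must be treated with two different annuli. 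In case (5) the essential pair on $\partial H_1$ is the common boundary of the two annuli $D_{12}\cup P_{13}$ and $D_{13}\cup P_{12}$ and is not the boundary of any single face at all. Relatedly, your assertion that all branched loci are parallel essential curves on $\partial H_k$ holds in case (3) but fails in cases (4) and (5), where the third torus contains disk faces.

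Second, and more seriously, the heart of the claim---that the essential loci cannot wrap two or more times around \emph{both} solid tori---is never actually proved. You dismiss this case as ``ruled out by the gluing data just described,'' but the gluing data you describe is precisely the assertion that the relevant annulus is parallel to the core on one side, i.e.\ the conclusion to be established; the argument is circular at its crux. (Your dichotomy also omits mixed possibilities, e.g.\ meridional on one side and wrapping at least twice on the other.) The paper closes exactly this case with a short prime-decomposition argument: if the essential loci ran at least twice around the meridian on both distinguished handlebodies, each of those two solid tori would contribute a nontrivial lens space connected summand, so $M$ would be a connected sum of two lens spaces, contradicting the hypothesis that $M$ is the $3$-sphere or a lens space. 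Some version of that argument, or an honest slope computation in the explicit models, is needed to complete your proof.
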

\begin{proof}[Proof of Claim \ref{cl2}]
Suppose that both of the boundaries of the handlebodies in the statement of Claim \ref{cl} contain exactly two branched loci that intersect the boundary of a meridian disk of the handlebody more than twice.
Then, $M$ has two lens spaces as a connected summand. 
This contradicts the assumption.
\end{proof}
\begin{claim}\label{1}
A handlebody decomposition that satisfies conclusion (2), (3), (4), or (5) of Theorem \ref{thm 3} can be destabilized by a type-1 destabilization.
\end{claim}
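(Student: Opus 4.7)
The plan is to exhibit, for each of conclusions (2)--(5) of Theorem~\ref{thm 3}, a non-separating disk $D_i\subset H_i$ in one of the three handlebodies whose boundary meets the set of branched loci transversely in exactly two points; by the converse clause in the definition of type-1 destabilization, this is all that is required.

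For cases (3), (4) and (5) the proof is a direct combination of Claims~\ref{cl} and~\ref{cl2}. Claim~\ref{cl} supplies two handlebodies $H_i$ and $H_j$ on whose boundary tori exactly two branched loci are essential, and Claim~\ref{cl2} then singles out one of them, say $H_i$, on which these two essential branched loci each intersect the boundary of a meridian disk of $H_i$ exactly once. A meridian disk of the solid torus $H_i$ is non-separating, so it serves as the required $D_i$: its boundary meets the union of branched loci transversely at exactly two points, one on each of the essential branched loci.

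Case (2) is more delicate and must be handled by a direct construction. Here $F_{ij}\cong D\cup T^{\circ}$ and $F_{jk}\cong F_{ki}\cong A$, and the two branched loci $\partial D$ and $\partial T^{\circ}$ are both inessential on $\partial H_i$, while on $\partial H_k$ they are parallel essential curves of slope $(p,q)$ corresponding to $M\cong L(p,q)$, so any meridian disk of $H_k$ meets the branched loci in $2p$ points, which is at least $4$ as soon as $p\geq 2$. I therefore work on $\partial H_i$ instead. The pieces $D$, $F_{ki}\cong A$ and $T^{\circ}$ decompose $\partial H_i$ so that $D\cup F_{ki}$ is an embedded disk whose boundary is $\partial T^{\circ}$ and which contains $\partial D$ as an interior circle cobounding the subannulus $F_{ki}$ with $\partial T^{\circ}$. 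Start with a meridian representative of $H_i$ isotoped entirely into $T^{\circ}$, which is possible because every slope on the torus $\partial H_i$ admits a representative disjoint from the disk $D\cup F_{ki}$. Now perturb it within its slope class so that a single subarc leaves $T^{\circ}$ through $\partial T^{\circ}$, traverses the annular region $F_{ki}$, and re-enters $T^{\circ}$ through $\partial T^{\circ}$ without ever entering the inner subdisk $D$. The resulting curve still represents the meridian slope, so it bounds a non-separating meridian disk $D_i\subset H_i$, and its boundary meets the branched loci transversely at exactly two points, both on $\partial T^{\circ}$.

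The principal obstacle is case (2): the absence of essential branched loci on $\partial H_i$ and $\partial H_j$, together with the large intersection number forced on $\partial H_k$ when $p\geq 2$, prevents any direct minimal-intersection argument, and the proof must exploit the fact that the condition of having \emph{exactly two transverse intersection points} is strictly weaker than minimal geometric intersection within an isotopy class. Once $D_i$ has been produced in each case, the destabilization is valid by the defining inverse-stabilization clause, and the drop of the genus triple from $(1,1,1)$ to $(0,1,1)$ together with the corresponding decrease of the number of branched loci confirms that the output is a type-$(0,1,1)$ decomposition.
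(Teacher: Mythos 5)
Your proposal is correct and follows essentially the same route as the paper: cases (3), (4), (5) are dispatched by combining Claims \ref{cl} and \ref{cl2} to produce a meridian disk meeting the branched loci in exactly two points, and case (2) is handled on the handlebody whose branched loci are all inessential. Your explicit perturbation of a meridian isotoped into $T^{\circ}$ across $\partial T^{\circ}$ is just a detailed unpacking of the paper's one-line assertion that such a meridian intersecting the branched loci exactly twice exists because the loci are inessential in that boundary torus.
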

\begin{proof}
	By Claims \ref{cl} and \ref{cl2}, case (3), (4), or (5) is stabilized by a type-1 destabilization since there exists a meridian of a handlebody that intersects the branched loci exactly twice. 
	If a handlebody decomposition satisfies case (2), we can find a meridian of $H_1$ that intersects the branched loci exactly twice since all the branched loci in $\partial H_1$ are inessential.
	Then, we can perform a type-1 destabilization on the handlebody decomposition of case (4).
\end{proof}
\begin{claim}\label{2}
	A handlebody decomposition that satisfies conclusion (1) of Theorem \ref{thm 3} can be destabilized by a type-1 destabilization.
\end{claim}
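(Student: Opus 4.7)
The plan is to exhibit, in some handlebody $H_i$, a nonseparating meridian disk whose boundary meets the union $c_1 \cup c_2$ of the two branched loci transversely in exactly two points; this is precisely what a type-1 destabilization requires. The argument splits into two cases, guided by the dichotomy in the Remark following Theorem \ref{thm 3}, according to whether the branched loci are meridional on some $\partial H_i$ or not.

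Suppose first that the branched loci are non-meridional on every $\partial H_i$. By the Remark, the decomposition is induced by a Seifert fibered structure on $M$ in which each $H_i$ is a fibered solid torus and $c_1, c_2$ appear as regular fibers on each $\partial H_i$. Since $M$ is the 3-sphere or a lens space, and case (6) of Theorem \ref{thm 3} (the $\mathbb{R}P^2$ base for $L(4,1)$) is excluded, Proposition \ref{Seifert1} forces the base to be $S^2$ with at most two singular fibers. The three handlebodies correspond to the three regions of a theta-graph decomposition of this $S^2$, so by pigeonhole at least one, say $H_i$, contains no singular fiber and is therefore a trivially fibered solid torus. On such an $H_i$ the fibers on $\partial H_i$ are longitudes, so a meridian disk of $H_i$ meets $c_1$ and $c_2$ each exactly once, providing the required disk.

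In the remaining case the branched loci are meridional on some $\partial H_i$; the Remark further guarantees that this occurs for at most one of the three handlebodies, which I label $H_1$. Then $c_1, c_2$ bound disjoint meridian disks $D_1, D_2$ of $H_1$, and together with the annulus $F_{23}$ they form a 2-sphere $S = D_1 \cup F_{23} \cup D_2$ embedded in $M$. Since $M$ is irreducible, $S$ bounds a 3-ball $B$. The intersection $B \cap H_1$ is one of the two 3-ball components of $H_1$ cut along $D_1 \cup D_2$, while $B \cap (H_2 \cup H_3)$ is a codimension-zero submanifold of the solid torus $V = \overline{M \setminus H_1}$ whose boundary torus is $F_{23}$ glued to the annulus of $\partial H_1$ opposite to $B \cap H_1$; after relabeling, this latter annulus is $F_{12}$, so $\partial(B \cap V) = F_{12} \cup F_{23} = \partial H_2$ and hence $B \cap V = H_2$. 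Thus $B$ is obtained by attaching the solid torus $H_2$ to a 3-ball along $F_{12}$, which forces the core of $F_{12}$ to be a longitude of $\partial H_2$. Consequently $c_1$ and $c_2$ are longitudes of $\partial H_2$, a meridian disk of $H_2$ meets them in exactly two points, and a type-1 destabilization applies. The main subtle step is the identification $B \cap V = H_2$ together with the consequence that the core of $F_{12}$ is a longitude of $\partial H_2$; both rest on the fact that a solid torus attached to a 3-ball along an essential annulus produces a 3-ball exactly when the core of that annulus is a longitude of the attached solid torus.
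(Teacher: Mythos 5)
Your proof is correct, and in its main case it takes the same route as the paper: the paper's entire proof of this claim is the observation that a Seifert fibration of $S^3$ or a lens space over $S^2$ has at most two singular fibers, so one of the three fibered solid tori is a neighborhood of a regular fiber, and its meridian disk meets each branched locus once. Your Case 1 is exactly this argument. Where you genuinely differ is that you notice the Seifert-fibered description only applies when the branched loci are non-meridional on every $\partial H_i$ (as the Remark after Theorem \ref{thm 3} itself indicates), and you supply a separate argument for the meridional case: capping the annulus $F_{23}$ with the two meridian disks of $H_1$ gives an embedded sphere, irreducibility of $M$ puts a $3$-ball on one side, and that ball being the union of a $3$-ball piece of $H_1$ with the solid torus $H_2$ (or $H_3$) glued along $F_{12}$ (or $F_{13}$) forces, by a $\pi_1$/winding-number computation, the core of that annulus to be a longitude of the attached solid torus, which produces the required meridian disk there. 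The paper's proof silently assumes the Seifert-fibered picture and skips this case, so your version is the more complete one. One small remark: in Case 2 you do not actually need the Remark's assertion that at most one boundary can carry meridional branched loci --- your ball argument already forces the loci to be longitudinal on whichever of $H_2$, $H_3$ lies inside the ball, so the case split ``non-meridional everywhere'' versus ``meridional somewhere'' is all that is required.
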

\begin{proof}
		Since the Seifert fibered structures of the 3-sphere and lens spaces have at most two singular fibers, we can suppose that one of the $H_i$'s is a regular neighborhood of a regular fiber.  Suppose that $H_1$ is a regular neighborhood of a regular fiber.
		Hence, in case  (5) in Theorem \ref{thm 3},  $H_1$ has a meridian disk whose boundary intersects the branched loci exactly twice. 
		Therefore, we can perform a type-1 destabilization along such a meridian disk.
\end{proof}
\begin{claim}\label{3}
	The case of Theorem \ref{thm 3} (6) cannot be destabilized. 
\end{claim}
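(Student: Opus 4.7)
The plan is to rule out any type-1 destabilization of the decomposition in Theorem \ref{thm 3} (6) by exploiting the Seifert fibered structure of $L(4,1)$. By Proposition \ref{Seifert2}, $L(4,1)$ carries a unique Seifert fibered structure over $\mathbb{R}P^2$ with no singular fibers, and the decomposition of Theorem \ref{thm 3} (6) is the one obtained from that structure by taking the three handlebodies to be fibered regular neighborhoods of three regular fibers, exactly as in the last paragraph of the proof of Theorem \ref{thm 3}. In particular, the core of each solid torus $H_i$ is a regular fiber of the Seifert fibration.

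The central geometric step will be to identify the slope of the branched loci on each $\partial H_i$. Each annulus component of $F_{ij}\cong A\cup A$ is saturated by fibers, so its two boundary circles are regular fibers of the Seifert fibration. Because the core of $H_i$ is itself a regular fiber, any regular fiber on $\partial H_i$ is isotopic there to the longitude of $H_i$. Hence the four branched loci on $\partial H_i$ appear as four parallel longitudes of the solid torus.

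The claim then follows from a one-line intersection count: any non-separating disk in the solid torus $H_i$ is isotopic to a meridian disk, and the boundary of a meridian disk meets each longitude of $\partial H_i$ in exactly one point. Therefore the boundary of any non-separating disk in $H_i$ meets the union of the four branched loci in exactly four points, never in two. Since a type-1 destabilization requires, by definition, a non-separating disk whose boundary meets the branched loci in exactly two points transversely, no such disk exists in any of $H_1$, $H_2$, $H_3$, and the decomposition of Theorem \ref{thm 3} (6) admits no type-1 destabilization.

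The main obstacle is the slope identification in the second step: one must genuinely verify that the branched loci sit on $\partial H_i$ as longitudes rather than as some other slope, and this rests on the fact that each $H_i$ is a fibered solid torus whose core is a regular fiber, so that the fiber slope on $\partial H_i$ coincides with the longitudinal slope of $H_i$. Once this slope is pinned down, the counting obstruction is immediate and the claim is proved.
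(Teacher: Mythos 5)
Your argument against a type-1 destabilization is correct and is essentially the paper's argument in a more detailed form. The paper simply observes that the four branched loci are essential in each boundary torus, so the boundary of any meridian disk of $H_i$ must meet them in at least four points (or in zero points if they were meridional), hence never in exactly two; you reach the same intersection count by first using the Seifert fibration of $L(4,1)$ over $\mathbb{R}P^2$ to identify the branched loci as four parallel longitudes on each $\partial H_i$. Your slope identification is a legitimate refinement (it does rely on taking the case-(6) decomposition to be the one constructed from the fibration, with each $H_i$ a fibered neighborhood of a regular fiber, which is how the paper produces it), and the counting step is sound since any non-separating disk in a solid torus is a meridian disk and its boundary meets each longitude at least once.

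However, the paper's proof of this claim contains a second step that you omit, and it is needed for what the claim is used to establish in Theorem \ref{stab_2} (3), namely that the case-(6) decomposition is \emph{not stabilized}. Ruling out a type-1 destabilization only shows the decomposition is not the result of a type-1 stabilization; one must also exclude that it is the result of a type-0 stabilization. The paper handles this by noting that a type-0 stabilization yielding a type-$(1,1,1)$ decomposition must start from a type-$(0,0,1)$ decomposition and does not change the number of branched loci, so the resulting decomposition would have two branched loci with $F_{12}\cong F_{13}\cong F_{23}\cong A$, contradicting the four branched loci and $F_{ij}\cong A\cup A$ of case (6). You should add this (short) argument to close the gap.
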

\begin{proof}
	Since each branched loci in this case is essential in each boundary of  the handlebodies and the number of branched loci is four, there is no meridian disk of handlebodies that intersects the branched loci exactly twice.
	Hence, we cannot perform a type-1 destabilization.
	If the case of Theorem \ref{thm 3} (6) is stabilized by a type-0 stabilization from a type-$(0, 0, 1)$ decomposition, the decomposition satisfies $F_{12}\cong F_{13}\cong F_{23}\cong A$. 
	This contradicts case (6) of Theorem \ref{thm 3}.
\end{proof}
		By Claims \ref{1}, \ref{2}, and \ref{3}, we complete the proof of Theorem \ref{stab_2}
	\end{proof}
	\begin{Rem}
	\begin{enumerate}
	\item A type-$(0, 1, 1)$   decomposition of the 3-sphere that satisfies conclusion (1) of Theorem \ref{s3011} is obtained by performing a type-0 stabilization on a type-$(0, 0, 0)$ decomposition of the 3-sphere. Furthermore, a type-$(1, 1, 1)$ decomposition of the 3-sphere or a lens space that satisfies conclusion (5) of Theorem \ref{thm 3} is obtained by performing a type-0 stabilization on a type-$(0, 0, 1)$ decomposition of the 3-sphere or a lens space.
	\item A type-$(1, 1, 1)$ decomposition of a Seifert fibered space $\mathbb{S}(3)$ that is not a lens space or  the 3-sphere or $S^2\times S^1$ is not stabilized. In fact, $\mathbb{S}(3)$ does not have type-$(0, 0, 0)$, type-$(0, 0, 1)$, and type-$(0, 1, 1)$ decompositions.
	\end{enumerate}
	\end{Rem}
	\section{Acknowledgements}
	We would like to express our gratitude to our supervisor, Professor Koya Shimokawa, for his valuable advice. We would also like to thank  Kai Ishihara for his insightful comments on the definition of stabilizations.
	The second author was partially supported by Grant-in-Aid for JSPS Research Fellow from JSPS KAKENHI Grant Number JP20J20545.
$ $\\
Yasuyoshi Ito

Department of Mathematics, Saitama University, 255 Shimo-Okubo, Sakura-ku, Saitama- shi, Saitama, 338-8570, Japan

E-mail address: 0104y.ito@gmail.com$ $\\

Masaki Ogawa 

Department of Mathematics, Saitama University, 255 Shimo-Okubo, Sakura-ku, Saitama- shi, Saitama, 338-8570, Japan

E-mail address: m.ogawa.691@ms.saitama-u.ac.jp

\end{document}